\newcommand{\K}{\ensuremath{\mathcal{K}}}
\newcommand{\R}{\ensuremath{\mathbb{R}}}
\newcommand{\Z}{\ensuremath{\mathbb{Z}}}
\newcommand{\C}{\ensuremath{\mathbb{C}}}
\newcommand{\e}{\ensuremath{\mathbf{e}}}
\newcommand{\eps}{\ensuremath{\varepsilon}}
\newcommand{\al}{\ensuremath{\alpha}}
\newcommand{\be}{\ensuremath{\beta}}
\newcommand{\ga}{\ensuremath{\gamma}}
\newcommand{\txi}{\ensuremath{\tilde{\xi}_1}}
\newcommand{\philin}{\ensuremath{\phi_{lin}}}
\newcommand{\psilin}{\ensuremath{\psi_{lin}}}
\newcommand{\tphi}{\widetilde{\phi}}
\newcommand{\FF}{\ensuremath{\mathcal{F}}}
\newcommand{\DD}{\ensuremath{\mathbf{D}}}
\newtheorem{theorem}{Theorem}[section]
\newtheorem{lemma}[theorem]{Lemma}
\newtheorem{proposition}[theorem]{Proposition}
\newtheorem{corollary}[theorem]{Corollary}
\newtheorem{definition}[theorem]{Definition}
\numberwithin{equation}{section}
\begin{document}

\title[Global Schr\"{o}dinger maps]
{Global Schr\"{o}dinger maps in dimensions $d\geq 2$: small data in the critical Sobolev spaces}
\author{I. Bejenaru}
\address{Texas A\&M University}
\email{bejenaru@math.tamu.edu}
\author{A. D. Ionescu}
\address{University of Wisconsin -- Madison}
\email{ionescu@math.wisc.edu}
\author{C. E. Kenig}
\address{University of Chicago}
\email{cek@math.uchicago.edu}
\author{D. Tataru}
\address{University of California -- Berkeley}
\email{tataru@math.berkeley.edu}
\thanks{I. B. was supported in part by  NSF grant DMS0738442.
 A. I. was supported in part by a Packard Fellowship. C. K. 
 was supported in part by  NSF grant DMS0456583. D. T. was supported in part by  NSF grant DMS0354539}

\begin{abstract}
We consider the Schr\"{o}dinger map initial-value problem
\begin{equation*}
\begin{cases}
&\partial_t\phi=\phi\times\Delta \phi\,\text{ on }\,\mathbb{R}^d\times\mathbb{R};\\
&\phi(0)=\phi_0,
\end{cases}
\end{equation*}
where $\phi:\mathbb{R}^d\times\mathbb{R}\to\mathbb{S}^2\hookrightarrow \R^3$ is a smooth function. In all dimensions $d\geq 2$, we prove that the Schr\"{o}dinger map initial-value problem admits a unique global smooth solution $\phi\in C(\R:H^\infty_Q)$, $Q\in\mathbb{S}^2$, provided that the data $\phi_0\in H^\infty_Q$ is smooth and satisfies the smallness condition $\|\phi_0-Q\|_{\dot{H}^{d/2}}\ll 1$. We prove also that the solution operator extends continuously to the space of data in $\dot H^{d/2}\cap \dot H^{d/2-1}_Q$ with small $\dot{H}^{d/2}$ norm.

\end{abstract}
\maketitle
\tableofcontents

\section{Introduction}\label{INTRO}

In this paper we consider the Schr\"{o}dinger map initial-value
problem
\begin{equation}\label{Sch1}
  \begin{cases}
    &\partial_t\phi=\phi\times\Delta \phi\,\text{ on }\,\mathbb{R}^d\times\mathbb{R};\\
    &\phi(0)=\phi_0,
  \end{cases}
\end{equation}
where $d\geq 2$ and $\phi:\mathbb{R}^d\times\mathbb{R}
\to\mathbb{S}^2\hookrightarrow\mathbb{R}^3$. The Schr\"{o}dinger map
equation has a rich geometric structure and arises in several
different ways. For instance, it arises in ferromagnetism as the
Heisenberg model for the ferromagnetic spin system whose classical
spin $\phi$, which belongs to $\mathbb{S}^2\hookrightarrow \R^3$, is
given by \eqref{Sch1} in dimensions $d=1,2,3$; we refer the reader to
\cite{ChShUh}, \cite{NaStUh}, \cite{PaTo}, and \cite{La} for more
details. In this paper we are concerned with the issue of global
well-posedness of the initial-value problem \eqref{Sch1}, in the case
of data $\phi_0$ which is small in the critical Sobolev spaces
$\dot{H}^{d/2}$, $d\geq 2$ (see \cite{ChShUh} for results in dimension
$d=1$). Our main result is the direct analogue in the setting of
Schr\"{o}dinger maps of the theorem of Tao \cite{Ta2} on global
regularity of wave maps with small critical Sobolev norms. We also
prove continuous dependence of solutions on the initial data in
certain norms, as in \cite{Tat3}.
 
We start with some notation. Let $\Z_+=\{0,1,\ldots\}$. For
$\sigma\in[0,\infty)$ let $H^{\sigma}=H^\sigma(\mathbb{R}^d)$ denote
the usual Sobolev spaces of complex valued functions on
$\mathbb{R}^d$. For $Q \in\mathbb{S}^2$ we define the metric space
\begin{equation}\label{Sch2}
  H^\sigma_Q=\{f:\mathbb{R}^d:\R^3):|f(x)|\equiv 1\text{ a. e.  and }f-Q\in H^\sigma\},
\end{equation}
with the induced distance $d^\sigma_Q(f,g)=\|f-g\|_{H^\sigma}$. For
simplicity of notation, let $\|f\|_{H^\sigma_Q}=d^\sigma_Q(f,Q)$ for
$f\in H^\sigma_Q$. We also define the metric spaces
\begin{equation*}
  H^\infty=\bigcap_{\sigma\in\Z_+}H^\sigma\,\,\,\text{ and }\,\,\,H^\infty_Q=\bigcap_{\sigma\in\Z_+}H^\sigma_Q,
\end{equation*}
with the induced distances.

Similarly, for $T\in(0,\infty)$ and $\sigma,\rho\in\Z_+$ let
$H^{\sigma,\rho}(T)$ denote the Sobolev spaces of complex valued
functions in $\R^d \times [-T.T]$ with norm
\begin{equation*}
  \begin{split}
    \|f\|_{H^{\sigma,\rho}(T)}=\sup_{t\in(-T,T)}\sum_{\rho'=0}^{\rho}\|\partial_t^{\rho'}f(.,t)\|_{H^{\sigma}}
  \end{split}
\end{equation*}
For $Q \in\mathbb{S}^2$ we also define the metric space
\begin{equation*}
  \begin{split}
    H_Q^{\sigma,\rho}(T)&=\{f: \mathbb{R}^d\times(-T,T):\R^3);
    |f(x,t)|\equiv 1\text{ a. e. and }f-Q\in H^{\sigma,\rho}(T)\},
  \end{split}
\end{equation*}
with the distance induced by the $H^{\sigma,\rho}(T)$ norm. Finally,
we define the metric spaces
\begin{equation*}
  H^{\infty,\infty}(T)=\bigcap_{\sigma,\rho\in\Z_+}H^{\sigma,\rho}(T)\,\,\,\text{ and }\,\,\,H_Q^{\infty,\infty}(T)=\bigcap_{\sigma,\rho\in\Z_+}H_Q^{\sigma,\rho}(T),
\end{equation*}
with the induced distances.

For $f\in H^\infty$ we define the homogeneous Sobolev norms
\begin{equation*}
  \begin{split}
    \|f\|_{\dot{H}^{\sigma}}=\|\FF(f)(\xi)\cdot
    |\xi|^\sigma\|^2_{L^2}, \qquad \sigma \geq 0.
  \end{split}
\end{equation*}
Our first main theorem concerns global existence and uniqueness of
solutions of the initial-value problem \eqref{Sch1} for data
$\phi_0\in H^{\infty}_Q$, with $\|\phi_0-Q\|_{\dot{H}^{d/2}}\ll1$.

\begin{theorem}\label{newth0}
  (Global regularity) Assume $d\geq 2$ and $Q\in\mathbb{S}^2$. Then
  there is $\varepsilon_0 (d)>0$ such that for any $\phi_0\in
  H^{\infty}_Q$ with $\|\phi_0-Q\|_{\dot{H}^{d/2}}\leq
  \varepsilon_0(d)$ there is a unique solution $\phi=S_Q(\phi_0)\in
  C(\R:H^{\infty}_Q)$ of the initial-value problem
  \eqref{Sch1}. Moreover
  \begin{equation}\label{am2}
    \sup_{t\in\R}\|\phi(t)-Q\|_{\dot{H}^{d/2}}\leq C\|\phi_0-Q\|_{\dot{H}^{d/2}},
  \end{equation}
  and, for any $T\in[0,\infty)$ and $\sigma\in\Z_+$,
  \begin{equation}
    \sup_{t\in[-T,T]}\|\phi(t)\|_{H^\sigma_Q}\leq C(\sigma,T,\|\phi\|_{H^\sigma_Q}).
    \label{allsigma}\end{equation}
\end{theorem}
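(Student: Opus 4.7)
The plan is to set up a bootstrap for the critical $\dot H^{d/2}$ norm and then propagate higher regularity. Short-time existence of a unique smooth solution $\phi\in H^{\infty,\infty}_Q(T_0)$ on some small interval follows from standard subcritical $H^\si$ theory for $\si>d/2+1$ (energy method or parabolic regularization), with $T_0$ depending on a subcritical norm of $\phi_0$. Once \eqref{am2} is established on the current interval of existence, subcritical continuation together with a parallel regularity bootstrap at each level $\si\in\Z_+$ yields both the global solution in $C(\R:H^\infty_Q)$ and the bound \eqref{allsigma}. Thus the heart of the argument is \eqref{am2}.

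The nonlinearity $\phi\times\Delta\phi$ has two derivatives of $\phi$ and no scalar unknown admits a direct Picard iteration at critical regularity. The standard remedy, imported from the wave maps theory, is to differentiate the equation and pass to a frame formulation. Choose a smooth orthonormal frame with $(v,w,\phi)$ positively oriented along $\phi$, and set
\[
\psi_\al=\partial_\al\phi\cdot v+i\,\partial_\al\phi\cdot w,\qquad A_\al=\partial_\al v\cdot w,\qquad \al=0,1,\dots,d,
\]
where $\partial_0=\partial_t$. Because $\phi\times$ acts on $T_\phi\mathbb{S}^2$ as the complex structure $i$ induced by $(v,w)$, a direct computation turns \eqref{Sch1} into a magnetic Schr\"odinger system for $\psi=(\psi_1,\dots,\psi_d)$ of the schematic form
\[
(i\partial_t+\Delta)\psi_k=-2iA_\ell\partial_\ell\psi_k+\text{(quadratic in }A,\psi\text{)}+\text{(cubic in }\psi\text{)},
\]
coupled with the curvature/commutator identities $\partial_kA_\ell-\partial_\ell A_k=\mathrm{Im}(\psi_k\overline{\psi_\ell})$ and $\partial_k\psi_\ell-\partial_\ell\psi_k=i(A_k\psi_\ell-A_\ell\psi_k)$. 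Fixing the gauge freedom by the caloric gauge (which yields good decay and ellipticity for $A$) expresses $A$ as a quadratic expression in $\psi$ modulo admissible errors.

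With the gauge fixed, we face a magnetic NLS for $\psi$ at critical regularity $\dot H^{d/2-1}$. I would build resolution spaces $F^\si$ and nonlinearity spaces $G^\si$ from dyadic Littlewood--Paley pieces of $U^2_\Delta$/$V^2_\Delta$ atomic spaces with appropriate modulation weights, and prove the linear estimate
\[
\|\psi\|_{F^{d/2-1}}\lesssim\|\psi(0)\|_{\dot H^{d/2-1}}+\|(i\partial_t+\Delta)\psi\|_{G^{d/2-1}}
\]
together with the main multilinear estimate on the right-hand side, roughly $\|(i\partial_t+\Delta)\psi\|_{G^{d/2-1}}\lesssim\|\psi\|_{F^{d/2-1}}^3$, after substituting for $A$ via the gauge. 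A small-data bootstrap then yields $\|\psi\|_{F^{d/2-1}}\lesssim\|\psi(0)\|_{\dot H^{d/2-1}}$, and reconstructing $\phi$ from $\psi$ by integrating the structural equations delivers \eqref{am2}. Propagation of higher regularity \eqref{allsigma} is then obtained by differentiating the gauged system, running linear-in-$\nabla^\si\psi$ versions of the same estimates with coefficients controlled by the already-closed critical norm, and absorbing time-growth via Gronwall (hence the constant's dependence on $T$).

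The main obstacle is the magnetic trilinear bound $\|A_\ell\partial_\ell\psi\|_{G^{d/2-1}}\lesssim\|\psi\|_{F^{d/2-1}}^3$: after inserting $A=A[\psi,\psi]$ obtained from the gauge, the term $A\nabla\psi$ sits exactly at the scaling threshold, so closing it requires carefully exploiting the bilinear structure of $A$ together with bilinear Strichartz and frequency-transversality estimates that separate parallel from transverse high-frequency interactions. Dimension $d=2$ is the most delicate because the Strichartz endpoints are weakest there; in $d\ge 3$ stronger decay allows a somewhat more direct argument. Everything else in the proof is a careful but by-now-standard adaptation of the critical Picard scheme developed for wave maps and related geometric dispersive equations.
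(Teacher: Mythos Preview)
Your high-level architecture matches the paper's: local smooth theory as a starting point, passage to the differentiated fields $\psi_m$ in a frame, the caloric gauge to control the connection coefficients $A_m$, and a small-data bootstrap at critical regularity. Where you diverge from the paper---and where your outline is not yet a proof---is in the choice of function spaces and the mechanism for the magnetic term.

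You propose to build the resolution spaces from $U^2_\Delta/V^2_\Delta$ atoms with modulation weights and to close the trilinear bound $\|A\,\nabla\psi\|_{G^{d/2-1}}\lesssim\|\psi\|_{F^{d/2-1}}^3$ via ``bilinear Strichartz and frequency-transversality.'' That is essentially the wave-maps toolkit, and the paper stresses that it is \emph{not} how the Schr\"odinger case is handled. For Schr\"odinger equations the magnetic term $A_l\partial_l\psi_m$ carries a full derivative that Strichartz-type estimates alone cannot absorb; the paper's spaces $G_k(T)$, $F_k(T)$, $N_k(T)$ are built around the \emph{local smoothing} norm $L^{\infty,2}_\e$ (gaining half a derivative) paired with the \emph{maximal function} norm $L^{2,\infty}_\e$, with the nonlinearity placed in the inhomogeneous local smoothing space $L^{1,2}_\e$. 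The $U^p_\Delta/V^p_\Delta$ machinery appears in the paper only as a soft device (Lemma~\ref{alexnew1}) to pass from homogeneous to inhomogeneous bounds for the Strichartz-type component $N_k^0$ of the nonlinearity space; it is not the backbone of the argument. Without the local-smoothing/maximal-function pairing, it is not clear your trilinear estimate can close at scaling, and you have not indicated a substitute mechanism.

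In dimension $d=2$ there is a second missing ingredient: the scale-invariant maximal function bound \eqref{limaxb} fails, and the paper replaces $L^{2,\infty}_\e$ by a Galilean sum $\sum_\lambda L^{2,\infty}_{\e,\lambda}$ over the discrete set $W_k$ (Lemma~\ref{l4s}). This is the key new linear estimate of the paper and is what makes $d=2$ go through; your proposal does not account for it. Finally, your plan for \eqref{allsigma} via Gronwall is acceptable given the $T$-dependent constant, but the paper in fact obtains the uniform-in-$T$ bound \eqref{am3} for $\sigma\le\sigma_1$ by a frequency-envelope bootstrap rather than Gronwall, and then invokes the local theory for larger $\sigma$.
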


Theorem \ref{newth0} was proved in dimensions $d\geq 4$ by the first
three authors in \cite{BeIoKe}. In addition to this global regularity
result, we also prove a uniform global bound on certain smooth norms
and a well-posedness result, see Theorem \ref{Main1} below. If $\sigma
< d/2$ then the completion of $H^\infty$ with respect to the
$\dot{H}^{\sigma}$ norm is a space of distributions which we denote by
$\dot{H}^{\sigma}$. As above, we set
\[
\dot{H}^{\sigma}_Q = \{f:\R^d \to \R^3; f-Q \in \dot H^\sigma, \ \
|f(x)|\equiv 1 \text{ a.e. in } \R^d\},
\]
In the interesting case $\sigma = d/2$ this is no longer the case.
Instead, the completion of $H^\infty$ with respect to the
$\dot{H}^{d/2}$ norm can be identified with a subspace of the quotient
space of distributions modulo constants.
For this and other technical reasons, in this article we do not
consider the most general problem with initial data in $\dot H^{d/2}$
and instead we restrict ourselves to the smaller initial data space
$\dot H^{d/2} \cap \dot H^{d/2-1}_Q$ where the above difficulty does
not arise. More precisely, for $\sigma\geq d/2$ and $\eps>0$ we define
\begin{equation*}
  \mathcal{B}_\eps^\sigma=\{\phi\in\dot{H}^{d/2-1}_Q\cap\dot{H}^\sigma:\|\phi-Q\|_{\dot{H}^{d/2}}\leq\eps\}
\end{equation*}
with the distance induced by the space
$\dot{H}^{d/2-1}_Q\cap\dot{H}^\sigma$. Our second main theorem
concerns global wellposedness of the initial-value problem
\eqref{Sch1} for initial data in $\mathcal{B}^\sigma_\eps$,
$\sigma\geq d/2$, $\eps\ll 1$.
 
\begin{theorem}\label{Main1} (Uniform bounds and well-posedness)
  Assume $d\geq 2$, $Q\in\mathbb{S}^2$ and $\sigma_1\geq d/2$. Then
  there is $\varepsilon_0(d,\sigma_1)\in (0,\eps_0(d)]$ such that for
  any $\phi_0\in H^{\infty}_Q $ with
  $\|\phi-Q\|_{\dot{H}^{d/2}}\leq\eps_0(d,\sigma_1)$ the global
  solution $\phi=S_Q(\phi_0)\in C(\R:H^{\infty}_Q)$ constructed in
  Theorem \ref{newth0} satisfies the uniform bound
  \begin{equation}\label{am3}
    \sup_{t\in\R}\|\phi(t)-Q\|_{{H}^{\sigma}}\leq 
    C_{\sigma} \|\phi_0-Q\|_{{H}^{\sigma}}, \qquad  d/2 \leq \sigma
    \leq \sigma_1.
  \end{equation}
  In addition, for any $\sigma\in[d/2,\sigma_1]$ the operator $S_Q$
  admits a continuous extension
  \[
  S_Q: \mathcal{B}^\sigma_{\eps_0(d,\sigma_1)}\to C(\R, \dot
  H^{\sigma} \cap \dot H^{d/2-1}_Q).
  \]
\end{theorem}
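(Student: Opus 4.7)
The plan is to deduce Theorem \ref{Main1} from Theorem \ref{newth0} in three steps: (i) propagate higher Sobolev regularity from smooth data via a frequency envelope argument, (ii) establish a weak Lipschitz-type bound for differences of two solutions at the subcritical regularity $\dot H^{d/2-1}$, and (iii) combine these with a Bona--Smith type density argument to obtain the continuous extension. The core multilinear/dispersive estimates in the Schr\"{o}dinger-adapted resolution spaces are the same ones used to prove Theorem \ref{newth0}; the new content consists of bootstrap and stability arguments layered on top.

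For the uniform bound \eqref{am3} I would use the modified (gauge-transformed) formulation of \eqref{Sch1} already employed for Theorem \ref{newth0}, in which the gauge field derived from $\phi$ satisfies a semilinear Schr\"{o}dinger equation whose nonlinear coefficients are controlled by $\|\phi-Q\|_{\dot H^{d/2}}$. Fixing a slowly varying frequency envelope $\{c_k\}_{k\in\Z}$ with
\begin{equation*}
  \|P_k(\phi_0-Q)\|_{\dot H^\sigma} \leq c_k, \qquad \sum_k c_k^2 \lesssim \|\phi_0-Q\|_{\dot H^\sigma}^2,
\end{equation*}
I would run a bootstrap on the envelope-weighted resolution norm $\sup_k c_k^{-1}\|P_k(\phi-Q)\|_{F^\sigma}$ in the resolution space $F^\sigma$ underlying Theorem \ref{newth0}. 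The critical smallness $\|\phi_0-Q\|_{\dot H^{d/2}}\leq \eps_0$ absorbs the nonlinear contributions, while the envelope tracks frequency-by-frequency regularity. The resulting frequency-localized bound $\|P_k(\phi(t)-Q)\|_{\dot H^\sigma}\lesssim c_k$, summed in $k$, yields \eqref{am3}. The same envelope machinery runs uniformly for $\sigma\in[d/2,\sigma_1]$ with constants depending only on $\sigma_1$.

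For the continuous extension I would first prove a subcritical Lipschitz estimate
\begin{equation*}
  \sup_{t\in\R}\|\phi(t)-\psi(t)\|_{\dot H^{d/2-1}} \leq C\|\phi_0-\psi_0\|_{\dot H^{d/2-1}}
\end{equation*}
for any two smooth solutions $\phi=S_Q(\phi_0)$, $\psi=S_Q(\psi_0)$ with small critical norm. This is obtained by linearizing the gauge-modified equation around $\phi$ (with error terms corresponding to the mismatch between the gauge frames of $\phi$ and $\psi$) and reapplying the trilinear estimates with one factor being $\phi-\psi$ and the other two being small in the critical norm. Combined with the high-regularity bound \eqref{am3}, this estimate runs a Bona--Smith regularization: given rough data $\phi_0\in\mathcal{B}^\sigma_{\eps_0}$, I approximate it by a sequence of smooth data $\phi_0^{(n)}\in H^\infty_Q$ converging in $\dot H^\sigma\cap\dot H^{d/2-1}_Q$; the smooth solutions $S_Q(\phi_0^{(n)})$ are Cauchy in $C(\R,\dot H^{d/2-1})$ by the Lipschitz bound and uniformly bounded in $C(\R,\dot H^\sigma)$ by \eqref{am3}, which together with a standard high--low frequency decomposition gives Cauchy convergence in $C(\R,\dot H^\sigma\cap\dot H^{d/2-1}_Q)$. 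The limit defines the continuous extension $S_Q(\phi_0)$, and uniqueness of the extension follows from the density of smooth data together with the Lipschitz bound.

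The main obstacle I expect is the Lipschitz bound at $\dot H^{d/2-1}$. The difference $\phi-\psi$ of two $\mathbb{S}^2$-valued maps is unconstrained, so the gauge-theoretic reduction producing clean multilinear estimates for a single solution does not transfer directly; the connection coefficients in the linearized equation are tied to one of the two solutions, and the frame mismatch between $\phi$ and $\psi$ must be absorbed into error terms controlled by $\phi-\psi$ itself. Moreover, since $\dot H^{d/2-1}$ is scaling-subcritical, low-frequency pieces of $\phi-\psi$ interacting with high-frequency pieces of $\phi$ or $\psi$ have to be balanced carefully to avoid logarithmic losses; this typically forces the use of a subcritical resolution space parallel to $F^\sigma$ but shifted by one derivative. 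A secondary technical point is constructing frequency envelopes admissible for the trilinear estimates uniformly across $\sigma\in[d/2,\sigma_1]$.
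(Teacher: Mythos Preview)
Your overall architecture matches the paper's: \eqref{am3} is proved by a frequency-envelope bootstrap on the gauge fields, and the continuous extension comes from a subcritical Lipschitz bound in $\dot H^{d/2-1}$ combined with a high--low decomposition to upgrade convergence to $\dot H^\sigma$. The Bona--Smith step you describe is exactly what the paper does.

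The one genuine divergence is in how the Lipschitz bound is obtained, and it is precisely at the point you flag as the main obstacle. You propose to write an equation for the difference $\phi-\psi$, linearize the gauge system around one of the two solutions, and absorb the frame mismatch between the gauges of $\phi$ and $\psi$ as error terms controlled by $\phi-\psi$. The paper sidesteps this entirely: it invokes a lemma (from \cite{Tat2}) producing a smooth one-parameter family $\{\phi_0^h\}_{h\in[0,1]}$ joining $\phi_0^0$ to $\phi_0^1$ with $\int_0^1\|\partial_h\phi_0^h\|_{\dot H^{(d-2)/2}}\,dh\approx\|\phi_0^0-\phi_0^1\|_{\dot H^{(d-2)/2}}$. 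For each fixed $h$ one has a single solution $\phi^h$ with its own caloric gauge, and $\partial_h\phi^h$ satisfies the clean linearized equation \eqref{schlin} in that gauge, with no mismatch terms. The linearized equation has the same structure as \eqref{schcov2}, so the same $G_k$/$N_k$ estimates apply verbatim and give $\|\partial_h\phi^h\|_{L^\infty_t\dot H^{(d-2)/2}}\lesssim\|\partial_h\phi_0^h\|_{\dot H^{(d-2)/2}}$; integrating in $h$ yields the Lipschitz bound.

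Your direct approach is not obviously wrong, but the frame-mismatch terms you anticipate are genuinely troublesome (two different caloric gauges, two different heat extensions), and making them perturbative in the difference is substantially more work than the one-parameter-family trick. The paper's route buys you the ability to reuse the single-solution estimates without modification; yours would require new bilinear bounds comparing two gauges.
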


Our analysis gives more information about the global solution $\phi$;
we can prove for instance that $\nabla\phi$ satisfies all the
Strichartz estimates globally in time. The rough solutions obtained in
Theorem \ref{Main1} as uniform limits of smooth solutions can be also
shown to satisfy the equation \eqref{Sch1} in a suitable
distributional sense.  The global bound \eqref{am3} is sometimes
interpreted as the absence of ``weak turbulence''.

We record also two conservation laws for solutions of the
Schr\"{o}dinger map equation \eqref{Sch1}: if $\phi\in C((T_1,T_2):
H^\infty_Q)$ solves the equation $\partial_t\phi=\phi\times\Delta_x
\phi$ on some time interval $(T_1,T_2)$ then the quantities
\begin{equation}\label{conserve}
  E_0(t)=\int_{\R^d}|\phi(t)-Q|^2\,dx\quad\text{ and }\quad E_1(t)=\int_{\R^d}\sum_{m=1}^d|\partial_m\phi(t)|^2\,dx
\end{equation}
are conserved. In particular, with $S_Q(\phi_0)$ as in Theorem
\ref{Main1},
\[
\|S_Q(\phi_0)(t)\|_{H^0_Q}=\|\phi_0\|_{H^0_Q}, \qquad
\|S_Q(\phi_0)(t)\|_{H^1_Q}=\|\phi_0\|_{H^1_Q}, \quad t \in \R
\]

As mentioned earlier, the direct analogue of Theorem \ref{newth0} in
the setting of wave maps is the theorem of Tao \cite{Ta2}. However,
our proof of Theorem \ref{newth0} is closer to that of \cite{ShSt},
\cite{NaStUh3}, \cite{Kri1}, and \cite{Kri2}, in the sense that we
prove a priori bounds on the derivatives of the Schr\"{o}dinger map
$\phi$, in a suitable gauge, rather than the Schr\"{o}dinger map
itself. See also \cite{KlMa}, \cite{KlSe}, \cite{Tat1}, \cite{Tat2},
\cite{Ta1}, \cite{KlRo}, \cite{ShSt}, \cite{NaStUh3}, \cite{Kri1},
\cite{Kri2}, and \cite{Tat3} for other local and global regularity (or
well-posedness) theorems for wave maps. A complete account of the main
ideas in the work on wave maps can be found in the book \cite[Chapter
6]{TaoBook}.

We remark that, while from the geometric and algebraic points of view
there are many similarities between wave maps and Schr\"{o}dinger
maps, there is a fundamental difference from the analytic point of
view. This is mainly due to the fact that it is much more difficult to
handle perturbatively derivatives in the nonlinearity for
Schr\"{o}dinger equations than for wave equations. This reflects the
fact that wave equations have two time derivatives, while
Schr\"{o}dinger equations have only one, with corresponding effect on
the Cauchy data (see \cite[p. 268]{Tao5} for a related
discussion). Thus, for wave equations, at least in high dimensions,
there are large classes of Strichartz estimates which can be used to
control derivative nonlinearities in a perturbative way. This is not
the case for Schr\"{o}dinger equations. To deal with this problem for
Schr\"{o}dinger equations, Kenig, Ponce, and Vega \cite{KePoVe2}
introduced for the first time a method to obtain local well-posedness
for general derivative nonlinearity Schr\"{o}dinger equations. This
method combines ``local smoothing estimates'', ``inhomogeneous local
smoothing estimates'', which give the crucial gain of one derivative,
and ``maximal function estimates''. Further results are in
\cite{KePoVe3} and \cite{KePoVe4}.

The initial-value problem \eqref{Sch1} has been studied extensively
(also in the case in which the sphere $\mathbb{S}^2$ is replaced by
more general targets). It is known that sufficiently smooth solutions
exist locally in time, even for large data (see, for example,
\cite{SuSuBa}, \cite{ChShUh}, \cite{DiWa2}, \cite{Ga} and the
references therein). Such theorems for (local in time) smooth
solutions are proved using delicate geometric variants of the energy
method. For low-regularity data, the initial-value problem
\eqref{Sch1} has been studied indirectly using the ``modified
Schr\"{o}dinger map equations'' and certain enhanced energy methods
(see, for example, \cite{ChShUh}, \cite{NaStUh}, \cite{NaStUh2},
\cite{KeNa}, \cite{Ka}, and \cite{KaKo}), and directly, in the case of
small data, using fixed point arguments in suitable spaces (see
\cite{IoKe2}, \cite{Be}).

The first global well-posedness result for \eqref{Sch1} in critical
spaces (precisely, global well-posedness for small data in the
critical Besov spaces in dimensions $d\geq 3$) was proved by two of
the authors in \cite{IoKe3}, and independently by the first author
\cite{Be2}. This was later improved to global regularity for small
data in the critical Sobolev spaces in dimensions $d\geq 4$ in
\cite{BeIoKe}. In dimension $d=2$, in the case of equivariant data
with energy close to the energy of the equivariant harmonic map, the
existence of global equivariant solutions (and asymptotic stability)
was proved in \cite{GuKaTs}. In the case of radial or equivariant data
of small energy, global well-posedness was proved in \cite{ChShUh}.

The global results in \cite{IoKe3}, \cite{Be2}, and \cite{BeIoKe} use
in a fundamental way the strong ``local smoothing'', ``inhomogeneous
local smoothing'', and ``maximal function'' spaces
\begin{equation}\label{mainspaces}
  L^{\infty,2}_\e\qquad L^{1,2}_\e\qquad L^{2,\infty}_\e.
\end{equation}
See \eqref{Lpqe} for definitions and a longer discussion. These spaces
were introduced earlier in the study of Schr\"{odinger} maps by two of
the authors in \cite{IoKe2}, and replace the corresponding spaces in
\cite{KePoVe2}--\cite{KePoVe4} (where everything was localized to
finite cubes). It is essential to work with the strong spaces in
\eqref{mainspaces} instead of their localized versions in order to be
able to prove global in time results. The spaces in \eqref{mainspaces}
were first used by Linares and Ponce \cite{LiPo} to study the local
well-posedness of the Davey--Stewartson system. Other uses of such
spaces (implicit or explicit) to prove local-wellposedness are in
\cite{KePoVe5}, \cite{IoKe0}, and \cite{CoIoKeSt}. In the case of
global well-posedness, the strong spaces \eqref{mainspaces} were used
for the first time by two of the authors in \cite{IoKe} in the study
of the Benjamin--Ono equation in $L^2$.

As mentioned earlier, Theorem \ref{newth0} was proved in dimensions
$d\geq 4$ by the first three authors in \cite{BeIoKe}. It is likely
that the proof in \cite{BeIoKe} can be extended to dimension $d=3$,
provided one uses some type of ``dynamical separation" to bound
$\mathrm{High}\times\mathrm{High}\to\mathrm{Low}$ frequency
interactions that appear in the connection coefficients $A_m$ of the
Coulomb gauge (as in \cite{Kri1} and \cite{Kri2} in the case of wave
maps). There are, however, two significant difficulties in dimension
$d=2$. The first main difficulty is related to the maximal function
estimate
\begin{equation}\label{max}
  \|e^{it\Delta}\phi\|_{L^{2,\infty}_\e}\lesssim \|\phi\|_{L^2_x}
\end{equation}
for functions $\phi\in L^2(\R^d)$ with $\mathcal{F}(\phi)$ supported
in $\{\xi\in\R^d:|\xi|\in[1/2,2]\}$. This estimate holds in dimensions
$d\geq 3$ (see \cite{IoKe2}) and plays a key role in the global
results of \cite{IoKe3}, \cite{BeIoKe}, and \cite{Be2}, but fails
``logarithmically'' in dimension $d=2$. Because of this logarithmic
failure, in dimension $d=2$ we replace the space $L^{2,\infty}_\e$ in
the left-hand side of \eqref{max} with a sum of Galilean transforms of
it (see the precise definitions in Section~\ref{functionspaces}).  The
idea of using such sums of spaces as substitutes for missing estimates
in low dimensions is due to Tataru \cite{Tat2}, in the setting of wave
maps, where the Lorentz invariance and Strichartz spaces are used
instead of the Galilean invariance and the maximal function
space. These substitutes have played a key role in all the subsequent
work on global wave maps in dimensions $2$ and $3$.

The second main difficulty is related to the choice of a suitable
system of coordinates (or gauge) for perturbative analysis. The use of
gauges for the equation \eqref{Sch1} was pioneered in \cite{ChShUh},
where orthonormal frames were first used in the context of
Schr\"{o}dinger maps. These constructions have been presented by J.
Shatah, his students and collaborators on several occasions (see
\cite{Ga}, \cite{NaShVeZe} and the references therein). Unlike in
dimensions $d\geq 3$, in dimension $d=2$ it appears that one cannot
use the standard Coulomb gauge, even if the analysis of the
Schr\"{o}dinger equation is combined with elliptic techniques such as
the dynamical separation mentioned earlier. We substitute the Coulomb
gauge with Tao's caloric gauge introduced in \cite{Ta3}, see section
\ref{setup} and \cite[Chapter 6]{TaoBook} for a longer discussion on
the various gauges used in the study of wave maps. As explained in
\cite{Ta3}, this caloric gauge leads to better estimates on the
connection coefficients $A_m$ than the Coulomb gauge, which allow us
to close the perturbative part of the argument.

It is important to notice that the main components of the spaces we
use for our perturbative analysis are the strong local smoothing,
inhomogeneous local smoothing, and maximal function spaces in
\eqref{mainspaces}, as well as Galilean transformations of these
spaces in dimension $d=2$ (see Definition \eqref{Ldef}). In
particular, we do not use $X^{s,b}$-type structures that have been
frequently used in the subject. All of our norms are defined in the
physical space, without the use of the Fourier transform (except for
dyadic localizations), and are very simple, see Definitions
\ref{spacesd>2} and \ref{spacesd2}, at least when compared to the
corresponding spaces used in the study of wave maps in dimensions $2$
and $3$. This reflects the cubic nature of the main part of the
nonlinearity; see also \cite{KTa} for another instance of this
phenomenon. The simplicity of these spaces is due, in part, to the
geometric nature and the efficiency of the caloric gauge, compared to
other gauges used in the study of wave maps and Schr\"{o}dinger maps.

Most of our construction is geometric and can be written in covariant
form. There is one exception, however, namely the definition of the
space $H^0_Q$, which depends on the Euclidean distance
$|\phi(x)-Q|$. The supercritical quantity $\|\phi(t)\|_{H^0_Q}=E_0(t)$
is conserved through the Schr\"{o}dinger map flow, see
\eqref{conserve}. It is useful to have control of such a supercritical
quantity in the construction of the caloric gauge in Proposition
\ref{TaoHeat}, particularly in dimension $d=2$, in order to be able to
prove that the orthonormal frame $v,w$ does indeed trivialize as the
heat time $s$ tends to infinity.

We prefer, however, to adopt the extrinsic point of view throughout
the paper: we think of smooth maps $g:D\to \mathbb{S}^2$, where $D$ is
some domain, as maps $g:D\to\mathbb{R}^3$ (thus $(3\times 1)$
matrices) with $|g|\equiv 1$. With this point of view, an orthonormal
frame of $g^\ast T\mathbb{S}^2$ on $D$ is simply a pair of smooth maps
$v,w:D\to \mathbb{S}^2$ such that ${}^tv\cdot g={}^tw\cdot
g={}^tv\cdot w=0$ on $D$. See \cite[Chapter 6]{TaoBook} for a
discussion on the relation between the intrinsic and the extrinsic
points of view, in the setting of wave maps. The extrinsic formalism
we use in this paper was explained to us by T. Tao \cite{Ta4}.

For the sake of completeness we write the proof of the main theorems
in all dimensions $d\geq 2$. We emphasize however that many of the
difficulties are only present in dimension $d=2$. In dimensions $d\geq
3$ the main normed spaces $F_k(T)$, $G_k(T)$, and $N_k(T)$ are
simpler. Also the analysis related to
$\mathrm{High}\times\mathrm{High}\to\mathrm{Low}$ frequency
interactions, which motivates the use of the caloric gauge, is easier.

We thank T. Tao for a discussion on the benefits of the caloric gauge
and for explaining us the elementary extrinsic formalism we use in
this paper.

\section{The differentiated equations and the caloric gauge}
\label{setup}

In this section we start with a smooth solution $\phi$ to the
Schr\"odinger map equation and a smooth orthonormal frame $(v,w)$ in
$T_\phi \mathbb S^2$. Then we construct the fields $\psi_m$ and the
connection coefficients $A_m$, and derive the differentiated
(modified) Schr\"{o}dinger map equations satisfied by these functions
(see \cite{ChShUh}, \cite{NaStUh} where the modified Schr\"{o}dinger
map equations were introduced, and \cite{NaShVeZe} for a detailed
discussion on the connection between the modified Schr\"{o}dinger maps
and the original equation).  Next we introduce the caloric gauge.
This is done by solving first a covariant heat equation which leads to
an extension of smooth Schr\"odinger maps to parabolic time $s \in
[0,\infty)$.  We then construct the orthonormal frame $(v,w)$ by
solving an ordinary differential equation with data prescribed at
infinity in order to construct the orthonormal frame $(v,w)$. This
construction is due to Tao \cite{Ta3}.

The Schr\"{o}dinger map equation leads to the system \eqref{schcov2}
of $d$ scalar Schr\"{o}dinger equations satisfied by the fields
$\psi_m$, $m=1,\ldots,d$, at heat time $s=0$.  The caloric gauge
condition allows us to express the connection coefficients $A_m$ in
terms of the parabolic extensions of the differentiated fields
$\psi_m$, see \eqref{Aform}. Finally, we derive the linearized
Schr\"{o}dinger map equation and we express it in the frame form
\eqref{schlin}.

We begin with a smooth function $\phi: \R^d\times(-T,T) \to \mathbb
S^2$.  Instead of working directly on the equation \eqref{Sch1} for
the function $\phi$ it is convenient to study the equations satisfied
by its derivatives $\partial_m \phi(x,t)$ for $m = 1,d+1$, where
$\partial_{d+1}=\partial_t$. These are tangent vectors to the sphere
at $\phi(x,t)$. Suppose we have a smooth orthonormal frame
$(v(t,x),w(t,x))$ in $T_{\phi(x,t)}\mathbb S^2$. Then we can introduce
the differentiated variables,
\begin{equation}\label{definitionso}
  \psi_m={}^tv\cdot\partial_m {\phi}
+i\ {}^tw\cdot\partial_m {\phi}.
\end{equation}
Thus we can express  $\partial_m \phi$ in the $(v,w)$  frame as 
\begin{equation}\label{basis}
 \partial_m {\phi}=v\Re(\psi_m)+w\Im(\psi_m).
\end{equation}
In order to write the equations for $\psi_m$ we need 
to know how $v$ and $w$ vary as functions of $(x,t)$.
For this we introduce the real coefficients 
\begin{equation}\label{definitions1}
A_m={}^tw\cdot\partial_m v.
\end{equation}
In particular this  allows us to  complement \eqref{basis}
with 
\begin{equation}\label{basis1}
\begin{cases}
&\partial_mv=-\phi\Re(\psi_m)+wA_m;\\
&\partial_mw=-{\phi}\Im(\psi_m)-vA_m.
\end{cases}  
\end{equation}

The variables $\psi_m$ are not independent, instead they satisfy the 
curl type relations
\begin{equation}\label{id1}
(\partial_l+iA_l)\psi_m=(\partial_m+iA_m)\psi_l.
\end{equation}
Thus with  the notation $\DD_m=\partial_m+iA_m$ we can rewrite 
this as 
\begin{equation}\label{id2}
\DD_l \psi_m= \DD_m \psi_l.
\end{equation}
A direct computation using the definition of $A_m$ shows that
\begin{equation}\label{id3}
\partial_lA_m-\partial_mA_l=\Im(\psi_l\overline{\psi_m})=q_{lm}.
\end{equation}
Thus the curvature of the connection is given by
\begin{equation}\label{id4}
\DD_l\DD_m-\DD_m\DD_l=iq_{lm}.
\end{equation}

Assume now that the smooth function $\phi$ satisfies the
Schr\"{o}dinger map equation $\partial_t\phi=\phi\times\Delta_x\phi$.
Then we derive the Schr\"odinger equations for the functions $\psi_m$.
A direct computation, using \eqref{id1}, \eqref{id3},  $\phi\times v=w$, and
$\phi\times w=-v$, shows that
\begin{equation}\label{schcov}
\psi_{d+1}=i\sum_{l=1}^d\DD_l\psi_l.
\end{equation}
Using \eqref{id2} and \eqref{id4}, it follows that for $m=1,\ldots,d$
\begin{equation}
\DD_{d+1}\psi_m=i\sum_{l=1}^d\DD_l\DD_l\psi_m+\sum_{l=1}^dq_{lm}
\psi_l,
\label{Dpsim}\end{equation}
which is equivalent to
\begin{equation}\label{schcov2}
(i\partial_t+\Delta_x)\psi_m=-2i\sum_{l=1}^d \!
A_l\partial_l\psi_m+\big(A_{d+1}+\sum_{l=1}^d (A_l^2-i\partial_lA_l)\big)\psi_m
-i\sum_{l=1}^d\! \psi_l\Im(\overline{\psi}_l\psi_m).
\end{equation}

Consider the system of equations which consists of \eqref{id1},
\eqref{id3} and \eqref{schcov2}.  The solution $\{\psi_m\}$ for the
above system cannot be uniquely determined as it depends on the choice
of the orthonormal frame $(v,w)$. Precisely, it is invariant with
respect to the gauge transformation
\[
\psi_m \to e^{i\theta} \psi_m, \qquad A_m \to A_m + \partial_m \theta.
\]
In order to obtain a well-posed system one needs to make 
a choice which uniquely determines the gauge. Ideally one may hope
that this choice  uniquely determines the $A_m$'s in terms of the 
$\psi_m$'s in a way that makes the nonlinearity in \eqref{schcov2}
perturbative. 

A natural choice for this is the Coulomb gauge, where one adds 
the equation
\[
\sum_{m=1}^d \partial_m A_m = 0
\]
which in view of \eqref{id3} leads to 
\begin{equation}
A_m = \Delta^{-1} \sum_{l=1}^d \partial_l \Im (\bar{\psi_l} \psi_m),
\qquad m=1,\ldots,d.
\label{cou}\end{equation}
This choice works well in high dimension $d \geq 4$, see \cite{BeIoKe}; 
however, in low dimension there are difficulties caused by the
contributions of two high frequencies in $\psi$ to the low frequencies
in $A$.

This is what causes us to look for a different choice of gauge, namely
the caloric gauge. This was proposed in \cite{Ta3} in the context of
the wave map equation, and then as a possible gauge for Schr\"odinger
maps \cite{Ta4}.
  
Precisely, at each time $t$ we solve a covariant heat equation
with $\phi(t)$ as the initial data,
\begin{equation}\label{heat3}
\begin{cases}
&\partial_s {\tphi}=\Delta_x {\tphi}+{\tphi}\cdot\sum_{m=1}^d
|\partial_m{\tphi}|^2\quad\text{ on }[0,\infty)\times\R^d;\\
&{\tphi}(0,t,x)=\phi(t,x).
\end{cases}
\end{equation}
We heuristically remark that as the heat time $s$ approaches infinity,
the solution $\phi(s)$ approaches the equilibrium state $Q$. This is
related to our assumption that the ``mass'' $E_0$ of $\phi_0$ is
finite, and would not necessarily be true otherwise. This would allow
us to arbitrarily pick $(v_\infty,w_\infty)$ at $s = \infty$ as an
arbitrary orthonormal base in $T_Q\mathbb S^2$, independently of $t$
and $x$.  To define the orthonormal frame $(v,w)$ for all $s \geq 0$
we pull back $(v_\infty,w_\infty)$ along the backward heat flow using
parallel transport. This translates into the relation
\begin{equation}
{}^tw\cdot\partial_s v=0
\label{transport}\end{equation}
The existence of a global smooth solution for the caloric equation
\eqref{heat3} and of the corresponding frame $(v,w )$ is proved in
Proposition~\ref{TaoHeat}. In particular for each $F \in
\{\tphi-Q,v-v_\infty,w-w_\infty\}$ the following decay properties are
valid:
\begin{equation}
| \partial_x^\alpha F(s)| \leq c_\alpha \langle s \rangle
^{-(|\alpha|+1)/2}, \qquad s \geq 0
\label{pointdecay}\end{equation} 
  
  Setting $\partial_0 = \partial_s$ we can  define 
the functions $\psi_m$ and $A_m$ for all $s \in [0,\infty)$
and $m = 0, \cdots, d+1$ by 
\begin{equation}\label{definitions}
\begin{cases}
&\psi_m={}^tv\cdot\partial_m\tphi+
i{}^tw\cdot\partial_m\tphi;\\
&A_m={}^tw\cdot\partial_m v.
\end{cases}
\end{equation}
Then the relations \eqref{id1}-\eqref{id4} hold for all $l,m = 0,d+1$.
In addition, the parallel transport relation ${}^tw\cdot\partial_s v=0$ yields
the main gauge condition
\begin{equation}
A_0 = 0.
\end{equation}

As in the case of the Schr\"odinger equation, a direct computation
using the heat equation \eqref{heat3} and \eqref{id1}, \eqref{id3}
shows that
\begin{equation}\label{heatcov}
\psi_0=\sum_{l=1}^d \DD_l \psi_l.
\end{equation}
Thus, using again \eqref{id3}, for any $m=1,\ldots,d+1$
\begin{equation*}
\begin{split}
\partial_0\psi_m&=\DD_m\psi_0=\sum_{l=1}^d\DD_m\DD_l\psi_l=\sum_{l=1}^d\DD_l\DD_m\psi_l+i\sum_{l=1}^dq_{ml}\psi_l\\
&=\sum_{l=1}^d\DD_l\DD_l\psi_m+i\sum_{l=1}^d\Im(\psi_m\overline{\psi_l})\psi_l,
\end{split}
\end{equation*}
which is equivalent to
\begin{equation}\label{heatcov2}
(\partial_s-\Delta_x)\psi_m=2i\sum_{l=1}^dA_l\partial_l\psi_m-\sum_{l=1}^d(A_l^2-i\partial_lA_l)\psi_m+i\sum_{l=1}^d\Im(\psi_m\overline{\psi_l})\psi_l.
\end{equation}

On the other hand from \eqref{id3} we obtain
\[
\partial_s A_m = \Im (\psi_0 \overline{\psi_m}).
\]
Due to \eqref{goodbounds4} and  \eqref{goodbounds5} we can integrate back from 
$s=\infty$ to obtain
\begin{equation}\label{Aform}
\begin{split}
A_m(s)=-\int_{s}^{\infty}\Im(\psi_0\overline{\psi_m})(r)\,dr=-\sum_{l=1}^d\int_s^{\infty}\Im\big(\overline{\psi_m}(\partial_l\psi_l+iA_l\psi_l)\big)(r)\,dr,
\end{split}
\end{equation}
for any $m=1,\ldots,d+1$ and $s\in[0,\infty)$. Then ${A_m}_{|s=0}$ 
represents our choice of the gauge for the Schr\"odinger map equation.
The reason we prefer the caloric gauge to the Coulomb gauge
is the way the high-high frequency interactions are handled.
Indeed, while \eqref{cou} can be conceptually written in the
form
 \[
A \approx \sum_{j < k} 2^{-k} P_j \psi P_k \psi   + 
\sum_{j \leq k} 2^{-j} P_j(P_k \psi P_k \psi),
\] 
substituting the first approximation $\psi(s) \approx e^{s \Delta}
\psi(0)$ in \eqref{Aform} yields the relation
\begin{equation}\label{schem}
A \approx \sum_{j < k} 2^{-k} P_j \psi P_k \psi   + 
\sum_{j \leq k} 2^{-k} P_j(P_k \psi P_k \psi).
\end{equation}
This has a better frequency factor in the high $\times$ high $\to $
low frequency interactions.
 
We consider now linearized Schr\"odinger map equations. This is necessary in order to
establish the continuous dependence of the solutions on the initial
data. The linearized equation along a Schr\"odinger map $\phi$ has 
the form
\begin{equation}
\partial_t \philin = \philin \times \Delta \phi + \phi \times \Delta \philin
\label{eqlin}\end{equation}
where 
\[
{}^t \philin \cdot \phi = 0.
\]
Then we can  express $\philin$ in the $(v,w)$ frame 
by setting
\begin{equation}
\psilin = {}^tv\cdot {\philin}
+i\ {}^tw\cdot {\philin},
\end{equation}
or equivalently
\begin{equation}
 {\philin}=v\Re(\psilin)+ w\Im(\psilin). 
\end{equation}
The field $\psilin$ satisfies the linearized equation
\begin{equation}\label{schlin}
\begin{split}
  &(i\partial_t+\Delta_x)\psilin\\
  &=-2i\sum_{l=1}^dA_l\partial_l\psilin+
\big(A_{d+1}+\sum_{l=1}^d(A_l^2-i\partial_lA_l)\big)\psilin-
i\sum_{l=1}^d\psi_l\Im(\overline{\psi_l}\psilin).
\end{split}
\end{equation}
This can be derived by direct computations as before. Heuristically,
one can also think of a one parameter family of solutions $\phi(h)$
for the Schr\"odinger map equation so that $\phi(0)=\phi$ and
$\partial_h \phi_{|h=0} = \philin$, and extend the frame $(v,w)$ as
$h$ varies. Then we have $\psilin = \psi_h$ and we can use
\eqref{Dpsim} with $m$ replaced by $h$.

\section{Function spaces}\label{functionspaces}

In this section we define our main function spaces and derive some of
their properties. We define first several cutoff functions and
(smooth) projection operators.
 
\begin{definition}\label{etas}
  We fix $\eta_0:\mathbb{R}\to[0,1]$ a smooth even function supported
  in the set $\{\mu\in\mathbb{R}:|\mu|\leq 8/5\}$ and equal to $1$ in
  the set $\{\mu\in\mathbb{R}:|\mu|\leq 5/4\}$. We define
  \begin{equation*}
    \chi_j(\mu)=\eta_0(\mu/2^j)-\eta_0(\mu/2^{j-1}),\quad\chi_{\leq j}=\eta_0(\mu/2^j),\qquad j\in\Z.
  \end{equation*}
  Let $P_k$ denote the operator on $L^\infty(\R^d)$ defined by the
  Fourier multiplier $\xi\to\chi_k(|\xi|)$. For any interval
  $I\subseteq\R$, let $\chi_I=\sum_{j\in I}\chi_j$ and let $P_I$
  denote the operator on $L^\infty(\R^d)$ defined by the Fourier
  multiplier $\xi\to\sum_{k\in I}\chi_k(|\xi|)$. For simplicity of
  notation, we define $P_{\leq k}=P_{(-\infty,k]}$. For any
  $\e\in\mathbb{S}^{d-1}$ and $k\in\Z$ we define the operators
  $P_{k,\e}$ by the Fourier multipliers $\xi\to\chi_k(\xi\cdot\e)$.
\end{definition}

To motivate our choice of spaces, recall the Schr\"{o}dinger
nonlinearities, see \eqref{schcov2}
\begin{equation}\label{Schnonlin}
  L_m=-2i\sum_{l=1}^d A_l\partial_l\psi_m+\big(A_{d+1}+\sum_{l=1}^d (A_l^2-i\partial_lA_l)\big)\psi_m
  -i\sum_{l=1}^d\! \psi_l\Im(\overline{\psi}_l\psi_m).
\end{equation}
We would like to analyze these nonlinearities perturbatively in
suitable spaces. The main difficulty is caused by the magnetic terms
$-2i\sum_{l=1}^d A_l\partial_l\psi_m$. Using \eqref{schem} (for
simplicity consider only the terms corresponding to $k=j$) they can be
written schematically in the form
\begin{equation}\label{schem2}
  \sum_{k,k'\in\Z} 2^{-k} P_k \psi P_k \psi \cdot 2^{k'}P_{k'}\psi.
\end{equation}
The difficulty is to estimate the sum over $k\leq k'-100$ in
\eqref{schem2}. The main ingredient needed to estimate such magnetic
terms in Schr\"{o}dinger problems is the local smoothing phenomenon:
we place the highest frequency factor of the nonlinearity in the local
smoothing space $L^{\infty,2}_\e$ (see definition below) and attempt
to estimate the nonlinearity in the inhomogeneous local smoothing
space $L^{1,2}_\e$. This allows us to barely recover the full
derivative loss of the magnetic nonlinearity. This approach, with
certain local smoothing and inhomogeneous local smoothing spaces
localized to cubes, was first used in \cite{KePoVe2} to study local
well-posedness of Schr\"{o}dinger equations with general derivative
nonlinearities. To prove global results, it is essential to work with
stronger local smoothing/inhomogeneous local smoothing spaces, which
exploit better the geometry of Euclidean spaces. A low-regularity
global result using such stronger local smoothing spaces was proved by
two of the authors in \cite{IoKe}.

This scheme was first used in the setting of Schr\"{o}dinger maps by
two of the authors in \cite{IoKe2} and played a key role in all the
global results in \cite{IoKe3}, \cite{BeIoKe}, and \cite{Be2}. In
order for this scheme to work in the Schr\"{o}dinger map problem we
need to be able to control the $L^{2,\infty}_\e$ norm (in a scale
invariant way) of the low frequency terms.

For a unit vector $\e \in \mathbb S^{d-1}$ we denote by $H_\e$ its
orthogonal complement in $\R^d$ with the induced measure.  We define
the lateral spaces $L^{p,q}_\e$ with norms
\begin{equation}\label{Lpqe}
  \|h\|_{L^{p,q}_{{\e}}}=
  \Big[\int_{\mathbb{R}}\Big[\int_{H_\mathbf{e}\times \mathbb{R}}
  |h(x_1\mathbf{e}+x',t)|^q\,dx'dt\Big]^{p/q}\,dx_1\Big]^{1/p}
\end{equation}
with the usual modifications when $p=\infty$ or $q=\infty$. The key
spaces in this family are the local smoothing space $L^{\infty,2}_\e$
and the inhomogeneous local smoothing space $L^{1,2}_\e$. These are
the only spaces in this family that can be used to analyze
perturbatively magnetic nonlinearities, such as $L_m$. This is in
sharp contrast with the case of wave equations, where large classes of
Strichartz estimates can be used to control magnetic nonlinearities,
at least in high dimensions. To make the transition between the local
smoothing and the inhomogeneous local smoothing spaces we use the
maximal function spaces $L^{2,\infty}_\e$.
  
The following local smoothing/maximal function estimates were proved
by two of the authors in \cite{IoKe2} and \cite{IoKe3}:
 
 \begin{lemma}\label{keyboundsd>2}
   If $f\in L^2(\R^d)$, $k\in\Z$, and $\e\in\mathbb{S}^{d-1}$ then
   \begin{equation}
     \|e^{it \Delta} P_{k,\e} f\|_{L^{\infty,2}_\e} \lesssim  
     2^{ -k/2} \|f\|_{L^2}.
     \label{limaxa}\end{equation}
   In addition, if $d\geq 3$ then
   \begin{equation}
     \|e^{it \Delta} P_k  f\|_{L^{2,\infty}_\e} \lesssim  
     2^{k(d-1)/2} \|f\|_{L^2}.
     \label{limaxb}\end{equation}
 \end{lemma}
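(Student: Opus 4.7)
The plan is to handle the two estimates by different methods: \eqref{limaxa} follows directly from Plancherel along the paraboloid, whereas \eqref{limaxb} reduces via a $TT^\ast$ argument to a one-dimensional convolution estimate whose kernel decays enough to lie in $L^1$ precisely when $d\geq 3$.

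For \eqref{limaxa}, after a rotation I take $\e=e_1$ and split $\xi=(\xi_1,\xi')$. Fix $x_1$ and consider $u:=e^{it\Delta}P_{k,e_1}f$ as a function of $(x',t)\in\R^{d-1}\times\R$. Computing the partial Fourier transform,
\[
\mathcal{F}_{x',t}[u](x_1,\xi',\tau)=C\int e^{ix_1\xi_1}\chi_k(\xi_1)\hat f(\xi_1,\xi')\,\delta(\tau+\xi_1^2+|\xi'|^2)\,d\xi_1,
\]
the $\delta$-function is supported on the two sheets $\xi_1=\pm\sqrt{-\tau-|\xi'|^2}$ and introduces a Jacobian $(2|\xi_1|)^{-1}$. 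Squaring and using $|A+B|^2\leq 2(|A|^2+|B|^2)$, then changing the variable $\tau$ back to $\xi_1$ so that $d\tau=2|\xi_1|\,d\xi_1$, converts the Jacobian into a net factor of $|\xi_1|^{-1}\sim 2^{-k}$ on the support of $\chi_k$. Plancherel in $(x',t)$ and integration in $\xi'$ then give
\[
\|u(x_1,\cdot,\cdot)\|_{L^2_{x',t}}^2\lesssim 2^{-k}\|\chi_k(\xi_1)\hat f\|_{L^2(\R^d)}^2\lesssim 2^{-k}\|f\|_{L^2}^2,
\]
uniformly in $x_1$, which is \eqref{limaxa}.

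For \eqref{limaxb}, parabolic scaling reduces the problem to $k=0$. Setting $Tf=e^{it\Delta}P_0 f$, duality identifies the desired bound $T:L^2\to L^{2,\infty}_\e$ with the $TT^\ast$ bound $\|TT^\ast g\|_{L^{2,\infty}_\e}\lesssim\|g\|_{L^{2,1}_\e}$, where
\[
TT^\ast g(x,s)=\int K(x-y,s-t)g(y,t)\,dy\,dt,\qquad K(z,r)=\int e^{iz\xi-ir|\xi|^2}\chi_0(|\xi|)^2\,d\xi.
\]
Define the one-dimensional envelope $\mathcal{K}(a)=\sup_{b\in H_\e,\,c\in\R}|K(a\e+b,c)|$. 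Bringing the sup over $(x',s)$ inside the integral gives the pointwise bound $\sup_{x',s}|TT^\ast g(x,s)|\leq(\mathcal{K}\ast G)(x_1)$ with $G(y_1)=\|g(y_1,\cdot,\cdot)\|_{L^1_{y',t}}$, and Young's inequality then yields $\|TT^\ast g\|_{L^{2,\infty}_\e}\leq\|\mathcal{K}\|_{L^1(\R)}\|g\|_{L^{2,1}_\e}$.

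The remaining step, and the main obstacle, is bounding $\mathcal{K}$. Stationary phase applied to $z\xi-r|\xi|^2$ on the support $|\xi|\sim 1$ shows that the critical point $\xi_\ast=z/(2r)$ contributes only when $|z|\sim|r|$, producing $|K(z,r)|\lesssim|r|^{-d/2}$, while non-stationary phase gives rapid decay away from the light cone. Optimizing over $(b,c)$ for fixed $a$ forces $|c|\sim|a|$ and $|b|\lesssim|a|$, giving $\mathcal{K}(a)\lesssim(1+|a|)^{-d/2}$. This is integrable on $\R$ precisely when $d/2>1$, i.e.\ $d\geq 3$; this is exactly where the dimensional hypothesis enters and foreshadows the logarithmic failure of \eqref{limaxb} in $d=2$ noted in the introduction.
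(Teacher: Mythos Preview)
Your proof is correct. The paper does not actually prove this lemma in the text; it cites \cite{IoKe2} and \cite{IoKe3} for both estimates (see the remark preceding Lemma~\ref{keyboundsd>2} and the proof of Lemma~\ref{linhom}). Your arguments --- Plancherel along the $\e$-foliation of the paraboloid for \eqref{limaxa}, and a $TT^\ast$ reduction to an $L^1$ bound on the one-dimensional kernel envelope $\mathcal K$ for \eqref{limaxb} --- are the standard ones and match what appears in those references. In fact the paper reuses exactly this $TT^\ast$/kernel-envelope template in its proof of the $d=2$ substitute \eqref{linnew}, with the stationary-phase bound on $K_k$ stated there agreeing with yours. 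One small wording issue: in the optimization step you write that the maximum ``forces $|b|\lesssim|a|$''; this is not literally necessary, but since $|z|\geq|a|$ always and stationary phase gives $|K|\lesssim|z|^{-d/2}\leq|a|^{-d/2}$, the conclusion $\mathcal K(a)\lesssim(1+|a|)^{-d/2}$ holds regardless, so the argument is unaffected.
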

 
 It is easy to see that the two bounds \eqref{limaxa} and
 \eqref{limaxb} cooperate in the right way to allow us to estimate the
 expression in \eqref{schem2} in the inhomogeneous local smoothing
 space $L^{1,2}_\e$. The bounds \eqref{limaxa} and \eqref{limaxb} are
 not hard to prove. They depend, however, on delicate global
 properties of the Euclidean geometry.

 Unfortunately, the maximal function bound \eqref{limaxb} fails in
 dimension $d=2$, which causes considerable difficulties. To handle
 this case we need to use the Galilean invariance: if $f$ solves
 $(i\partial_t+\Delta_x)f=0$ in $\R^d\times\R$, then $T_w(f)$ solves
 the same equation, where $T_w$, $w\in\R^d$, is the Galilean operator
 defined below.

\begin{definition}\label{Ldef}
  Assume $d=2$, $p,q\in[1,\infty]$, $\e\in\mathbb{S}^{1}$,
  $\lambda\in\R$ and $W\subseteq\R$ finite. We define the spaces
  $L^{p,q}_{e,\lambda}$ using the norms
  \begin{equation}\label{operatorT}
    \begin{split}
      &\|h\|_{L^{p,q}_{\mathbf{e},\lambda}}=\|T_{\lambda\e}(h)\|_{L^{p,q}_\e}=
      \Big[\int_{\mathbb{R}}\Big[\int_{H_\mathbf{e}\times
        \mathbb{R}}|h((x_1+\lambda t)
      {\e}+x',t)|^q\,dx'dt\Big]^{p/q}\,dx_1\Big]^{1/p};\\
      &T_w(h)(x,t)=e^{-ix\cdot w/2}e^{-it|w|^2/4}h(x+tw,t).
    \end{split}
  \end{equation}
  with the usual modifications when $p=\infty$ or $q=\infty$. Then we
  define the spaces $L^{p,q}_{\e,W}$
  \begin{equation*}
    \begin{split}
      L^{p,q}_{\mathbf{e},W}=\sum_{\lambda\in
        W}L^{p,q}_{\mathbf{e},\lambda},\qquad\|h\|_{L^{p,q}_{\mathbf{e},W}}=\inf_{h=\sum_{\lambda\in
          W}h_\lambda}\sum_{\lambda\in
        W}\|h_\lambda\|_{L^{p,q}_{\mathbf{e},\lambda}}.
    \end{split}
  \end{equation*}
\end{definition}

In what follows we fix some large integer $\K$ and define, for $k \in
\Z$,
\begin{equation*}
  W_k=W_k(\mathcal{K})=\{\lambda\in[-2^k,2^k]:2^{k+2\mathcal{K}}\lambda\in\Z\}.
\end{equation*}

\begin{lemma}
  Let $d=2$. For any $f \in L^2$, $k\in\Z$, and $\e \in \mathbb S^1$
  we have
  \begin{equation}
    \|e^{it \Delta} P_{k,\e}f
    \|_{L^{\infty,2}_{\e,\lambda}}  \lesssim 2^{-k/2} 
    \|f\|_{L^2}, \qquad |\lambda| \leq 2^{k-40}.
    \label{litl}\end{equation}
  In addition, if $T \in (0,2^{2\K}]$ then
  \begin{equation}
    \| 1_{[-T,T]}(t) e^{it \Delta} P_kf\|_{L^{2,\infty}_{\e,W_{k+40}}}  \lesssim 2^{k/2} 
    \|f\|_{L^2}.
    \label{ltil}\end{equation}
  \label{l4s}\end{lemma}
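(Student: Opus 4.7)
The first estimate \eqref{litl} follows from \eqref{limaxa} and the Galilean invariance of the Schr\"odinger group. One has the intertwining identity $T_{\lambda\e}(e^{it\Delta}g)=e^{it\Delta}(\sigma_{\lambda\e}g)$, where $\sigma_{\lambda\e}g(x)=e^{-ix\cdot\lambda\e/2}g(x)$ is the modulation that translates $\hat g$ by $-\lambda\e/2$. For $g=P_{k,\e}f$ and $|\lambda|\leq 2^{k-40}$, this shift is much smaller than the frequency scale $2^k$, so the shifted Fourier support is still contained in the band covered by $P_{k-1,\e}+P_{k,\e}+P_{k+1,\e}$. Unfolding the definition $\|\cdot\|_{L^{\infty,2}_{\e,\lambda}}=\|T_{\lambda\e}(\cdot)\|_{L^{\infty,2}_\e}$ and applying \eqref{limaxa} to each of the three pieces yields the claim.

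The second estimate \eqref{ltil} is substantially more delicate because the direct $d=2$ analogue of \eqref{limaxb} fails by a logarithm. Following the sum-space philosophy of \cite{Tat2} adapted to Galilean rather than Lorentz boosts, the plan is to decompose the Fourier support of $P_kf$ into thin slabs transverse to $\e$ and boost each slab separately. Pick a slab width $2\mu$ and a subset $\Lambda\subset W_{k+40}$ such that the midpoints $\{\lambda/2:\lambda\in\Lambda\}$ form a $2\mu$-spaced net covering the $\xi\cdot\e$-range $[-2^{k+1},2^{k+1}]$ of $P_k$. The fine spacing $2^{-(k+40+2\K)}$ of $W_{k+40}$ ensures that such a $\Lambda$ can be chosen inside it. With a smooth partition of unity $\{\varphi_\lambda\}_{\lambda\in\Lambda}$ subordinate to this cover, define $P_k^\lambda$ as the Fourier multiplier $\chi_k(|\xi|)\varphi_\lambda(\xi\cdot\e)$, so that $P_kf=\sum_{\lambda\in\Lambda}P_k^\lambda f$.

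For each $\lambda$, the Galilean intertwining converts the $L^{2,\infty}_{\e,\lambda}$ norm of the slab piece into an unboosted $L^{2,\infty}_\e$ norm,
\[
\big\|1_{[-T,T]}e^{it\Delta}P_k^\lambda f\big\|_{L^{2,\infty}_{\e,\lambda}}=\big\|1_{[-T,T]}(t)\,e^{it\Delta}g_\lambda\big\|_{L^{2,\infty}_\e},\qquad g_\lambda:=\sigma_{\lambda\e}P_k^\lambda f,
\]
where $g_\lambda$ has Fourier support in a thin slab $\{|\xi\cdot\e|\leq\mu\}\cap\{|\xi|\sim 2^k\}$ near $\xi\cdot\e=0$. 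I would estimate each boosted piece via the tensor factorization $e^{it\Delta}=e^{it\partial_{x_1}^2}e^{it\partial_{x'}^2}$: choosing $\mu$ so that $T\mu^2\lesssim 1$ (allowed because $T\leq 2^{2\K}$), the $x_1$-direction phase $e^{-it\xi_1^2}$ remains $O(1)$ on the slab and its contribution is controlled by a Cauchy--Schwarz in $\xi_1$, while the transverse evolution at frequency $\sim 2^k$ contributes the expected $2^{k/2}$ factor via the 1D Fourier-$L^1$ bound; Plancherel in $x_1$ then completes the per-slab estimate.

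The main obstacle is the final $\ell^1$ summation imposed by the sum-space norm: the per-slab bound and the cardinality of $\Lambda$ must interlock so that the sum reproduces the sharp $2^{k/2}\|f\|_{L^2}$ without a logarithmic loss. The peculiar definition of $W_{k+40}$---with its fine spacing scaling in both $k$ and $\K$ and its $+40$ offset---is tuned precisely to accommodate this balance between the slab thinness, the time cutoff, and the number of boosts, and implementing it rigorously is the technical core of the argument.
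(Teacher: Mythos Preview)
Your treatment of \eqref{litl} is correct and matches the paper's one-line reduction to \eqref{limaxa} via a Galilean boost.

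For \eqref{ltil} the slab-decomposition strategy does not close, and the obstruction is not merely logarithmic. After the boost each $g_\lambda$ has Fourier support in $\{|\xi_1|\lesssim\mu,\ |\xi'|\sim 2^k\}$; freezing the $x_1$-evolution (legitimate when $T\mu^2\lesssim 1$) and applying Bernstein in $\xi'$ followed by Plancherel in $x_1$ gives
\[
\big\|1_{[-T,T]}\,e^{it\Delta}g_\lambda\big\|_{L^{2}_{x_1}L^\infty_{x',t}}\lesssim 2^{k/2}\|g_\lambda\|_{L^2},
\]
and this per-slab bound is \emph{sharp}: a tensor bump $g_\lambda=\phi(\mu x_1)\psi(2^k x')$ already saturates it at $t=0$. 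No ``Cauchy--Schwarz in $\xi_1$'' can buy an extra $\mu^{1/2}$ here, because that step produces an $L^\infty_{x_1}$ bound, after which Plancherel in $x_1$ is no longer available. Summing in $\ell^1$ then forces
\[
\sum_{\lambda\in\Lambda}2^{k/2}\|g_\lambda\|_{L^2}\leq 2^{k/2}|\Lambda|^{1/2}\|f\|_{L^2},
\]
and since $|\Lambda|\sim 2^k/\mu$ with $\mu\lesssim T^{-1/2}$, the loss $|\Lambda|^{1/2}\gtrsim 2^{k/2}T^{1/4}$ is polynomial in $2^k$ and in $2^{\K}$. No admissible choice of $\mu$ removes it; splitting the \emph{data} across boosts is simply the wrong mechanism.

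The paper's argument is structurally different: it proves the stronger bound into ${\sum}^2 L^{2,\infty}_{\e,W_{k+5}}$ (see \eqref{linnew}, which implies \eqref{ltil} via \eqref{compr}) by a $TT^\ast$ computation on the dual side. After rescaling to $\K=0$ the bilinear form $\iint \bar g\,g\,K_k$ is controlled through the stationary-phase pointwise foliation, valid for $|t|\leq 2$,
\[
|K_k(t,x)|\lesssim\sum_{\lambda\in W_{k+5}}(1+2^k|x\cdot\e-\lambda t|)^{-N},
\]
which distributes the \emph{kernel} mass over the family of rays rather than the data. Each ray kernel has $\|K_{k,\lambda}\|_{L^{1,\infty}_{\e,\lambda}}\sim 2^{-k}$, so Young's inequality gives
\[
\iint \lesssim 2^{-k}\sum_{\lambda}\|g\|_{L^{2,1}_{\e,\lambda}}^2
=2^{-k}|W_{k+5}|\,\|g\|_{{\bigcap}^2 L^{2,1}_{\e,W_{k+5}}}^2\sim 2^{k}\|g\|_{{\bigcap}^2 L^{2,1}_{\e,W_{k+5}}}^2.
\]
The $\ell^2$ structure of ${\sum}^2/{\bigcap}^2$ is exactly what matches this quadratic form, and a Hahn--Banach duality step (with a smoothed time cutoff to land in a compactly Fourier-supported class) converts the dual estimate into the claimed ${\sum}^2$ bound. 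The fine spacing in $W_k$ is there to make the kernel foliation hold pointwise, not to balance a slab count against a per-slab gain.
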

The bound \eqref{litl} is a straightforward consequence of
\eqref{limaxa} via a Galilean transformation. The main novelty is the
estimate \eqref{ltil}, which provides a usable replacement to
\eqref{limaxb} in dimension $d=2$. Indeed, it is easy to see that the
bounds \eqref{litl} and \eqref{ltil} can still be used to estimate the
expression in \eqref{schem2} in the space $L^{1,2}_{\e,W_{k-40}}$,
which is an acceptable inhomogeneous local smoothing space in
dimension $d=2$. The idea of using sums of spaces such as
$L^{2,\infty}_{\e,W_{k+40}}$ as substitutes for missing estimates in
the setting of wave maps is due to Tataru \cite{Tat2}.

Limiting the time $T$ to the interval $(0,2^{2\K}]$ is what allows us
to use the discretization which is given by the $W_k$ sets. One could
also allow $T$ to be arbitrarily large, at the expense of replacing
the discrete sums in the definition of the $
L^{2,\infty}_{\e,W_{k+40}}$ with a continuous counterpart. We do not
pursue this here in order to avoid distracting technicalities.

Once the main terms of the Schr\"{o}dinger nonlinearities
\eqref{Schnonlin} are under control, we can use various
Strichartz-type estimates to estimate the remaining terms. We state
below the Strichartz-type estimates we need in this paper; at this
stage many variations are possible. Let $p_d=(2d+4)/d$ denote the
Strichartz exponent.

\begin{lemma}\label{variousStrichartz}
  If $f \in L^2(\R^d)$  then we have the Strichartz estimates
  \begin{equation*}
    \|e^{it \Delta} f\|_{L^{p_d}_{x,t}} \lesssim \|f\|_{L^2},
  \end{equation*}
  and the maximal function bounds
  \begin{equation*}
    \|e^{it \Delta} P_k f\|_{L^{p_d}_x{ L^\infty_t}} \lesssim 
    2^{kd/(d+2)}\|f\|_{L^2}, \qquad k \in \Z
  \end{equation*}
  In addition, if $2/p+d/q=d/2$ and $2 \leq q < 2d(d-2)$ then
  \begin{equation*}
    \sup_{\e\in\mathbb{S}^1}\|e^{it \Delta} P_k f\|_{L^{p,q}_\e} \lesssim_p 
    2^{k(\frac{d+2}{dp}-\frac12)} \|f\|_{L^2}, \qquad p \leq q,
  \end{equation*}
  respectively
  \begin{equation*}
    \sup_{\e\in\mathbb{S}^1}\|e^{it \Delta} P_{k,\e} f\|_{L^{p,q}_\e} \lesssim  
    2^{k(\frac{d+2}{dp}-\frac12)} \|f\|_{L^2}, \qquad p \geq q.
  \end{equation*}
\end{lemma}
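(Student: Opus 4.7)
The plan is to handle the four inequalities one at a time and, in each case, reduce the frequency-localized statement to the unit-frequency case $k=0$ by the parabolic rescaling $x \mapsto 2^k x$, $t \mapsto 2^{2k}t$. A direct scaling computation shows that the stated powers of $2^k$ are precisely those produced by this dilation from a scale-invariant $k=0$ bound (for the lateral estimates one uses the relation $2/p + d/q = d/2$), so it suffices in each case to establish the $k=0$ version.

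The first (global Strichartz) bound is classical: the pointwise dispersive decay $\|e^{it\Delta}g\|_{L^\infty_x} \lesssim |t|^{-d/2}\|g\|_{L^1_x}$ interpolated with mass conservation yields $\|e^{it\Delta}\|_{L^{p_d'}_x \to L^{p_d}_x} \lesssim |t|^{-d(1/2 - 1/p_d)}$, and the $TT^*$ argument together with Hardy--Littlewood--Sobolev in time produces the space-time bound (one checks $2/p_d = d(1/2 - 1/p_d)$, so HLS applies exactly). For the maximal-function estimate at $k=0$, I would partition $\R$ into unit intervals $I_n$ and apply the one-dimensional Sobolev embedding $\|u\|_{L^\infty_t(I_n)} \lesssim \|u\|_{L^{p_d}_t(I_n)} + \|\partial_t u\|_{L^{p_d}_t(I_n)}$ to $u(t) = e^{it\Delta}P_0 f(x)$; since $\partial_t u = i\Delta u$ and $\Delta$ acts essentially as multiplication by a bounded symbol on $P_0$-localized functions, taking $p_d$-th powers, summing over $n$, integrating in $x$, and invoking the first estimate with $P_0f$ and with $\Delta P_0f\simeq \widetilde P_0 f$ yields the claim.

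For the two lateral estimates I would follow the methods developed in \cite{IoKe2} and \cite{IoKe3}. For $P_{k,\e}$ with $p \geq q$ (at $k=0$), the localization of $\xi \cdot \e$ away from zero separates the phase $e^{-it|\xi|^2} = e^{-it(\xi\cdot\e)^2}e^{-it|\xi'|^2}$ and allows a stationary-phase analysis in the $\e$-direction giving a genuine one-dimensional dispersive decay of size $|t|^{-1/2}$ in the variable $x_1 = x\cdot\e$; a $TT^*$ argument in $x_1$ combined with a standard endpoint Strichartz bound in the $(x',t)$-variables, followed by Minkowski's inequality in the order $L^p_{x_1}L^q_{x',t}$, yields the stated $L^{p,q}_\e$ estimate in the range $p \geq q$. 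For the $P_k$ version with $p \leq q$, one decomposes the annulus $|\xi|\sim 1$ into a finitely overlapping family of angular caps $\{\xi : \xi\cdot\e'\sim 1\}$, applies the previous $p\geq q$ bound in each cap, and sums by Minkowski and almost-orthogonality, which now requires $p\leq q$ to swap the order of summation. The main obstacle I anticipate is the careful bookkeeping at the $TT^*$ endpoint, particularly the role of the constraint $q < 2d/(d-2)$ in ensuring the $t$-kernel is integrable after applying HLS, a restriction that is sharp and whose breakdown in $d=2$ is precisely what forces the Galilean-summed substitutes of Lemma \ref{l4s}.
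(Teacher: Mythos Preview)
Your handling of the first two bounds matches the paper's: the Strichartz estimate is classical, and the maximal bound follows once one controls both $e^{it\Delta}P_k f$ and $\partial_t e^{it\Delta}P_k f = i\Delta e^{it\Delta}P_k f$ in $L^{p_d}_{x,t}$ and applies a one-dimensional Sobolev embedding in $t$.

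For the two lateral estimates the paper takes a much shorter route than your sketch. For $p \geq q$ it simply interpolates between the local smoothing bound $\|e^{it\Delta}P_{k,\e}f\|_{L^{\infty,2}_\e} \lesssim 2^{-k/2}\|f\|_{L^2}$ and the diagonal Strichartz estimate $\|e^{it\Delta}f\|_{L^{p_d}_{x,t}} \lesssim \|f\|_{L^2}$; both endpoints sit on the scaling line, so complex interpolation gives the full range in one line, with no separation of the phase and no $TT^*$ in $x_1$. For $p \leq q$ the paper runs a direct $TT^*$ argument in the lateral norm $L^{p,q}_\e$, using the dispersive decay and Hardy--Littlewood--Sobolev in the $x_1$ variable (this is exactly where the restriction $p > 2$, equivalently $q < 2d/(d-2)$, enters). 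There is no angular decomposition and no reduction to the other lateral estimate.

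Your proposed $p \leq q$ argument has a genuine gap. After cutting the annulus into caps $\{\xi \cdot \e' \sim 1\}$ for finitely many directions $\e'$, the only estimate available on each cap is the $P_{k,\e'}$-localized bound from the $p \geq q$ case, and that bound controls the lateral norm $L^{p,q}_{\e'}$ along the cap direction $\e'$, not the target norm $L^{p,q}_\e$ for the fixed direction $\e$. These norms are not comparable when $\e'\neq\e$. Worse, on caps with $\e'$ nearly orthogonal to $\e$ the quantity $\xi\cdot\e$ is not bounded away from zero, so no local-smoothing-type input in the $\e$ direction is present at all. Since you are summing only $O(1)$ pieces, Minkowski and almost-orthogonality are not the obstruction; the obstruction is that none of the pieces is bounded in the correct norm to begin with.
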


The first bound in Lemma \ref{variousStrichartz} is the original
Strichartz estimate \cite{Str}. The second bound follows by
scaling. The last two bounds, which we call lateral Strichartz
estimates, follow informally by interpolation between the $L^{p_d}$
Strichartz estimate and the local smoothing/maximal function estimates
of Lemma \ref{l4s}. The results
stated in Lemma~\ref{keyboundsd>2}, Lemma~\ref{l4s}, and
Lemma~\ref{variousStrichartz} are summarized and proved later in
Lemma~\ref{linhom} (we prove in fact a slightly stronger version of
\eqref{ltil}, which is needed to prove full inhomogeneous estimates).

We are now ready to define our main dyadic function spaces $F_k(T)$,
$G_k(T)$ and $N_k(T)$. Assume that $T\in\R$. For $k\in\Z$ let
$I_{k}=\{\xi\in\R^d:|\xi|\in[2^{k-1},2^{k+1}]\}$
and 
\[
L^2_{k}(T)=\{f\in L^2(\R^d\times[-T,T]):\mathcal{F}(f)\text{
  is supported in }I_{k}\times\R\},
\]

\begin{definition}\label{spacesd>2}
  Assume $d \geq 3$, $T\in\R$, and $k\in\Z$. Then $F_k(T)$, $G_k(T)$
  and $N_k(T)$ are the Banach spaces of functions in $L^2_{k}(T)$ for
  which the corresponding norms are finite:
  \begin{equation}
    \begin{split}
      \| \phi \|_{F_k(T)} = &\ \|\phi\|_{L^\infty_t L^2_x} +
      \|\phi\|_{L^{p_d}} + 2^{-kd/(d+2)} \|\phi\|_{L^{p_d}_x
        L^\infty_t} + 2^{-k(d-1)/2} \sup_{\e \in \mathbb S^{d-1}}
      \|\phi\|_{L^{2,\infty}_\e},
    \end{split}
    \label{fkdef3}\end{equation}
  \begin{equation}
    \begin{split}
      \| \phi \|_{G_k(T)} = \| \phi \|_{F_k} + 2^{k/2} \sup_{|j-k|
        \leq 20} \sup_{\e \in \mathbb S^{d-1}} \| P_{j,\e}
      \phi\|_{L^{\infty,2}_\e},
    \end{split}
    \label{gkdef3}\end{equation}
  respectively
  \begin{equation}
    \|f\|_{N_k(T)} = \inf_{f=f_1+f_2} \|f_1\|_{L^{p'_d}}+ 2^{-k/2}  
    \sup_{\e \in \mathbb S^{d-1}} \| f_2\|_{L^{1,2}_\e}.
    \label{nkdef3}\end{equation}
\end{definition}

\begin{definition} \label{spacesd2} Assume that $d =2$, $k\in\Z$,
  $\mathcal{K}\in\Z_+$, and $T \in (0,2^{2\K}]$. Recall the definition
  $W_k=\{\lambda\in[-2^k,2^k]:2^{k+2\mathcal{K}}\lambda\in\Z\}$. For
  $\phi\in L^2(\R^d\times[-T,T])$ let
  \begin{equation}
    \| \phi \|_{F_k^0(T)} =\|\phi\|_{L^\infty_t L^2_x} +
    \|\phi\|_{L^{4}} + 2^{-k/2} \|\phi\|_{L^{4}_x
      L^\infty_t} + 2^{-k/2} \sup_{\e \in \mathbb S^{1}}
    \|\phi\|_{L^{2,\infty}_{\e,W_{k+40}}}.
    \label{fkodef2}\end{equation}
  We define $F_k(T)$, $G_k(T)$ and $N_k(T)$ as the normed spaces of
  functions in $L^2_{k}(T)$ for which the corresponding norms are
  finite:
  \begin{equation}
    \| \phi \|_{F_k(T)}=  \inf_{J,m_1,\ldots,m_J\in\Z_+}
    \inf_{f=f_{m_1}+\ldots+f_{m_J}}
    \sum_{j=1}^J2^{m_j} \|f_{m_j}\|_{F_{k+m_j}^0},
    \label{fkdef2}\end{equation}
  \begin{equation}
    \begin{split}
      \| \phi \|_{G_k(T)} = & \ \| \phi \|_{F_k^0} + 2^{-k/6} \sup_{\e
        \in \mathbb S^{1}} \|\phi\|_{L^{3,6}_{\e}}+ 2^{k/6}
      \sup_{|j-k| \leq 20} \sup_{\e \in \mathbb S^{1}} \|P_{j,\e}
      \phi\|_{L^{6,3}_{\e}}
      \\
      &\ + 2^{k/2} \sup_{|j-k| \leq 20} \sup_{\e \in \mathbb S^{1}}
      \sup_{|\lambda| < 2^{k-40}} \| P_{j,\e}
      \phi\|_{L^{\infty,2}_{\e,\lambda}},
    \end{split}
    \label{gkdef2}\end{equation}
  respectively
  \begin{equation}
    \begin{split}
      \|f\|_{N_k(T)} = \inf_{f=f_1+f_2+f_3+f_4}\|f_1\|_{L^{\frac43}} +
      2^{\frac{k}6}\|f_2\|_{L^{\frac32,\frac65}_{\e_1}}+
      2^{\frac{k}6}\|f_3\|_{L^{\frac32,\frac65}_{\e_2}} +
      2^{-\frac{k}2} \sup_{\e \in \mathbb S^{1}} \|
      f_4\|_{L^{1,2}_{\e,W_{k-40}}},
    \end{split}
    \label{nkdef2}\end{equation}
  where $(\e_1,\e_2)$ is the canonical basis in $\R^2$.
\end{definition}

In all dimensions $d\geq 2$ the spaces $N_k(T)$ and $G_k(T)$ and
related by the following linear estimate, which is proved in
Section~\ref{SPACES}.

\begin{proposition}\label{linearmainrep}
  (Main linear estimate) Assume $\mathcal{K}\in\Z_+$,
  $T\in(0,2^{2\mathcal{K}}]$ and $k\in\Z$. Then for each $u_0 \in L^2$
which is frequency localized in $I_k$ and any $h \in N_k(T)$ the solution
$u$ to
\[
(i\partial_t+\Delta_x)u=h, \qquad u(0) = u_0
\]
satisfies
 \begin{equation*}
    \|u\|_{G_k(T)}\lesssim  \|u(0)\|_{L^2_x}+\|h\|_{N_k(T)}
  \end{equation*}
\end{proposition}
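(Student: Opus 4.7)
The plan is to split $u = u_{\mathrm{hom}} + u_{\mathrm{inh}}$, where $u_{\mathrm{hom}}(t)=e^{it\Delta}u_0$ and $u_{\mathrm{inh}}(t)=-i\int_0^t e^{i(t-s)\Delta}h(s)\,ds$, and to control each piece in every norm appearing in the definition of $G_k(T)$. For the homogeneous part the task is essentially a bookkeeping exercise: every norm in $G_k(T)$ (including those coming from the auxiliary space $F_k^0(T)$ in $d=2$) is of the form $\|e^{it\Delta}u_0\|_X\lesssim 2^{k\alpha_X}\|u_0\|_{L^2}$, and each such bound is precisely one of the estimates already collected in Lemma~\ref{keyboundsd>2} (local smoothing, plus the maximal function for $d\geq 3$), Lemma~\ref{l4s} (the $d=2$ replacements \eqref{litl} and \eqref{ltil}, the latter using $T\leq 2^{2\K}$), and Lemma~\ref{variousStrichartz} (the Strichartz, maximal, and lateral Strichartz bounds). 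Energy conservation supplies the $L^\infty_tL^2_x$ piece for free.

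For the inhomogeneous part I would use a $TT^\ast$/duality argument component by component. Decompose $h$ according to the infimum in the definition of $\|h\|_{N_k(T)}$ (two summands in $d\geq 3$ and four in $d=2$); each summand lies in a space predual to one of the homogeneous norms treated in the previous paragraph. Dualizing Strichartz and dualizing the lateral/local smoothing estimates then yields
\[
\|u_{\mathrm{inh}}\|_{L^\infty_tL^2_x}\lesssim \|h\|_{N_k(T)},
\]
the local smoothing contribution carrying the gain $2^{-k/2}$ which exactly matches the weight in \eqref{nkdef3}/\eqref{nkdef2}. Composing this base $L^2$ estimate with any homogeneous estimate $\|e^{it\Delta}v\|_X\lesssim 2^{k\alpha_X}\|v\|_{L^2}$ supplies all the Strichartz, maximal, and lateral Strichartz norms of $u_{\mathrm{inh}}$. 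The only component not immediately of this composed form is the homogeneous local smoothing norm of $u_{\mathrm{inh}}$ itself (the $L^{\infty,2}_\e$ piece of $G_k$ with weight $2^{k/2}$); this requires the double local smoothing estimate
\[
\Bigl\|\int_\R e^{i(t-s)\Delta}P_{j,\e}f(s)\,ds\Bigr\|_{L^{\infty,2}_\e}\lesssim 2^{-k}\|f\|_{L^{1,2}_\e},
\]
obtained by squaring the estimates of Lemma~\ref{keyboundsd>2}/Lemma~\ref{l4s} when the directions (and, in $d=2$, the velocities) of input and output match, supplemented by the trivial embedding $L^\infty_tL^2_x\hookrightarrow L^{\infty,2}_\e$ for the mismatched contributions coming from the Strichartz summands of $h$. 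The Christ--Kiselev lemma then replaces $\int_\R$ by $\int_0^t$, since every output norm has strictly larger time exponent than the corresponding input.

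The main obstacle is the Galilean sum-space structure in $d=2$, where the inhomogeneous local smoothing norm in \eqref{nkdef2} lives on $L^{1,2}_{\e,W_{k-40}}$, the homogeneous one in \eqref{gkdef2} on $L^{\infty,2}_{\e,\lambda}$ for $|\lambda|<2^{k-40}$, and the maximal function ingredient \eqref{ltil} on $L^{2,\infty}_{\e,W_{k+40}}$. The $TT^\ast$ step must therefore be executed velocity by velocity: one chooses a decomposition $f=\sum_{\lambda\in W_{k-40}}f_\lambda$ realizing the infimum in $L^{1,2}_{\e,W_{k-40}}$, conjugates by the Galilean operator $T_{\lambda\e}$ of \eqref{operatorT} to reduce to the standard local smoothing estimate of Lemma~\ref{keyboundsd>2} or \eqref{litl}, and resums against the target family $L^{\infty,2}_{\e,\lambda'}$. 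The nested inclusions $\{|\lambda|<2^{k-40}\}\subset W_{k-40}\subset W_{k+40}$ are consistent with this scheme, and the restriction $T\leq 2^{2\K}$ is precisely what is needed for the discretization defining $W_k$ to be well adapted to the Schrödinger flow on the time interval $[-T,T]$.
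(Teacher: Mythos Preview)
Your overall strategy---splitting into homogeneous and inhomogeneous pieces and treating the latter summand by summand according to the $N_k(T)$ decomposition---matches the paper. For the homogeneous piece the bookkeeping is exactly as you describe. For the Strichartz-type summands of $h$ (what the paper calls $N_k^0(T)$, the pieces with time exponent strictly less than $2$) the paper uses the $U^p_\Delta/V^p_\Delta$ machinery together with a slight strengthening of $G_k$ to a space $\widetilde G_k$ that is subadditive over time partitions, but it explicitly remarks that a Christ--Kiselev-style argument would also work there, so on this part you are morally fine. (One caveat: the ``trivial embedding $L^\infty_tL^2_x\hookrightarrow L^{\infty,2}_\e$'' you invoke does not exist---the right-hand side integrates in $t$---so even for Strichartz input the local smoothing output must be obtained directly, not via energy.)

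The genuine gap is the inhomogeneous local smoothing piece $N_k^1(T)$. You cannot use Christ--Kiselev to pass from $\int_\R$ to $\int_0^t$ for the pairing $L^{1,2}_\e\to L^{\infty,2}_{\e}$ (or its Galilean variants in $d=2$): both spaces are of the form $L^p_{x_1}(L^2_{x',t})$, so the \emph{time} exponent is $2$ on each side, and your assertion that ``every output norm has strictly larger time exponent than the corresponding input'' is false precisely here. The $TT^\ast$ bound with $\int_\R$ is correct, but no black-box lemma upgrades it to the retarded version. The paper singles this case out explicitly as the one where ``the general argument \ldots does not apply'' and supplies a direct proof (Lemma~\ref{alexnew2}): after angular frequency localization and a Galilean boost one foliates the Duhamel integral along the $\e$-direction as $u=\int_\R u_{y_1}\,dy_1$, and shows (Lemma~\ref{sidel}) that each slice $P_kP_{j,\e}u_{y_1}$ equals a \emph{spatially} truncated free solution $(P_{<k-40,\e}\mathbf{1}_{\{x_1>y_1\}})\cdot e^{it\Delta}v_0$ plus a remainder $w$ that is smooth at frequency scale $2^k$ in space-time. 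The homogeneous estimates then control the first piece and Sobolev embeddings the second. Trading the temporal cutoff $s<t$ for this spatial cutoff $x_1>y_1$ is the missing idea.
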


We describe now the structure of the normed spaces $G_k(T)$, $N_k(T)$,
and $F_k(T)$. As Proposition \ref{linearmainrep} suggests, we use the
spaces $G_k(T)$ to measure solutions of Schr\"{o}dinger equations. The
main components of the spaces $G_k(T)$ are given by the local
smoothing/maximal function spaces $L^{2,\infty}_{\e,W_{k+40}}$ and
$L^{\infty,2}_{\e,\lambda}$ in dimension $d=2$, respectively
$L^{2,\infty}_{\e}$ and $L^{\infty,2}_{\e}$ in dimensions $d\geq 3$
(compare with Lemma \ref{keyboundsd>2} and Lemma \ref{l4s}). The other
components of the spaces $G_k(T)$ are Strichartz-type spaces, compare
with Lemma \ref{variousStrichartz}. These components are much more
flexible.

As Proposition \ref{linearmainrep} suggests, we use the spaces
$N_k(T)$ to measure nonlinearities of Schr\"{o}dinger equations. The
key components of the spaces $N_k(T)$ are the inhomogeneous local
smoothing spaces $L^{1,2}_{\e,W_{k-40}}$ in dimension $d=2$, and
$L^{1,2}_{\e}$ in dimension $d\geq 3$, which are the only spaces that
can be used to bound the difficult magnetic parts of the
Schr\"{o}dinger nonlinearities. The other components of the spaces
$N_k(T)$ are Strichartz-type spaces, and are chosen in a way that
matches the Strichartz spaces of $G_k(T)$.

We discuss now the spaces $F_k(T)$. It is clear from the definition
that $G_k(T)\hookrightarrow F_k(T)$. The larger spaces $F_k(T)$ have
an important advantage over the spaces $G_k(T)$: for any $k\in\Z$ and
$f\in F_k(T)\cap F_{k+1}(T)$ we have $\|f\|_{F_k(T)}\approx
\|f\|_{F_{k+1}(T)}$. This is easy to see by examining the definitions
and noticing that $\|\phi\|_{F_{k+1}^0(T)}\lesssim
\|\phi\|_{F_k^0(T)}$ for any $\phi\in L^2(\R^d\times[-T,T])$ if
$d=2$. Moreover, if $k,k'\in\Z$, $|k-k'|\leq 20$, $u\in F_{k'}(T)$,
and $v\in L^\infty(\R^d\times(-T,T))$ then
\begin{equation}\label{motiv1}
  \|P_k(uv)\|_{F_k(T)}\lesssim \|u\|_{F_{k'}(T)}\|v\|_{L^\infty_{x,t}}.
\end{equation}
The spaces $G_k(T)$ do not have this important property, proved in
Lemma \ref{building}, mostly because of the local smoothing norms
which require certain frequency localizations.

We use the spaces $G_k(T)$ to measure the fields $\psi_m$,
$m=1,\ldots,d$, at parabolic time $s=0$. This is consistent with the
Schr\"{o}dinger equations \eqref{schcov2} satisfied by these
fields. We use, however, the weaker spaces $F_k(T)$ to measure the
fields $\psi_m(s)$ for $s>0$, as well as the connection coefficients
$A_m(s)$. The fields $\psi_m(s)$ satisfy the covariant heat equations
\eqref{heatcov2}, and we are able to propagate control of these fields
along the heat flow, with suitable parabolic decay, only in the larger
spaces $F_k(T)$. Fortunately, in the perturbative analysis of
Schr\"{o}dinger equations, it is not necessary to control the
connection coefficients $A_m(0)$ in the missing local smoothing norms.

To bound products of functions in $F_k(T)$ we often use a more relaxed
criterion. Precisely, since for $\e \in \mathbb S^1$ we have
\[
\|f\|_{L^{2,\infty}_{\e,W_{k+m_j}}}
\leq\|f\|_{L^{2,\infty}_{\e}}\lesssim
2^{k(d-1)/2}\|f\|_{L^2_xL^\infty_t}
\]
it follows that, in all dimensions $d\geq 2$,
\begin{equation}\label{useboundin}
  \|f\|_{F_k(T)}\lesssim \|f\|_{L^2_xL^\infty_t}+\|f\|_{L^{p_d}}.
\end{equation}
This criterion is often used to estimate bilinear expressions, by
exploiting the $L^{p_d}_xL^\infty_t$ norms in the spaces $F_k(T)$.

We also need to evolve $F_k(T)$ functions along the heat flow.  Since
the $F_k(T)$ norm is translation invariant it immediately follows that
if $h\in F_k(T)$ then
\begin{equation}\label{hebound}
  \|e^{s\Delta_x}h\|_{F_k(T)}\lesssim (1+s2^{2k})^{-20}\|h\|_{F_k(T)},
  \qquad s \geq 0.
\end{equation}

To prove useful bounds on the connection coefficients $A_m$,
$m=1,\ldots,d$, for $k\in\Z$ and $\omega\in[0,1/2]$ we define the
normed spaces $S_k^\omega(T)$ of functions in $L^2_k(T)$ for which
\begin{equation}\label{sk}
  \begin{split}
    \|f\|_{S_k^\omega(T)}=2^{k\omega}(\|f\|_{L^\infty_tL^{2_\omega}_x}+\|f\|_{L^{p_d}_tL^{p_{d,\omega}}_x}+2^{-kd/(d+2)}\|f\|_{L^{p_{d,\omega}}_xL^\infty_t})<\infty,
  \end{split}
\end{equation}
where the exponents $2_\omega$ and $p_{d,\omega}$ are such that
\begin{equation*}
  \frac{1}{2_\omega}-\frac{1}{2}=\frac{1}{p_{d,\omega}}-\frac{1}{p_d}
  =\frac{\omega}{d}.
\end{equation*}
The spaces $S_k^\omega(T)$ are at the same scale as the spaces
$F_k(T)$ and $F_k(T) \hookrightarrow S_k^0(T)$. By Sobolev embeddings
we have
\begin{equation}\label{skin}
  \|f\|_{S^{\omega'}_k(T)}\lesssim \|f\|_{S^{\omega}_k(T)}\quad\text{ if }\omega'\leq\omega.
\end{equation}
Thus the spaces $S_k^\omega(T)$ can be interpreted as refinements of
the Strichartz part of the spaces $F_k(T)$ (which corresponds to
$S_k^0(T)$).  It is important to be able to prove bounds on the
coefficients $A_m$, $m=1,\ldots,d$, in both spaces $F_k(T)$ and
$S_k^{1/2}(T)$. These bounds quantify an essential gain of smoothness
of the coefficients $A_m$ compared to the fields $\psi_m$. This is
proved in Lemma \ref{Aprop} and used in many estimates in sections
\ref{concoef} and \ref{PERTURB}.

\section{Outline of the proof}\label{outline}

This section contains an outline of the proofs of the main
theorems. We observe first that it suffices to construct the solution
$\phi$ on the time interval $(-2^{2\mathcal{K}},2^{2\mathcal{K}})$,
for any given $\mathcal{K}\in\mathbb{Z}_+$, and prove the bounds
\eqref{am2} and \eqref{am3} uniformly in $\mathcal{K}$. We therefore
fix\footnote{The value of $\mathcal{K}$ does not appear in any of the
  effective bounds; it is useful, however, to have
  $\mathcal{K}<\infty$ in some of the continuity arguments and to be
  able to define the sets $W_k(\mathcal{K})$ in Definition
  \ref{spacesd2} as finite sets. Weak bounds, such as
  \eqref{goodbounds2}, may depend implicitly on the value of
  $\mathcal{K}$.} once and for all $\mathcal{K}\gg 1\in\mathbb{Z}_+$
and assume $T\in(0,2^{2\mathcal{K}}]$.

We start with a solution $\phi\in C((-T,T):H^\infty_Q)$ of
\eqref{Sch1} on some time interval $(-T,T)$, where
$T\in(0,2^{2\K}]$. Our main goal is to prove a priori estimates on
\begin{equation*}
  \sup_{t\in(-T,T)}\| \phi(t)\|_{\dot{H}^{d/2}}\quad\text{ and }
  \quad\sup_{t\in(-T,T)}\|\phi(t)\|_{H^{\sigma}_Q},
\end{equation*} 
for $\sigma$ in a fixed interval $\sigma \in [d/2,\sigma_1]$.  We use
the notion of frequency envelopes.  We fix a small parameter $\delta$
(for instance $\delta = 1/(20 d)$ suffices).

\begin{definition}
  A positive sequence $\{b_k\}_{k \in \Z}$ is a frequency envelope if
  it is $l^2$ bounded
  \begin{equation}
    \sum_{k \in \Z} b_k^2 < \infty 
  \end{equation}
  and slowly varying,
  \begin{equation}
    b_k \leq b_j 2^{\delta |k-j|}, \qquad k,j \in \Z.
    \label{alsc}\end{equation}
  An $\epsilon$-frequency envelope $\{b_k\}_{k \in \Z}$ satisfies the
  additional relation
  \begin{equation}
    \sum_{k \in \Z} b_k^2 < \epsilon^2.
  \end{equation}
\end{definition}

Given an $l^2$ bounded nonnegative sequence $\{\al_k\}_{k\in\Z}$ we
often define the frequency envelope
\begin{equation*}
  \al'_k=\sup_{k'\in\Z}\ \al_{k'}2^{-\delta|k-k'|}.
\end{equation*}
Clearly, we have $\alpha_k\leq\alpha'_k$ and $\alpha'_k=\alpha_k$ if
$\{\alpha_k\}_{k\in\Z}$ is already a frequency envelope. In addition,
$\sum_{k\in\Z}[\alpha'_{k}]^2\lesssim \sum_{k\in\Z}\alpha_{k}^2$.

Given $\sigma_1 > d/2$ as in Theorem \ref{Main1}, $T > 0$ and
$\phi \in H^{\infty,\infty}_Q(T)$ we define the frequency
envelopes
\begin{equation}
  \gamma_k(\sigma) = \sup_{k' \in \Z} 2^{-\delta |k-k'|} 2^{\sigma k'} \|P_{k'}
  \phi\|_{L^\infty_t L^2_x}, \qquad \sigma \in [0,\sigma_1],\qquad\delta=1/(20d).
  \label{gammaksi}\end{equation}
We also set $\gamma_k = \gamma_k(d/2)$. Then we have
$\gamma_{k'}(\sigma)\leq 2^{\delta|k-k'|}\gamma_k(\sigma)$ for any
$k,k'\in\Z$, and
\begin{equation}
  \|P_k \phi\|_{L^\infty_t L^2_x} \leq 2^{-\sigma k} \gamma_k(\sigma),
  \qquad \sigma \in [0,\sigma_1].
\end{equation}

\begin{proposition}\label{TaoHeat} (Construction of the caloric gauge)
  Assume that $T\in(0,\infty)$ and $Q\in\mathbb{S}^2$. Let $ \phi\in
  {H}^{\infty,\infty}_Q(T)$ which satisfies the smallness condition
  \begin{equation}
    \sum_{k \in \Z} 2^{kd}    \|P_k \phi\|_{L^\infty_t L^2_x}^2 = \gamma^2 \ll1.
    \label{smallnorm}\end{equation}
  Then there is a unique smooth solution $\tphi\in
  C([0,\infty):{H}^{\infty,\infty}_Q(T))$ of the covariant heat
  equation
  \begin{equation}\label{heat3}
    \begin{cases}
      &\partial_s {\tphi}=\Delta_x {\tphi}+{\tphi}\cdot\sum_{m=1}^d
      |\partial_m{\tphi}|^2\quad\text{ on }[0,\infty)\times\R^d\times(-T,T);\\
      &{\tphi}(0,x,t)=\phi(x,t).
    \end{cases}
  \end{equation}
  In addition, there are smooth functions
  $v,w:[0,\infty)\times\R^d\times(-T,T)\to\mathbb{S}^2$ with the
  properties
  \begin{equation}\label{vwprop}
    {}^tv\cdot\tphi={}^tw\cdot\tphi={}^tv\cdot
    w={}^tw\cdot\partial_s v=0\text{ on
    }[0,\infty)\times\R^d\times(-T,T).
  \end{equation}
  For any $F\in\{\tphi,v,w\}$ we have the bounds
  \begin{equation}\label{goodbounds1}
    \|P_k F(s)\|_{L^\infty_tL^2_x}\lesssim \gamma_k(\sigma) 
    (1+s2^{2k})^{-20}2^{-\sigma k}, \qquad 
    \sigma \in [d/2,\sigma_1]
  \end{equation}
  with $\gamma_k(\sigma)$ defined by \eqref{gammaksi}, and, for any
  $\sigma,\rho\in \Z_+$,
  \begin{equation}\label{goodbounds2}
    \sup_{k\in\Z}\sup_{s\in[0,\infty)}(s+1)^{\sigma/2}2^{\sigma k}\|P_k \partial_t^\rho  F(s)\|_{L^\infty_tL^2_x} <\infty.
  \end{equation}
\end{proposition}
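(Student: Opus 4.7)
The construction proceeds in three stages, following Tao's blueprint for the caloric gauge.

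\emph{Stage 1: Global heat flow and envelope bounds for $\tphi$.} Setting $u = \tphi - Q$, I would recast \eqref{heat3} as the Duhamel integral equation
\[
u(s) = e^{s\Delta_x}(\phi - Q) + \int_0^s e^{(s-r)\Delta_x}\bigl[(u+Q)|\nabla u|^2\bigr](r)\,dr
\]
and run a contraction/Picard iteration in the space of $H^{\infty,\infty}_Q(T)$-valued functions of $s$ controlled by the right-hand side of \eqref{goodbounds1} simultaneously for all $\sigma \in [d/2,\sigma_1]$. The linear piece is dispatched by dyadic heat smoothing $\|P_k e^{s\Delta_x}f\|_{L^2_x}\lesssim e^{-cs 2^{2k}}\|P_k f\|_{L^2_x}$, while the quadratic/cubic nonlinearity is handled by a Littlewood--Paley paraproduct decomposition together with Bernstein's inequality at the critical scale $\dot H^{d/2}$; the smallness constant $\gamma$ from \eqref{smallnorm} closes the contraction and simultaneously propagates the envelope bound \eqref{goodbounds1} for $F = \tphi$. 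The constraint $|\tphi|\equiv 1$ is preserved because $\rho := |\tphi|^2 - 1$ solves the scalar heat equation $\partial_s \rho = \Delta_x \rho + 2\rho|\nabla\tphi|^2$ with zero initial data, so uniqueness forces $\rho\equiv 0$. The qualitative bounds \eqref{goodbounds2} for $\tphi$ then come from commuting $\partial_x^\alpha \partial_t^\rho$ through Duhamel and using the parabolic gain $s^{-|\alpha|/2}$.

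\emph{Stage 2: Backward parallel transport from $s = \infty$.} The envelope bound of Stage 1, summed dyadically, shows that $\tphi(s,x,t) \to Q$ as $s \to \infty$ at each $(x,t)$, so I may fix any orthonormal pair $(v_\infty,w_\infty)\subset T_Q\mathbb S^2$, independent of $(x,t)$. Expanding $\partial_s v$ in the orthonormal basis $\{\tphi,v,w\}$ and imposing $|v|=1$, ${}^tv \cdot \tphi = 0$, and the gauge condition ${}^tw\cdot \partial_s v = 0$ forces the ODE
\[
\partial_s v = -({}^tv \cdot \Delta_x \tphi)\,\tphi, \qquad v(s{=}\infty, x, t) = v_\infty,
\]
and analogously for $w$. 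I would solve this backwards in $s$ by Picard iteration; the required integrability $\int_s^\infty |\Delta_x \tphi(r,x,t)|\,dr < \infty$ follows from Stage 1 via the dyadic estimate $\int_s^\infty 2^{2k}(1+r 2^{2k})^{-20}\,dr \lesssim (1+s 2^{2k})^{-19}$ summed against the envelope. To verify the full set of relations in \eqref{vwprop}, differentiate each of ${}^tv\tphi$, ${}^tv v - 1$, ${}^tv w$, ${}^tw w - 1$, ${}^tw\tphi$ in $s$ and use the heat and frame ODEs; each resulting scalar satisfies a homogeneous linear ODE with data zero at $s=\infty$, hence vanishes identically.

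\emph{Stage 3: Envelope and smoothness bounds for $v, w$; main obstacle.} Applying $P_k$ to the $v$-ODE (noting $P_k v_\infty = 0$ for $k\in\Z$), inserting the envelope bound for $\tphi$ from Stage 1, and integrating from $s$ to $\infty$ yields \eqref{goodbounds1} for $F \in \{v, w\}$ with the same $(1+s 2^{2k})^{-20}$ decay (up to a constant which is absorbed by the iteration). The qualitative bounds \eqref{goodbounds2} for $v,w$ follow by commuting $\partial_x^\alpha \partial_t^\rho$ through the $v$-ODE and the Duhamel formula and inducting on $|\alpha|+\rho$. The principal obstacle throughout is the critical (scale-invariant) nature of the smallness $\gamma\ll 1$: the fixed point for the heat flow, the propagation of the dyadic decay factor $(1+s 2^{2k})^{-20}$, and the parallel-transport integrability at $s=\infty$ must all be closed using only this single small constant, and the delicate $\mathrm{High}\times\mathrm{High}\to\mathrm{Low}$ interactions in the cubic nonlinearity must be estimated without logarithmic loss — exactly the difficulty that the caloric gauge (via \eqref{schem}) was chosen to resolve.
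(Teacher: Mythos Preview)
Your plan is correct and matches the paper's proof in Section~\ref{HEATPROOF} almost step for step: Duhamel plus a dyadic bootstrap for $\tphi$ (the paper phrases it as a continuation argument $B_1\lesssim 1+\gamma B_1^2$ rather than a literal contraction, and runs the $\sigma=d/2$ case first before bootstrapping general $\sigma$), then the backward ODE for the frame with data at $s=\infty$. Two cosmetic differences worth noting: the paper writes the frame ODE as $\partial_s v = R\,v$ with the antisymmetric matrix $R=\partial_s\tphi\cdot{}^t\tphi-\tphi\cdot{}^t(\partial_s\tphi)$ (which makes $|v|\equiv 1$ immediate) rather than your component form $\partial_s v=-({}^tv\cdot\Delta_x\tphi)\tphi$, and it sets $w=\tphi\times v$ at the end instead of solving a second ODE---but your route is equivalent and your constraint-verification scheme works. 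Your closing remark about $\mathrm{High}\times\mathrm{High}\to\mathrm{Low}$ and \eqref{schem} is slightly misplaced: that improvement is the payoff of the caloric gauge for the connection coefficients $A_m$ in Section~\ref{concoef}, not an obstacle in the present construction, where the cubic heat nonlinearity is handled directly by a trilinear paraproduct estimate (Lemma~\ref{lemmac3}) without logarithmic loss.
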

  
The key caloric gauge condition is the last identity in
\eqref{vwprop}, namely ${}^tw\cdot\partial_s v\equiv 0$, which leads
to the identity $A_0\equiv 0$. It is also important that the functions
$\tphi, v,w$ become trivial as $s\to\infty$, in the sense of
\eqref{goodbounds2}. Proposition \ref{TaoHeat} is due to Tao
\cite{Ta3}; we give a complete proof of Proposition \ref{TaoHeat} in
section \ref{HEATPROOF}.

Most of our analysis is done at the level of the fields $\psi_m$ and
the connection coefficients $A_m$. From \eqref{goodbounds1} and
\eqref{basicest5.1} we obtain
\begin{equation}\label{goodbounds3}
  \|P_k \psi_m(s) \|_{L^\infty_t L^2_x}+ 
  \|P_k A_m(s) \|_{L^\infty_t L^2_x}
  \lesssim \gamma_k(\sigma) 
  (1+s2^{2k})^{-20}2^{-(\sigma-1) k},
\end{equation}
for $m=1,\ldots,d$, $\sigma\in[d/2,\sigma_1]$, $s\in[0,\infty)$,
$k\in\Z$.  By \eqref{goodbounds2} we also have
\begin{equation}\label{goodbounds4}
  \sup_{k\in\Z}\sup_{s\in[0,\infty)}(s+1)^{\sigma/2}2^{k\sigma}2^{-k}\big[\|P_k(\partial_t^\rho\psi_m(s))\|_{L^\infty_tL^2_x}+\|P_k(\partial_t^\rho A_m(s))\|_{L^\infty_tL^2_x}\big]<\infty
\end{equation}
for $m=1,\ldots,d$, and
\begin{equation}\label{goodbounds5}
  \sup_{k\in\Z}\sup_{s\in[0,\infty)}(s+1)^{\sigma/2}2^{k\sigma}\big[\|P_k(\partial_t^\rho\psi_{d+1}(s))\|_{L^\infty_tL^2_x}+\|P_k(\partial_t^\rho A_{d+1}(s))\|_{L^\infty_tL^2_x}\big]<\infty.
\end{equation}

Given an initial data $\phi_0 \in H^\infty_Q$ for the Schr\"odinger
map equation, for $\sigma \in [(d-2)/2, \sigma_1-1]$ we introduce the
frequency envelopes
\begin{equation}
  c_k(\sigma) = \sup_{k' \in \Z}  \| P_{k'} \nabla \phi_0\|_{L^2_x}2^{\sigma k'} 2^{-\delta|k-k'|}.
  \label{ckenv}\end{equation}
Then we have the relations, for any $\sigma \in [(d-2)/2, \sigma_1-1]$
and $k\in\Z$,
\begin{equation}
  \|\nabla \phi_0\|_{ \dot H^{\sigma}}^2 \approx 
  \sum_{k\in \Z} c_k^2(\sigma)\qquad\text{ and }\qquad\| P_k \nabla \phi_0\|_{L^2} \leq c_k(\sigma) 2^{-\sigma k}.
  \label{hsbd}\end{equation}
Let $c_k=c_k(d/2-1)$. If $\|\phi_0\|_{\dot H^{d/2}} \leq \eps_0$ then
$\sum_{k \in \Z} c_k^2 \lesssim \eps_0^2$.

The bounds in Proposition~\ref{TaoHeat} in the energy space
$L^\infty_t L^2_x$ are far from sufficient for the study of the
Schr\"odinger equation. We need suitable bounds in the $F_k(T)$
spaces.  In the next proposition we fix some $\sigma_0 \in [d/2-1,
\sigma_1-1]$ and use two frequency envelopes $b_k$ and
$b_k(\sigma_0)$.  The envelope $b_k = b_k((d-2)/2)$ is used to measure
critical regularity and carries a smallness condition. The envelope
$b_k(\sigma_0)$ is always used to measure noncritical regularity. To
these two envelopes we associate the sequences
\[
b_{>k} = \big(\sum_{j \geq k} b_j^2\big)^{1/2}, \qquad
b_{>k}(\sigma_0) = \big(\sum_{j \geq k} b_j b_j(\sigma_0)
2^{(k-j)(\sigma_0-(d-2)/2)}\big)^{1/2}.
\]

\begin{proposition} \label{TaoHeat2} (Heat flow bootstrap estimates)
  Assume that $T\in(0,\infty)$ and $Q\in\mathbb{S}^2$. Given $ \phi\in
  {H}^{\infty,\infty}_Q(T)$ satisfying \eqref{smallnorm} we consider
  $\tphi,v,w$ as in Proposition~\ref{TaoHeat}, and $\psi_m$ and $A_m$
  the associated fields and connection coefficients. Let
  $b_k=b_k(d/2-1)$ be an $\eps$-frequency envelope with small $\eps$,
  and $b_k(\sigma_0)$ be another frequency envelope.

  (a) Suppose that the functions $\{\psi_m\}_{m=1,d}$ satisfy
  \begin{equation}
    \| P_k \psi_m (0) \|_{F_k(T)} \leq  b_k(\sigma)  2^{-\sigma k},
    \qquad 
    \sigma \in \{ (d-2)/2,\sigma_0\}
    \label{psizero}\end{equation}
  as well as the bootstrap condition
  \begin{equation}
    \| P_k \psi_m (s) \|_{F_k(T)} \leq
    \eps^{-1/2} b_k 2^{-k(d-2)/2}(1+s2^{2k})^{-4}.
    \label{psibound}\end{equation}
  Then for $\sigma \in \{ (d-2)/2,\sigma_0\}$ we have
  \begin{equation}
    \| P_k \psi_m (s) \|_{F_k(T)} \lesssim  b_k(\sigma)
    2^{-\sigma k}(1+s2^{2k})^{-4}, \qquad \sigma \in \{ (d-2)/2,\sigma_0\}.
    \label{psifk}\end{equation}
  Also, for $ l,m=1,\cdots,d$ and $\sigma \in \{ (d-2)/2,\sigma_0\}$,
  we have the $F_k(T)$ bounds
  \begin{equation}
    \| P_k ( A_m (s) \psi_l (s)) \|_{F_k(T)} \lesssim b_k(\sigma)
    2^{-(\sigma-1) k}(2^{2k} s)^{-\frac38}
    (1+s2^{2k})^{-2},
    \label{apsifk}\end{equation}
  as well as the $L^{p_d}$ estimate at $s=0$
  \begin{equation}
    \| P_k A_m (0) \|_{L^{p_d}} \lesssim  b_k(\sigma)
    2^{-\sigma k}.
    \label{pkam}\end{equation}

  (b) Assume in addition that
  \begin{equation}
    \| P_k \psi_{d+1}(0)\|_{L^{p_d}} \lesssim  b_k(\sigma) 
    2^{-(\sigma-1) k},  \qquad 
    \sigma \in \{ (d-2)/2,\sigma_0\}.
    \label{psidpu}\end{equation}
  Then for $\sigma \in \{ (d-2)/2,\sigma_0\}$ we have
  \begin{equation}
    \| P_k \psi_{d+1}(s)\|_{L^{p_d}} \lesssim b_k(\sigma)  
    2^{-(\sigma-1) k} (1+2^{2k} s)^{-2},
  \end{equation}
  and the connection coefficient $A_{d+1}$ satisfies the $L^{2}$
  estimate at $s=0$
  \begin{equation}
    \| P_k A_{d+1} (0) \|_{L^{2}} \lesssim  \eps  b_k(\sigma)
    2^{-\sigma k}, \qquad d\geq 3 
    \label{aadunu}\end{equation}
  respectively
  \begin{equation}
    \|A_{d+1}(0)\|_{L^2} \lesssim \eps^2, \qquad 
    \| P_{k} A_{d+1} (0) \|_{L^{2}} \lesssim  
    b_{>k}^2(\sigma)2^{-\sigma k},
    \qquad d = 2. 
    \label{aadunub}\end{equation}
  \label{heatfk}
\end{proposition}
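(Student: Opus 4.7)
The plan is to run a bootstrap argument in the heat time $s$ on top of the Duhamel formulation of \eqref{heatcov2}. Writing
\[
\psi_m(s) = e^{s\Delta_x}\psi_m(0) + \int_0^s e^{(s-s')\Delta_x} N_m(s')\,ds',
\]
where
\[
N_m = 2i\sum_{l=1}^d A_l \partial_l\psi_m - \sum_{l=1}^d (A_l^2-i\partial_l A_l)\psi_m + i\sum_{l=1}^d \Im(\psi_m\overline{\psi_l})\psi_l,
\]
I would apply \eqref{hebound} to the linear piece to convert \eqref{psizero} into a contribution of size $b_k(\sigma)2^{-\sigma k}(1+s2^{2k})^{-20}$, and use the bootstrap ansatz \eqref{psibound} to estimate the Duhamel integral. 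The bootstrap closes because $N_m$ is trilinear in $\psi$ once $A_m$ is expanded via \eqref{Aform}, so the cube of the inflated constant $\eps^{-1/2}$ from \eqref{psibound} is beaten by the $l^2$-smallness $\sum b_k^2 \lesssim \eps^2$ of the critical envelope, yielding a net $\eps^{3/2}$ gain which strictly improves \eqref{psibound} to \eqref{psifk}.

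The heart of the argument is the estimate for $A_m(s)$ from \eqref{Aform}. Substituting $\psi_0 = \sum_l (\partial_l + iA_l)\psi_l$, $A_m(s)$ is quadratic-plus-cubic in $\psi(r)$ for $r\geq s$, and a Littlewood--Paley decomposition recovers the schematic expansion \eqref{schem}. The decisive point is that for high-high-to-low frequency interactions, the $r$-integral combined with the parabolic decay $(1+r2^{2k_{\max}})^{-4}$ from \eqref{psibound} produces the frequency weight $2^{-2k_{\max}}$, stronger than the Coulomb weight exhibited in \eqref{cou}. Combining this with the product rule \eqref{motiv1}, the Strichartz criterion \eqref{useboundin}, and the lateral Strichartz estimates of Lemma \ref{variousStrichartz}, I obtain bounds on $A_m(s)$ in $F_k(T)$ and in $S_k^\omega(T)$ with parabolic decay; at $s=0$ the $L^{p_d}$ Strichartz component of $F_k(T)$ yields \eqref{pkam} directly.

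To bound the product $A_m(s)\psi_l(s)$ in $F_k(T)$ I would dyadically localize both factors and apply \eqref{motiv1} together with bilinear Strichartz bounds from Lemma \ref{variousStrichartz}. The singular-but-integrable factor $(2^{2k}s)^{-3/8}$ in \eqref{apsifk} is arranged so that, when $A_l\partial_l\psi_m$ is inserted into the Duhamel integral and convolved with $(1+(s-s')2^{2k})^{-20}$, the resulting $s'$-integral produces the $(1+s2^{2k})^{-4}$ decay required in \eqref{psifk}; the exponent $3/8<1$ ensures integrability of the singularity at $s'=0$. The electric term $(A_l^2-i\partial_l A_l)\psi_m$ is handled similarly, while the cubic term $\Im(\psi_m\overline{\psi_l})\psi_l$ is easier, being purely pointwise in $s$ and controlled directly by \eqref{motiv1} and \eqref{useboundin}.

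For part (b), the identity \eqref{schcov} reads $\psi_{d+1} = i\sum_l \DD_l\psi_l$, which together with \eqref{pkam} and the caloric condition $A_0\equiv 0$ lets the same Duhamel scheme, applied to the $m=d+1$ case of \eqref{heatcov2}, propagate \eqref{psidpu} to all $s\geq 0$. The coefficient $A_{d+1}$ is then recovered by integrating $\partial_s A_{d+1}=\Im(\psi_0\overline{\psi_{d+1}})$ from $s=\infty$ down to $s=0$, using the just-established bounds on $\psi_0$ and $\psi_{d+1}$; the distinction between \eqref{aadunu} for $d\geq 3$ and \eqref{aadunub} for $d=2$ reflects the failure of \eqref{max} in dimension two and the consequent need to sum a squared frequency envelope over $j\geq k$. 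The principal obstacle will be the sharp product estimate \eqref{apsifk} in $d=2$, which requires delicate use of the Galilean-translated spaces of Definition \ref{Ldef} and is precisely the point where the caloric-gauge advantage \eqref{schem} over the Coulomb expression \eqref{cou} is essential.
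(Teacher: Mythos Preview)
Your overall strategy---Duhamel for \eqref{heatcov2}, bootstrap via \eqref{psibound}, recover $A_m$ from \eqref{Aform}, then do the same for $\psi_{d+1}$ and $A_{d+1}$---is exactly the paper's. There is, however, a missing technical ingredient that the paper leans on throughout Section~\ref{concoef}: the bound on $A_m(s)$ is proved not just in $F_k(T)$ but in the strictly smaller space $F_k(T)\cap S_k^{1/2}(T)$ (see \eqref{sk} and Lemma~\ref{Aprop}). The extra $\omega=1/2$ gain enters the bilinear estimate \eqref{basbo3.3} as the factor $2^{(j-k)(\frac{2d}{d+2}-\omega)}$ in the high--high sum; with $\omega=0$ this exponent is $\frac{2d}{d+2}\geq 1$, which is too large to close the product $A_m\psi_l$ (Lemma~\ref{heatcontrol}, equation \eqref{mf5}) or to produce the $(2^{2k}s)^{-3/8}$ in \eqref{apsifk}. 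You gesture at ``bilinear Strichartz bounds'' but do not isolate this Sobolev-refined Strichartz structure, and without it the heat nonlinearity estimate does not close at the critical scale.

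Two smaller points. First, \eqref{pkam} does \emph{not} follow from the $L^{p_d}$ component of the $F_k$ bound on $A_m(0)$: Lemma~\ref{Aprop} at $s=0$ gives $\|P_kA_m(0)\|_{F_k}\lesssim 2^{-\sigma k}\sum_{j\geq k}a_ja_j(\sigma)$, and that tail sum is not controlled by $b_k(\sigma)$ alone. The paper needs a separate bilinear lemma (Lemma~\ref{building2}) with genuine high--high decay in $L^{p_d}$ to obtain \eqref{pkam}. Second, the $d=2$ versus $d\geq 3$ split in \eqref{aadunu}--\eqref{aadunub} has nothing to do with \eqref{max} or the Galilean spaces; it is purely the vanishing of the factor $2^{k(d-2)/2}$ in the elementary bilinear $L^2$ estimate \eqref{resaux1}, which forces the sum $b_{>k}^2(\sigma)$ to appear. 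Likewise \eqref{apsifk} is proved entirely with the Strichartz-type spaces $F_k$, $S_k^{1/2}$ and never touches Definition~\ref{Ldef}.
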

Proposition~\ref{heatfk} is proved in Section~\ref{concoef}. Here we
note the following improvement:

\begin{corollary} 
  The result in Proposition~\ref{heatfk} remains valid as well if the
  bootstrap assumption \eqref{psibound} is dropped.
  \label{heatcor}\end{corollary}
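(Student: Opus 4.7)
The strategy is a standard continuity (bootstrap) argument in the heat-time variable $s$, exploiting the fact that the conclusion \eqref{psifk} of Proposition~\ref{heatfk}(a) is quantitatively stronger than the hypothesis \eqref{psibound} by a factor of $\eps^{1/2}$ (up to an absolute constant). For fixed $\phi \in H^{\infty,\infty}_Q(T)$ satisfying \eqref{smallnorm}, we define
\[
S = \bigl\{\, s^* \in [0,\infty) : \|P_k\psi_m(s)\|_{F_k(T)} \leq \tfrac12\eps^{-1/2} b_k 2^{-k(d-2)/2}(1+s2^{2k})^{-4} \text{ for all } s\in[0,s^*],\ k\in\Z,\ m=1,\dots,d\,\bigr\}
\]
and aim to show $S=[0,\infty)$. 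The factor $1/2$ is inserted so that $S$ is closed while the conclusion of the proposition, together with smallness of $\eps$, still improves the bound. Once $S=[0,\infty)$, the bootstrap hypothesis \eqref{psibound} holds automatically for the given $\phi$, and Proposition~\ref{heatfk} applies.

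First I would verify that $0\in S$: at $s=0$ the hypothesis \eqref{psizero} with $\sigma=(d-2)/2$ gives $\|P_k\psi_m(0)\|_{F_k(T)} \leq b_k 2^{-k(d-2)/2}$, which is strictly smaller than $\tfrac12\eps^{-1/2}b_k 2^{-k(d-2)/2}$ as soon as $\eps$ is sufficiently small. Next, I would argue $S$ is closed in $[0,\infty)$: this reduces to continuity of the maps $s\mapsto \|P_k\psi_m(s)\|_{F_k(T)}$, which follows from the smoothness of the heat flow asserted in Proposition~\ref{TaoHeat} together with the smoothness of the initial data ($\phi\in H^{\infty,\infty}_Q(T)$), after dyadic frequency localization. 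Third, to show $S$ is open, suppose $s^*\in S$. Then \eqref{psibound} is satisfied on $[0,s^*]$ with constant $\eps^{-1/2}$, so Proposition~\ref{heatfk}(a) applies on $[0,s^*]$ and yields
\[
\|P_k\psi_m(s)\|_{F_k(T)} \leq C\, b_k 2^{-k(d-2)/2}(1+s2^{2k})^{-4}, \qquad s\in[0,s^*],
\]
with $C$ independent of $\eps$. Choosing $\eps$ small enough that $C\leq \tfrac14\eps^{-1/2}$, the defining inequality for $S$ holds with a factor $1/4$ rather than $1/2$ at $s=s^*$. By the continuity established in the closedness step, it continues to hold with the factor $1/2$ on some interval $[0,s^*+\delta]$, so $s^*+\delta\in S$. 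Hence $S$ is open, closed and nonempty, and therefore $S=[0,\infty)$.

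With the bootstrap assumption thus verified unconditionally, all conclusions \eqref{psifk}, \eqref{apsifk}, \eqref{pkam} of part (a) and the bounds \eqref{aadunu}, \eqref{aadunub} of part (b) follow from Proposition~\ref{heatfk} itself, since none of its conclusions logically depend on \eqref{psibound} beyond what the proof uses to close it. The main obstacle is the continuity statement in $s$ for the $F_k(T)$ norms: unlike the $L^2_x$ norm, the $F_k(T)$ norms combine Strichartz, local smoothing, and maximal function components, so continuity in $s$ requires invoking the parabolic smoothing effect of the covariant heat equation \eqref{heat3} together with the qualitative bounds \eqref{goodbounds1}--\eqref{goodbounds2}. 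However, once one fixes $k$ and works at a single dyadic frequency, the heat semigroup $e^{s\Delta_x}$ acts continuously on each of the constituent spaces (as recorded in \eqref{hebound}), and the nonlinear terms in \eqref{heatcov2} are controlled uniformly on $[0,s^*]$ by the bootstrap; this suffices to produce the required qualitative continuity in $s$ at each fixed $k$ and thereby close the argument.
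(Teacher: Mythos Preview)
Your bootstrap runs in the wrong variable. You try to induct on the heat time $s$, assuming \eqref{psibound} on $[0,s^*]$ and then invoking Proposition~\ref{heatfk}(a) ``on $[0,s^*]$''. But the proof of that proposition cannot be localized to a finite $s$-interval: the connection coefficients enter through the caloric-gauge formula
\[
A_m(s)=-\int_{s}^{\infty}\Im(\psi_0\overline{\psi_m})(r)\,dr,
\]
so the bound on $A_m(s)$ in Lemma~\ref{Aprop}---and hence the control of the heat nonlinearity $K_m$ in Lemma~\ref{heatcontrol}---requires quantitative information on $\psi_m(r)$ for \emph{all} $r\geq s$, in particular for $r>s^*$. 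Knowing the bootstrap only on $[0,s^*]$ gives you nothing there, and the openness step collapses.

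The paper instead bootstraps in the Schr\"odinger time-interval length $T'\in(0,T]$, defining $\Psi(T')=\sup_{k,s}(1+s2^{2k})^{4}2^{k(d-2)/2}b_k^{-1}\|P_k\psi_m(s)\|_{F_k(T')}$. For each $T'$ the hypothesis \eqref{psibound} is then assumed (or to be verified) for \emph{all} $s\geq 0$ simultaneously, which is exactly what the integral formula for $A_m$ needs. The nontrivial seed $\lim_{T'\to 0}\Psi(T')\lesssim 1$ is obtained by reducing (via \eqref{useboundin} and \eqref{goodbounds4}) to $L^2_x$ bounds at the single physical time $t=0$, and those are supplied for all $s\geq 0$ by Proposition~\ref{TaoHeat}, after one checks $c_k\lesssim b_k$ using the frame identity $\partial_m\phi=v\Re(\psi_m)+w\Im(\psi_m)$ at $s=0$. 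Continuity of $\Psi$ in $T'$ comes from \eqref{goodbounds4} with $\rho=1$. This is the mechanism you are missing; the argument genuinely uses that $T'$, not $s$, is the parameter in which one can shrink to trivialize the $F_k$ norm while still retaining global-in-$s$ control.
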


\begin{proof}[Proof of Corollary \ref{heatcor}]
  Define the function
  \[
  \Psi(T') = \sup_{k \in \Z} \sup_{s \geq 0} b_k^{-1}
  2^{k(d-2)/2}(1+s2^{2k})^{4} \| P_k \psi_m (s) \|_{F_k(T')}, \qquad
  0<T'\leq T.
  \]
  It follows easily from \eqref{goodbounds4} with $\rho=1$ that $\Psi$
  is an increasing, continuous function of $T'$. We will prove that
  \begin{equation}\label{alexnew20}
    \lim_{T' \to 0} \Psi(T') \lesssim 1.
  \end{equation}
  Since $\Psi:(0,T]\to[0,\infty)$ is a continuous increasing function
  and $\Psi(T')\leq \eps^{-1/2}$ implies $\Psi(T')\lesssim 1$ (see
  \eqref{psifk}), the corollary follows easily from
  \eqref{alexnew20}. To prove \eqref{alexnew20}, let
  $\psi_{m,0}(s,x)=\psi_m(s,x,0)$. Using \eqref{useboundin} and
  \eqref{goodbounds4}, it suffices to prove that
  \begin{equation}\label{alexnew21}
    \sup_{k \in \Z} \sup_{s \geq 0} b_k^{-1}
    2^{k(d-2)/2}(1+s2^{2k})^{4}
    \| P_k \psi_{m,0} (s) \|_{L^2_x}\lesssim 1.
  \end{equation}
  It follows from \eqref{psizero} that
  \begin{equation}\label{alexnew22}
    2^{k(d-2)/2}\| P_k \psi_{m,0} (0) \|_{L^2_x}\leq b_k.
  \end{equation}
  We need to extend this bound to $s>0$ with suitable parabolic decay.

  Recall the coefficients $c_k=c_k(d/2-1)$ defined in
  \eqref{ckenv}. We apply Proposition \ref{TaoHeat} on sufficiently
  short time intervals $(-T,T)$; using \eqref{goodbounds1}
  \begin{equation}\label{alexnew24}
    2^{dk/2}(1+s2^{2k})^{20}\big[\|P_k(v_0(s))\|_{L^2_x}+\|P_k(w_0(s))\|_{L^2_x}\big]\lesssim c_k,
  \end{equation}
  where $v_0(s,x)=v(s,x,0)$ and $w_0(s,x)=w(s,x,0)$. We use this bound
  at $s=0$, the identity \eqref{basis}, and the bounds
  \eqref{alexnew22} and \eqref{basicest5.1}; it follows that
  \begin{equation*}
    2^{k(d-2)/2}\|P_k\nabla\phi_0\|_{L^2_x}\lesssim b_k.
  \end{equation*}
  Since $\{b_k\}_{k\in\Z}$ is a frequency envelope, it follows that
  $c_k\lesssim b_k$, see definition \eqref{ckenv}. It follows from
  \eqref{goodbounds3} that
  \begin{equation*}
    \|P_k\psi_{m,0}(s)\|_{L^2_x}\lesssim c_k(1+s2^{2k})^{-20}2^{-k(d-2)/2}.
  \end{equation*}
  The bound \eqref{alexnew21} follows since $c_k\lesssim b_k$.
\end{proof}

Next we turn our attention to the Schr\"odinger equations
\eqref{schcov2}. Our main Schr\"odinger bootstrap result is the
following.

\begin{proposition} (Schr\"{o}dinger bootstrap estimates) Assume that
  $T\in(0,2^{2\K}]$ and $Q\in\mathbb{S}^2$.  Let $\{c_k\}_{k \in \Z}$
  be an $\eps_0$-frequency envelope with $\eps_0 \in (0,\eps_0(d)]$,
  and $\{c_k(\sigma_0)\}_{k \in \Z}$ another frequency envelope. Let
  $\phi \in {H}^{\infty,\infty}_Q(T)$ be a solution of the
  Schr\"odinger map equation \eqref{Sch1} whose initial data satisfies
  \begin{equation}
    \| P_k \nabla \phi_0 \|_{L^2_x} \leq  c_k(\sigma) 2^{\sigma k},
    \qquad \sigma \in \{(d-2)/2,\sigma_0\}. 
    \label{schid}\end{equation}
  Assume that $\phi$ satisfies the bootstrap condition
  \begin{equation}
    \| P_k \nabla \phi \|_{L^\infty_t L^2_x} \leq  \eps_0^{-1/2} 2^{-(d-2)k/2}c_{k}
    \label{bootso}\end{equation}
  and let $(\phi,v,w)$ be the caloric extension of $\phi$ given by
  Proposition~\ref{TaoHeat}, with the corresponding fields $\psi_m$,
  $A_m$.  Suppose also that at the initial parabolic time $s=0$ the
  functions $\{\psi_m\}_{m=1,d}$ satisfy the additional bootstrap
  condition
  \begin{equation}
    \| P_k \psi_m (0) \|_{G_k(T)} \leq \eps_0^{-1/2}  2^{-(d-2)k/2} c_{k}. 
    \label{bootsch}\end{equation}
  Then we have
  \begin{equation}
    \| P_k \psi_m (0) \|_{G_k(T)} \lesssim  c_{k}(\sigma) 2^{-\sigma k}, 
    \qquad \sigma \in \{(d-2)/2,\sigma_0\}.
    \label{bootschout}\end{equation}
  \label{schgk}
\end{proposition}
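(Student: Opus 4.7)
The plan is to apply the main linear estimate (Proposition \ref{linearmainrep}) to the Schr\"odinger equation \eqref{schcov2} for $\psi_m$ at parabolic time $s=0$, viewed as an inhomogeneous linear Schr\"odinger equation $(i\partial_t+\Delta_x)\psi_m(0) = L_m(0)$ in $(x,t)\in\R^d\times(-T,T)$, with $L_m$ as in \eqref{Schnonlin}. This reduces the desired bound \eqref{bootschout} to an $L^2_x$ estimate on the datum $\psi_m(0,\cdot,t=0)$ plus an $N_k(T)$ estimate on the nonlinearity $P_k L_m(0)$, both by $c_k(\sigma)2^{-\sigma k}$ with small constant.

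For the initial datum I would write $\psi_m(0,x,0) = v(0,x,0)\cdot\partial_m\phi_0 + i\,w(0,x,0)\cdot\partial_m\phi_0$ and split $v = v_\infty+(v-v_\infty)$, and analogously for $w$. The $v_\infty, w_\infty$ contributions reduce exactly to $P_k\nabla\phi_0$, controlled by \eqref{schid}, while the $v-v_\infty$ and $w-w_\infty$ contributions are handled by a standard Littlewood--Paley paraproduct using the Sobolev bounds \eqref{goodbounds1} at $s=0$, yielding $\|P_k \psi_m(0,\cdot,0)\|_{L^2_x}\lesssim c_k(\sigma)2^{-\sigma k}$ for $\sigma\in\{(d-2)/2,\sigma_0\}$.

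The main work is the $N_k(T)$ bound on $P_k L_m(0)$. The key preparatory step is to upgrade the weak bootstrap \eqref{bootsch} via the embedding $G_k(T)\hookrightarrow F_k(T)$ to the hypothesis \eqref{psizero} of Proposition \ref{TaoHeat2} with the enlarged frequency envelope $\widetilde b_k = \eps_0^{-1/2} c_k$; since $\sum \widetilde b_k^2\lesssim \eps_0$ is still small, Corollary \ref{heatcor} applies and supplies (i) $F_k(T)$ control of $\psi_m(s)$ for all $s\geq 0$ with parabolic decay, (ii) the crucial product bound \eqref{apsifk} for $A_m(s)\psi_l(s)$, and (iii) the $A_m(0), A_{d+1}(0)$ bounds \eqref{pkam}, \eqref{aadunu}, \eqref{aadunub}. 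Each piece of $L_m$ is then estimated in $N_k(T)$ separately: the cubic term $\psi_l\Im(\overline{\psi_l}\psi_m)$ via trilinear H\"older into the $L^{p'_d}$ Strichartz component; the potential terms $A_{d+1}\psi_m$, $A_l^2\psi_m$, $(\partial_l A_l)\psi_m$ via the $A_m\psi_l$ bound combined with the refined $S_k^{1/2}$ smoothing for $A_l$ from Lemma \ref{Aprop}; and the magnetic term $-2i A_l\partial_l\psi_m$ via the local smoothing framework, placing $\partial_l\psi_m$ in $L^{\infty,2}_\e$ (respectively $L^{\infty,2}_{\e,\la}$ in $d=2$), $A_l$ in $L^{2,\infty}_\e$ (respectively $L^{2,\infty}_{\e,W_{k+40}}$), and the product output in $L^{1,2}_\e$ (respectively $L^{1,2}_{\e,W_{k-40}}$). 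In each term one $\psi$ factor produces $c_k(\sigma)$, and the remaining $\widetilde b_k$-factors sum after $l^2$ summation to an $O(\eps_0^{1/2})$-gain that beats the $\eps_0^{-1/2}$-loss from the weak bootstrap, closing the estimate.

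The main obstacle will be the magnetic term in dimension $d=2$, where the failure of the plain maximal function bound \eqref{limaxb} forces use of the Galilean-summed spaces $L^{2,\infty}_{\e,W_{k+40}}$ and $L^{1,2}_{\e,W_{k-40}}$. Carefully matching the frequency-dependent discretizations $W_k$ across the trilinear frequency interactions in \eqref{schem}, and in particular handling the $\mathrm{High}\times\mathrm{High}\to\mathrm{Low}$ regime where the improved $2^{-k}$ caloric weight (in contrast to the Coulomb weight $2^{-j}$) is needed for summability, is the most delicate analytic step, and is precisely what motivates the choice of the caloric gauge over the Coulomb gauge.
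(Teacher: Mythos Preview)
Your overall strategy is correct and matches the paper's: apply Proposition~\ref{linearmainrep} to \eqref{schcov2}, control the initial datum via \eqref{schid} and the frame bounds from Proposition~\ref{TaoHeat}, and bound $\|P_k L_m\|_{N_k(T)}$ using the heat-flow output of Proposition~\ref{TaoHeat2}/Corollary~\ref{heatcor}. There are, however, two genuine gaps.

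\textbf{Closure at the noncritical level $\sigma=\sigma_0$.} Your sentence ``one $\psi$ factor produces $c_k(\sigma)$'' is circular when $\sigma=\sigma_0$: the bootstrap \eqref{bootsch} is only at the critical level, so nothing gives you $c_k(\sigma_0)$-control of $\psi_m(0)$ in $G_k(T)$ or $F_k(T)$ before you have proved it. The paper fixes this by defining the envelope $b_k(\sigma)$ directly from the \emph{actual} norms $\|P_j\psi_m(0)\|_{G_j(T)}$ (finite by \eqref{goodbounds4}), feeding $b_k(\sigma)$ into Proposition~\ref{TaoHeat2}, and proving $\|P_kL_m\|_{N_k(T)}\lesssim \eps_0\,2^{-\sigma k}b_k(\sigma)$. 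The linear estimate then yields the self-referential inequality $b_k(\sigma)\lesssim c_k(\sigma)+\eps_0 b_k(\sigma)$, which closes because $\eps_0$ is small. Your scheme works as stated for $\sigma=(d-2)/2$ (the $\eps_0^{1/2}$-gain you describe is correct there) but needs this reformulation for $\sigma_0$.

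\textbf{Verifying the hypothesis for the $A_{d+1}$ bound, and the magnetic term.} To invoke \eqref{aadunu}/\eqref{aadunub} you must first verify \eqref{psidpu}, i.e.\ $\|P_k\psi_{d+1}(0)\|_{L^{p_d}}\lesssim b_k(\sigma)2^{-(\sigma-1)k}$; this uses the identity \eqref{schcov} and is a separate short lemma in the paper. More substantively, your description of the magnetic term (``place $A_l$ in $L^{2,\infty}_\e$'') is too heuristic: $A_l(0)$ is \emph{not} estimated directly in the maximal-function space. Instead the paper inserts the integral formula \eqref{Aform}, writes $A_l\partial_l\psi_m$ as $\int_0^\infty \psi(s)\,\DD\psi(s)\,\partial\psi(0)\,ds$, and bounds each frequency interaction via a trilinear constant $C(k,k_1,k_2,k_3)$ built from the bilinear $L^2$ estimates \eqref{ff}--\eqref{flowghigh} and the $N_k$ estimates \eqref{Fhfl}--\eqref{Flgh}. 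The local-smoothing/maximal-function pairing you describe is exactly what produces the off-diagonal gain $2^{(k_1+k_2)/2-k}$ in the Low$\times$Low$\to$High case, but the other frequency regimes (in particular High$\times$High$\to$Low, where the lateral Strichartz norms $L^{6,3}_\e$ enter in $d=2$) require the full trilinear machinery rather than a single H\"older pairing.
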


The above proposition is proved in Section~\ref{PERTURB} by applying
the linear result in Proposition~\ref{linearmainrep} to the equation
\eqref{schcov2}. The right hand side in \eqref{schcov2} is estimated
in the $N_k(T)$ spaces using the bounds in Proposition ~\ref{heatfk}
for the differentiated fields $\psi_m$ and the connection coefficients
$A_m$.  In what follows we show that Proposition~\ref{schgk} implies
Theorem~\ref{newth0} and the bound \eqref{am3}.

\begin{proof}[\bf Proof of Theorem~\ref{newth0} and the bound
  \eqref{am3}.]

  Consider an initial data $\phi_0\in H^\infty_Q$ for the
  Schr\"odinger map equation \eqref{Sch1} which satisfies $\|
  \phi_0\|_{\dot H^{d/2}} \ll 1$. Our starting point is the local
  existence and uniqueness of smooth solutions of the Schr\"{o}dinger
  map equation (see, for example, \cite{Ga}): if $\phi_0\in
  H^\infty_Q$ then there is $T= T(\|\phi_0\|_{H^{2d+20}_Q})>0$ and a
  unique solution $\phi\in C((- T,T):H^\infty_Q)$ of the initial value
  problem \eqref{Sch1}.

 In order to prove  Theorem  \ref{newth0} we 
take $\sigma_1=2d+20$ in what follows. For  Theorem \ref{Main1}, on the other
hand, we allow $\sigma_1$ to be arbitrarily large.   It suffices
  to prove that the solution $\phi\in C((- T,T):H^\infty_Q)$ of the
  initial value problem \eqref{Sch1} satisfies the bounds \eqref{am2}
  and \eqref{am3} with constants which are independent of $T$.  In
  preparation for the proof of the well-posedness part of Theorem
  \ref{Main1}, we prove stronger bounds for the $\phi$ in terms of the
  frequency envelopes of $\phi_0$.

  Define the frequency envelopes $c_k(\sigma)$ as in
  \eqref{ckenv}. Our goal for the rest of this proof is to use
  Proposition~\ref{schgk} in order to prove that
  \begin{equation}
    \sup_{t\in(-T,T)}\| P_k \nabla \phi (0,.,t) \|_{L^2_x} \lesssim  
    c_{k} (\sigma)2^{-\sigma k} , \qquad \sigma \in [(d-2)/2,\sigma_1-1],
    \label{phiboot}\end{equation}
  with implicit constants which depend only on $d$ and $\sigma_1$.  In
  view of \eqref{hsbd}, and \eqref{conserve}, this suffices to
  establish \eqref{am2} and \eqref{am3}. Then \eqref{allsigma} follows
from \eqref{am3} for $\sigma$ up to $2d+20$ and from the result in \cite{Ga}
for larger $\sigma$.

  For $T'\in(0,T]$ let
  \[
  \Psi(T') = \sup_{k \in \Z} c_{k}^{-1} 2^{(d-2) k/2} ( \|P_{k} \nabla
  \phi\|_{L^\infty_{T'}L^2_x}+ \| P_k \psi_m (0) \|_{G_k(T')}), \quad
  0<T'\leq T.
  \]
  The function $\Psi:(0,T]\to[0,\infty)$ is well-defined, increasing
  and continuous, using \eqref{goodbounds4}, the fact that $\phi\in
  C((-T,T):H^\infty_Q)$, and the fact that $c_k$ is a frequency
  envelope. We show now that
  \begin{equation}\label{alexnew30}
    \begin{cases}
      &\text{  if }\Psi(T')\leq \eps_0^{-1/2}\text{ then }\Psi(T')\lesssim 1;\\
      &\lim_{T'\to 0}\Psi(T')\lesssim 1.
    \end{cases}
  \end{equation}
  The limit in the second line of \eqref{alexnew30} follows from the
  definition of the coefficients $c_k$, see \eqref{hsbd}, and
  \eqref{goodbounds3} (we apply Proposition \ref{TaoHeat} on
  sufficiently short intervals $(-T',T')$). Also, using Proposition
  \ref{schgk}, if $\Psi(T')\leq \eps_0^{-1/2}$ then
  \begin{equation*}
    \sup_{k \in \Z} c_{k}^{-1}
    2^{(d-2) k/2}\| P_k \psi_m (0) \|_{G_k(T')}\lesssim 1.
  \end{equation*}
  Assuming $\Psi(T')\leq \eps_0^{-1/2}$, we apply
  Proposition~\ref{TaoHeat} to conclude that
  \begin{equation}
    \| P_k \nabla v(0) \|_{L^\infty_{T'} L^2_x}+ \| P_k \nabla w(0) \|_{L^\infty_{T'} L^2_x} 
    \lesssim \eps_0^{-1/2} 2^{-(d-2)k/2} c_k. 
    \label{pknpb}\end{equation}
  On the other hand from \eqref{bootschout}
  \begin{equation}
    \| P_k  \psi_m(0) \|_{L^\infty_{T'} L^2_x} \lesssim  2^{-\sigma k}
    c_k(\sigma), 
    \qquad    \sigma \in [(d-2)/2,\sigma_1-1].
    \label{pknpa}\end{equation}
  We use the relation, see \eqref{basis}, $\partial_m \phi =
  v\Re(\psi_m)+w\Im(\psi_m)$.  From \eqref{pknpa} with $\sigma =
  (d-2)/2$ and \eqref{pknpb}, using \eqref{basicest5.1}, we obtain
  \begin{equation*}
    \| P_k \nabla \phi \|_{L^\infty_{T'} L^2} \lesssim  2^{-(d-2)k/2} c_k.
    \label{pknp}\end{equation*}
  The implication in the first line of \eqref{alexnew30} follows.

  It follows from \eqref{alexnew30} and the continuity of $\Psi$ that
  $\Psi(T)\lesssim 1$. Thus
  \begin{equation}\label{alexnew31}
    \|P_{k} \nabla \phi\|_{L^\infty_{T}L^2_x}+ \| P_k \psi_m (0) \|_{G_k(T)}\lesssim c_k2^{-k(d-2)/2}.
  \end{equation}
  This suffices to prove the bound \eqref{phiboot} for
  $\sigma=(d-2)/2$. To establish \eqref{phiboot} for a different
  $\sigma$ we denote by $B>0$ the best constant so that
  \begin{equation}
    \| P_k \nabla \phi \|_{L^\infty_{T} L^2_x} \leq B c_{k}(\sigma) 2^{-\sigma k}.
    \label{scor3}\end{equation}
  Such a constant exists because of the smoothness of $\phi$ and the
  fact that $c_k(\sigma)$ is a frequency envelope. Using
  \eqref{alexnew31} and Proposition~\ref{TaoHeat} we have
  \begin{equation}
    \| P_k v(0) \|_{L^\infty_{T} L^2_x}+\| P_k w(0) \|_{L^\infty_{T} L^2_x} \lesssim
    B c_k(\sigma) 2^{-(\sigma+1) k}.
    \label{scor4}\end{equation}
  From \eqref{pknpa} and \eqref{scor4}, by \eqref{basicest5.1}, we
  obtain
  \begin{equation}
    \| P_k \nabla \phi \|_{L^\infty L^2} \lesssim (1+\eps B)
    2^{-\sigma k} c_k(\sigma).
  \end{equation} 
  By the optimality of $B$ in \eqref{scor3} we conclude that $B
  \lesssim 1+\eps B$, which yields $B \lesssim 1$. Thus
  \eqref{phiboot} is proved.
\end{proof}

To define rough solutions and study the dependence of solutions on the
initial data we consider the linearized equation \eqref{schlin} and
prove that it is well-posed in $\dot H^{(d-2)/2}$.

\begin{proposition} 
  Let $\phi_0 \in H^\infty_Q$ be an initial data for the Schr\"odinger
  map equation which satisfies the smallness condition
  $\|\phi_0\|_{\dot H^{d/2}} \ll 1$. Let $\phi$ be the corresponding global
  solution to \eqref{Sch1}, $(\phi,u,v)$ its caloric extension and
  $\psi_m$, $A_m$ as before.  Then for each initial data $\psi_{lin}
  (0) \in H^\infty$ there exists an unique solution $\psi_{lin} \in
  C(\R,H^\infty)$ for \eqref{schlin}, which satisfies the bounds
  \begin{equation}
    \sum_{k} 2^{(d-2) k} \| P_k \psi_{lin} \|_{G_k(T)}^2 \lesssim 
    \|\psi_{lin}(0)\|^2_{\dot H^{\frac{d-2}2}}
    \label{psl}\end{equation}
\end{proposition}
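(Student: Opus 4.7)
The plan is to apply the main linear estimate from Proposition~\ref{linearmainrep} to the linearized equation \eqref{schlin} with the nonlinear terms on the right-hand side treated as forcing, and close a frequency-envelope bootstrap that mirrors the one used for Proposition~\ref{schgk}. Note that \eqref{schlin} has exactly the same algebraic structure as \eqref{schcov2} for $\psi_m$, except that one of the factors of $\psi$ is replaced by $\psilin$; equivalently, writing $\psilin$ in place of $\psi_m$ and freezing the background coefficients $A_l, A_{d+1}, \psi_l$ coming from the given smooth Schr\"odinger map $\phi$. Thus the entire multilinear machinery that will be developed in Section~\ref{PERTURB} to estimate the right-hand side of \eqref{schcov2} in $N_k(T)$ applies verbatim, with the bounds on the background factors $\psi_l, A_l, A_{d+1}$ furnished by Corollary~\ref{heatcor} and the bound \eqref{bootschout} already established for the Schr\"odinger map $\phi$.

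Concretely, let $c_k = c_k(d/2-1)$ be the frequency envelope of $\phi_0$ defined in \eqref{ckenv}, so that $\sum_k c_k^2 \lesssim \|\phi_0\|_{\dot H^{d/2}}^2 \ll 1$; and introduce a second frequency envelope
\[
d_k = \sup_{k' \in \Z} 2^{(d-2)k'/2} \| P_{k'} \psilin(0)\|_{L^2_x} \, 2^{-\delta|k-k'|},
\]
so that $\sum_k d_k^2 \lesssim \|\psilin(0)\|_{\dot H^{(d-2)/2}}^2$ and $\|P_k \psilin(0)\|_{L^2_x}\leq d_k 2^{-(d-2)k/2}$. My goal is the bootstrap bound
\begin{equation}\label{goal}
\|P_k \psilin\|_{G_k(T)} \lesssim d_k\, 2^{-(d-2)k/2}, \qquad k\in \Z,
\end{equation}
from which \eqref{psl} follows by square-summing in $k$. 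Using Proposition~\ref{linearmainrep} it is enough to verify
\[
\bigl\| P_k\bigl[A_l \partial_l \psilin + (A_{d+1}+ A_l^2 - i\partial_l A_l)\psilin - \psi_l\Im(\overline{\psi_l}\psilin)\bigr]\bigr\|_{N_k(T)} \lesssim c_k \cdot d_k\, 2^{-(d-2)k/2}
\]
for each of the schematic trilinear (and bilinear-with-$A_{d+1}$) terms. The magnetic contribution $A_l\partial_l\psilin$, which is the one that forces the use of the caloric gauge in the first place, will be handled exactly as in the proof of Proposition~\ref{schgk}: decompose $A_l$ via \eqref{Aform} into a bilinear expression in $\psi$ integrated in the heat time $s$, yielding the schematic form \eqref{schem}; the heat-time decay \eqref{apsifk} of $A_m \psi_l$ in $F_k(T)$, combined with the inhomogeneous local smoothing space $L^{1,2}_{\e,W_{k-40}}$ (in $d=2$) or $L^{1,2}_\e$ (in $d\geq 3$), absorbs the derivative loss. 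The remaining lower-order terms are treated by the Strichartz components of $F_k(T)$, using \eqref{pkam} and \eqref{aadunu}--\eqref{aadunub} for $A_{d+1}$.

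The bootstrap itself is standard: define
\[
\Psi(T')=\sup_{k\in\Z} d_k^{-1} 2^{(d-2)k/2} \|P_k \psilin\|_{G_k(T')}, \qquad T'\in(0,T],
\]
which is continuous and increasing in $T'$, tends to a constant as $T' \to 0$ by smoothness of $\psilin$ and the construction of $d_k$, and satisfies $\Psi(T')\lesssim 1$ once $\Psi(T')\leq \eps_0^{-1/2}$ by the trilinear estimates above combined with $\sum c_k^2\ll 1$. A continuity argument then yields \eqref{goal} on $[-T,T]$ for every $T\leq 2^{2\K}$, hence globally after letting $\K\to\infty$. Global existence of smooth $\psilin\in C(\R,H^\infty)$ follows from standard local theory for linear Schr\"odinger equations with smooth coefficients (since $\phi$ and hence all $A_m,\psi_m$ are smooth) together with the a priori bound \eqref{goal} applied at each Sobolev level $\sigma$ using the higher-order envelopes; uniqueness follows from \eqref{goal} applied to the difference of two solutions with zero initial data. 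The main obstacle, as in the proof of Proposition~\ref{schgk}, is the magnetic term $A_l\partial_l\psilin$ in the high$\times$high$\to$low regime, where one must exploit the improved frequency factor $2^{-\kma}$ of \eqref{schem} produced by the caloric gauge; once this is in hand, the linearization does not introduce any new analytic difficulty beyond relabeling one factor of $\psi$ as $\psilin$.
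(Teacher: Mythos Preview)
Your proposal is correct and follows exactly the approach the paper intends: the paper's entire proof is the single sentence ``The proof of this result is identical to the proof of Proposition~\ref{heatfk}'' (almost certainly a misprint for Proposition~\ref{schgk}, since \eqref{schlin} is a Schr\"odinger equation and the conclusion is a $G_k(T)$ bound), and your write-up is a faithful expansion of that---apply Proposition~\ref{linearmainrep}, feed the nonlinearity through the same bilinear/trilinear machinery as in Section~\ref{PERTURB} with the background factors $\psi_l,A_l,A_{d+1}$ already controlled by Proposition~\ref{schgk} and Corollary~\ref{heatcor}, and close via a frequency-envelope continuity argument. One small remark: because \eqref{schlin} is genuinely linear in $\psilin$, the ``bootstrap'' reduces to the inequality $d'_k\lesssim d_k+\eps_0 d'_k$ for the $G_k(T)$-envelope $d'_k$ of $\psilin$, which can be absorbed directly once finiteness of $d'_k$ is known from smoothness; the continuity argument is then only used to justify that absorption, not to improve a large constant.
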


The proof of this result is identical to the proof of
Proposition~\ref{heatfk}. As a consequence of this we obtain the
Lipschitz dependence of solutions to \eqref{Sch1} in terms of the
initial data in a weaker topology:

\begin{proposition}
  Consider two initial data $\phi_0^0$ and $\phi_0^{1}$ in
  $H^\infty_Q$ which satisfy the smallness condition
  $\|\phi_0^h\|_{\dot H^{\frac{d}2}} \ll 1$, $h=0,1$, and let
  $\phi^0$, $\phi^1$ be the corresponding global solutions for
  \eqref{schlin}. Then
  \begin{equation}
    \sum_k 2^{(d-2)k} \| P_k( \phi^0 -\phi^1) \|_{L^\infty L^2}^2 \lesssim
    \| \phi^0_0 -\phi^1_0\|_{\dot H^{\frac{d-2}{2}}}^2
    \label{l2diff}\end{equation}
  \label{pdiff}
\end{proposition}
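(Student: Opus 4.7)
The plan is to reduce the Lipschitz bound to the linearized estimate \eqref{psl} via a homotopy argument. I would first construct a smooth one-parameter family $h\mapsto\phi_0^h\in H^\infty_Q$, $h\in[0,1]$, interpolating between $\phi_0^0$ and $\phi_0^1$, such that $\|\phi_0^h\|_{\dot H^{d/2}}\ll 1$ uniformly in $h$, $\partial_h\phi_0^h(x)\in T_{\phi_0^h(x)}\mathbb{S}^2$ for every $(x,h)$, and
\[
\int_0^1 \|\partial_h\phi_0^h\|_{\dot H^{(d-2)/2}}^2\,dh\lesssim \|\phi_0^0-\phi_0^1\|_{\dot H^{(d-2)/2}}^2.
\]
The natural choice is pointwise spherical geodesic interpolation, which is legitimate thanks to the smallness of $\phi_0^j-Q$ recovered from the Sobolev hypothesis via standard approximation.

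For each $h$, let $\phi^h=S_Q(\phi_0^h)\in C(\R:H^\infty_Q)$ be the global solution from Theorem \ref{newth0}, $(v^h,w^h)$ the caloric frame of Proposition \ref{TaoHeat}, and $\psi_m^h, A_m^h$ the associated fields and connection coefficients. Smooth dependence of the local smooth flow on data yields that $h\mapsto\phi^h$ is smooth and that $\philin^h:=\partial_h\phi^h$ satisfies \eqref{eqlin} along $\phi^h$ with initial data $\partial_h\phi_0^h$. Setting $\psilin^h={}^tv^h\cdot\philin^h+i\,{}^tw^h\cdot\philin^h$, the field $\psilin^h$ solves \eqref{schlin}. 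Transferring between the extrinsic field $\philin^h$ at $t=0$ and its frame representation $\psilin^h(0)$ by a paraproduct against the smooth frame (using the bounds on $v^h(0),w^h(0)$ from Proposition \ref{TaoHeat}) gives $\|\psilin^h(0)\|_{\dot H^{(d-2)/2}}\lesssim \|\partial_h\phi_0^h\|_{\dot H^{(d-2)/2}}$, so the proposition preceding \ref{pdiff} yields
\[
\sum_k 2^{(d-2)k}\|P_k\psilin^h\|_{G_k(T)}^2\lesssim \|\partial_h\phi_0^h\|_{\dot H^{(d-2)/2}}^2.
\]
Inverting the frame representation via $\philin^h=v^h\Re(\psilin^h)+w^h\Im(\psilin^h)$ and applying a second paraproduct (in $L^\infty_t L^2_x$, using $G_k(T)\hookrightarrow L^\infty_tL^2_x$ and the uniform-in-$h$ heat-flow bounds on $v^h,w^h$), we conclude
\[
\sum_k 2^{(d-2)k}\|P_k\philin^h\|_{L^\infty_tL^2_x}^2\lesssim \|\partial_h\phi_0^h\|_{\dot H^{(d-2)/2}}^2.
\]

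Finally, writing $\phi^1-\phi^0=\int_0^1\philin^h\,dh$, Minkowski's inequality in $\ell^2_k$ followed by Cauchy--Schwarz in $h\in[0,1]$ gives
\[
\sum_k 2^{(d-2)k}\|P_k(\phi^1-\phi^0)\|_{L^\infty_tL^2_x}^2\lesssim \int_0^1\|\partial_h\phi_0^h\|_{\dot H^{(d-2)/2}}^2\,dh\lesssim \|\phi_0^0-\phi_0^1\|_{\dot H^{(d-2)/2}}^2,
\]
which is \eqref{l2diff}. The main obstacle will be the transfer between the extrinsic field $\philin^h$ and its frame representation $\psilin^h$ at the critical regularity $\dot H^{(d-2)/2}$: since the caloric frame depends on $h$ through $\phi^h$, this step requires combining the uniform-in-$h$ heat-flow bounds of Proposition \ref{TaoHeat} with a careful paraproduct analysis in the spirit of \eqref{motiv1}. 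A secondary technical point is the construction of the homotopy with the stated smallness and tangency properties, but this is a routine pointwise geodesic construction combined with Sobolev approximation.
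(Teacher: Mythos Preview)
Your proposal is correct and follows essentially the same strategy as the paper: transfer \eqref{psl} from $\psilin$ to $\philin$ via the frame, construct a one-parameter family $\phi_0^h$ joining the two data, apply the linearized bound to $\partial_h\phi^h$, and integrate in $h$. The only notable difference is that the paper invokes an external result (Proposition~3.13 of \cite{Tat2}) for the interpolating family and its $\dot H^{(d-2)/2}$ bound rather than constructing it by hand via geodesic interpolation---your ``routine'' construction does require some care, since the endpoints need not be pointwise close on $\mathbb{S}^2$ and the $\dot H^{(d-2)/2}$ control on $\partial_h\phi_0^h$ is not entirely trivial for $d\geq 3$.
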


\begin{proof}
  By \eqref{psl} we have the global in time bound
  \[
  \sum_{k} 2^{(d-2) k} \| P_k \psi_{lin} \|_{L^\infty L^2}^2 \lesssim
  \|\psi_{lin}(0)\|^2_{\dot H^{\frac{d-2}2}}
  \]
  As in the proof of Theorem~\ref{newth0}, this bound easily transfers
  to the functions $\philin$, and we obtain
  \begin{equation}
    \sum_k 2^{(d-2)k} \|P_k \philin\|_{L^\infty L^2}^2 \lesssim
    \|\philin(0)\|_{\dot H^{\frac{d-2}{2}}}^2
    \label{infdiff}\end{equation}
  for all solutions $\philin \in C(\R, H^\infty)$ to \eqref{eqlin}.

  Any two initial data $\phi_0^0$ and $\phi_0^{1}$ in $H^\infty_Q$
  which satisfy the smallness condition $\|\phi_0^h\|_{\dot
    H^{\frac{d}2}} \ll 1$, $h=0,1$ can be joined with a one parameter
  family of initial data as follows:

\begin{lemma} [Proposition 3.13, \cite{Tat2}]
  Consider two functions $\phi_0^0, \phi_0^{1} \in H^\infty_Q$ so that
  $\|\phi_0^h\|_{\dot H^{\frac{d}2}} \ll 1$, $h=0,1$. Then there exists a
  smooth one parameter family of initial data $\{ \phi^h_0\}_{h \in
    [0,1]}\in C^\infty([0,1];H^\infty)$, taking values in $
  H^\infty_Q$, which joins them, so that the smallness condition
  $\|\phi_0^h\|_{\dot H^{\frac{d}2}} \ll 1$, $h\in [0,1]$ is satisfied
  uniformly and
  \begin{equation}
    \int_{0}^{1} \| \partial_h \phi^h_0\|_{\dot H^{\frac{d-2}{2}}} \approx
    \| \phi^0_0 -\phi^1_0\|_{\dot H^{\frac{d-2}{2}}}
    \label{twmref}\end{equation}
\end{lemma}

The corresponding family of global solutions $\phi^h$ is smooth with
respect to $h$, and the functions $\partial_{h}\phi^h $ solve the
linearized equation \eqref{schlin} with $\phi$ replaced by $\phi_h$.
Applying \eqref{infdiff} to $\partial_{h}\phi^h $ we obtain
\[
\sum_{k} 2^{(d-2) k} \| P_k \partial_h \phi^h \|_{L^\infty L^2}^2
\lesssim \|\partial_h \phi^h(0)\|^2_{\dot H^{\frac{d-2}2}}
\]
The estimate \eqref{l2diff} is obtained by integrating with respect to
$h$ due to \eqref{twmref}.
\end{proof}

The above proposition allows us to conclude the proof of
Theorem~\ref{Main1}.

\begin{proof}[\bf Proof of Theorem~\ref{Main1}] The bound \eqref{am3}
  was proved earlier. To show that the map $S_Q$ admits an unique
  continuous extension
  \[
  S_Q: \dot H^{\frac{d}2} \cap \dot H^{\frac{d-2}2}_Q \to C(\R; \dot
  H^{\frac{d}2} \cap \dot H^{\frac{d-2}2}_Q)
  \]
  it suffices to consider a sequence of smooth initial data $\phi^n_0
  \in H^\infty_Q$ which satisfy uniformly the smallness condition $
  \|\phi^n_0\|_{\dot H^{\frac{d}2} } \ll 1 $ and so that $\phi^n_0 \to
  \phi_0$ in $\dot H^{\frac{d}2} \cap \dot H^{\frac{d-2}2}_Q$, and
  show that the corresponding sequence of global solutions is Cauchy
  in the space in $C(\R; \dot H^{\frac{d}2} \cap \dot
  H^{\frac{d-2}2}_Q)$.  By Proposition~\ref{pdiff} it follows the
  $\phi^n$ is Cauchy in $C(\R; \dot H^{\frac{d-2}2}_Q)$,
  \begin{equation}
    \lim_{n,m \to \infty} \| \phi^n -\phi^m\|_{ C(\R; \dot
      H^{\frac{d-2}2})} = 0
    \label{conta}\end{equation}
  Consider frequency envelopes $\{ c^n_k\}$ associated as in
  \eqref{ckenv} to $\phi^n_0$.  Since $\phi^n_0$ is convergent in
  $\dot H^{\frac{d}2}$ we can choose the corresponding envelopes $\{
  c^n_k\}$ to converge in $l^2$. Then we have the uniform summability
  property
  \begin{equation}
    \lim_{k_0 \to \infty} \sup_{n} \sum_{k > k_0} (c_k^n )^2 = 0
    \label{contb}\end{equation}
  Now we use \eqref{phiboot} to estimate
  \[
  \begin{split}
    \| \phi^n -\phi^m\|_{ C(\R; \dot H^{\frac{d}2})} \leq & \| P_{\leq
      k_0} (\phi^n -\phi^m)\|_{ C(\R; \dot H^{\frac{d}2})} + \|
    P_{>k_0} \phi^n \| _{ C(\R; \dot H^{\frac{d}2})}+\| P_{>k_0}
    \phi^m\|_{ C(\R; \dot H^{\frac{d}2})}
    \\
    \lesssim &\ 2^{k_0} \| P_{\leq k_0} (\phi^n -\phi^m)\|_{ C(\R;
      \dot H^{\frac{d-2}2})} + \sum_{k > k_0} (c_k^n )^2 + (c_k^n )^2
  \end{split}
  \]
  Hence using \eqref{conta} we have
  \[
  \limsup_{n,m \to \infty} \| \phi^n -\phi^m\|_{ C(\R; \dot
    H^{\frac{d}2})} \lesssim \sup_n \sum_{k > k_0} (c_k^n )^2
  \]
  Letting $k_0 \to \infty$, by \eqref{contb} we obtain
  \[
  \limsup_{n,m \to \infty} \| \phi^n -\phi^m\|_{ C(\R; \dot
    H^{\frac{d}2})} = 0
  \]
  and the argument is concluded.

  The continuity of the solution operator $S_Q$ in higher Sobolev
  spaces
  \[
  S_Q : \dot H^{\sigma} \cap \dot H^{\frac{d-2}2}_Q \to C(\R; \dot
  H^{\sigma} \cap \dot H^{\frac{d-2}2}_Q), \qquad \frac{d}2 < \sigma
  \leq \sigma_1
  \]
  is obtained in the same manner. The proof of Theorem~\ref{Main1} is
  concluded.
\end{proof}

\section{The heat flow: proof of Proposition~\ref{heatfk}.}\label{concoef}

For $k \in \Z$ we denote 
\begin{equation}
a(k) = \sup_{s \in [0,\infty)} (1+s2^{2k})^{4} \sum_{m=1}^d \| P_k \psi_m(s)\|_{F_k(T)}.
\end{equation}
For $\sigma \in \{(d-2)/2,\sigma_0\}$ we introduce the frequency envelopes
\begin{equation}
a_k(\sigma) = \sup_{j \in \Z} 2^{\sigma j} 2^{-\delta |k-j|} a(j).
\label{aksdef}\end{equation}
These are finite and belong to $l^2$ due to \eqref{goodbounds4} and \eqref{useboundin}. For \eqref{psifk} we need to show that
$a_k(\sigma) \lesssim b_k(\sigma)$. On the other hand from the
bootstrap assumption \eqref{psibound} we know that $a_k=a_k(d/2-1) \leq
\eps^{-1/2} b_k$.  In particular this implies that
\begin{equation}
\sum_{k \in \Z} a_k^2 \leq \eps.
\label{akeps}\end{equation}

We prove first a bilinear estimate.

\begin{lemma}\label{building} Assume that $T\in(0,2^{2\mathcal{K}}]$,
  $f,g\in {H}^{\infty,\infty}(T)$, $P_k f\in F_k(T)\cap
  S_k^\omega(T)$, $P_k g\in F_k(T)$ for some $\omega\in[0,1/2]$ and
  any $k\in\Z$, and
  \begin{equation*}
    \al_k=\sum_{|j-k|\leq 20}\|P_{j}f\|_{F_{j}(T)\cap S_{j}^\omega(T)},\quad\be_k=\sum_{|j-k|\leq 20}\|P_{j}g\|_{F_{j}(T)}.
  \end{equation*}
  Then, for any $k\in\Z$
  \begin{equation}\label{basbo3.3}
      \|P_k(fg)\|_{F_k(T)\cap S^{1/2}_k(T)} \lesssim \sum_{j\leq
        k}2^{\frac{jd}2}(\be_k\al_{j}+\al_k\be_{j})+2^{\frac{kd}2}\sum_{j\geq
        k}2^{(j-k)(\frac{2d}{d+2}-\omega)}\alpha_{j}\be_{j}.
  \end{equation}
\end{lemma}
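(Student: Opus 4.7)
The strategy is a paraproduct decomposition of $fg$ combined with Bernstein and H\"older-type estimates tailored to the frequency interaction type. One writes
$$P_k(fg) = P_k\Big(\sum_{|j_1-k|\leq 4}P_{j_1}f\cdot P_{\leq k-5}g\Big) + P_k\Big(\sum_{|j_2-k|\leq 4}P_{\leq k-5}f\cdot P_{j_2}g\Big) + P_k\Big(\sum_{j\geq k-4,\,|j'-j|\leq 4} P_jf\cdot P_{j'}g\Big),$$
and by the support of the Fourier transform these three pieces (high-low, low-high, high-high to low) exhaust $P_k(fg)$. The high-low and low-high pieces will produce the first sum on the right-hand side of \eqref{basbo3.3}, while the high-high to low piece produces the second.

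For the high-low piece, the factor $P_{j_1}f$ lives at frequency $\sim 2^k$ and will be placed in the same norm used on the output, while the low-frequency factor $P_jg$ ($j\leq k-5$) is pushed into $L^\infty_{x,t}$ via Bernstein,
$$\|P_{j}g\|_{L^\infty_{x,t}}\lesssim 2^{jd/2}\|P_jg\|_{L^\infty_tL^2_x}\lesssim 2^{jd/2}\beta_j,$$
and then summed in $j\leq k-5$, yielding the $\alpha_k\sum_{j\leq k}2^{jd/2}\beta_j$ contribution; the symmetric low-high piece produces the $\beta_k\sum_{j\leq k}2^{jd/2}\alpha_j$ contribution. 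For the $F_k(T)$ component of the output norm I would invoke \eqref{useboundin}, so that only the $L^\infty_tL^2_x$ and $L^{p_d}_{x,t}$ norms need be checked by H\"older pairing with $L^\infty_{x,t}$; for the $S_k^{1/2}(T)$ component, the same H\"older computation is repeated in each of the three Strichartz-type norms of \eqref{sk} with $\omega=1/2$.

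For the high-high to low piece $P_jf\cdot P_{j'}g$ with $j\sim j'\geq k$, Bernstein on the output supplies a factor $2^{kd/2}$ to compensate for collapsing the natural product frequency $2^j$ down to $2^k$, while a refined Strichartz estimate from $S^\omega_j$ applied to one factor yields an $\omega$-order derivative gain. Schematically, for each of the constituent norms one pairs the maximal function/Strichartz norms of the two factors in such a way that the Bernstein loss works out to $2^{(j-k)\cdot 2d/(d+2)}$ (this is the scaling of the $L^{p_d}_xL^\infty_t$-type bound in $F_j$, rescaled from frequency $j$ to frequency $k$), and the $S^\omega_j$ placement on the companion factor converts part of this into the gain $2^{-\omega(j-k)}$. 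Summing over $j\geq k$ and $|j'-j|\leq 4$ gives the second sum in \eqref{basbo3.3}.

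The main obstacle will be the exponent bookkeeping in the high-high to low case: one must verify that for \emph{each} of the several norms comprising $F_k(T)\cap S_k^{1/2}(T)$ (namely $L^\infty_tL^2_x$, $L^{p_d}_{x,t}$, $L^{p_d}_xL^\infty_t$, and, for $S^{1/2}_k$, the refined $L^\infty_tL^{2_{1/2}}_x$, $L^{p_d}_tL^{p_{d,1/2}}_x$, $L^{p_{d,1/2}}_xL^\infty_t$) there exists a bilinear H\"older pairing with one factor estimated in $S^\omega_j$ and the other in $F_{j'}$ producing precisely the power $2d/(d+2)-\omega$. The $L^2_xL^\infty_t$ slot of $F_k$ is covered by Bernstein on the output $L^2_x$ combined with H\"older in $L^\infty_t$, which fits in the same bookkeeping scheme. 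Once this case analysis is verified, the bound \eqref{basbo3.3} assembles by adding the three paraproduct contributions.
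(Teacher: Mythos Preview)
Your overall strategy---paraproduct decomposition into high-low, low-high, and high-high pieces, with the high-high piece handled via Bernstein and the $S^\omega$ refinement---matches the paper's proof exactly. The high-high bookkeeping you sketch is essentially what the paper does in its bound \eqref{basbo2}: one reduces $F_k\cap S^{1/2}_k$ to $L^2_xL^\infty_t$ (via \eqref{useboundin}) plus the $S^{1/2}_k$ components, then pairs $L^{p_{d,\omega}}_xL^\infty_t$ against $L^{p_d}_xL^\infty_t$ for the first and $L^\infty_tL^2_x$ against $L^\infty_tL^2_x\cap L^{p_d}$ for the rest.

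There is, however, a concrete gap in your high-low treatment. You propose to control the $F_k(T)$ norm of $P_k(P_{j_1}f\cdot P_{\leq k-5}g)$ by invoking \eqref{useboundin} and then checking ``$L^\infty_tL^2_x$ and $L^{p_d}$''. But \eqref{useboundin} reads $\|h\|_{F_k}\lesssim \|h\|_{L^2_xL^\infty_t}+\|h\|_{L^{p_d}}$, not $L^\infty_tL^2_x$. After correcting the typo, you would need $\|P_{j_1}f\|_{L^2_xL^\infty_t}$ to be controlled by $\alpha_k$, and this fails: the $F_{j_1}$ norm contains only $L^{p_d}_xL^\infty_t$ (suitably scaled), and Bernstein cannot lower the spatial exponent from $p_d$ to $2$. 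The paper avoids this entirely. It observes (see \eqref{motiv1} and the bound labeled \eqref{basbo1} in the proof) that multiplication by an $L^\infty_{x,t}$ function is bounded on \emph{every} component of $F_k$---including the lateral $L^{2,\infty}_\e$ and, in $d=2$, the $L^{2,\infty}_{\e,W}$ pieces---so that $\|P_k(uv)\|_{F_k}\lesssim \|u\|_{F_{j_1}}\|v\|_{L^\infty_{x,t}}$ holds directly when $|k-j_1|\leq 20$. This is the cleanest route for the high-low $F_k$ bound; no reduction via \eqref{useboundin} is needed there. For the $S^{1/2}_k$ part of the high-low bound the paper similarly uses a direct H\"older pairing (labeled \eqref{basbo1.1}), placing the high factor in the $F_{j_1}$ components and the low factor in $L^{d/\omega'}_xL^\infty_t$.
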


\begin{proof}[Proof of Lemma \ref{building}] We observe first that if
  $k,j\in\Z$ with $|k-j|\leq 20$ and
  $\omega'\in[0,1/2]$  then
  \begin{equation}\label{basbo1}
    \|P_k(uv)\|_{F_k(T)}\lesssim\|u\|_{F_{j}(T)}\|v\|_{L^\infty_{x,t}}
  \end{equation}
  and
  \begin{equation}\label{basbo1.1}
    \|P_k(uv)\|_{S^{\omega'}_k(T)}\lesssim\|u\|_{F_{j}(T)}2^{k\omega'}
\|v\|_{L^{d/\omega'}_xL^\infty_t}.
  \end{equation}
  both of which follow directly the definitions.  For the second factor on the right in both
  \eqref{basbo1} and \eqref{basbo1.1} we observe that for $v$ 
which is localized at frequency $2^k$ we have by Sobolev embeddings
\begin{equation}
\| v\|_{L^\infty_{x,t}}+2^{k\omega'}\| 
v\|_{L^{d/\omega'}_xL^\infty_t}\leq
  C2^{dk/2}\| v\|_{F_k(T)}
\label{basbo1.2}\end{equation}

  We show now that if $|k_1-k_2|\leq 8$, and 
 $f_{k_1}$, $g_{k_2}$ are localized at frequency $2^{k_1}$,
 respectively $2^{k_2}$ then
  \begin{equation}\label{basbo2}
    \|P_{k}(f_{k_1}g_{k_2})\|_{F_{k}(T)\cap S_k^{1/2}(T)}\leq C2^{kd/2}2^{(k_2-k)(2d/(d+2)-\omega)}\|f_{k_1}\|_{S^\omega_{k_1}(T)}\|g_{k_2}\|_{S^0_{k_2}(T)}.
  \end{equation}
  To prove this we  use \eqref{useboundin} and Sobolev embeddings:
  \begin{equation*}
    \begin{split}
      \|P_{k}(f_{k_1}&g_{k_2})\|_{F_{k}(T)\cap S_k^{1/2}(T)}
      \lesssim  \ \|P_{k}(f_{k_1}g_{k_2})\|_{L^2_xL^\infty_t}+
2^{k/2}\|P_{k}(f_{k_1}g_{k_2})\|_{L^\infty_tL^{2_{1/2}}_x \cap L^{p_d}_tL^{p_{d,1/2}}_x}\\
      \lesssim &\ 2^{kd(1/p_d+1/p_{d,\omega}-1/2)}\|f_{k_1}\|_{L^{p_{d,\omega}}_xL^\infty_t}\|g_{k_2}\|_{L^{p_d}_xL^\infty_t}
 + 2^{kd/2}\|f_{k_1}\|_{L^\infty_tL^2_x}\|g_{k_2}\|_{L^\infty_tL^{2}_x \cap L^{p_d}}\\
      \lesssim & \
      2^{kd/2}(2^{|k_2-k|(2d/(d+2)-\omega)}\|f_{k_1}\|_{S^\omega_{k_1}(T)}\|g_{k_2}\|_{S^0_{k_2}(T)}+\|f_{k_1}\|_{S_{k_1}^0(T)}\|g_{k_2}\|_{S^0_{k_2}(T)}),
    \end{split}
  \end{equation*}

  To prove the estimate \eqref{basbo3.3} we use a bilinear
  Littlewood-Paley decomposition
\begin{equation}
P_{k}(fg) = \sum_{k_1\leq k-4}^{|k_2-k|\leq 4}P_k(P_{k_1}fP_{k_2}g)
+ \sum_{k_2\leq k-4}^{|k_1-k|\leq 4} P_k(P_{k_1}f P_{k_2}g)
+ \sum_{k_1,k_2\geq k-4}^{|k_1-k_2|\leq 8} P_k(P_{k_1}f P_{k_2}g)
\label{bilp}\end{equation}
and bound each of the terms on the right in $F_{k}(T)\cap S_k^{1/2}(T)$.
For the first two we use \eqref{basbo1}, \eqref{basbo1.1} and \eqref{basbo1.2}.
For the third we use   \eqref{basbo2} instead. The bound \eqref{basbo3.3}
follows.

\end{proof}

We prove now our main estimates on the connection coefficients $A_m$,
$m=1,\ldots,d$.

\begin{lemma} \label{Aprop} (Bounds on $A_m(s)$) For
  any $k\in\Z$, $s\in[0,\infty)$, and $m=1,\ldots,d$
  \begin{equation}\label{claimA3}
    \begin{split}
      \|P_k(A_m(s))\|_{F_k(T)\cap S_k^{1/2}(T)}\lesssim
      2^{-\sigma k} (1+s2^{2k})^{-4}b_{k,s}(\sigma),
    \end{split}
  \end{equation}
  where, if $s\in[2^{2k_0-1},2^{2k_0+1})$, $k_0\in\Z$, then
  \begin{equation}\label{newb}
    b_{k,s}(\sigma)=
    \begin{cases}
      &2^{k+k_0}a_{-k_0}a_k(\sigma)\text{ if }k+k_0\geq 0;\\
      &\sum_{j=k}^{-k_0}a_{j} a_{j}(\sigma)\text{ if }k+k_0\leq 0.
    \end{cases}
  \end{equation}
\end{lemma}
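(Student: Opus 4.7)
The plan is to start from the integral representation \eqref{Aform}, rewritten as
\begin{equation*}
A_m(s)=-\sum_{l=1}^d\int_s^\infty \Im\bigl(\overline{\psi_m}\,\partial_l\psi_l\bigr)(r)\,dr-\sum_{l=1}^d\int_s^\infty A_l(r)\,\Re\bigl(\psi_l\overline{\psi_m}\bigr)(r)\,dr,
\end{equation*}
and to reduce \eqref{claimA3} to bilinear/trilinear bounds on the integrand combined with integration in the heat time $r\in[s,\infty)$. The main bilinear contribution is handled by Lemma \ref{building} with $\omega=0$ applied to $P_k(\partial_l\psi_l\cdot\overline{\psi_m})$, where one factor is controlled by the $(d-2)/2$ envelope via $\|P_j\psi_\cdot(r)\|_{F_j(T)}\leq a(j)(1+r2^{2j})^{-4}\leq 2^{-(d-2)j/2}a_j((d-2)/2)(1+r2^{2j})^{-4}$ and the other via $\|P_j\psi_\cdot(r)\|_{F_j(T)}\leq 2^{-\sigma j}a_j(\sigma)(1+r2^{2j})^{-4}$. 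The trilinear term $A_l\psi_l\overline{\psi_m}$ is then treated by bootstrap: one applies Lemma \ref{building} twice, pairing $A_l$ with one of the $\psi$'s and invoking the bilinear bound already obtained, so that together with the smallness \eqref{akeps} this term is an $O(\varepsilon^{1/2})$ perturbation which is absorbed by a continuous induction on $s\in[0,\infty)$.

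The key computation is the dyadic summation in the frequencies of the two $\psi$ factors and the time integration. Writing $s\in[2^{2k_0-1},2^{2k_0+1})$ and using that the low-high, high-low, and high-high regimes in \eqref{bilp} produce the three pieces on the right of \eqref{basbo3.3}, I integrate in $r$ using
\begin{equation*}
\int_s^\infty(1+r2^{2\mu})^{-8}\,dr\lesssim 2^{-2\mu}(1+s2^{2\mu})^{-7},
\end{equation*}
with $\mu$ the largest of the two paired frequencies. The slow variation \eqref{alsc} with $\delta=1/(20d)$ ensures that every geometric sum in $j$ is controlled by its endpoint. Splitting the sums at $j=-k_0$ separates the already-diffused range $j\geq -k_0$ (where $(1+s2^{2j})^{-7}$ contributes genuine smallness) from the undiffused range $j<-k_0$, and produces exactly the two cases of $b_{k,s}(\sigma)$ in \eqref{newb}: the sum $\sum_{j=k}^{-k_0}a_j a_j(\sigma)$ when $k+k_0\leq 0$, and the single dominant term $2^{k+k_0}a_{-k_0}a_k(\sigma)$ when $k+k_0\geq 0$. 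The external factor $(1+s2^{2k})^{-4}$ is manufactured by taking $\mu=k$ in the $r$-integral whenever $j\leq k$.

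The main obstacle is the high-high-to-low frequency interaction when $k+k_0\geq 0$. A naive bound by $2^{\max(j_1,j_2)-k}$ (as in the Coulomb gauge \eqref{cou}) would fail to yield the factor $2^{k+k_0}$; this is precisely where the caloric gauge is superior, as reflected in \eqref{schem}. Concretely, the integration $\int_s^\infty$ converts the derivative $2^j$ on the highest piece into $2^{-j}(1+s2^{2j})^{-3}$, and the remaining summation over $j\geq\max(k,-k_0)$ telescopes to $2^{k+k_0}a_{-k_0}a_k(\sigma)$. A secondary technical point is that the bound must hold simultaneously in $F_k(T)$ and $S_k^{1/2}(T)$; Lemma \ref{building} delivers both norms with the same prefactor $2^{kd/2}$ in the high-high regime, the half-derivative gain in $S_k^{1/2}$ being exactly the parabolic smoothing encoded in \eqref{Aform} and the reason the estimate is later strong enough to close the perturbative analysis of the magnetic nonlinearities in the $N_k(T)$ spaces.
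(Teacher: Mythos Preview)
Your proposal is correct and follows essentially the same route as the paper: the integral representation \eqref{Aform}, Lemma~\ref{building} with $\omega=0$ for the bilinear piece $\overline{\psi_m}\,\partial_l\psi_l$, a bootstrap for the cubic piece $A_l\psi_l\overline{\psi_m}$, and dyadic summation in the heat time producing the two cases of $b_{k,s}(\sigma)$. Two small differences in execution are worth noting. First, the paper organizes the bootstrap not as a ``continuous induction on $s$'' but by letting $B_1$ be the best constant in the $F_k(T)$ bound \eqref{claimA3} (finite by \eqref{goodbounds4}) and proving $B_1\lesssim 1+\varepsilon B_1$; this sidesteps any monotonicity argument in $s$ and yields the $S_k^{1/2}(T)$ bound for free since the right-hand side of \eqref{int22} is already estimated in $F_k\cap S_k^{1/2}$. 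Second, in the trilinear term the paper pairs the two $\psi$'s first via \eqref{usebound99a} and then multiplies by $A_\gamma$, rather than pairing $A_l$ with a $\psi$; either order works, but the paper's choice makes the smallness factor come out as $\varepsilon$ (from $\sum a_k^2$) rather than $\varepsilon^{1/2}$.
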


\begin{proof}[Proof of Lemma \ref{Aprop}] We use the identity
  \eqref{Aform}
  \begin{equation}\label{claim12}
    \begin{split}
      A_m(s)&=-\sum_{l=1}^d\int_s^{\infty}\Im\big(\overline{\psi_m}(\partial_l\psi_l+iA_l\psi_l)\big)(r)\,dr.
    \end{split}
  \end{equation}
  To prove \eqref{claimA3}, let $B_1$ denote the smallest number in
  $[1,\infty)$ with the property that for any $s\in[0,\infty)$,
  $k\in\Z$, $m=1,\ldots,d$, and $\sigma\in\{(d-2)/2,\sigma_1-1\}$
  \begin{equation}\label{int21}
    \|P_k(A_m(s))\|_{F_k(T)}\leq B_12^{-\sigma k}(1+s2^{2k})^{-4}b_{k,s}(\sigma).
  \end{equation}

  We observe first that for any
  $f,g\in\{\psi_m,\overline{\psi_m}:m=1,\ldots d\}$, 
  $r\in[2^{2j-2},2^{2j+2}]$, $j\in\Z$, $l=1,\ldots,d$, and
  $\sigma\in\{(d-2)/2,\sigma_1-1\}$ we have the bounds
  \begin{equation}\label{usebound99a}
    \|P_k(f(r) g(r))\|_{F_k(T)\cap S_k^{1/2}(T)}\lesssim 
2^{-\sigma k}(1+2^{2k+2j})^{-4} 2^{-j}a_{-j} a_{\max(k,-j)}(\sigma)
  \end{equation}
  \begin{equation}\label{usebound99}
 \!  \|P_k(f(r) \partial_lg(r))\|_{F_k(T)\cap S_k^{1/2}(T)}\lesssim
2^{-\sigma k}(1+2^{2k+2j})^{-4}2^{-j}a_{-j}(2^{k}a_k(\sigma)+2^{-j}a_{-j}(\sigma))\!\!
  \end{equation}
  To prove this we consider the cases $k+j\leq 0$ and
  $k+j\geq 0$, use the bounds \eqref{psibound} and \eqref{basbo3.3}
  with $\omega=0$, and simplify the resulting expressions using the
  fact that $a_{k}$ is slowly varying.

  We apply $P_k$ to \eqref{claim12} to conclude that for any
  $s\in[0,\infty)$, $k\in\Z$, $m=1,\ldots,d$
  \begin{equation}\label{int22}
    \begin{split}
      \|P_k(A_m(s))\|_{F_k(T)\cap S_k^{1/2}(T)}&\lesssim 
\sum_{\al,\be,\ga=1}^d\int_s^{\infty}\|P_k(\overline{\psi_\al}(r)\partial_\be\psi_\ga(r))\|_{F_k(T)\cap S_k^{1/2}(T)}\,dr\\
      &+\!\! \sum_{\al,\be,\ga=1}^d\int_s^{\infty}\|P_k(\overline{\psi_\al}(r)\psi_\be(r)
      A_\ga(r))\|_{F_k(T)\cap S_k^{1/2}(T)}\,dr
    \end{split}
  \end{equation}
  With $k_0$ as before, using \eqref{usebound99}, the first term in
  the right-hand side of \eqref{int22} is dominated by
  \begin{equation}\label{int23}
    \begin{split}
      &\sum_{\al,\be,\ga=1}^d\sum_{j\geq k_0}\int_{2^{2j-1}}^{2^{2j+1}}\|P_k(\overline{\psi_\al}(r)\partial_\be\psi_\ga(r))\|_{F_k(T)\cap S_k^{1/2}(T)}\,dr \\
      &\lesssim\sum_{j\geq k_0}2^{-\sigma k}(1+2^{2k+2j})^{-4}2^{j}a_{-j}(2^ka_k(\sigma)+2^{-j}a_{-j}(\sigma))\\
      &\lesssim 2^{-\sigma k}\sum_{j\geq k_0}(1+2^{2k+2j})^{-4}(2^{j+k}a_{-j}a_k(\sigma)+a_{-j} a_{-j}(\sigma))\\
      &\lesssim 2^{-\sigma k}(1+s2^{2k})^{-4}b_{k,s}(\sigma),
    \end{split}
  \end{equation}
  where the last inequality follows easily from \eqref{newb} by
  checking the two cases $k+k_0\geq 0$ and $k+k_0\leq 0$.

  We estimate now $\|P_k(\overline{\psi_\al}(r)\psi_\be(r)\cdot
  A_\ga(r))\|_{F_k(T)}$ for $r\in[2^{2j-1},2^{2j+1}]$. It follows from
  \eqref{usebound99a} that for any $k'\in\Z$
  \begin{equation}\label{int7}
    \|P_{k'}(\overline{\psi_\al}(r)\psi_\be(r))\|_{F_k(T)\cap S_k^{1/2}(T)}\lesssim
2^{-\sigma k'}(1+2^{2k'+2j})^{-4}2^{-j}a_{-j} a_{\max(k',-j)}(\sigma).
  \end{equation}
  It follows from \eqref{basbo3.3}, \eqref{int21} and \eqref{akeps} that
  \begin{equation*}
    \|P_k(\overline{\psi_\al}(r)\psi_\be(r)\cdot A_\ga(r))\|_{F_k(T)\cap S_k^{1/2}(T)}
\lesssim B_12^{-\sigma k}\eps2^{-2j}a_{-j} a_{-j}(\sigma)
  \end{equation*}
  if $k+j\leq 0$, and
  \begin{equation*}
    \|P_k(\overline{\psi_\al}(r)\psi_\be(r)\cdot A_\ga(r))\|_{F_k(T)\cap S_k^{1/2}(T)}
\lesssim B_12^{-\sigma k}\eps (1+2^{2k+2j})^{-4}2^{-2j}b_{k,r}(\sigma)
  \end{equation*}
  if $k+j\geq 0$. Thus, with $k_0$ as before, the second term in the
  right-hand side of \eqref{int22} is dominated by
  \begin{equation*}
    \begin{split}
      &\sum_{\al,\be,\ga=1}^d\sum_{j\geq k_0}\int_{2^{2j-1}}^{2^{2j+1}}\|P_k(\overline{\psi_\al}(r)\psi_\be(r)\cdot A_\ga(r))\|_{F_k(T)\cap S_k^{1/2}(T)}\,dr\\
      &\lesssim B_12^{-\sigma k}\eps\sum_{j\geq k_0}(1+2^{2k+2j})^{-4}(\mathbf{1}_-(k+j)a_{-j} a_{-j}(\sigma)+\mathbf{1}_+(k+j)b_{k,2^{2j}}(\sigma))\\
      &\lesssim B_12^{-\sigma
        k}\eps(1+2^{2k+2k_0})^{-4}b_{k,2^{2k_0}}(\sigma).
    \end{split}
  \end{equation*}
  Thus, by \eqref{int22} and \eqref{int23}, for any
  $s\in[0,\infty)$ and
  $\sigma\in\{(d-2)/2,\sigma_0\}$ we obtain
  \begin{equation*}
    \|P_k(A_m(s))\|_{F_k(T)\cap S^{1/2}_k(T)}\lesssim 2^{-\sigma k}(1+s2^{2k})^{-4}b_{k,s}(\sigma)(1+B_1\eps),
  \end{equation*}
  which shows that $B_1\lesssim 1+B_1 \eps $ and further $B_1 \lesssim 1$. This completes the proof of
  \eqref{claimA3}.
\end{proof}

We prove now bounds on nonlinearity of the heat equation
\eqref{heatcov2}
\begin{equation}\label{heatcov22}
  K_m=2i\sum_{l=1}^d\partial_l(A_l\psi_m)-\sum_{l=1}^d(A_l^2+i\partial_lA_l)\psi_m+i\sum_{l=1}^d\Im(\psi_m\overline{\psi_l})\psi_l.
\end{equation}

\begin{lemma}\label{heatcontrol}
  (Control of the heat nonlinearities) For any
  $s\in[0,\infty)$, $k\in\Z$, $m=1,\ldots,d$ and
  $\sigma\in\{(d-2)/2,\sigma_0\}$
  \begin{equation} \label{nonlheat} \Big\|\int_0^s e^{(s-r)\Delta_x}
    P_k(K_m(r)) \,dr \Big\|_{F_k(T)} \lesssim \eps
    (1+s2^{2k})^{-4} 2^{-\sigma k} a_k(\sigma).
  \end{equation}
\end{lemma}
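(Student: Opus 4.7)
By the heat-kernel bound \eqref{hebound} and Minkowski's inequality,
\begin{equation*}
\Big\|\int_0^s e^{(s-r)\Delta_x}P_k K_m(r)\,dr\Big\|_{F_k(T)}\lesssim \int_0^s (1+(s-r)2^{2k})^{-20}\,\|P_k K_m(r)\|_{F_k(T)}\,dr,
\end{equation*}
so the task reduces to obtaining a sharp pointwise-in-$r$ estimate for $\|P_k K_m(r)\|_{F_k(T)}$ and then integrating in $r$. I write $K_m$ as the sum of a magnetic piece $\partial_l(A_l\psi_m)$, an electric piece $(A_l^2+i\partial_l A_l)\psi_m$, and a cubic piece $\Im(\psi_m\overline{\psi_l})\psi_l$, and handle each by repeated applications of Lemma~\ref{building} combined with the $A_l$ bounds from Lemma~\ref{Aprop} and the bootstrap bound \eqref{psibound} on $\psi_m$.

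\textbf{Pointwise bounds.} For $r\in[2^{2j-1},2^{2j+1}]$, the magnetic term $\partial_l(A_l(r)\psi_m(r))$ carries an outer derivative cost of $2^k$ in $F_k(T)$; this is absorbed by using the $S^{1/2}_k(T)$ component of the bound \eqref{claimA3} on $A_l(r)$ and invoking \eqref{basbo3.3} with $\omega=1/2$. The electric term $A_l^2\psi_m$ is controlled by first applying Lemma~\ref{building} to bound $P_{k'}(A_l^2)$ using \eqref{claimA3}, and then once more to contract against $\psi_m$; the term $(\partial_l A_l)\psi_m$ is treated similarly after absorbing the derivative into the smoothness gain on $A_l$. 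The cubic piece $\Im(\psi_m\overline{\psi_l})\psi_l$ is estimated by two applications of Lemma~\ref{building} with $\omega=0$, using only \eqref{psibound}. In each case the result schematically reads
\begin{equation*}
\|P_k K_m(r)\|_{F_k(T)}\lesssim \eps\cdot 2^{-\sigma k}\,(1+r2^{2k})^{-M}\,\widetilde b_{k,r}(\sigma)
\end{equation*}
for large $M$, where $\widetilde b_{k,r}(\sigma)$ is a mild variant of the envelope $b_{k,r}(\sigma)$ from \eqref{newb} and the factor $\eps$ emerges from one extra small-$\ell^2$ summation $\sum_{k'} a_{k'}^2\leq\eps$ as in \eqref{akeps}.

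\textbf{Integration and main obstacle.} Splitting $(0,s)$ dyadically into pieces $r\sim 2^{2j}$ and combining the heat-kernel decay $(1+(s-r)2^{2k})^{-20}$ with the $(1+r2^{2k})^{-M}$ decay on the nonlinearity, one integrates to the claimed bound $\eps(1+s2^{2k})^{-4}2^{-\sigma k}a_k(\sigma)$: in the parabolic regime $r\gtrsim 2^{-2k}$ the two decay factors cooperate directly, while in the sub-parabolic regime $r\lesssim 2^{-2k}$ the envelope $\widetilde b_{k,r}(\sigma)$ is summed over the appropriate range of scales as dictated by the two cases in \eqref{newb}. The main technical obstacle is the magnetic term, where the derivative loss $2^k$ must be counterbalanced by the smoothness gain encoded in $S^{1/2}_k(T)$ for $A_l$; this is precisely the payoff of the caloric gauge, since the schematic identity \eqref{schem} places an extra $2^{-k}$ on the high$\times$high to low frequency contributions of $\psi$ to $A$ compared to the Coulomb gauge. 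A secondary difficulty is the piecewise definition of $b_{k,r}(\sigma)$ in \eqref{newb}, which forces a case split $k+k_0\gtrless 0$ at the bookkeeping level, but both cases close by the slowly-varying property \eqref{alsc} of the envelope $a_k(\sigma)$.
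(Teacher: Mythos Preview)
Your overall plan matches the paper's proof: reduce via \eqref{hebound} to a pointwise-in-$r$ estimate on $\|P_k K_m(r)\|_{F_k(T)}$, establish a bound of the form \eqref{mf4}, and then integrate dyadically with the case split $k+k_0\gtrless 0$. Your treatment of the magnetic term and your identification of the $S^{1/2}_k(T)$ gain on $A_l$ as the mechanism that absorbs the $2^k$ derivative loss are exactly right.

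There is, however, a genuine gap in your handling of the cubic piece $\Im(\psi_m\overline{\psi_l})\psi_l$. You propose two applications of Lemma~\ref{building} with $\omega=0$, but in dimension $d=2$ this does not close. The point is that with $\omega=0$ the high--high term in \eqref{basbo3.3} carries a factor $2^{(j'-k)\cdot 2d/(d+2)}$, which in $d=2$ equals $2^{j'-k}$; iterating twice at the parabolic scale $j'\sim -j$ with $k+j\le 0$ yields $\|P_k(\psi^3)(r)\|_{F_k}\lesssim 2^{-2j}a_{-j}^3$, and after the $r$-integration one is left with $\sum_{j\le k_0}a_{-j}^3$. This quantity is \emph{not} controlled by $\eps\, a_k(\sigma)$: take for instance an envelope peaked far from $k$, so that $a_k$ is tiny while $\sum a_{-j}^3\sim\eps^{3/2}$. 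The slowly-varying condition \eqref{alsc} alone cannot rescue this.

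The fix, which the paper uses uniformly for \emph{all} second-step multiplications (magnetic, electric, and cubic), is to observe that the first application of Lemma~\ref{building} already places the bilinear factor $P_{k'}(\psi_m\overline{\psi_l})$ in $F_{k'}(T)\cap S^{1/2}_{k'}(T)$ (this is the content of \eqref{usebound99a} and \eqref{mf2}), and then to invoke Lemma~\ref{building} with $\omega=1/2$ in the second step. The improved high--high exponent $2d/(d+2)-1/2$ gains exactly the factor $2^{(k+j)/2}$ needed to make the $j$-sum converge and produce the $a_k(\sigma)$ localization, as in the paper's bound \eqref{mf4}. So the $S^{1/2}_k$ smoothing is needed not only for $A_l$ but also for the intermediate bilinear $\psi\overline{\psi}$; once you make that correction, your argument goes through.
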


\begin{proof}[Proof of Lemma \ref{heatcontrol}] Assume
  $r\in[2^{2j-2},2^{2j+2}]$ for some $j\in\Z$ and assume that
  \begin{equation}\label{mf1}
    F\in\big\{A_l^2,\partial_l A_l,fg:l=1,\ldots,d,\,\,f,g\in\{\psi_n,\overline{\psi}_n:n=1,\ldots,d\}\big\}.
  \end{equation}
  We show first that for $F$ as in \eqref{mf1} we have
  \begin{equation}\label{mf2}
    \|P_k(F(r))\|_{F_k(T)\cap S^{1/2}_k(T)}\lesssim
2^{-\sigma k}(1+2^{2k+2j})^{-4} c_{k,j}(\sigma)
  \end{equation}
  where
  \begin{equation}\label{mf3}
    c_{k,j}(\sigma)=
    \begin{cases}
      2^{-j}a_{-j}a_{-j}(\sigma)&\text{ if }k+j\leq 0;\\
      2^{2k+j}a_{-j}a_k(\sigma)&\text{ if }k+j\geq 0.
    \end{cases}
  \end{equation}
  If $F$ is of the form $\partial_lA_l$ or $fg$ then the bound
  \eqref{mf2} follows from \eqref{claimA3} and \eqref{newb}, respectively 
  \eqref{usebound99a} (recall that $a_{k}(\sigma)$ is slowly varying). To
  prove this bound for $F=A_l^2$ we use \eqref{claimA3} and Lemma
  \ref{building} with $\omega=0$: if $k+j\leq 0$ then
  \begin{equation*}
    \|P_k(A_l^2(r))\|_{F_k(T)\cap S^{1/2}_k(T)}\lesssim 
\eps 2^{-\sigma k}2^{-j}a_{-j}a_{-j}(\sigma),
  \end{equation*}
  and if $k+j\geq 0$ then
  \begin{equation*}
    \|P_k(A_l^2(r))\|_{F_k(T)\cap S^{1/2}_k(T)}\lesssim 
\eps 2^{-\sigma k}2^{-j}b_{k,2^{2j}}(\sigma).
  \end{equation*}
  These bounds suffice to prove \eqref{mf2}.

  We prove now that, with $r\in[2^{2j-2},2^{2j+2}]$ as before,
  \begin{equation}\label{mf4}
    \|P_k(K_m(r))\|_{F_k(T)}\lesssim\eps 2^{-\sigma k}2^{2k}
(1+2^{2k+2j})^{-4}[a_k(\sigma)+2^{-3(k+j)/2}a_{-j}(\sigma)].
  \end{equation}
  In view of the formula \eqref{heatcov22}, it suffices to prove that
  \begin{equation}\label{mf5}
    \begin{split}
      \|P_k&(F(r)f(r))\|_{F_k(T)}+2^k\|P_k(A_l(r)f(r))\|_{F_k(T)}\\
      &\lesssim \eps 2^{-\sigma
        k}2^{2k}(1+2^{2k+2j})^{-4}[a_k(\sigma)+2^{-3(k+j)/2}a_{-j}(\sigma)],
    \end{split}
  \end{equation}
  for any $F$ as in \eqref{mf1} and
  $f\in\{\psi_n,\overline{\psi}_n:n=1,\ldots,d\}$. In the proof of
  \eqref{mf5} we need to use Lemma \ref{building} with $\omega=1/2$.
  From \eqref{psibound} we have
  \begin{equation}\label{mf9}
    \|P_k(f(r))\|_{F_k(T)}\leq 2^{-\sigma k} a_k(\sigma)(1+2^{2k+2j})^{-4}.
  \end{equation}
  We combine this with  \eqref{mf2}, using  Lemma \ref{building} with $\omega=1/2$,
to obtain
  \begin{equation*}
    \|P_k(F(r)f(r))\|_{F_k(T)}\lesssim 2^{-\sigma k}2^{k-j}2^{-(k+j)/2}
a_{-j}^2a_{-j}(\sigma),
\qquad k+j \leq 0
  \end{equation*}
  \begin{equation*}
    \|P_k(F(r)f(r))\|_{F_k(T)}\lesssim 2^{-\sigma k}(1+2^{2k+2j})^{-4}2^{2k}
a_{-j}^2a_k(\sigma) \qquad k + j > 0
  \end{equation*}
 which imply \eqref{mf5} for the first term.
  By  \eqref{claimA3}, \eqref{mf9}, and Lemma \ref{building} with
  $\omega=1/2$
  \begin{equation*}
    2^k\|P_k(A_l(r)f(r))\|_{F_k(T)}\lesssim 2^{2k}2^{-\sigma k}2^{-(k+j)/2}a_{-j}^2a_{-j}(\sigma)\qquad k+j \leq 0
  \end{equation*}
  \begin{equation*}
    2^k\|P_k(A_l(r)f(r))\|_{F_k(T)}\leq C2^{-\sigma k}(1+2^{2k+2j})^{-4}2^{2k}a_{-j}^2a_k(\sigma)\qquad k + j > 0
  \end{equation*}
   These bounds imply \eqref{mf5} for the second term.

  We use now \eqref{mf4} to prove \eqref{nonlheat}. Assume
  $s\in[2^{2k_0-1},2^{2k_0+1})$ for some $k_0\in\Z$. We use
  \eqref{hebound}. If $k+k_0\leq 0$ then
  \begin{equation*}
    \begin{split}
      \Big\|\int_0^s &e^{(s-r)\Delta_x} P_k(K_m(r)) \,dr \Big\|_{F_k(T)}\lesssim
\sum_{j\leq k_0}\int_{2^{2j-1}}^{2^{2j+1}}\|P_k(K_m(r))\|_{F_k(T)}\,dr\\
      &\lesssim \sum_{j\leq k_0}2^{2j}\eps 2^{-\sigma
        k}2^{2k}2^{-3(k+j)/2}a_{-j}(\sigma)\lesssim \eps
2^{-\sigma k}2^{(k+k_0)/2}a_{-k_0}(\sigma),
    \end{split}
  \end{equation*}
  which suffices. If $k+k_0\geq 0$ then, with $d_{k,j}$ as in the
  right-hand side of \eqref{mf4},
  \begin{equation*}
    \begin{split}
      \Big\|\int_0^s &e^{(s-r)\Delta_x} P_k(K_m(r)) \,dr \Big\|_{F_k(T)}\\
      \leq & \ \int_0^{s/2} \|e^{(s-r)\Delta_x} P_k(K_m(r))\|_{F_k(T)} \,dr+\int_{s/2}^s \|e^{(s-r)\Delta_x} P_k(K_m(r))\|_{F_k(T)} \,dr\\
      \lesssim & \ \sum_{j\leq k_0}2^{-20(k+k_0)}2^{2j}d_{k,j}+2^{-2k}d_{k,k_0}\\
      \lesssim & \ 2^{-20(k+k_0)}\sum_{j\leq k_0}\eps 2^{-\sigma k}2^{2k}(1+2^{2k+2j})^{-4}[2^{2j}a_{k}(\sigma)+2^{j/2}2^{-3k/2}a_{-j}(\sigma)]\\
      & \ +\eps 2^{-\sigma k}(1+2^{2k+2k_0})^{-4}a_k(\sigma)\\
      \lesssim & \ \eps 2^{-\sigma
        k}(1+2^{2k+2k_0})^{-4}a_k(\sigma)
    \end{split}
  \end{equation*}
  which suffices. This completes the proof of the lemma.
\end{proof}

We are now able to prove the $F_k(T)$ bounds \eqref{psifk} in
Proposition~\ref{heatfk}.  In view of \eqref{heatcov2} we have
\begin{equation*}
  P_k(\psi_m(s))=e^{s\Delta_x}P_k(\psi_m(0))+
  \int_0^s e^{(s-r)\Delta_x} P_k(K_m(r)) \,dr.
\end{equation*}
Thus, from Lemma \ref{heatcontrol} and
\eqref{psizero},\eqref{hebound} we obtain
\[
\| P_k \psi_m(s)\|_{F_k(T)} \lesssim 2^{-\sigma k}(1+s2^{2k})^{-4} 
(b_k(\sigma) +\eps a_k(\sigma)), \qquad \sigma \in \{(d-2)/2, \sigma_0\}
\]
Due to the definition of $a_k(\sigma)$ in \eqref{aksdef} this implies
that  $a_k(\sigma) \lesssim b_k(\sigma) +\eps a_k(\sigma)$, and further
$a_k(\sigma) \lesssim b_k(\sigma)$. Then \eqref{psifk} follows.

Next we consider the $F_k$ bound \eqref{apsifk} for the functions
$P_k(A_m(s) \psi_l(s))$. It follows from Lemma \ref{building} (with
$\omega=1/2$), Lemma \ref{Aprop} and \eqref{psibound} that for any
$l,m=1,\ldots,d$, and $r\in[2^{2j-2},2^{2j+2}]$
\begin{equation*}
  \|P_k(A_l(r)\psi_m(r))\|_{F_k(T)}\lesssim 2^{-\sigma
    k}2^k2^{-(k+j)/2}a_{-j}^2a_{-j}(\sigma), \qquad k\leq -j
\end{equation*}
respectively
\begin{equation*}
  \|P_k(A_l(r)\psi_m(r))\|_{F_k(T)}\lesssim 2^{-\sigma
    k}2^{k}(1+2^{2j+2k})^{-4}a_{-j}^2a_{k}(\sigma), \qquad k\geq -j.
\end{equation*}
Then \eqref{apsifk} follows since $a_k$, $a_k(\sigma)$ are slowly varying.

Next we turn our attention to the $L^{p_d}$ bounds in
Proposition~\ref{heatfk}. We start with a general lemma, similar to
Lemma \ref{building}.

\begin{lemma}\label{building2} Assume that $T\in(0,2^{2\mathcal{K}}]$,
  $f,g\in {H}^{\infty,\infty}(T)$, $P_k f\in S_k^\omega(T)$,
  $P_k g\in L^{p_d}_{t,x}$ for some $\omega\in[0,1/2]$ and any $k\in\Z$, and
  \begin{equation*}
    \mu_k=\sum_{|k'-k|\leq 20}\|P_{k'}f\|_{S_{k'}^\omega(T)},\quad\nu_k=\sum_{|k'-k|\leq 20}\|P_{k'}g\|_{L^{p_d}_{t,x}}.
  \end{equation*}
  Then, for any $k\in\Z$
  \begin{equation}\label{bas1}
      \|P_k(fg)\|_{L^{p_d}_{t,x}} \lesssim \sum_{k'\leq
        k}2^{\frac{k'd}2}(\mu_{k'}\nu_k+
      2^{\frac{d}{d+2}(k-k')}\mu_k\nu_{k'})+
      2^{\frac{kd}2}\sum_{k'\geq k}2^{-\omega(k'-k)}\mu_{k'}\nu_{k'}.
  \end{equation}
\end{lemma}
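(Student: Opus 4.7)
The argument mirrors that of Lemma \ref{building}, with $L^{p_d}_{t,x}$ replacing the combined $F_k(T)\cap S_k^{1/2}(T)$ norm, so I will decompose $P_k(fg)$ via the bilinear Littlewood--Paley splitting \eqref{bilp} into the three interaction types (low$\times$high, high$\times$low, and high$\times$high) and estimate each separately. The basic tools are H\"older's inequality in space-time, Bernstein's inequality for the frequency-localized pieces, and Sobolev embedding applied to the components of the $S_k^\omega(T)$ norm. A preliminary computation, based on the relations $1/p_{d,\omega}=1/p_d+\omega/d$ and $d/p_d+d/(d+2)=d/2$, gives the key Sobolev-type bound
\[
\|h_{k'}\|_{L^\infty_{x,t}} \lesssim 2^{k'd/2} \|h_{k'}\|_{S_{k'}^\omega(T)}
\]
for any frequency-localized $h_{k'}$ and any $\omega\in[0,1/2]$; this will handle the low frequency factor in the LH piece.

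For the LH case $k_1\leq k-4$, $|k_2-k|\leq 4$: H\"older gives
\[
\|P_k(f_{k_1}g_{k_2})\|_{L^{p_d}_{t,x}}\leq \|f_{k_1}\|_{L^\infty_{t,x}}\|g_{k_2}\|_{L^{p_d}_{t,x}}\lesssim 2^{k_1 d/2}\mu_{k_1}\nu_{k_2},
\]
which summed over $k_1\leq k-4$ produces the first term in \eqref{bas1}. For the HL case $|k_1-k|\leq 4$, $k_2\leq k-4$: split using the Fubini-type H\"older
\[
\|f_{k_1}g_{k_2}\|_{L^{p_d}_x L^{p_d}_t}\leq \|f_{k_1}\|_{L^{p_d}_x L^\infty_t}\|g_{k_2}\|_{L^{p_d}_t L^\infty_x},
\]
estimate the first factor by Sobolev embedding in $x$ from the $L^{p_{d,\omega}}_x L^\infty_t$ component of $S_{k_1}^\omega(T)$ to get $2^{k_1 d/(d+2)}\mu_{k_1}$, and estimate the second factor by Bernstein in $x$ pointwise in $t$ to get $2^{k_2 d/p_d}\nu_{k_2}$. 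Since $d/p_d=d^2/(2(d+2))=d/2-d/(d+2)$, summing gives the middle term in \eqref{bas1}.

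The high$\times$high case is the interesting one, because only here is the $P_k$ localization at frequency strictly below those of $f_{k_1}$ and $g_{k_2}$, which allows a Bernstein gain on the output. For $k_1,k_2\geq k-4$ with $|k_1-k_2|\leq 8$, apply Bernstein in $x$ to the frequency-$2^k$ output:
\[
\|P_k(f_{k_1}g_{k_2})\|_{L^{p_d}_x}\lesssim 2^{kd(1/q-1/p_d)}\|f_{k_1}g_{k_2}\|_{L^q_x},\qquad 1/q=1/2_\omega+1/p_d,
\]
and follow with the H\"older split $\|f_{k_1}g_{k_2}\|_{L^q_x}\leq \|f_{k_1}\|_{L^{2_\omega}_x}\|g_{k_2}\|_{L^{p_d}_x}$. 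Combining in time via $\|\cdot\|_{L^{p_d}_t L^q_x}\leq \|f_{k_1}\|_{L^\infty_t L^{2_\omega}_x}\|g_{k_2}\|_{L^{p_d}_t L^{p_d}_x}$ and using the defining bound $\|f_{k_1}\|_{L^\infty_t L^{2_\omega}_x}\leq 2^{-k_1\omega}\mu_{k_1}$ yields, with $1/q-1/p_d=1/2_\omega=1/2+\omega/d$,
\[
\|P_k(f_{k_1}g_{k_2})\|_{L^{p_d}_{t,x}}\lesssim 2^{k(d/2+\omega)-k_1\omega}\mu_{k_1}\nu_{k_2}=2^{kd/2}\,2^{-\omega(k_1-k)}\mu_{k_1}\nu_{k_2},
\]
which after summing over $k_1\approx k_2\approx k'\geq k-4$ yields the third term in \eqref{bas1}.

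The main subtlety will be the bookkeeping in the high$\times$high case: one must choose the intermediate exponent $q$ exactly so that the H\"older split matches the only available ``Lebesgue'' ingredient for $g$ (namely $L^{p_d}_{t,x}$) while the remaining factor for $f$ lies in a component of $S^\omega_k(T)$, and so that the $P_k$-Bernstein gain yields precisely the $2^{kd/2}$ prefactor together with the $2^{-\omega(k'-k)}$ off-diagonal decay. All other manipulations amount to tracking the Sobolev indices $1/p_{d,\omega}-1/p_d=\omega/d$ and $d/2-d/(d+2)=d^2/(2(d+2))=d/p_d$, which already appear in the proof of Lemma \ref{building}.
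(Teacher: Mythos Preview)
Your proof is correct and follows essentially the same approach as the paper's: the same bilinear Littlewood--Paley decomposition, the same H\"older/Bernstein estimates in the LH and HH cases, and the same exponent arithmetic. The only cosmetic difference is in the HL case, where you pair $L^{p_d}_xL^\infty_t$ (from the $L^{p_{d,\omega}}_xL^\infty_t$ component of $S^\omega_{k_1}$) with $L^{p_d}_tL^\infty_x$, whereas the paper pairs $L^\infty_tL^{p_d}_x$ (from the $L^\infty_tL^{2_\omega}_x$ component) with $L^{p_d}_tL^\infty_x$; both routes yield the identical bound $2^{k_1 d/(d+2)}\mu_{k_1}$ for the $f$-factor.
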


\begin{proof}[Proof  of  Lemma \ref{building2}] 
  We use the same bilinear Littlewood-Paley decomposition \eqref{bilp}
  as in the proof of Lemma \ref{building}, and estimate each term. If
  $|k_2-k| \leq 4$ and $k_1 \leq k-4$ then by the Sobolev embedding
  $\|P_{k}f\|_{L^\infty_{t,x}}\lesssim 2^{dk/2}\mu_k$ we have
\[
\|P_k(P_{k_1}f\cdot P_{k_2}g)\|_{L^{p_d}_{t,x}} \lesssim 
\|P_{k_1}f\|_{L^\infty_{t,x}}\|P_{k_2}g\|_{L^{p_d}_{t,x}} \lesssim 2^{k_1 d/2} \mu_{k_1} \nu_k.
\]
If $|k_1-k| \leq 4$, $k_2 \leq k-4$  then we use the Sobolev embeddings
 $\|P_{k}f\|_{L^\infty_t L^{p_d}_x}\lesssim
  2^{\frac{d}{d+2}k}\mu_k$ and  $\|P_kg\|_{L^{p_d}_tL^\infty_x}\lesssim
  2^{(\frac{d}2 -\frac{d}{d+2}) k}\nu_{k}$ to obtain
\[
\|P_k(P_{k_1}f\cdot P_{k_2}g)\|_{L^{p_d}_{t,x}} \lesssim  \|P_{k_1}f\|_{L^\infty_tL^{p_d}_x}\|P_{k_2}g\|_{L^{p_d}_tL^\infty_x} \lesssim 2^{dk_2/2}
 2^{\frac{d}{d+2}(k-k')}\mu_k\nu_{k_2}.
\]
Finally if $k_1,k_2\geq k-4$ and $|k_1-k_2|\leq 8$ then we similarly have
\[
\|P_k(P_{k_1}f\cdot P_{k_2}g)\|_{L^{p_d}_{t,x}} \lesssim 2^{k(d/2+\omega)}\|P_{k_1}f\|_{L^\infty_tL^{2_\omega}_x}\|P_{k_2}g\|_{L^{p_d}_{t,x}} \lesssim 2^{k(d/2+\omega)}2^{-\omega
        k_1}\mu_{k_1}\nu_{k_1}.
\]

The bound \eqref{bas1} follows by summing up the three cases above.
\end{proof}

We prove now $L^{p_d}_{t,x}$ bounds on the connection coefficients 
$A_m(0)$, $m=1,\ldots,d$.
\begin{lemma}\label{Lpda}
  For any $k\in\Z$, $m=1,\ldots,d$ and $\sigma\in\{(d-2)/2,\sigma_0\}$
  \begin{equation}\label{alpd}
    \begin{split}
 \|P_k(A_m(0))\|_{L^{p_d}_{t,x}}   \lesssim 2^{-\sigma k}b_k(\sigma).
    \end{split}
  \end{equation}
\end{lemma}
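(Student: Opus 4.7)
The proof follows the template of Lemma \ref{Aprop}, but working in $L^{p_d}_{t,x}$ via the bilinear product estimate Lemma \ref{building2} in place of $F_k(T)\cap S^{1/2}_k(T)$ via Lemma \ref{building}. Starting from \eqref{Aform} specialized to $s=0$,
\[
P_k A_m(0) = -\sum_{l=1}^d \int_0^\infty P_k\Im\bigl(\overline{\psi_m}(\partial_l\psi_l + iA_l\psi_l)\bigr)(r)\,dr,
\]
I would split the $r$-integral into dyadic pieces $r\in[2^{2j-1},2^{2j+1}]$, $j\in\Z$, bound each dyadic contribution of the two integrands in $L^{p_d}_{t,x}$, and finally integrate and sum in $j$.

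For each fixed dyadic piece, the two terms to control are $\overline{\psi_m}\partial_l\psi_l$ and $\overline{\psi_m}\psi_l A_l$. For the first, the bound already contained in \eqref{usebound99} gives an $F_k(T)\cap S^{1/2}_k(T)$ estimate, and since $F_k(T)\hookrightarrow L^{p_d}_{t,x}$ by definition, this step is a direct transcription and produces the same right-hand side. For the second, I would first apply \eqref{usebound99a} (i.e.\ Lemma \ref{building} with $\omega=0$) to bound $\overline{\psi_m}\psi_l$ in $F_k(T)\cap S^{1/2}_k(T)$, and then use Lemma \ref{building2} with $\omega=1/2$ on the product $(\overline{\psi_m}\psi_l)\cdot A_l$: the factor $A_l$ goes into the $S^{1/2}_{k'}$ slot via its Lemma \ref{Aprop} bound, while the quadratic $\overline{\psi_m}\psi_l$ goes into the $L^{p_d}_{t,x}$ slot via its just-derived $F_k(T)$ bound.

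Once the dyadic bounds are in place, I would integrate in $r$ over each interval $[2^{2j-1},2^{2j+1}]$ and sum over $j\in\Z$. The parabolic weight $(1+2^{2(k+j)})^{-4}$ concentrates the contribution near the natural heat scale $j\approx-k$ and the estimate collapses down. The target is to reach $2^{-\sigma k}b_k(\sigma)$, using the slowly varying property of the envelopes, the inequalities $a_k\lesssim b_k$ and $a_k(\sigma)\lesssim b_k(\sigma)$ already obtained in the proof of Lemma \ref{Aprop}, and the smallness $\sum_k a_k^2\lesssim\eps$ from \eqref{akeps}.

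The main obstacle is precisely the bookkeeping in this final step. A naive application of the $S^{1/2}_k$ bound on $A_m(s)$ from \eqref{claimA3} does not extend to $s=0$, since as $s\to 0$ the quantity $b_{k,s}(\sigma)$ reduces to a tail $\sum_{j\geq k}a_j a_j(\sigma)$ which is not controlled by $b_k(\sigma)$. Circumventing this requires exploiting the $2^{-(k'-k)/2}$ high-high decay factor in Lemma \ref{building2} with $\omega=1/2$ (for the cubic term) together with the extra $2^{k'}$ derivative factor weighted by the parabolic decay $(1+2^{2(k'+j)})^{-4}$ (for the quadratic term), so that any residual envelope tail is absorbed either by a convergent geometric factor in the frequency sum or by the smallness parameter $\eps$.
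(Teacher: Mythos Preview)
Your overall strategy matches the paper's, but the execution for the quadratic term $\overline{\psi_m}\partial_l\psi_l$ has a real gap. You propose to invoke \eqref{usebound99} --- an $F_k(T)\cap S^{1/2}_k(T)$ bound derived from Lemma~\ref{building} --- and then embed $F_k(T)\hookrightarrow L^{p_d}_{t,x}$. But the high--high term of \eqref{basbo3.3} at $\omega=0$ carries the \emph{growing} factor $2^{(k'-k)\cdot 2d/(d+2)}$; after integrating in $r$ and summing over the dyadic times this reproduces exactly the tail $\sum_{l\ge k}a_l a_l(\sigma)$ you flag as uncontrolled. That tail thus already appears in the quadratic piece, not only in the cubic one, and your proposed fix (``the extra $2^{k'}$ derivative factor weighted by the parabolic decay'') is already built into \eqref{usebound99} and does not supply the missing off-diagonal decay in $k'-k$. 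The embedding $F_k\hookrightarrow L^{p_d}$ is precisely the step where that decay is thrown away.

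The paper instead applies Lemma~\ref{building2} from the outset, whose high--high term in \eqref{bas1} at $\omega=0$ has \emph{no} growing factor. Concretely, it groups $\DD_l\psi_l=\partial_l\psi_l+iA_l\psi_l$ into a single $L^{p_d}$ factor using \eqref{psifk} and \eqref{apsifk} (the latter supplying the integrable $(s2^{2k})^{-3/8}$ blow-up near $s=0$), places $\psi_m$ in $S^0_k$, and applies \eqref{bas1} with $\omega=0$. The $s$-integral at each internal frequency $k'$ then contributes $\approx 2^{-2k'}$, producing the $2^{(k-k')d/2}$ gain that makes the high--high sum collapse to $\lesssim 2^{-\sigma k}b_k b_k(\sigma)$. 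No dyadic time decomposition is needed, and the cubic term requires no separate treatment since it is already inside $\DD_l\psi_l$.
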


\begin{proof}[Proof  of Lemma \ref{Lpda}]
We start from the identity
  \eqref{Aform}
  \begin{equation}\label{mf21}
    A_m(0)=-\sum_{l=1}^d\int_0^{\infty}\Im\big(\overline{\psi_m}\DD_l\psi_l\big)(s)\,ds.
  \end{equation}
where, as before, $\DD_l\psi_l=\partial_l\psi_l+iA_l\psi_l$. From \eqref{psifk}
we have
\[
\|P_k \psi_m(s)\|_{S_k^0} \lesssim  2^{-\sigma k} (1+s2^{2k})^{-4} b_k(\sigma) 
\]
while  from \eqref{psifk}, \eqref{apsifk} we obtain
\[
\|P_k (\DD_l\psi_l(s))\|_{L^{p_d}_{t,x}} \lesssim  2^k 2^{-\sigma k} (s2^{2k})^{-3/8}
(1+s2^{2k})^{-3} b_k(\sigma).
\]
We use now \eqref{bas1} with $\omega=0$ to estimate
\[
\begin{split}
\|P_k(A_m(0))\|_{L^{p_d}_{t,x}} \lesssim &\  \sum_{l=1}^d \int_0^{\infty} \| \overline{\psi_m(s)}\DD_l\psi_l(s)\|_{L^{p_d}_{t,x}} ds \\
\lesssim &\ 2^{-\sigma k} \sum_{k' \leq  k} b_k(\sigma) b_{k'}  2^{{k'}+k}
\int_0^\infty  (s2^{2k})^{-3/8} (1+s2^{2k})^{-3}    ds \\
& \ +  2^{-\sigma k} \sum_{k' \leq  k} b_k(\sigma) b_{k'}  
 2^{2{k'}} 2^{\frac{d}{d+2}(k-{k'})} \int_0^\infty
(s2^{2{k'}})^{-3/8} (1+s2^{2k})^{-4}     ds \\ 
& \ +  \sum_{{k'} \geq  k}  2^{-\sigma {k'}} b_{k'}(\sigma) b_{k'}  2^{(k-{k'})d/2}
 2^{2{k'}} \int_0^\infty
(s2^{2{k'}})^{-3/8} (1+s2^{2{k'}})^{-7}     ds \\
\lesssim &\  2^{-\sigma k}b_k(\sigma) \sum_{{k'} \leq k}  b_{k'} 2^{({k'}-k)/4} +  
\sum_{{k'} \geq  k}  b_{k'}(\sigma) b_{k'}  2^{-\sigma {k'}} 2^{(k-{k'})d/2} \\
\lesssim &\  2^{-\sigma k} b_k b_k(\sigma).
\end{split}
\]
Thus \eqref{alpd} follows.
\end{proof}

This concludes the proof of part (a) of Proposition~\ref{heatfk}.  We
next turn our attention to part (b). We first prove $L^{p_d}$ bounds
on the field $\psi_{d+1}(s)$.

\begin{lemma}\label{L3L6}
  For any $k\in\Z$ and $\sigma\in\{(d-2)/2,\sigma_1-1\}$
  \begin{equation}\label{mf20}
    \begin{split}
   \|P_k(\psi_{d+1}(s))\|_{L^{p_d}_{t,x}}\leq
      C2^k 2^{-\sigma k}b_k(\sigma)(1+s2^{2k})^{-2}
    \end{split}
  \end{equation}
\end{lemma}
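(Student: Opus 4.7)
\medskip

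My plan for Lemma~\ref{L3L6} is to derive a covariant heat equation for $\psi_{d+1}$ and then run a bootstrap argument in $L^{p_d}_{t,x}$ that mirrors the proofs of \eqref{psifk} and Lemma~\ref{Aprop}. First, using the curl identity $\DD_0\psi_{d+1}=\DD_{d+1}\psi_0$ from \eqref{id2}, the caloric gauge condition $A_0=0$ (so that $\DD_0=\partial_s$), the heat identity $\psi_0=\sum_l\DD_l\psi_l$ of \eqref{heatcov}, and the curvature relation \eqref{id4}, an exact parallel of the computation leading to \eqref{heatcov2} yields
\begin{equation*}
(\partial_s-\Delta_x)\psi_{d+1}=2i\sum_{l=1}^d A_l\partial_l\psi_{d+1}-\sum_{l=1}^d(A_l^2-i\partial_lA_l)\psi_{d+1}+i\sum_{l=1}^d\Im(\psi_{d+1}\overline{\psi_l})\psi_l=:\widetilde K_{d+1}.
\end{equation*}
This is structurally identical to the heat equation \eqref{heatcov2} satisfied by the $\psi_m$ for $m=1,\ldots,d$, with the crucial difference that here we only seek $L^{p_d}$ control of $\psi_{d+1}$, not $F_k$ control.

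Second, let $B_2\geq 1$ be the smallest constant such that
\begin{equation*}
\|P_k\psi_{d+1}(s)\|_{L^{p_d}_{t,x}}\leq B_2\cdot 2^k2^{-\sigma k}b_k(\sigma)(1+s2^{2k})^{-2}
\end{equation*}
for all $k\in\Z$, $s\geq 0$, and $\sigma\in\{(d-2)/2,\sigma_0\}$; the existence of $B_2$ follows from smoothness of $\psi_{d+1}$ together with the weak bound \eqref{goodbounds5}. Writing the Duhamel formula
\begin{equation*}
P_k\psi_{d+1}(s)=e^{s\Delta_x}P_k\psi_{d+1}(0)+\int_0^s e^{(s-r)\Delta_x}P_k\widetilde K_{d+1}(r)\,dr,
\end{equation*}
the linear piece is bounded by combining hypothesis \eqref{psidpu} with the parabolic decay $\|e^{s\Delta_x}P_kf\|_{L^{p_d}_{t,x}}\lesssim (1+s2^{2k})^{-N}\|P_kf\|_{L^{p_d}_{t,x}}$, giving a contribution of $C\cdot 2^k2^{-\sigma k}b_k(\sigma)(1+s2^{2k})^{-2}$ as required.

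Third, I would bound $\|P_k\widetilde K_{d+1}(r)\|_{L^{p_d}_{t,x}}$ for $r\in[2^{2j-2},2^{2j+2}]$ by applying Lemma~\ref{building2} with appropriate $\omega$: the magnetic term $A_l\partial_l\psi_{d+1}$ is estimated using the $S_k^{1/2}$ bound \eqref{claimA3} on $A_l(r)$ (with $\omega=1/2$) against the bootstrap $L^{p_d}$ bound on $\partial_l\psi_{d+1}(r)$ (which absorbs a $2^k$ from the derivative); the terms $A_l^2\psi_{d+1}$ and $(\partial_lA_l)\psi_{d+1}$ are handled by the same method after invoking the product bound \eqref{mf2} for $A_l^2$ and $\partial_lA_l$ in $S_k^{1/2}$; and the cubic term $\Im(\psi_{d+1}\overline{\psi_l})\psi_l$ is controlled by bundling the two $\psi_l$ factors via the $S_k^\omega$ bounds from \eqref{psifk} against the bootstrap for $\psi_{d+1}$ in $L^{p_d}$. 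Integrating in $r$ against the heat kernel decay $(1+(s-r)2^{2k})^{-N}$, following the same case split $k+j\leq 0$ versus $k+j\geq 0$ and $k+k_0\leq 0$ versus $k+k_0\geq 0$ used in the proofs of Lemmas~\ref{Aprop} and \ref{heatcontrol}, yields a nonlinear contribution bounded by
\begin{equation*}
C\eps(1+B_2)\cdot 2^k2^{-\sigma k}b_k(\sigma)(1+s2^{2k})^{-2},
\end{equation*}
where the smallness $\eps$ comes from the envelope relation \eqref{akeps} (i.e., from the $A_l$ and $\psi_l$ factors in the nonlinearity).

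Combining the linear and nonlinear bounds gives $B_2\lesssim 1+\eps B_2$, hence $B_2\lesssim 1$ for $\eps$ sufficiently small, which is precisely \eqref{mf20}. The main obstacle is simply the bookkeeping of the case analysis in the convolution against the heat kernel, but the argument is entirely parallel to the proof of Lemma~\ref{heatcontrol} and introduces no genuinely new difficulty; in particular, no new function space is needed since the $L^{p_d}$ control of $\psi_{d+1}$ interacts linearly with the existing $F_k$ and $S_k^{1/2}$ controls on $\psi_l$ and $A_l$ already established in part (a).
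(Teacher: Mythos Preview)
Your proposal is correct and follows essentially the same approach as the paper: derive the covariant heat equation for $\psi_{d+1}$, apply Duhamel, use the $S_k^{1/2}$ bounds on $A_l$ and on the quadratic expressions $\{A_l^2,\partial_lA_l,\psi_m\overline{\psi_l}\}$ together with Lemma~\ref{building2} at $\omega=1/2$, and close in $L^{p_d}_{t,x}$. The paper phrases the closing step as an iteration rather than a bootstrap constant $B_2$, and it streamlines the heat-time integration by packaging \eqref{mf2}--\eqref{mf3} into a single bound of the form $(s2^{2k})^{-5/8}(1+s2^{2k})^{-2}$ (your \eqref{mf27}), which avoids the explicit $k+j\lessgtr 0$ case split you outline; but these are cosmetic differences, not substantive ones.
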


\begin{proof}[Proof of Lemma \ref{L3L6}] 
 We use the heat equation \eqref{heatcov2} for $\psi_{d+1}$,
  \begin{equation}\label{mf31}
    \begin{split}
      &(\partial_s-\Delta_x)\psi_{d+1}=K(\psi_{d+1});\\
      &K(\psi)=2i\sum_{l=1}^d\partial_l(A_l\psi)-\sum_{l=1}^d(A_l^2+i\partial_lA_l)\psi +i\sum_{l=1}^d\Im(\psi \overline{\psi_l})\psi_l.
    \end{split}
  \end{equation}
We rewrite this equation in the form
\begin{equation}
\psi_{d+1}(s) = e^{s \Delta} \psi_{d+1}(0) + \int_{0}^s e^{(s-r)
  \Delta} K(\psi_{d+1})(r) dr.
\label{mfint}\end{equation}

Assuming that
\begin{equation} \label{phicond} 
\| P_k \psi(s) \|_{L^{p_d}_{t,x}} \lesssim
  2^{-(\sigma-1)k} b_k(\sigma) (1+s2^{2k})^{-2} 
\end{equation}
we claim the following
\begin{equation} \label{Kphi}
\left\| \int_0^s e^{(s-r)\Delta_x} P_k K(\psi)(r) \,dr \right\|_{L^{p_d}_{t,x}}
\lesssim 
\eps^2 2^{-(\sigma-1)k} b_k(\sigma) (1+s2^{2k})^{-2}.  
\end{equation}
By \eqref{psidpu} the function $e^{s\Delta_x} \psi_{d+1}(0)$ satisfies
\eqref{phicond}.  Then a standard iteration argument shows that the
solution $\psi_{d+1}$ to \eqref{mfint} also satisfies \eqref{phicond}.
We note that by standard $L^\infty$ bounds for the heat equation,
\eqref{mf31} admits an unique bounded solution on each interval
$[0,S]$, with $S > 0$.  Therefore the solution obtained iteratively
must coincide with $\psi_{d+1}$.

It remains to prove our claim.  As in the proof of Lemma
\ref{heatcontrol}, assume that
  \begin{equation*}
    F\in\big\{A_l^2,\partial_l A_l,fg:l=1,\ldots,d,\,\,f,g\in\{\psi_n,\overline{\psi}_n:n=1,\ldots,d\}\big\},
  \end{equation*}
Due to  \eqref{mf2} and \eqref{mf3} we have 
  \begin{equation}\label{mf27}
    \|P_k F(r)\|_{S^{1/2}_k(T)}
\lesssim 2^{- (\sigma-1) k}(1+s2^{2k})^{-2} (s2^{2k})^{-\frac58}
 b_{k}b_{k}(\sigma).
  \end{equation}
Also, by  Proposition \ref{Aprop}, 
  \begin{equation}\label{mf28}
    \|P_k A_l(r)\|_{S^{1/2}_k(T)} 
\lesssim 2^{- \sigma k}(1+s2^{2k})^{-3} (s2^{2k})^{-\frac18}
 b_{k}b_{k}(\sigma).
  \end{equation}

  Using \eqref{bas1} (with $\omega=1/2$), \eqref{mf27}, \eqref{mf28}
  and \eqref{phicond} it follows that
\begin{equation*}\label{mf29}
      \|P_k(F(r)\psi(r))\|_{L^{p_d}_{t,x}}+2^k\|P_k(A_l(r)\psi(r))\|_{L^{p_d}_{t,x}}
      \lesssim 2^{-(\sigma+1) k}
(1+s 2^{2k})^{-2}(s2^{2k})^{-\frac78} b_{k}^2 b_{k}(\sigma) 
  \end{equation*}
  for any $k\in\Z$, $l=1,\ldots,d$, and
  $\sigma\in\{(d-2)/2,\sigma_0\}$. Since $b_k^2 \leq \eps^2$ we 
get
\[
\| P_k K (\phi)\|_{L^{p_d}_{t,x}}
      \lesssim \eps^2 2^{-(\sigma -3) k}
(1+s 2^{2k})^{-2}(s2^{2k})^{-\frac78}  b_{k}(\sigma).
\]
This implies \eqref{Kphi} after
integration with respect to $s$ since
\[
\int_{0}^s (1+(s-r)2^{2k})^{-N} (1+r 2^{2k})^{-2}(r2^{2k})^{-\frac78} dr
\lesssim 2^{-2k} (1+s 2^{2k})^{-2}.
\]
\end{proof}

We conclude the proof of Proposition~\ref{heatfk} with the $L^2$ 
bounds on $P_k A_{d+1}(0)$.

\begin{lemma}
The connection coefficient $A_{d+1}$ satisfies  
 \begin{equation}
\| P_k (A_{d+1}(0)) \|_{L^2_{t,x}}
      \lesssim \eps 2^{-\sigma k} b_k(\sigma), \qquad d \geq 3
\label{adunu} \end{equation}
respectively
 \begin{equation}
\|A_{d+1}(0)\|_{L^2_{t,x}} \lesssim \eps^2,
\qquad
\| P_k (A_{d+1}(0)) \|_{L^2_{t,x}}
      \lesssim  2^{-\sigma k} b_{> k}^2(\sigma), \qquad d =2
\label{adunu2} \end{equation}
\end{lemma}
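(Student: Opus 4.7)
The plan is to specialize the representation formula \eqref{Aform} to $m=d+1$ and $s=0$, giving
\begin{equation*}
A_{d+1}(0) = -\sum_{l=1}^d \int_0^\infty \Im\big(\overline{\psi_{d+1}}(r)\,\DD_l \psi_l(r)\big)\,dr,
\end{equation*}
and to reduce \eqref{adunu} and \eqref{adunu2} by Minkowski's inequality to a uniform bound on $\|P_k(\overline{\psi_{d+1}}\,\DD_l\psi_l)(r)\|_{L^2_{t,x}}$ integrated against the parabolic decay in $r$. The two factors would be controlled using, respectively, the $L^{p_d}_{t,x}$ bound on $\psi_{d+1}$ from Lemma~\ref{L3L6} and the $F_k(T)$ bounds on $\partial_l\psi_l$ and $A_l\psi_l$ from \eqref{psifk} and \eqref{apsifk}.

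From here I would run a bilinear Littlewood-Paley decomposition of the product into high-low, low-high, and high-high paraproducts. In dimension $d=2$, where $p_d=4$ and $F_k(T)\hookrightarrow L^4_{t,x}$, the H\"older pairing $L^4\times L^4\to L^2$ is direct. The main contribution comes from the high-high case with both inputs at a common frequency $2^j$, $j\geq k$: placing the critical envelope $b_j$ on one factor and the noncritical $b_j(\sigma)$ on the other, carrying out the $r$-integration against the parabolic decay, and summing over $j\geq k$ produces exactly $2^{-\sigma k}b_{>k}^2(\sigma)$, reading off the definition of $b_{>k}(\sigma)$ with weight $2^{(k-j)(\sigma-(d-2)/2)}=2^{(k-j)\sigma}$. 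The high-low and low-high pieces are bounded by $2^{-\sigma k}b_k\,b_k(\sigma)$, which is controlled by the $j=k$ summand of $2^{-\sigma k}b_{>k}^2(\sigma)$ and hence absorbed into the target. Finally, summing the frequency-localized bound \eqref{adunu2} in $k$ yields $\|A_{d+1}(0)\|_{L^2}\lesssim \eps^2$.

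In dimensions $d\geq 3$, the pairing $L^{p_d}\times L^{p_d}\to L^{p_d/2}$ misses $L^2$, and I would close the gap using Bernstein at the low output frequency, gaining the factor $2^{kd(d-2)/(2(d+2))}$. A paraproduct estimate analogous to Lemma~\ref{Lpda} then bounds each piece by $\eps\,2^{-\sigma k}b_k(\sigma)$, the $\eps$ factor arising from bounding one of the two $\psi$-factors in the $\eps$-small critical envelope $b_k$.

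The main obstacle I anticipate is the bookkeeping for the high-high contribution in the $d=2$ case: verifying that the sum $\sum_{j\geq k} b_j\,b_j(\sigma)\,2^{-\sigma j}$ obtained from H\"older and $r$-integration matches precisely $2^{-\sigma k}b_{>k}^2(\sigma)$ with the correct weight from the definition of $b_{>k}(\sigma)$, and that the same accounting produces the correct exponent in the high-low and low-high cases after using slow variation of the envelopes. A secondary technical point is the mildly singular factor $(r2^{2k})^{-3/8}$ appearing in \eqref{apsifk}, but, as in Lemma~\ref{Lpda}, it is integrable at $r=0$ and dominated by parabolic decay at infinity, producing only a harmless constant after the $r$-integration.
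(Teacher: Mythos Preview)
Your overall strategy—the representation formula for $A_{d+1}(0)$, the bilinear Littlewood--Paley decomposition, and the input bounds from Lemma~\ref{L3L6} and \eqref{psifk}--\eqref{apsifk}—matches the paper's, and your frequency-localized analysis for $d=2$ is correct. There are, however, two concrete gaps.

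\emph{Dimension $d\geq 3$.} The pairing $L^{p_d}\times L^{p_d}\to L^{p_d/2}$ followed by spatial Bernstein on the output yields only $L^{p_d/2}_t L^2_x$, and since $p_d/2=(d+2)/d<2$ you cannot upgrade this to $L^2_{t,x}$; the time integrability is too weak. The paper instead exploits the $L^\infty_t L^2_x$ component of the $F_k(T)$ norm for $\DD_l\psi_l$ (see \eqref{mf21a}): the bilinear estimate \eqref{resaux1} pairs one factor in $L^{p_d}_{t,x}\cap L^\infty_t L^2_x$ with the other in $L^{p_d}_{t,x}$ to land directly in $L^2_{t,x}$, with off-diagonal factor $2^{j(d-2)/2}$. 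You already have $L^\infty_t L^2_x$ available from $F_k(T)$, so the fix is immediate once you use it.

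\emph{Dimension $d=2$, global $L^2$ bound.} You cannot recover $\|A_{d+1}(0)\|_{L^2}\lesssim\eps^2$ by $l^2$-summing the localized bound $\|P_k A_{d+1}(0)\|_{L^2}\lesssim b_{>k}^2$: at $\sigma=0$ one has $b_{>k}^2=\sum_{j\geq k}b_j^2\to\eps^2$ as $k\to-\infty$, so $\sum_k(b_{>k}^2)^2=\infty$. The localized bound is simply not sharp at low output frequencies. The paper handles this piece separately via \eqref{resaux2}, bounding $\|\overline{\psi_{d+1}}\,\DD_l\psi_l\|_{L^2_{t,x}}$ globally (without projecting to $P_k$) before the $r$-integration, which sidesteps the low-frequency divergence.
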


\begin{proof}[Proof  of  Lemma  \ref{adunu2}]
 To bound $A_{d+1}$ we start from the
  identity \eqref{Aform}
  \begin{equation}\label{mf62}
    \begin{split}
      A_{d+1}(0)=-\sum_{l=1}^d\int_0^{\infty}
\Im\big(\overline{\psi_{d+1}}\DD_l \psi_l\big)(s)\,ds.
    \end{split}
  \end{equation}
For $\psi_{d+1}$ we use the bound \eqref{mf20}. For $\DD_l \psi_l$,
by \eqref{psifk} and \eqref{apsifk},
\begin{equation}
\| \DD_l \psi_l(s)\|_{L^\infty_t L^2_x \cap L^{p_d}_{t,x}} \lesssim 2^{k}
2^{-\sigma k} a_k(\sigma) (s2^{2k})^{-3/8} (1+s2^{2k})^{-2}.
\label{mf21a}\end{equation}
To multiply $L^\infty_t L^2_x \cap L^{p_d}$ and $L^{p_d}$ functions we
will use the following bound: if
  \begin{equation*}
    \mu_k=\sum_{|k'-k|\leq 20}\|P_{k'}f\|_{L^{p_d}_{t,x}
      \cap L^\infty_t L^2_x},\quad
\nu_k=\sum_{|k'-k|\leq 20}\|P_{k'}g\|_{L^{p_d}_{t,x}}
  \end{equation*}
 then, for any $k \in \Z$,
  \begin{equation} \label{resaux1} \| P_k (fg) \|_{L^2_{t,x}}
 \lesssim \sum_{j \leq k} 2^{j(d-2)/2} (\mu_j \nu_k + \mu_k \nu_j) 
+ \sum_{j
      \geq k} 2^{k(d-2)/2}\mu_{j}\nu_{j}.
  \end{equation}
This is easy to prove as in Lemma~\ref{building2} using a bilinear
Littlewood-Paley decomposition and Sobolev embeddings.

  In dimension $d \geq 3$ we estimate using  \eqref{resaux1},
  \eqref{mf20} and \eqref{mf21a}:
\begin{equation*}
\begin{split}
\|P_k A_{d+1}(0)\|_{L^2_{t,x}} \lesssim &\sum_{l=1}^d\int_0^{\infty}
\| P_k \big(\overline{\psi_{d+1}}\DD_l \psi_l\big)(s)\|_{L^2_{t,x}} ds\\
\lesssim &\ 2^{-\sigma k} \sum_{j \leq  k} 2^{j+k} b_k(\sigma) b_{j}  
\int_0^\infty  (s2^{2j})^{-3/8} (1+s2^{2k})^{-2}    ds \\
& \ +  \sum_{{j} \geq  k}  2^{-\sigma {j}} 2^{2j} 2^{(k-{j})(d-2)/2}
b_{j}(\sigma) b_{j}  \int_0^\infty
(s2^{2{j}})^{-3/8} (1+s2^{2{j}})^{-4}     ds \\
\lesssim &\  2^{-\sigma k}b_k(\sigma) \sum_{{j} \leq k}  b_{j} 2^{({j}-k)/4} +  
\sum_{{j} \geq  k}  b_{j}(\sigma) b_{j}  2^{-\sigma {j}} 2^{(k-{j})(d-2)/2} \\
\lesssim &\  2^{-\sigma k} b_k b_k(\sigma).
\end{split}
\end{equation*}
 In dimension $d=2$ the same computation applies, with the only
 difference that the last sum can only be bounded by $b_{>k}^2(\sigma) 2^{-\sigma k}$. This gives the second part of
 \eqref{adunu}. For the first part we replace \eqref{resaux1} by
  \begin{equation} \label{resaux2}
 \| fg \|_{L^2_{t,x}}
 \lesssim \sum_k \mu_k \sum_{j \leq k} \nu_j +
 \sum_k \nu_k \sum_{j \leq k} \mu_j.
  \end{equation}
Then repeating the above computation we obtain
\[
\|A_{d+1}(0)\|_{L^2_{t,x}} \lesssim  \sum_k b_k  \sum_{j \leq k} b_j
2^{({j}-k)/4} \lesssim \sum_k b_k^2.
\]
\end{proof}

\section{Perturbative analysis of the Schr\"{o}dinger
  equation}\label{PERTURB}

In this section we prove Proposition~\ref{schgk}. 
For $k \in \Z$ we denote 
\begin{equation}
b(k) = \sum_{m=1}^d \| P_k \psi_m(0)\|_{G_k(T)}.
\end{equation}
For $\sigma \in \{(d-2)/2,\sigma_0\}$ we introduce the frequency
envelopes
\begin{equation}
b_k(\sigma) = \sup_{j \in \Z} 2^{\sigma j} 2^{-\delta |k-j|} b(j).
\label{bksdef}\end{equation}
These are finite and belong to $l^2$ due to \eqref{goodbounds4} and
Sobolev embeddings. We also have
\begin{equation}
\| P_k \psi_m(0)\|_{G_k(T)} \lesssim 2^{-\sigma k} b_k(\sigma).
\label{bksbd} \end{equation}

For \eqref{bootschout} we need to show that
$b_k(\sigma) \lesssim c_k(\sigma)$. On the other hand from the
bootstrap assumption \eqref{bootsch} we know that $b_k \leq
\eps_0^{-\frac12} c_k$.  In particular
\begin{equation}
\sum_{k \in \Z} b_k^2 \leq \eps_0.
\label{bkeps}\end{equation}
For the connection coefficients $A_m$ we use Proposition~\ref{heatfk}
with $\eps = \eps_0^\frac12$. The assumption \eqref{psizero} follows
from the inclusion $G_k \subset F_k$. We also need to verify that the
assumption \eqref{psidpu} in Proposition~\ref{heatfk} follows from
\eqref{psizero} if $\phi$ solves the Schr\"odinger map equation:

\begin{lemma}\label{alexnew40}
  If $b_k(\sigma)$ are as above then the field $\psi_{d+1}(0)$ satisfies
  the bounds
\begin{equation}
\|P_k \psi_{d+1}(0)\|_{L^{p_d}_{t,x}} \lesssim b_k(\sigma) 2^{-(\sigma-1) k}.
\end{equation}
\end{lemma}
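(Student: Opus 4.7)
The approach is to use the algebraic identity $\psi_{d+1}(0) = i \sum_{l=1}^d \DD_l \psi_l(0)$ obtained from \eqref{schcov} at heat time $s=0$, which expands as
\[
\psi_{d+1}(0) = i \sum_{l=1}^d \partial_l \psi_l(0) - \sum_{l=1}^d A_l(0)\psi_l(0).
\]
This reduces the claim to separate $L^{p_d}_{t,x}$ estimates on the derivative term and the product term. Crucially, part (a) of Proposition~\ref{heatfk} (bounds on $\psi_m$, $A_m$ in $F_k(T)\cap S_k^{1/2}(T)$ and the $L^{p_d}$ bound \eqref{pkam}) has already been proved without invoking the assumption \eqref{psidpu} that we are currently verifying, so these bounds are available.

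For the derivative term the bound is immediate. Inspection of Definitions~\ref{spacesd>2} and \ref{spacesd2} shows that the $G_k(T)$ norm dominates $\|\cdot\|_{L^{p_d}_{t,x}}$ in every dimension $d\geq 2$, so the hypothesis \eqref{bksbd} gives
\[
\|P_k \partial_l \psi_l(0)\|_{L^{p_d}_{t,x}} \lesssim 2^k \|P_k \psi_l(0)\|_{L^{p_d}_{t,x}} \lesssim 2^k \cdot 2^{-\sigma k} b_k(\sigma) = 2^{-(\sigma-1)k} b_k(\sigma).
\]

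For the product $A_l(0)\psi_l(0)$ I would apply the bilinear estimate from Lemma~\ref{building2}, taking $\psi_l(0)$ as the factor in the $S_k^0(T)$ role (via the chain of embeddings $G_k\hookrightarrow F_k\hookrightarrow S_k^0$) and $A_l(0)$ as the factor in $L^{p_d}_{t,x}$ using \eqref{pkam}. Both factors obey $2^{-\sigma k} b_k(\sigma)$ bounds in their respective norms. The low--high and high--low pieces of the bilinear Littlewood--Paley decomposition can be summed by invoking slow variation of $b_{k}(\sigma)$ together with the smallness $b_{k}\leq \eps_0^{1/2}$ of the critical envelope, yielding contributions of size $\eps_0^{1/2}\,b_k(\sigma) 2^{-(\sigma-1)k}$. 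The high--high contribution, where no pointwise geometric decay is available in the sum over $k'\geq k$, should be handled by pairing one copy of $b_{k'}$ at critical regularity and using $\sum_{k'}b_{k'}^2\lesssim \eps_0$ to extract the smallness. Adding the two estimates gives
\[
\|P_k\psi_{d+1}(0)\|_{L^{p_d}_{t,x}}\lesssim (1+\eps_0^{1/2})\,2^{-(\sigma-1)k}b_k(\sigma),
\]
which is the desired bound.

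The main obstacle will be the high--high frequency interaction in the bilinear bound on $A_l(0)\psi_l(0)$, especially in dimension $d=2$ at critical regularity $\sigma=(d-2)/2=0$, where Lemma~\ref{building2} with $\omega=0$ offers no geometric decay and slow variation of the envelope is insufficient. Here one must exploit the $\ell^2$-smallness $\sum_{k'}b_{k'}^2\lesssim \eps_0$ of the critical envelope, and it is important that the $L^{p_d}$ bound \eqref{pkam} for $A_l(0)$ is sharp, matching the regularity of $\psi_l(0)$ exactly.
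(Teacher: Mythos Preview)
Your overall strategy matches the paper's: expand $\psi_{d+1}(0)=i\sum_l\DD_l\psi_l(0)$, bound the derivative piece directly from $G_k\hookrightarrow L^{p_d}$, and treat $A_l(0)\psi_l(0)$ bilinearly. The low--high and high--low parts of the Littlewood--Paley decomposition close exactly as you describe, and in dimensions $d\geq 3$ your high--high argument via Lemma~\ref{building2} with $\omega=0$ also works, because $(d-2)/2+\sigma\geq 1>\delta$ gives genuine geometric decay.

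The gap is precisely the case you flag: $d=2$, $\sigma=0$. There Lemma~\ref{building2} with $\omega=0$ gives the high--high contribution $2^{k}\sum_{k'\geq k}b_{k'}^2$, and the $\ell^2$-smallness you invoke yields only $2^{k}\eps_0$. The target is $2^{k}b_k$, and $b_k$ can be arbitrarily small compared to $\eps_0$, so the frequency-envelope bound does not close. Slow variation of $b_k$ does not help either, since it points in the wrong direction for $k'\geq k$.

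The fix is to reverse the roles of the two factors and use the stronger $S_{k'}^{1/2}$ control on $A_l(0)$ available from Lemma~\ref{Aprop} (this is exactly what the paper does). At $s=0$ that lemma gives the uniform bound $\|P_{k'}A_l(0)\|_{S_{k'}^{1/2}(T)}\lesssim\eps_0$. Applying Lemma~\ref{building2} with $f=A_l(0)$, $\omega=1/2$ and $g=\psi_l(0)$ in $L^{p_d}$, the high--high sum becomes
\[
2^{k}\sum_{k'\geq k}2^{-(k'-k)/2}\,\eps_0\cdot 2^{-\sigma k'}b_{k'}(\sigma)
\;\lesssim\;\eps_0\,2^{-(\sigma-1)k}b_k(\sigma),
\]
using only slow variation and $1/2>\delta$. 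This closes the estimate in all dimensions.
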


\begin{proof}[Proof of Lemma \ref{alexnew40}] 
We use the identity
\eqref{schcov}
\begin{equation*}
  \psi_{d+1}(0)=i\sum_{l=1}^d(\partial_l\psi_l(0)+iA_l(0)\psi_l(0)).
\end{equation*}
From \eqref{psifk}, \eqref{pkam} and \eqref{goodbounds3} we have
\begin{equation}
  \| P_k  \psi_l(0)\|_{L^\infty_t L^2_x \cap L^{p_d}_{t,x}} + 
  \|P_k A_l(0)\|_{L^\infty_t L^2_x \cap L^{p_d}_{t,x}} \lesssim 
2^{-\sigma k} b_k(\sigma).
  \label{psilal}\end{equation}
The $L^{p_d}_{t,x}$ bound for the first term $\partial_l\psi_l(0)$
immediately follows. The second term $A_l(0)\psi_l(0)$ can also be
estimated by \eqref{psilal} using \eqref{resaux1}, except in dimension
$d=2$. If $d = 2$ then from \eqref{resaux1} and \eqref{psilal} we
still obtain
\[
\| P_k (P_{\leq k+4} A_l(0)  \psi_l(0))\|_{L^2_{t,x}} 
\lesssim 2^{-\sigma k} b_k(\sigma).
\]
However, in order to handle the high-high frequency interactions
we need a stronger bound on $A_l$ which follows from
Lemma \ref{Aprop}, namely
\begin{equation*}
2^{\frac{k}2}\|P_k A_l(0)\|_{L^4_t L^2_x} \lesssim
 \|P_k(A_l(0))\|_{S^{1/2}_k(T)}\lesssim \eps.
\end{equation*}
Combining this with \eqref{psilal} for $A_l$ we easily obtain the
remaining bound
\[
\| P_k (P_{> k+4} A_l(0)  \psi_l(0))\|_{L^2_{t,x}} 
\lesssim 2^{-\sigma k} b_k(\sigma).
\]
\end{proof}

Thus we can apply Proposition~\ref{TaoHeat2} and Corollary~\ref{heatcor}.
For convenience we summarize the two main ingredients 
which are to be
used in the sequel.  On one hand, for $l = 1,\cdots,m$ we have the
bounds
\begin{equation}\label{ro1}
\left\{ \begin{array}{l}
    \|\psi_l(s)\|_{F_k(T)} \lesssim
    2^{-\sigma k}b_k(\sigma)(1+s 2^{2k})^{-4}
\cr \cr
\|P_k \DD_l \psi_l(s)\|_{F_k(T)}\lesssim
  2^k  2^{-\sigma k}b_k(\sigma)(s2^{2k})^{-\frac38}(1+s 2^{2k})^{-2}
  \end{array}\right.
\end{equation}
which follow from \eqref{psifk} and \eqref{apsifk}. On the other
hand,   for each 
\[
F \in \{\psi_m(0) \overline{\psi_l}(0),
A_l^2(0), A_{d+1}(0)\}
\]
we have the bounds
\begin{equation}\label{psa}
\left\{  \begin{array}{ll}
    \| P_k F\|_{L^2_{t,x}}  \lesssim
    \eps_0^\frac12 2^{-\sigma k} b_k(\sigma), & \qquad d \geq 3 \cr \cr
  \| F \|_{L^2_{t,x}} 
  \lesssim \varepsilon_0, \qquad  \| P_k F\|_{L^2_{t,x}} \lesssim
     2^{-\sigma k} b_{>k}^2(\sigma), & \qquad   d = 2
\end{array} \right.
\end{equation}
Also by Sobolev embeddings and Littlewood-Paley theory,
\begin{equation}\label{psasob}
\| F \|_{L^2_t L^d_x}\lesssim [\sum_{k\in\Z}\|P_kF \|_{L^2_t L^d_x}^2]^{1/2}\lesssim[\sum_{k\in\Z}\eps_0b_k^2]^{1/2}\lesssim \eps_0.
\end{equation}
 Here the $A_{d+1}$ bound is from \eqref{aadunu} and
\eqref{aadunub}, while the $\psi_m(0) \overline{\psi_l}(0)$ and the
$A_l^2(0)$ bounds follow from \eqref{psilal} due to \eqref{resaux1}.

For $m=1,\ldots,d$ we denote the nonlinearity of the Schr\"{o}dinger equation
\eqref{schcov2}
\begin{equation}\label{ro2}
  L_m=-2i\sum_{l=1}^dA_l\partial_l\psi_m+\big(A_{d+1}+
\sum_{l=1}^d(A_l^2-i\partial_lA_l)\big)\psi_m-i\sum_{l=1}^d\psi_l\Im(\overline{\psi_l}\psi_m).
\end{equation}
For simplicity of notation, in this section we use sometimes $\psi_m$
for $\psi_m(0)$ and $A_m$ for $A_m(0)$. 

\begin{proposition}\label{SchPer}
  (Control of the Schr\"{o}dinger nonlinearities) For any
  $m=1,\ldots,d$ and $\sigma\in\{(d-2)/2,\sigma_0\}$ we have
  \begin{equation}\label{ro3}
   \|P_k(L_m)\|_{N_k(T)} \lesssim 
\varepsilon_0 2^{-\sigma k} b_k(\sigma).
  \end{equation}
\end{proposition}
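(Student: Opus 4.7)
The plan is to split $L_m$ into structural groups and estimate each in a distinct component of the $N_k(T)$ norm. I would treat separately: (i) the cubic non-derivative term $\sum_l \psi_l \Im(\overline{\psi_l}\psi_m)$; (ii) the ``potential'' terms $A_{d+1}\psi_m$ and $\sum_l A_l^2\psi_m$; (iii) the term $(\partial_l A_l)\psi_m$; and (iv) the magnetic term $\sum_l A_l\,\partial_l \psi_m$. The first three groups are scalar multiplications of $\psi_m$ by a function quadratic in $\psi$. By \eqref{psa} these quadratic factors sit in $L^2_{t,x}$, and by \eqref{ro1} the factor $\psi_m$ sits in $L^{p_d}_{t,x}\cap L^\infty_t L^2_x$; a H\"older-based bilinear argument, organized via a Littlewood--Paley decomposition as in Lemma \ref{building2}, then places these products in the $L^{p_d'}_{t,x}$ component of $N_k(T)$ in dimensions $d\geq 3$ and in the $L^{4/3}_{t,x}+L^{3/2,6/5}_{\e_j}$ components in dimension $d=2$ (the lateral pieces being needed only to absorb the High$\times$High$\to$Low interactions that overshoot $L^{4/3}$). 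The smallness factor $\varepsilon_0$ comes from squeezing out two $\psi$-factors via $\sum_k b_k^2 \lesssim \varepsilon_0$, see \eqref{bkeps}. For group (iii) the naked derivative on $A_l$ is problematic, but using \eqref{Aform} one rewrites $\partial_l A_l(0)$ as an $s$-integral of a quadratic expression in $\psi$, thereby reducing it to the type of bilinear bound above, with the $s$-decay from \eqref{ro1} making the integral convergent.

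The main work, as foreshadowed by the discussion around \eqref{schem2}, is the magnetic term (iv), which requires the inhomogeneous local smoothing component $L^{1,2}_\e$ of $N_k(T)$ (respectively $L^{1,2}_{\e,W_{k-40}}$ in $d=2$). Here I would exploit the caloric gauge representation \eqref{Aform} to write
\[
\sum_l A_l(0)\partial_l \psi_m(0) = -\sum_{l,n=1}^d \int_0^\infty \Im\big(\overline{\psi_l}(s)\,\DD_n \psi_n(s)\big)\,\partial_l \psi_m(0)\,ds,
\]
and then perform a dyadic frequency decomposition of the resulting trilinear expression. In each frequency configuration, the factor carrying the highest frequency is placed in the local smoothing norm $L^{\infty,2}_\e$ via the $G_k(T)$ control on $\psi_m(0)$ (or its Galilean-shifted analogue $L^{\infty,2}_{\e,\lambda}$ in $d=2$); the remaining two factors are placed in compatible lateral Strichartz, $L^{p_d}_x L^\infty_t$, or maximal function norms supplied by the $F_k(T)$ bound in \eqref{ro1}, using Lemma \ref{l4s} in $d=2$ to handle the missing endpoint maximal function. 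The factor $\DD_n\psi_n(s)$ contributes a full derivative $2^k$ balanced by the favourable parabolic decay $(s2^{2k})^{-3/8}(1+s2^{2k})^{-2}$, which renders the $s$-integral convergent and localizes its main contribution to $s\sim 2^{-2\max(k_1,k_2,k_3)}$, exactly matching the dyadic scale of the dominant frequency.

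The principal obstacle is the High$\times$High$\to$Low frequency cascade inside the integrand: two high-frequency $\psi$'s combine to produce a low-frequency $A_l$, and a naive estimate would lose the required weight at the low output frequency. This is precisely the mechanism displayed in \eqref{schem} for which the caloric gauge outperforms the Coulomb gauge, replacing the Coulomb factor $2^{-j}$ (indexed by the low output frequency $j$) with the caloric factor $2^{-k}$ (indexed by the high input frequency $k$), giving exactly the slack needed to close the estimate at critical regularity. Once this interaction is controlled, the remaining summation in frequency proceeds by a bookkeeping argument parallel to the proofs of Lemma \ref{heatcontrol} and Lemma \ref{Lpda}: two of the three $\psi$-factors consume the smallness $\varepsilon_0$ via \eqref{bkeps}, while the third carries the regularity $b_k(\sigma)2^{-\sigma k}$. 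In dimension $d=2$ there is the additional task of tracking compatibility of the Galilean parameter sets $W_{k\pm 40}$ across the trilinear estimate, matching the output of Lemma \ref{l4s} to the bilinear framework of Lemma \ref{building}; this is routine but must be done carefully to ensure that the discretization of the Galilean parameter does not deteriorate under products.
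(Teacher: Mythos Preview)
Your overall architecture matches the paper's, and your treatment of the magnetic term (iv) via the caloric representation \eqref{Aform} and trilinear local-smoothing estimates is essentially the paper's Lemma~\ref{hardsch}. There is, however, a genuine gap in your plan for groups (i)--(ii).

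You propose to estimate $F\psi_m$, with $F\in\{\psi_l\overline{\psi_m},A_l^2,A_{d+1}\}$, purely in the Strichartz component $L^{p_d'}_{t,x}$ (plus lateral pieces for $d=2$ to catch High$\times$High$\to$Low). This misses the interaction where $F$ sits at the output frequency $k$ and $\psi_m$ sits at a much lower frequency $k_2\ll k$. In that regime, H\"older with $F\in L^2_{t,x}$ and $\psi_m\in L^{p_d}_{t,x}\cap L^\infty_tL^2_x$ yields, after Sobolev,
\[
\|P_{k}(P_{[k-4,k+4]}F\cdot P_{k_2}\psi_m)\|_{L^{p_d'}_{t,x}}\lesssim \eps_0^{1/2}2^{-\sigma k}b_k(\sigma)\,b_{k_2},
\]
with no off-diagonal decay in $k_2$; the sum $\sum_{k_2<k}b_{k_2}$ is not controlled (the $b_{k_2}$ are only $\ell^2$). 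The paper closes this case not with Strichartz but with the inhomogeneous local smoothing component of $N_k(T)$ paired against the maximal function component of $F_{k_2}(T)$: this is exactly \eqref{Fhfh}, which produces the crucial factor $2^{(k_2-k)/2}$ and makes the $k_2$-sum geometric. So the local smoothing/maximal function machinery is needed already for the ``easy'' part $L_{m,1}$, not only for the magnetic term.

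Two smaller structural points. First, the paper does not treat $(\partial_lA_l)\psi_m$ as a third ``potential'' group; it combines it with the magnetic term via the identity $-2iA_l\partial_l\psi_m-i(\partial_lA_l)\psi_m=-i\partial_l(A_l\psi_m)-iA_l\partial_l\psi_m$ and then expands $A_l$ by \eqref{Aform}, so that the extra derivative contributes a clean $2^{\max(k,k_3)}$ in the trilinear bookkeeping of Lemma~\ref{hardsch}. Your alternative (expand $\partial_lA_l$ directly via \eqref{Aform}) can be made to work but is less transparent. Second, the paper organizes the trilinear estimate for $L_{m,2}$ through an abstract best-constant $C(k,k_1,k_2,k_3)$ obtained by chaining two bilinear bounds (Lemmas~\ref{bilnk} and~\ref{bilnka}); this replaces the case-by-case placement of factors you sketch and handles the $d=2$ Galilean compatibility automatically, since the relevant $L^{2,\infty}_{\e,W}$ and $L^{\infty,2}_{\e,\lambda}$ spaces enter only through the already-packaged $F_k(T)$ and $G_k(T)$ norms.
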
  

Before proving the above proposition we show how to use it to conclude
the proof of Proposition~\ref{schgk}.  Applying  Proposition
\ref{linearmainrep} for the equations \eqref{schcov2}, by 
\eqref{ro3} and \eqref{schid} we obtain
\begin{equation}
\| P_k \psi_m(0) \|_{G_k(T)} \lesssim 
2^{\sigma k}( c_k(\sigma) + \eps_0 b_k(\sigma)).
\label{ceb}\end{equation}
By the definition of $b_k(\sigma)$ this implies that
\[
 b_k(\sigma) \lesssim c_k(\sigma) + \eps_0 b_k(\sigma).
\]
Hence 
\begin{equation*}
 b_k(\sigma) \lesssim c_k(\sigma)
\end{equation*}
which combined with \eqref{ceb} gives \eqref{bootschout},
concluding the proof of Proposition \ref{schgk}.

The rest of this section is concerned with the proof of Proposition
\ref{SchPer}, which follows from Lemma \ref{easysch} and Lemma~\ref{hardsch}. 
We begin our analysis with some bilinear estimates:

\begin{lemma}
(a) If  $|k_1- k| \leq 80$ and $f\in F_{k_1}(T)$ then
  \begin{equation} 
\| P_k(F f)\|_{N_k(T)} \lesssim \|F\|_{L^2_t L^d_x}\|f\|_{F_{k_1}(T)}.
\label{Fhfl}  \end{equation}
(b) If  $k_1 \leq k-80$ and $f\in F_{k_1}(T)$ then
  \begin{equation} \| P_k(F f) \|_{N_k(T)} \leq
    2^{k_1(d-2)/2} 2^{(k_1-k)/2}\| F\|_{L^2_{t,x}} \| f \|_{F_{k_1}(T)}.
\label{Fhfh}  \end{equation}
(c)  If  $k \leq k_1-80$  and $g\in G_{k_1}(T)$ then
  \begin{equation} \| P_k(F g) \|_{N_k(T)} \leq
    2^{k_1(d-2)/2} 2^{(k-k_1)/6}\| F\|_{L^2_{t,x}} \|g \|_{G_{k_1}(T)}.
\label{Flgh}  \end{equation}
\label{bilnk}\end{lemma}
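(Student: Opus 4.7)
My plan for each of the three parts is to select an appropriate component of the $N_k(T)$ norm, apply a H\"older inequality that factors the product as $F\in L^2_{x,t}$ (or $L^2_tL^d_x$) times a norm on $f$ (or $g$), and then invoke the corresponding Strichartz, maximal-function, or local-smoothing bound built into $F_{k_1}(T)$ (or $G_{k_1}(T)$).

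For part (a), I will place $P_k(Ff)$ in the $L^{p'_d}_{t,x}$ component of $N_k(T)$ (or $L^{4/3}_{t,x}$ when $d=2$). H\"older yields
\[
\|Ff\|_{L^{p'_d}_{t,x}}\lesssim \|F\|_{L^2_tL^d_x}\|f\|_{L^{q_1}_tL^{q_2}_x},\qquad \frac{1}{p'_d}=\frac{1}{2}+\frac{1}{q_1}=\frac{1}{d}+\frac{1}{q_2},
\]
and a direct computation gives $2/q_1+d/q_2=d/2$, so $(q_1,q_2)$ is Strichartz-admissible. The norm $\|f\|_{L^{q_1}_tL^{q_2}_x}$ is then controlled by $\|f\|_{F_{k_1}(T)}$ through complex interpolation between the $L^\infty_tL^2_x$ and $L^{p_d}_{t,x}$ bounds included in $F_{k_1}(T)$ (cf.\ Lemma~\ref{variousStrichartz}).

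For part (b), the constraint $k_1\leq k-80$ forces the frequencies of $F$ that contribute to $P_k(Ff)$ to lie near $2^k$. I will place $P_k(Ff)$ in the $L^{1,2}_\e$ component of $N_k(T)$ (or $L^{1,2}_{\e,W_{k-40}}$ for $d=2$), which carries the weight $2^{-k/2}$. The pointwise H\"older embedding $L^{2,2}_\e\cdot L^{2,\infty}_\e\hookrightarrow L^{1,2}_\e$ gives
\[
\|Ff\|_{L^{1,2}_\e}\lesssim \|F\|_{L^2_{t,x}}\|f\|_{L^{2,\infty}_\e},
\]
and the maximal function bound $\|f\|_{L^{2,\infty}_\e}\lesssim 2^{k_1(d-1)/2}\|f\|_{F_{k_1}(T)}$ built into $F_{k_1}(T)$ produces the factor $2^{k_1(d-2)/2}2^{(k_1-k)/2}$ after multiplying by the $2^{-k/2}$ weight. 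In $d=2$ the Galilean transforms $T_{\lambda\e}$ commute with H\"older (since $|T_{\lambda\e}h|=|h|$), and $W_{k_1+40}\subseteq W_{k-40}$ because $k_1+40\leq k-40$, so the sum-space decomposition from the $F^0_{k_1}$ maximal-function norm propagates into $L^{1,2}_{\e,W_{k-40}}$.

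For part (c), $g$ is at frequency $2^{k_1}$ and $k\leq k_1-80$ forces the relevant piece of $F$ to lie near frequency $2^{k_1}$ as well, so this is a high-high-to-low interaction. In $d=2$ I will place $P_k(Fg)$ in the $L^{3/2,6/5}_{\e_i}$ component of $N_k(T)$ (weight $2^{k/6}$); H\"older $L^{2,2}_{\e_i}\cdot L^{6,3}_{\e_i}\hookrightarrow L^{3/2,6/5}_{\e_i}$ yields
\[
\|Fg\|_{L^{3/2,6/5}_{\e_i}}\lesssim \|F\|_{L^2_{t,x}}\|g\|_{L^{6,3}_{\e_i}}\lesssim 2^{-k_1/6}\|F\|_{L^2_{t,x}}\|g\|_{G_{k_1}(T)},
\]
the last inequality coming from the $L^{6,3}_\e$ Strichartz term in $G_{k_1}(T)$; combined with the $2^{k/6}$ weight this delivers the claimed $2^{(k-k_1)/6}$. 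In $d\geq 3$ I will use the analogous lateral Strichartz $L^{p,q}_\e$ from Lemma~\ref{variousStrichartz} (which is Strichartz-admissible for the appropriate pair), together with H\"older into the $L^{p'_d}_{t,x}$ component of $N_k(T)$, to produce the $2^{k_1(d-2)/2}2^{(k-k_1)/6}$ factor. The main obstacle is the $d=2$ case: the $L^{6,3}_\e$ bound in $G_{k_1}(T)$ is attached to lateral-frequency projections $P_{j,\e_i}g$ with $|j-k_1|\leq 20$, whereas $g=P_{k_1}g$ has lateral-frequency content at all scales $j\leq k_1+1$, so the low-lateral-frequency pieces must be handled via one-dimensional Bernstein in the $\e_i$ direction together with other components of $G_{k_1}(T)$, with the weights carefully balanced so that the final $2^{(k-k_1)/6}$ gain is preserved.
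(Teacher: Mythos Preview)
Parts (a) and (b) of your proposal match the paper's proof essentially verbatim: place the product in the $L^{p'_d}_{t,x}$ component and use H\"older against $L^\infty_tL^2_x\cap L^{p_d}_{t,x}$ for (a), and place it in the inhomogeneous local smoothing component and use H\"older against the maximal function norm for (b). Your handling of the Galilean sum spaces in (b) is also what the paper does (tersely).

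For part (c) you diverge from the paper, and the divergence matters. In dimension $d\geq 3$ the paper does \emph{not} use lateral Strichartz estimates: it simply places $P_k(Fg)$ in $L^{p'_d}_{t,x}$, uses H\"older against a Strichartz norm of $g$, and applies a Bernstein/Sobolev step at the output frequency $2^k$ to produce the factor $2^{k(d-2)/2}$. Since $k\leq k_1-80$ and $(d-2)/2\geq 1/2$, this already gives the decay $2^{(k-k_1)(d-2)/2}\leq 2^{(k-k_1)/6}$; no lateral spaces are needed. Your proposed route through lateral Strichartz is not wrong in principle, but the $N_k(T)$ space for $d\geq 3$ has only the $L^{p'_d}$ and $L^{1,2}_\e$ components, so you would have to funnel everything back through one of those anyway.

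In dimension $d=2$ you correctly identify the obstacle --- the $L^{6,3}_\e$ control in $G_{k_1}(T)$ is only on $P_{j,\e}g$ with $|j-k_1|\leq 20$, not on $g$ itself --- but your proposed fix (one-dimensional Bernstein on the low-lateral-frequency pieces) is unnecessarily complicated and you do not verify that the weights actually close. The paper's resolution is cleaner and uses a different idea: it exploits the fact that $N_k(T)$ allows a \emph{splitting} into an $L^{3/2,6/5}_{\e_1}$ piece and an $L^{3/2,6/5}_{\e_2}$ piece. One performs a smooth angular partition of unity in frequency on the annulus $|\xi|\approx 2^{k_1}$ to write $g=g_1+g_2$, where $g_i$ is Fourier-supported in the sector $|\xi\cdot\e_i|\gtrsim 2^{k_1}$. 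Then $g_i$ is a finite sum of terms $P_{j,\e_i}g_i$ with $|j-k_1|\leq 20$, the angular multiplier is bounded on $L^{6,3}_{\e_i}$, and hence $\|g_i\|_{L^{6,3}_{\e_i}}\lesssim 2^{-k_1/6}\|g\|_{G_{k_1}(T)}$ directly. One then places $Fg_i$ in $L^{3/2,6/5}_{\e_i}$ via H\"older exactly as you wrote. This sidesteps the low-lateral-frequency issue entirely; the two directions $\e_1,\e_2$ in the definition of $N_k(T)$ are there precisely to make this angular decomposition work.
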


\begin{proof}[Proof of  Lemma \ref{bilnk}]
Part (a) follows from the definition of $F_k(T)$, $N_k(T)$, and 
\[
\| F f\|_{L^{p'_d}_{t,x}} \lesssim \|F\|_{L^2_t L^d_x}\|f\|_{L^\infty_t L^2_x \cap L^{p_d}_{t,x}}.
\]
Part (b) also follows directly from the definitions, since
  \begin{equation*}
    \begin{split}
      \|P_k(F f)\|_{N_{k}(T)}\lesssim 2^{-k/2}\sup_{\e\in\mathbb{S}^{d-1}}
\|Ff\|_{L^{1,2}_{\e,W_{k-40}}}
      \lesssim
      2^{-k/2}\sup_{\e\in\mathbb{S}^{d-1}}\|f\|_{L^{2,\infty}_{\e,W_{k_1+40}}}
\|F\|_{L^2_{x,t}}.
    \end{split}
  \end{equation*}
Finally for part (c) we use Sobolev embeddings if $d \geq 3$, 
\[
\|P_k(F g)\|_{N_k(T)} \lesssim \|P_k(F g)\|_{L^{p_d'}_{t,x}} \lesssim 2^{k(d-2)/2}
\|F\|_{L^2_{t,x}} \|g\|_{L^\infty_t L^2_x \cap L^{p_d}_{t,x}}. 
\]
If $d=2$ we need to use the lateral Strichartz estimates.  
Using an angular partition of unity in frequency 
we can write
\[
g = g_1 + g_2, \qquad
\|g_1\|_{L^{6,3}_{\e_1}}+\|g_2\|_{L^{6,3}_{\e_2}} \lesssim 
2^{-k_1/6} \|g \|_{G_k(T)}.
\]
Then we have
\[
\begin{split}
\|P_k(Fg)\|_{N_k(T)} \lesssim & \ 2^{k/6} 
\big(\|Fg_1\|_{L^{\frac32,\frac65}_{\e_1}}
+\|Fg_2\|_{L^{\frac32,\frac65}_{\e_2}}\big)
\\
\lesssim & \  2^{k/6} \|F\|_{L^2} \big( \|g_1\|_{L^{6,3}_{\e_1}}
+ \|g_2\|_{L^{6,3}_{\e_2}}\big)
\\
\lesssim & \ 2^{(k-k_1)/6} \|F\|_{L^2} \|g\|_{G_{k_1}(T)}.
\end{split}
\]
\end{proof}

The above lemma suffices in order to estimate the easier component of $L_m$:
\begin{equation}\label{ro5}
  L_{m,1}=\big(A_{d+1}+\sum_{l=1}^dA_l^2\big)\psi_m-
i\sum_{l=1}^d\psi_l\Im(\overline{\psi_l}\psi_m).
\end{equation}

\begin{lemma}\label{easysch}
  Let $F$ satisfy \eqref{psa} and $\psi \in \{\psi_m, \ m = 1,
  \cdots,d\}$. Then
  \begin{equation}\label{ro6}
 \|P_k (\psi F)\|_{N_k(T)} \lesssim \eps_0 2^{-\sigma k}  b_k(\sigma).
  \end{equation}
\end{lemma}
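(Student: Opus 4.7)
The plan is to bound $P_k(\psi F)$ via a bilinear Littlewood--Paley decomposition
\[
P_k(\psi F) = \sum_{\substack{|k_1-k|\leq 4\\ k_2\leq k-4}} P_k(P_{k_1}\psi\, P_{k_2}F) + \sum_{\substack{k_1\leq k-4\\ |k_2-k|\leq 4}} P_k(P_{k_1}\psi\, P_{k_2}F) + \sum_{\substack{k_1,k_2\geq k-4\\ |k_1-k_2|\leq 8}} P_k(P_{k_1}\psi\, P_{k_2}F),
\]
and estimate each regime using the appropriate part of Lemma~\ref{bilnk} together with the bounds \eqref{psa}--\eqref{psasob} on $F$ and the Schr\"odinger bound \eqref{bksbd} for $\psi$.

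For the high-low regime ($\psi$ at frequency $\sim 2^k$, $F$ at lower frequency), I would apply part (a) of Lemma~\ref{bilnk} with the low-frequency factor measured in $L^2_tL^d_x$ and the high-frequency factor in $F_{k_1}$; invoking \eqref{psasob} to get $\|P_{\leq k-4}F\|_{L^2_tL^d_x}\lesssim \eps_0$ and \eqref{bksbd} for $\|P_{k_1}\psi\|_{F_{k_1}}$ immediately yields the target bound $\lesssim \eps_0\, 2^{-\sigma k}b_k(\sigma)$ after summing the $O(1)$-many indices $|k_1-k|\leq 4$.

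For the low-high regime, I would apply part (b) of Lemma~\ref{bilnk} with $F$ at frequency $\sim 2^k$ and $\psi$ at the lower frequency $k_1\leq k-4$. Pairing the critical-level bound $\|P_{k_1}\psi\|_{F_{k_1}}\lesssim 2^{-(d-2)k_1/2}b_{k_1}$ with the $L^2_{t,x}$ bound on $P_kF$ from \eqref{psa}, the factor $2^{k_1(d-2)/2}\cdot 2^{(k_1-k)/2}$ from part (b) exactly cancels the scaling weight from $\psi$, leaving a geometric factor $2^{(k_1-k)/2}$ in $k_1$; summing via Cauchy--Schwarz against $\sum b_{k_1}^2\leq \eps_0$ from \eqref{bkeps} produces $\lesssim \eps_0^{1/2}\cdot \eps_0^{1/2}\cdot 2^{-\sigma k}b_k(\sigma)$ in the case $d\geq 3$; in $d=2$ one uses instead the unrestricted $\|F\|_{L^2_{t,x}}\lesssim\eps_0$ bound from \eqref{psa} and exploits the slow variation $b_{k_1}(\sigma)\leq 2^{\delta(k-k_1)}b_k(\sigma)$ to sum.

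The high-high regime is the main obstacle. Here I would bound $\|P_k(P_{k_1}\psi\,P_{k_1}F)\|_{N_k}$ in dimensions $d\geq 3$ by placing $f_1\in L^{p_d'}$ via H\"older and Bernstein: using $\|P_{k_1}\psi\|_{L^{p_d}_{t,x}}\lesssim 2^{-\sigma k_1}b_{k_1}(\sigma)$ (coming from $G_{k_1}\subset F_{k_1}$) and $\|P_{k_1}F\|_{L^2_{t,x}}\lesssim \eps_0^{1/2} 2^{-\sigma k_1}b_{k_1}(\sigma)$ together with a Bernstein factor $2^{(k-k_1)d/2}$ to land at output frequency $2^k$ in $L^{p_d'}$; this yields a geometric factor $2^{(k-k_1)(d-2)/2}$ that, combined with $\ell^2$-Cauchy--Schwarz, closes the estimate. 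In dimension $d=2$ this decay factor is absent, so instead I use the lateral Strichartz pieces of the $N_k(T)$ norm (the $L^{3/2,6/5}_{\e_j}$ components), placing $\psi$ in $L^{3,6}_\e\subset G_{k_1}(T)$ (paying a weight $2^{k_1/6}$) and $F$ in the $L^{6,3}_\e$ or $L^2$ component, then exchanging the summability of $\sum_{k_1\geq k}b_{k_1}\,2^{-\sigma k_1}b_{>k_1}^2(\sigma)$ using Fubini, the slow-variation of $b_j(\sigma)$, and the smallness $b_{>k}^2\leq\eps_0$ to produce the claimed bound $\eps_0\, 2^{-\sigma k}b_k(\sigma)$. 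The delicate bookkeeping in this $d=2$, high-high subcase, where we must reconcile the tail envelope $b_{>k}^2(\sigma)$ coming from \eqref{psa} with the target envelope $b_k(\sigma)$, is where the argument requires the most care.
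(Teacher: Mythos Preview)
Your strategy is essentially the paper's: the same trichotomy, the same inputs \eqref{psa}, \eqref{psasob}, \eqref{bksbd}, and the same bilinear Lemma~\ref{bilnk} applied piece by piece. Two points deserve correction.

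First, in the $d=2$ high--high case your H\"older pairing is wrong. Placing $\psi\in L^{3,6}_\e$ and $F\in L^2$ does \emph{not} land in $L^{3/2,6/5}_\e$ (check the exponents). The correct move, which is exactly the content of part~(c) of Lemma~\ref{bilnk}, is to use an angular partition of unity so that $P_{k_2}\psi$ splits as $g_1+g_2$ with $\|g_j\|_{L^{6,3}_{\e_j}}\lesssim 2^{-k_2/6}\|P_{k_2}\psi\|_{G_{k_2}}$; then $L^{6,3}_{\e_j}\cdot L^2_{t,x}\hookrightarrow L^{3/2,6/5}_{\e_j}$ gives the needed $2^{(k-k_2)/6}$ gain. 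Rather than redoing this by hand (and separately redoing the $d\geq 3$ Bernstein-into-$L^{p_d'}$ argument), you can simply cite \eqref{Flgh}, which already packages both dimensions uniformly.

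Second, in the $d=2$ high--high summation your bookkeeping with $b_{>k_1}^2(\sigma)$ needs a case split in $\sigma$ that you only gesture at. The paper does this explicitly: for small $\sigma$ (say $\sigma\leq 1/12$) use the crude bound $\|P_{k_1}F\|_{L^2}\lesssim\eps_0$ together with $\|P_{k_2}\psi\|_{G_{k_2}}\lesssim 2^{-\sigma k_2}b_{k_2}(\sigma)$ and the slow variation of $b_{k_2}(\sigma)$; for larger $\sigma$ use instead $\|P_{k_1}F\|_{L^2}\lesssim 2^{-\sigma k_1}b_{>k_1}^2(\sigma)$ together with the critical-level bound on $\psi$, and close via $b_{>k}^2(\sigma)\lesssim b_k b_k(\sigma)$ (valid since $\sigma>2\delta$). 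Your single expression $\sum_{k_1\geq k}b_{k_1}2^{-\sigma k_1}b_{>k_1}^2(\sigma)$ does not obviously sum to $\eps_0\,2^{-\sigma k}b_k(\sigma)$ uniformly over $\sigma\in[0,\sigma_0]$ without this split. Finally, a cosmetic point: adjust your Littlewood--Paley thresholds from $4$ to $80$ to match the hypotheses of Lemma~\ref{bilnk}.
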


\begin{proof}[Proof of Lemma \ref{easysch}] 
  We use a bilinear Littlewood-Paley decomposition
  \begin{equation*}
    \begin{split}
      P_k (F\psi)=  P_k (P_{<k-80} F P_{[k-4,k+4]} \psi) + 
\!\!\! \sum_{|k_1-k| \leq 4}^{k_2 <k-80}\!\!\! P_k (P_{k_1}F  P_{k_2} \psi)
      +\!\!\!\sum_{k_1,k_2\geq k-80}^{|k_1-k_2|\leq 90}\!\!\! P_k(P_{k_1}F 
      P_{k_2}\psi).
    \end{split}
  \end{equation*}
The first term is estimated using \eqref{Fhfl} and \eqref{psasob}. For the second by \eqref{Fhfh},
\[
\begin{split}
\| P_k(P_{k_1}F \,P_{k_2}\psi)\|_{N_k(T)} 
\lesssim  2^{k_2(d-2)/2} 2^{\frac{k_2-k}2} \|P_{k_1}F\|_{L^2_{t,x}} 
\| P_{k_2}\psi\|_{G_{k_2}(T)} 
 \lesssim  \eps_0 2^{\frac{k_2-k}2} 2^{-\sigma k}   
b_{k}(\sigma). \end{split}
\]
The summation with respect to $k_2 < k-80$ is straightforward.

Finally for the third term we use \eqref{Flgh}
 \[
\|P_k(P_{k_1}F \cdot P_{k_2} \psi)\|_{N_k(T)}  \lesssim 2^{\frac{k-k_2}6} 
2^{(d-2)k_2/2} \|P_{k_1}F\|_{L^2_{t,x}}  \|P_{k_2} \psi \|_{G_{k_2}(T)}.
\]

If $d \geq 3$, then using \eqref{psa} and that
$\sigma \geq \frac12$, the third sum is easily estimated. If $d=2$, one needs to distinguish between two cases. If $0 \leq \sigma \leq 1/12$ then we bound the right hand side by
\[
2^{\frac{k-k_2}6} \eps_0 2^{-\sigma k_2} b_{k_2}(\sigma)
\lesssim \eps_0 2^{-\sigma k} 2^{\frac{k-k_2}{12}}b_{k}(\sigma)
\]
and the summation with respect to $k_1,k_2 \geq k-80$ is straightforward.
If $\sigma \geq 1/12$  then we bound the right hand side by
\[
2^{\frac{k-k_2}6} 2^{-\sigma k} b_{>k}^2(\sigma)  
 b_{k_2} \lesssim 2^{\frac{k-k_2}6} 2^{-\sigma k} 
b_{k}(\sigma)  b_k b_{k_2}
\]
and the summation with respect to $k_1,k_2 \geq k-80$ is again straightforward.
\end{proof}

It remains to estimate the second part of $L_m$, namely
\begin{equation}\label{ro5.1}
  L_{m,2}=-2i\sum_{l=1}^dA_l\partial_l\psi_m-i\sum_{l=1}^d\partial_lA_l\cdot\psi_m=-i\sum_{l=1}^d\partial_l(A_l\psi_m)-i\sum_{l=1}^dA_l\partial_l\psi_m.
\end{equation}
For this we first complement Lemma~\ref{bilnk} with two $L^2$ bilinear
estimates:

\begin{lemma} \label{bilnka}
(a) If  $k_1 \leq k_2$, $f_1\in F_{k_1}(T)$, and $f_2\in F_{k_2}(T)$ then
  \begin{equation} \| f_1 \cdot  f_2 \|_{L^2_{t,x}} \lesssim
    2^{k_1(d-2)/2}  \| f_1 \|_{F_{k_1}(T)} \| f_2\|_{F_{k_2}(T)}.
\label{ff}  \end{equation}
(b)  If  $k_1 \leq k_2$, $f\in F_{k_1}(T)$, and $g\in G_{k_2}(T)$ then
  \begin{equation}  \| f\cdot g \|_{L^2_{t,x}} \lesssim
    2^{k_1(d-2)/2} 2^{(k_1-k_2)/2} \| f \|_{F_{k_1}(T)} \| g
    \|_{G_{k_2}(T)}.
\label{flowghigh}  \end{equation}
\end{lemma}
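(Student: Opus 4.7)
The two estimates rest on the standard lateral H\"older inequality
\[
\|h_1 h_2\|_{L^2_{t,x}} \leq \|h_1\|_{L^{2,\infty}_\e} \|h_2\|_{L^{\infty,2}_\e}, \qquad \e \in \mathbb S^{d-1},
\]
obtained by writing $L^2_{t,x} = L^2_{x_1} L^2_{x',t}$ with $x_1 = x \cdot \e$ and applying H\"older twice. In $d=2$ the same identity holds for the Galilean-twisted spaces $L^{2,\infty}_{\e,\lambda}$, $L^{\infty,2}_{\e,\lambda}$, since $|T_w f_i| = |f_i(\cdot + tw,t)|$ and hence $\|h_1 h_2\|_{L^2_{t,x}} = \|T_w(h_1) T_w(h_2)\|_{L^2_{t,x}}$ for any $w \in \R^2$. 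The $F_{k_1}$ definition immediately bounds the maximal-function factor: $\|f\|_{L^{2,\infty}_\e} \lesssim 2^{k_1(d-1)/2} \|f\|_{F_{k_1}}$ when $d \geq 3$, and in $d=2$ the sum-space structure of $F_{k_1}(T)$ yields $\|f\|_{L^{2,\infty}_{\e,\lambda}} \lesssim 2^{k_1/2} \|f\|_{F_{k_1}}$ for each $\lambda \in W_{k_1+40}$.

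For part (b), the improvement over (a) stems from the local smoothing content of $G_{k_2}$. I would first perform a finite angular decomposition $g = \sum_\omega g_\omega$ (respectively, a finite Galilean decomposition in $d=2$) into pieces whose frequency support lies in a sector where $|\xi \cdot \omega| \sim 2^{k_2}$, so that $g_\omega$ is essentially captured by $P_{j,\omega}$ for some $|j-k_2| \leq 20$. The defining norm of $G_{k_2}$ then gives $\|g_\omega\|_{L^{\infty,2}_\omega} \lesssim 2^{-k_2/2} \|g\|_{G_{k_2}}$, uniformly over the finite collection. Combining with $\sup_\omega \|f\|_{L^{2,\infty}_\omega} \lesssim 2^{k_1(d-1)/2} \|f\|_{F_{k_1}}$ and summing produces the factor $2^{k_1(d-1)/2 - k_2/2} = 2^{k_1(d-2)/2} \cdot 2^{(k_1-k_2)/2}$. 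In $d=2$, matching the Galilean parameters on both sides requires picking $\lambda$ in $W_{k_1+40} \cap \{|\lambda|<2^{k_2-40}\}$, which is nonempty since $k_1 \leq k_2$ and the constants are comfortable.

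For part (a) one can no longer invoke local smoothing for $f_2$, so the $L^{\infty,2}_\e$ side must be controlled via the Strichartz and maximal-function parts of $F_{k_2}$. My plan is to interpolate between two regimes. When $k_1 \approx k_2$, use H\"older $L^{p_d}_{t,x} \cdot L^{(d+2)/2}_{t,x} \to L^2_{t,x}$ together with Bernstein on $f_2$ (localized at $2^{k_2}$) to bound $\|f_2\|_{L^{(d+2)/2}_{t,x}}$ in terms of $\|f_2\|_{L^{p_d}_{t,x}}$; this produces the target exponent up to the ratio $k_2/k_1$. When $k_1 \ll k_2$, use instead the lateral H\"older identity with maximal function for $f_1$ and extract the factor $\|f_2\|_{L^{\infty,2}_\e}$ by Bernstein in the $\e$-direction (cost $2^{k_2/2}$) combined with the $L^{p_d}_x L^\infty_t$ component of $F_{k_2}$ (weight $2^{k_2 d/(d+2)}$) and a H\"older step in $(x',t)$. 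The precise combination of these exponents, governed by the identity $2^{k_1(d-2)/2} = 2^{k_1(d-1)/2} \cdot 2^{-k_1/2}$, produces the claimed bound.

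The main obstacle is the $k_1 \ll k_2$ regime of part (a): extracting the $k_1$-dependent factor $2^{k_1(d-2)/2}$ from a combination of $F_{k_2}$-type norms none of which contains a local smoothing component requires careful balancing of Bernstein in the transverse $\e$-direction with the lateral Strichartz content of $F_{k_2}$. Once the correct interpolation is identified, the remainder of the argument is a routine H\"older computation.
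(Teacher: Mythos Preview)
Your treatment of part (b) is essentially the paper's argument: angular decomposition of $g$, local smoothing for each piece, and the lateral maximal function norm on $f$. One imprecision: in $d=2$ the $F_{k_1}$ norm is an infimum over decompositions $f=\sum_j f_{m_j}$ with each piece controlled in $F^0_{k_1+m_j}$, and the maximal function component of $F^0_{k_1+m_j}$ is itself a \emph{sum} space $L^{2,\infty}_{\e,W_{k_1+m_j+40}}$, not a sup over $\lambda$. So your claim ``$\|f\|_{L^{2,\infty}_{\e,\lambda}} \lesssim 2^{k_1/2}\|f\|_{F_{k_1}}$ for each $\lambda$'' is false as stated. The paper handles this by first reducing to the $F^0$ level, then splitting into the cases $k_1+m \ge k_2-100$ (where $L^4\times L^4$ already suffices) and $k_1+m < k_2-100$ (where $W_{k_1+m+40}\subset\{|\lambda|<2^{k_2-40}\}$ so that the sum over $\lambda$ on the $f$ side can be matched against the sup over $\lambda$ on the $g$ side). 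This is a routine repair.

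The real gap is part (a). You have convinced yourself that the lateral $L^{2,\infty}_\e \times L^{\infty,2}_\e$ mechanism must be used, and then struggle to conjure an $L^{\infty,2}_\e$ bound on $f_2$ from the $F_{k_2}$ norm. That struggle is unwinnable: $F_{k_2}$ contains no local smoothing component, and none of the maneuvers you describe (Bernstein in the $\e$ direction, the $L^{p_d}_x L^\infty_t$ component, etc.) can produce the needed $L^2$ decay in $t$. Your proposed H\"older pair $L^{p_d}\cdot L^{(d+2)/2}\to L^2$ is also arithmetically wrong ($1/p_d + 2/(d+2) \ne 1/2$), and even with the correct dual exponent $d+2$, Bernstein only acts on the spatial index, so you cannot move from $L^{p_d}_{t,x}$ to $L^{d+2}_{t,x}$.

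The point you are missing is that part (a) has \emph{no} off-diagonal gain in $k_1-k_2$, so it needs no smoothing/maximal machinery at all. The paper's proof is a single H\"older step:
\[
\|f_1 f_2\|_{L^2_{t,x}} \le \|f_1\|_{L^4_t L^{2d}_x}\,\|f_2\|_{L^4_t L^{2d/(d-1)}_x}.
\]
The second factor is exactly the interpolant of $L^\infty_t L^2_x$ and $L^{p_d}_{t,x}$, both present in $F_{k_2}$. The first factor is that same interpolant for $f_1$, followed by Bernstein in space at frequency $2^{k_1}$ from $L^{2d/(d-1)}_x$ to $L^{2d}_x$, which costs precisely $2^{k_1(d-2)/2}$. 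The entire factor $2^{k_1(d-2)/2}$ thus comes from Bernstein on the low-frequency function $f_1$; the high-frequency function $f_2$ contributes no dyadic weight at all.
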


\begin{proof}[Proof of Lemma \ref{bilnka}]
Part (a) follows by Sobolev embeddings from
\[
\| f_1 \cdot  f_2 \|_{L^2_{t,x}} \lesssim  \| f_1 \|_{L^4_t L^{2d}_x} 
\| f_2\|_{L^\infty_t L^2_x \cap L^{p_d}_{t,x}}.
\]
For part (b), we first observe that, using a smooth partition of $1$
in the frequency space, we may assume that $\mathcal{F}(g)$ is
supported in the set 
\[
\{(\xi,\tau):|\xi|\in[2^{k_2-1},2^{k_2+1}]\text{
  and }\xi\cdot \e_0\geq 2^{k_2-5}\}
\]
 for some vector
$\e_0\in\mathbb{S}^{d-1}$.  Thus
  \begin{equation}
    \|g\|_{L^{\infty,2}_{\e_0,\lambda}}\lesssim 
2^{-k_2/2}\| g\|_{G_{k_2}(T)}\quad \text{ if }|\lambda|\leq 2^{k_2-40}.
 \label{pkgloc} \end{equation}
Then  in dimension $d \geq 3$ we have
\[
\| f\cdot g \|_{L^2_{t,x}} \lesssim \|f\|_{ L^{2,\infty}_\e} 
\|g \|_{L^{\infty,2}_\e} \lesssim
    2^{k_1(d-2)/2} 2^{(k_1-k_2)/2} \| f \|_{F_{k_1}(T)} \| g
    \|_{G_{k_2}(T)}.
\]
The argument is more involved if $d=2$. Given the definition
\eqref{fkdef2} of the $F_k(T)$ space in terms of $F_k^0(T)$, it suffices to
show that the following bounds hold for $F_k^0(T)$:
\begin{equation}
 \| f\cdot g \|_{L^2} \lesssim
 \| f \|_{F_{k_1}^0(T)} \| g
    \|_{G_{k_2}(T)}, \qquad k_1 \geq k_2 - 100
\label{kphigh}\end{equation}
respectively
\begin{equation}
 \| f\cdot g \|_{L^2} \lesssim 2^{(k_1-k_2)/2}
 \| f \|_{F_{k_1}^0(T)} \| g
    \|_{G_{k_2}(T)}, \qquad k_1 < k_2 - 100.
\label{kplow}\end{equation}
The bound \eqref{kphigh} follows easily by estimating both factors in
$L^4_{t,x}$. For \eqref{kplow}, on the other hand, we use the local smoothing/maximal function spaces. Precisely, for $g$ as in \eqref{pkgloc} we have
\[
\|  f\cdot g \|_{L^2_{t,x}} \lesssim \| f\|_{L^{2,\infty}_{\e_0,W_{k_1+40}}} \sup_{|\lambda| < 2^{k_2-40}}
\| P_{k_2} g\|_{L^{\infty,2}_{\e_0,\lambda}} \lesssim  2^{(k_1-k_2)/2} 
\| f \|_{F_{k_1}^0(T)} \| g
    \|_{G_{k_2}(T)},
\]
as desired.
 \end{proof}

 The bilinear estimates in Lemmas~\ref{bilnk},\ref{bilnka} allow us to
 obtain corresponding trilinear estimates.  We denote by
 $C(k,k_1,k_2,k_3)$ the best constant $C$ in the estimate
\begin{equation}
\begin{split}
  &\| P_{k} (P_{k_1} f_1 P_{k_2} f_2 P_{k_3} g) \|_{N_k(T)}\\
  &\lesssim   C
  2^{\frac{d-2}2 (k_1+k_2 +k_3-k)}          \| P_{k_1} f_1\|_{F_{k_1}(T)} 
  \| P_{k_2} f_2\|_{F_{k_2}(T)}    \| P_{k_3} g\|_{G_{k_3}(T)}.
  \end{split}
\label{tris}\end{equation}
Using the $L^\infty_t L^2_x \cap L^{p_d}_{t,x}$ norm for each of the three factors
and the $L^{p'_d}_{t,x}$ norm for the output, by Sobolev embeddings 
one can easily show that
\[
 C(k,k_1,k_2,k_3) \lesssim 1. 
\]
We seek to improve this with certain off-diagonal gains:

\begin{lemma}
The best constant $C=C(k,k_1,k_2,k_3)$  in \eqref{tris}
satisfies the following bounds:
\begin{equation}
C(k,k_1,k_2,k_3) \lesssim \left\{ \begin{array}{ll}  
2^{ (k_1+k_2)/2-k}   \qquad 
& 
k_1, k_2 \leq k -40
\cr
2^{-|k-k_3|/6}  
&
k, k_3 \leq k_1 -40
\cr
2^{-|\Delta k|/6}  
&
\text{otherwise}
\end{array} \right.
\label{ckkkk}\end{equation}
where $\Delta k = \max\{k,k_1,k_2,k_3\}-\min\{k,k_1,k_2,k_3\}$.
\end{lemma}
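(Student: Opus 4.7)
The plan is to prove each of the three bounds by a frequency case analysis that reduces the trilinear expression to applications of the bilinear building blocks from Lemmas~\ref{bilnk} and \ref{bilnka}. The general strategy is to write the product as $P_k(F\cdot H)$, where $F$ is the product of two of the three inputs, estimated in $L^2_{t,x}$ (or occasionally $L^2_tL^d_x$) via \eqref{ff} or \eqref{flowghigh}, and $H$ is the remaining input, for which $\|P_k(F\cdot H)\|_{N_k(T)}$ is controlled via one of parts (a)--(c) of Lemma~\ref{bilnk}. The choice of which two factors to pair will be dictated by the frequency geometry: part (a) handles comparable frequencies, (b) handles an $F$-type remainder at low frequency, and (c) handles the $G$-type remainder at high frequency (so in particular (c) can only be invoked when $P_{k_3}g$ is kept outside the pairing).

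For the first bound, with $k_1,k_2\leq k-40$, frequency support forces $|k_3-k|\leq 4$. I will pair $P_{k_1}f_1$ (WLOG $k_1\leq k_3$) with $P_{k_3}g$ via \eqref{flowghigh}, contributing a factor $2^{k_1(d-2)/2}2^{(k_1-k_3)/2}$, and then apply Lemma~\ref{bilnk}(b) to multiply by $P_{k_2}f_2$ (valid since $k_2\leq k-80$), contributing $2^{k_2(d-2)/2}2^{(k_2-k)/2}$. Dividing by the normalization $2^{(d-2)(k_1+k_2+k_3-k)/2}$ and using $k_3\approx k$ yields $C\lesssim 2^{(k_1+k_2)/2-k}$.

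For the second bound, with $k,k_3\leq k_1-40$, frequency balance forces $|k_1-k_2|\leq 8$. I will pair $P_{k_1}f_1$ with $P_{k_2}f_2$ via \eqref{ff}, contributing $2^{\min(k_1,k_2)(d-2)/2}$, and then multiply by $P_{k_3}g$, splitting into three sub-subcases: if $k_3\geq k+80$, Lemma~\ref{bilnk}(c) contributes the key gain $2^{(k-k_3)/6}$; if $k_3\leq k-80$, Lemma~\ref{bilnk}(b) (invoked via $G_{k_3}\hookrightarrow F_{k_3}$) contributes the stronger gain $2^{(k_3-k)/2}$; and if $|k-k_3|\leq 80$, I obtain $C\lesssim 1$ directly by a H\"older argument, using the $L^4_{t,x}$ components of $F_k^0$ and $G_k$ in dimension $d=2$, and the $L^{2,\infty}_\e$/$L^{\infty,2}_\e$ components together with the $L^{1,2}_\e$ piece of $N_k$ in dimensions $d\geq 3$. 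After dividing by the normalization and using the case hypotheses, each sub-subcase produces $C\lesssim 2^{-|k-k_3|/6}$.

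The third, generic bound follows by a further case analysis on which of the four frequencies is extremal: when $|\Delta k|=O(1)$ it is trivial, and otherwise a grouping analogous to Cases 1 or 2 yields the factor $2^{-|\Delta k|/6}$, with the exponent $1/6$ traced to Lemma~\ref{bilnk}(c) applied to $P_{k_3}g$ at the extremal frequency. I expect the main technical obstacle to be the intermediate sub-subcase of Case 2 (and its analogs in Case 3), where none of the off-diagonal gains in Lemma~\ref{bilnk}(b),(c) are directly available and the desired $O(1)$ bound must be obtained by the dimension-specific H\"older arguments above; one must track carefully that the Bernstein-type losses incurred when upgrading the bilinear $L^2_{t,x}$ bound to the $L^2_tL^d_x$ norm required by Lemma~\ref{bilnk}(a), or when placing the $g$-factor in a maximal function norm, are absorbed by the frequency gap $k_1-k\geq 40$ coming from the case hypothesis.
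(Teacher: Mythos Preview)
Your approach is correct and matches the paper's: pair two of the three factors in $L^2_{t,x}$ via \eqref{ff} or \eqref{flowghigh}, then close with one of the three parts of Lemma~\ref{bilnk}. Cases~1 and~2 coincide with the paper's argument (the paper simply splits Case~2 into $k\leq k_3$ versus $k>k_3$, applying (c) and (b) respectively, without isolating the intermediate range $|k-k_3|\leq 80$; your extra care there is harmless and your Bernstein/H\"older fix works).

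Your sketch of Case~3 is correct in spirit but slightly mis-attributes the source of the gain. The paper takes $k_1\leq k_2$ WLOG and splits according to whether $k_3\leq k$ (forcing $|k-k_2|\leq 40$) or $k_3>k$ (forcing $|k_3-k_2|\leq 40$). In the first branch the $2^{-|\Delta k|/6}$ decay comes entirely from the $2^{(k_{\min}-k)/2}$ factor in Lemma~\ref{bilnk}(b), not from~(c). In the second branch with $k_1\leq k$, one pairs $P_{k_1}f_1$ with $P_{k_3}g$ via \eqref{flowghigh} to extract the full $2^{(k_1-k_3)/2}$ gain and then multiplies by $P_{k_2}f_2$ using only Strichartz norms---no application of~(c) is needed here either. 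Lemma~\ref{bilnk}(c) is only the bottleneck in the remaining sub-case $k=k_{\min}<k_1\leq k_2\approx k_3$, which indeed sets the exponent $1/6$.
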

\begin{proof}
In the case $k_1, k_2 \leq k -40$ we must also have $|k_3-k| \leq 4$.
Then we successively apply \eqref{flowghigh} and \eqref{Fhfh}.

In the case $k, k_3 \leq k_1 -40$ we apply first \eqref{ff} and then 
conclude with \eqref{Flgh} if $k \leq k_3$, respectively \eqref{Fhfh}
if $k > k_3$.

In the remaining case we can assume without any restriction in generality 
that $k_1 \leq k_2$. Then there are two possibilities:

(i) $k_3 \leq  k$ and $|k-k_2| \leq 40$. If $ k_1 \leq k_3$ then we 
use \eqref{ff} for $P_{k_2} f_2 P_{k_3} g$ and then conclude with \eqref{Fhfh}.
If $ k_3<k_1$ then we use \eqref{ff} for $P_{k_1} f_1 P_{k_2} f_2$
and then conclude with \eqref{Fhfh}.

(ii) $k_3 > k$ and $|k_3-k_2| \leq 40$. If $ k_1 \leq k$ then we use
\eqref{flowghigh} for $P_{k_1} f_1 P_{k_3} g$ and then conclude using
only Strichartz norms.  If $k_{min} = k$ then we use \eqref{ff} for
$P_{k_1} f_1 P_{k_2} f_2$ and then conclude with \eqref{Flgh}.

\end{proof}

\begin{lemma}
The following estimate holds:
  \begin{equation}\label{ro6a}
 \|P_k L_{m,2}\|_{N_k(T)} \lesssim \eps_0 2^{-\sigma k}  b_k(\sigma).
  \end{equation}
\label{hardsch}\end{lemma}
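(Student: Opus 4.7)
The plan is to exploit the caloric gauge integral representation \eqref{Aform} of $A_l(0)$ in order to recast the bilinear-with-derivative expression $L_{m,2}$ as a superposition of purely trilinear expressions in the heat-extended fields, and then to apply the trilinear Schr\"odinger estimate \eqref{tris} together with the sharpened coefficient bounds \eqref{ckkkk}. This strategy realizes at the nonlinear level the structural improvement of the caloric gauge over the Coulomb gauge described in \eqref{schem}.

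More concretely, substituting
\[
A_l(0) = -\sum_{n=1}^d\int_{0}^{\infty}\Im\bigl(\overline{\psi_l(r)}\,\DD_n\psi_n(r)\bigr)\,dr
\]
into \eqref{ro5.1} produces
\[
L_{m,2} = i\sum_{l,n}\int_0^\infty\!\Bigl[\partial_l\bigl(\Im(\overline{\psi_l(r)}\DD_n\psi_n(r))\,\psi_m(0)\bigr) + \Im(\overline{\psi_l(r)}\DD_n\psi_n(r))\,\partial_l\psi_m(0)\Bigr]\,dr.
\]
By Minkowski in $r$, $\|P_k L_{m,2}\|_{N_k(T)}$ will be controlled by the $r$-integral of the $N_k(T)$ norms of the trilinear integrands. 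I would then perform a Littlewood--Paley decomposition of each of the three factors, writing each integrand as a sum over $(k_1,k_2,k_3) \in \Z^3$ of trilinear products of frequency-localized pieces; the outer derivative $\partial_l$ is to be absorbed with a loss $2^{k^\star}$, where $k^\star \in \{k, k_1, k_2, k_3\}$ is chosen to minimize that loss (in particular moving $\partial_l$ onto one of the heat-time factors when $k < \max(k_1,k_2)$).

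I would next apply \eqref{tris} together with the coefficient bounds \eqref{ckkkk}, and use \eqref{ro1} for the first two factors and \eqref{bksbd} for $\psi_m(0)$. Each dyadic contribution is then dominated by the product of the off-diagonal gain $C(k,k_1,k_2,k_3)$, the derivative factor $2^{k^\star}$, the scaling $2^{(d-2)(k_1+k_2+k_3-k)/2 - \sigma(k_1+k_2+k_3) + k_2}$ times the envelope product $b_{k_1}(\sigma)b_{k_2}(\sigma)b_{k_3}(\sigma)$, and the parabolic kernel $(1+r2^{2k_1})^{-4}(r2^{2k_2})^{-3/8}(1+r2^{2k_2})^{-2}$. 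Integrating in $r$ gains the factor $2^{-2\max(k_1,k_2)}$, and the remaining triple sum in $(k_1,k_2,k_3)$ is organized by the relative ordering of the four frequencies $(k,k_1,k_2,k_3)$ and handled case by case using the three regimes of \eqref{ckkkk}, the slow variation of the envelopes $b_k(\sigma)$, and the smallness $\sum_k b_k^2 \leq \eps_0$ of \eqref{bkeps}; this delivers \eqref{ro6a}.

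The hard part will be the high-high-to-low regime $k_1 \sim k_2 \gg k$, in which two factors at positive heat time combine at high frequency to contribute at the low output frequency. There the $r$-integration supplies $2^{-2k_{\max}}$ which, balanced against the derivative $2^{k_{\max}}$ coming from $\DD_n\psi_n$, reproduces precisely the $2^{-k_{\max}}$ (rather than the weaker $2^{-k}$) gain highlighted in \eqref{schem}; this is exactly what makes the caloric gauge effective in this argument. In dimension $d=2$ the margin is tightest, and the $2^{-|\Delta k|/6}$ off-diagonal decay in the generic case of \eqref{ckkkk}, itself a consequence of the lateral Strichartz estimates, will be essential to close the frequency summation.
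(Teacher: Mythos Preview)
Your approach is essentially the same as the paper's: substitute the caloric integral representation \eqref{Aform} into $L_{m,2}$, decompose in frequencies, apply the trilinear bound \eqref{tris} with the gains \eqref{ckkkk}, use \eqref{ro1} for the two heat-time factors and \eqref{bksbd} for $\psi_m(0)$, integrate in $r$ to gain $2^{-2\max(k_1,k_2)}$, and sum according to the three regimes of \eqref{ckkkk}. Two bookkeeping points to tighten: (i) there is no need to redistribute the derivative onto heat-time factors---the two terms in \eqref{ro5.1} directly give the factor $2^{\max\{k,k_3\}}$ (from $P_k\partial_l$ on the first and $\partial_l P_{k_3}\psi_m$ on the second), and this is exactly what feeds into the three sums $S_1,S_2,S_3$; (ii) for the noncritical exponent $\sigma_0$ you should place the $b_{\cdot}(\sigma)$ envelope on only one factor (the one at the largest input frequency) and keep the other two at the critical level $b_{\cdot}=b_{\cdot}((d-2)/2)$, so that the smallness $\sum_k b_k^2\leq\eps_0$ of \eqref{bkeps} produces the $\eps_0$ prefactor in \eqref{ro6a}.
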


\begin{proof}[Proof of Lemma \ref{hardsch}] 
To bound $L_{m,2}$ we use the representation \eqref{Aform} for the
connection coefficients $A_l$. Thus using the short notations $A$,
$\DD \psi$ and $\psi$ for $ A_m$, $\DD_m \psi_m$, and $\psi_m$ with $m =
1,d$ we can write
\[
\begin{split}
\| P_k \partial_x (A \psi)\|_{N_k(T)} \lesssim & \int_{0}^\infty 
\| \partial_x P_k(\psi(s) \DD \psi(s) \psi)\|_{N_k(T)}  
ds 
\\
\lesssim & \sum_{k_1,k_2,k_3} \int_{0}^\infty 
2^k \|P_k(P_{k_1} \psi(s) P_{k_2}(\DD \psi(s)) P_{k_3}\psi)\|_{N_k(T)} ds
\\
\lesssim & \!\! \sum_{k_1,k_2,k_3} \!\! 2^k   C_0
     \|  P_{k_3}\psi \|_{G_{k_3}(T)}
  \int_{0}^\infty \!\!  \|P_{k_1} \psi(s)\|_{F_{k_1}(T)} 
\|P_{k_2}(\DD \psi(s))\|_{F_{k_3}(T)} ds
\end{split}
\]
where $C_0 = C(k,k_1,k_2,k_3)2^{\frac{d-2}2 (k_1+k_2 +k_3-k)}$. For the term
$A \partial_x \psi$ we obtain a similar bound but with 
$2^k$ replaced by $2^{k_3}$ since $\|\partial_x P_{k_3}
\psi\|_{G_{k_3}(T)} \lesssim 2^{k_3} \|P_{k_3}
\psi\|_{G_{k_3}(T)}$.
Thus
\begin{equation*}
\begin{split}
\| P_k L_{m,2}\|_{N_k(T)} &\lesssim \sum_{k_1,k_2,k_3}2^{\max\{k,k_3\}}
 C_0
     \|  P_{k_3}\psi \|_{G_{k_3}(T)}\\
&\times\int_{0}^\infty \!\!  \|P_{k_1} \psi(s)\|_{F_{k_1}\!(T)} 
\|P_{k_2}(\DD \psi(s))\|_{F_{k_3}\!(T)} ds.
\end{split}
\end{equation*}
For the last two factors we use \eqref{ro1}. Thus we need to evaluate
the integrals
\[
I_{k_1k_2} = \int_0^\infty (1+s2^{2k_1})^{-4} 
2^{k_2}(s2^{2k_2})^{-3/8}   (1+s2^{2k_2})^{-2} ds \lesssim 2^{-\max\{k_1,k_2\}}.
\]
Taking this into account, from \eqref{ro1} and \eqref{bksbd}  we obtain
\[
\begin{split}
\| P_k L_{m,2}\|_{N_k(T)} \lesssim &\  2^{\sigma k} \sum_{k_1,k_2,k_3}
C(k,k_1,k_2,k_3) 2^{\max\{k,k_3\}- \max\{k_1,k_2\} }   
  b_{k_{min}} b_{k_{mid}}
b_{k_{max}}(\sigma)
\\ = &\ 2^{\sigma k} (S_1 + S_2 + S_3)
\end{split}
\]
where the sums $S_1$, $S_2$ and $S_3$ correspond to the three
cases in \eqref{ckkkk} and the indices $\{k_{min},k_{mid},k_{max}\}$
represent the increasing rearrangement of $\{k_1,k_2,k_3\}$.  Then
\[
S_1 \lesssim \sum_{k_1,k_2\leq k-4} 2^{-|k_1-k_2|/2}  b_{k_1} b_{k_2}
b_{k}(\sigma) \lesssim \eps_0 b_{k}(\sigma).
\]
In the second case we have full off-diagonal decay
\[
S_2 \lesssim \sum_{k_1 \geq k}^{k_3 \leq k_1} 2^{\max\{k,k_3\}-k_1} 2^{-|k-k_3|/6} 
b_{k_3} b_{k_1}
b_{k_1}(\sigma) \lesssim \eps_0 b_k(\sigma).   
\]
In the third case $\max\{k,k_3\}= \max\{k_1,k_2\}$ therefore we obtain
\[
S_3 \lesssim \sum_{k_1,k_2,k_3} 2^{-|\Delta k|/6}  b_{k_{min}} b_{k_{mid}}
b_{k_{max}}(\sigma) \lesssim \eps_0 b_k(\sigma).
\]
\end{proof}

\section{The main linear estimates}\label{SPACES}

In this section we prove Proposition~\ref{linearmainrep}. We use the
notation of section \ref{functionspaces}, see in particular the
definitions \ref{Ldef}, \ref{spacesd>2} and \ref{spacesd2}.  We define
two more classes of spaces, which are used only in this section. Given
a finite subset $W\subseteq \R$ and $r\in[1,\infty]$ we define the
spaces $\sum^r L^{p,q}_{\e,W}$ and $\bigcap^r L^{p,q}_{\e,W}$ using
the norms
\begin{equation}\label{sumspacesr}
  \| \phi\|^r_{\sum^r 
    L^{p,q}_{\e,W}} = |W|^{r-1}
  \inf_{\phi = \sum_{\lambda \in W} \phi_\lambda} \sum_{\lambda \in W} 
  \|\phi_\lambda\|_{L^{p,q}_{\e,\lambda}}^r
\end{equation}
and
\begin{equation}\label{intspacesr}
  \| \phi\|^r_{\bigcap^r 
    L^{p,q}_{\e,W}} = |W|^{-1} \sum_{\lambda \in W} 
  \|\phi\|_{L^{p,q}_{\e,\lambda}}^r.
\end{equation}
Clearly, $\sum^1 L^{p,q}_{\e,W}=L^{p,q}_{\e,W}$ (compare with
definition \ref{Ldef}) and
\begin{equation}\label{compr}
  \| \phi\|_{\sum^r L^{p,q}_{\e,W}} \leq \| \phi\|_{\sum^{r'} L^{p,q}_{\e,W}}\qquad\text{ if }r\leq r'.
\end{equation}

We first consider the homogeneous equation
\[
(i\partial_t+\Delta_x)u = 0, \qquad u(0) = f \in L^2(\R^d)
\]
which has the solution $u(t) = e^{it \Delta} f$.  For this we have the
following:

\begin{lemma}
  Assume $f \in L^2(\R^d)$, $k\in\mathbb{Z}$, and
  $\e\in\mathbb{S}^{d-1}$. We have:

  (i) Local smoothing estimate
  \begin{equation}\label{locsmobound}
    \sup_{|\lambda|\leq 2^{k-5}}\|e^{it\Delta}P_{k,\e}f\|_{L^{\infty,2}_{\e,\lambda}}\lesssim \|f\|_{L^2}.
  \end{equation}

  (ii) Maximal function estimates:
  \begin{equation}
    \|e^{it \Delta} P_k f\|_{L^{2,\infty}_\e} \lesssim  
    2^{k(d-1)/2} \|f\|_{L^2}, \qquad  d \geq 3,
    \label{latstc} \end{equation}
  and, for  any $\K\in\Z_+$,
  \begin{equation}
    \|1_{[-2^{2\K},2^{2\K}]}(t) 
    e^{it \Delta} P_{k} f\|_{\sum^2 L^{2,\infty}_{\e,W_{k+5}}} \lesssim  
    2^{k/2} \|f\|_{L^2}, \qquad d=2.
    \label{linnew}\end{equation}

  (iii) Strichartz-type estimates:
  \begin{equation}
    \|e^{it \Delta} f\|_{L^{p_d}_{x,t}} \lesssim \|f\|_{L^2},
    \label{linst}\end{equation}
  and
  \begin{equation}
    \|e^{it \Delta} P_k f\|_{L^{p_d}_x{ L^\infty_t}} 
    \lesssim 2^{kd/(d+2)} \|f\|_{L^2}.
    \label{linmax}\end{equation}
  If $d=2$ and $(p,q)\in(2,\infty]\times[2,\infty]$, $1/p+1/q=1/2$,
  then
  \begin{equation}
    \|e^{it \Delta} P_{k,\e} f\|_{L^{p,q}_{\e}} \lesssim  
    2^{k(2/p-1/2)} \|f\|_{L^2}, \qquad p \geq q,
    \label{latsta}\end{equation}
  and
  \begin{equation}
    \|e^{it \Delta} P_k f\|_{L^{p,q}_\e} \lesssim_p  
    2^{k(2/p-1/2)} \|f\|_{L^2}, \qquad p \leq q.
    \label{latstb} \end{equation}
  \label{linhom}\end{lemma}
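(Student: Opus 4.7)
The plan is to prove \eqref{locsmobound}--\eqref{latstb} by combining Galilean invariance of the free Schr\"odinger flow, the classical Kenig-Ponce-Vega local smoothing and maximal function bounds, and complex interpolation. For the local smoothing estimate \eqref{locsmobound}, a direct computation shows the Galilean conjugation identity $T_{\lambda\e}(e^{it\Delta}f)=e^{it\Delta}(e^{-ix\cdot\lambda\e/2}f)$, so that the modulation $e^{-ix\cdot\lambda\e/2}$ translates Fourier support by $-\lambda\e/2$. For $|\lambda|\leq 2^{k-5}$ the function $e^{-ix\cdot\lambda\e/2}P_{k,\e}f$ remains Fourier-supported in $\{|\xi\cdot\e|\sim 2^k\}$, and the standard bound \eqref{limaxa} from Lemma \ref{keyboundsd>2} yields \eqref{locsmobound} after unraveling the definition of $L^{\infty,2}_{\e,\lambda}$. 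The maximal function bound \eqref{latstc} in $d\geq 3$ is the classical estimate from \cite{IoKe2}, proved by a $TT^*$ reduction to an oscillatory integral controlled by stationary phase.

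The key new ingredient is \eqref{linnew}. We decompose the annulus $\{|\xi|\sim 2^k\}$ into frequency strips orthogonal to $\e$ of width $\delta:=2^{-k-5-2\K}$ equal to the spacing of $W_{k+5}$, writing $P_kf=\sum_{\lambda\in W_{k+5}}f_\lambda$ with $\widehat{f}_\lambda$ supported in $\{|\xi\cdot\e-2\lambda|\leq\delta\}$. The pieces are $L^2$-orthogonal, $\sum_\lambda\|f_\lambda\|_{L^2}^2=\|P_kf\|_{L^2}^2$. After Galilean conjugation by an appropriate $T_{c\lambda\e}$ the Fourier support becomes centered at $\xi\cdot\e=0$, and the thin-slab structure combined with Plancherel gives the localized maximal function bound
\begin{equation*}
\|1_{[-2^{2\K},2^{2\K}]}(t)\,T_{c\lambda\e}(e^{it\Delta}f_\lambda)\|_{L^{2,\infty}_\e}\lesssim \delta^{1/2}\|f_\lambda\|_{L^2},
\end{equation*}
the gain $\delta^{1/2}$ coming from the reduced transverse frequency width and the cap $|t|\leq 2^{2\K}$ ensuring that the Galilean displacement stays compatible with the strip decomposition. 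Transporting back and applying the definition \eqref{sumspacesr} of the $\sum^2$-norm gives
\begin{equation*}
\|1_{[-2^{2\K},2^{2\K}]}e^{it\Delta}P_kf\|_{\sum^2 L^{2,\infty}_{\e,W_{k+5}}}^2\leq |W_{k+5}|\,\delta\sum_\lambda\|f_\lambda\|_{L^2}^2\lesssim 2^k\|f\|_{L^2}^2,
\end{equation*}
since $|W_{k+5}|\cdot\delta\sim 2^k$ by direct counting.

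The Strichartz estimate \eqref{linst} is Strichartz's original theorem, and \eqref{linmax} follows by scaling from the $k=0$ case, which is obtained by a $TT^*$ argument using the dispersive decay $\|e^{it\Delta}f\|_{L^\infty_x}\lesssim|t|^{-d/2}\|f\|_{L^1_x}$. The lateral Strichartz bounds \eqref{latsta} and \eqref{latstb} follow by complex interpolation between the $L^{p_d}_{t,x}$ Strichartz endpoint and, respectively, the frequency-localized local smoothing \eqref{locsmobound} at $(p,q)=(\infty,2)$, which accounts for the $P_{k,\e}$ projection needed in \eqref{latsta}, and the maximal function substitute \eqref{linnew} at $(p,q)=(2,\infty)$, where only the radial localization $P_k$ is used, giving \eqref{latstb}.

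The main obstacle is \eqref{linnew}: the parameters $|W_{k+5}|$ and $\delta$ are finely calibrated so that $|W_{k+5}|\cdot\delta\sim 2^k$ precisely, and the restriction $T\leq 2^{2\K}$ is exactly what keeps the Galilean displacements within the transverse length scale of the $L^{2,\infty}_\e$ norm, circumventing the logarithmic failure of \eqref{latstc} in dimension $d=2$. Getting this scale-invariant combinatorics right, and matching it to the $|W|^{r-1}$ normalization of the $\sum^r$ spaces, is the heart of the argument.
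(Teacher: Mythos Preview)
Your treatment of \eqref{locsmobound}, \eqref{latstc}, \eqref{linst}, \eqref{linmax}, and \eqref{latsta} is fine and matches the paper. There are, however, genuine gaps in your arguments for \eqref{linnew} and \eqref{latstb}.

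\textbf{The bound you claim for \eqref{linnew} is false.} After your Galilean recentering, you assert that if $g$ has Fourier support in $\{|\xi\cdot\e|\leq\delta,\ |\xi|\sim 2^k\}$ with $\delta\sim 2^{-k-2\mathcal K}$, then
\[
\|1_{[-T,T]}(t)\,e^{it\Delta}g\|_{L^{2,\infty}_\e}\lesssim \delta^{1/2}\|g\|_{L^2}.
\]
Take $\widehat g$ to be a smooth bump on the box $\{|\xi_1|\leq\delta,\ |\xi_2-2^k|\leq 2^k\}$. Then $\|g\|_{L^2}\sim(\delta\,2^k)^{1/2}$, while already at $t=0$ one computes $\sup_{x_2}|g(x_1,x_2)|\sim \delta\,2^k\,|\check\chi(\delta x_1)|$, hence $\|g\|_{L^2_{x_1}L^\infty_{x_2}}\sim \delta^{1/2}2^k$. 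The ratio of left side to your claimed right side is therefore at least $(2^k/\delta)^{1/2}\sim 2^k$ (after rescaling to $\mathcal K=0$), which is unbounded. The intuition that a thin strip in $\xi\cdot\e$ buys a gain in $L^{2,\infty}_\e$ is backwards: thinness in $\xi_1$ spreads the function in $x_1$, which \emph{hurts} the $L^2_{x_1}$ component, and does nothing for the $\sup_{x_2,t}$ component.

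The paper proceeds entirely differently. It runs a $TT^\ast$ argument: one pairs $1_{[-1,1]}(t)e^{it\Delta}P_kf$ against $g\in\bigcap^2 L^{2,1}_{\e,W_{k+5}}$, reducing to a bilinear bound on $\int\!\!\int g\,\overline g\,K_k$, where $K_k$ is the kernel of $e^{it\Delta}P_k^2$. The key step is a \emph{physical-space} foliation of the kernel, not a frequency decomposition of the data: for $|t|\leq 2$ one has the pointwise bound $|K_k(t,x)|\lesssim\sum_{\lambda\in W_{k+5}}(1+2^k|x\cdot\e-\lambda t|)^{-N}$, after which each ray kernel is controlled in $L^{1,\infty}_{\e,\lambda}$ by $2^{-k}$ and Young's inequality closes the estimate. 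Passing from this dual bound back to the $\sum^2 L^{2,\infty}$ norm requires a separate Hahn--Banach argument, since the duality $(\bigcap^2 L^{2,1})'=\sum^2 L^{2,\infty}$ is not automatic.

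\textbf{For \eqref{latstb}} your proposed interpolation with \eqref{linnew} does not make sense: \eqref{linnew} lives in the sum space $\sum^2 L^{2,\infty}_{\e,W_{k+5}}$, not in $L^{2,\infty}_\e$, and there is no obvious interpolation scale connecting $L^{p_d}_{t,x}$ to $\sum^2 L^{2,\infty}_{\e,W}$ that produces the ordinary lateral spaces $L^{p,q}_\e$. The paper instead proves \eqref{latstb} directly by a $TT^\ast$ argument combined with the Hardy--Littlewood--Sobolev inequality; this works precisely because $p>2$ strictly, so one never touches the failing $L^{2,\infty}_\e$ endpoint.
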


\begin{proof}[Proof of Lemma \ref{linhom}] The Strichartz estimate
  \eqref{linst} is well-known, see \cite{Str}. As a consequence we
  obtain
  \[
  \|e^{it \Delta} P_k f\|_{L^{p_d}_t L^{p_d}_x} + 2^{-2k}
  \| \partial_t e^{it \Delta} P_k f\|_{L^{p_d}_t L^{p_d}_x} \lesssim
  \|f\|_{L^2}.
  \]
  The maximal Strichartz estimate \eqref{linmax} follows. The local
  smoothing estimate \eqref{locsmobound} is proved, for example, in
  \cite{IoKe2} and \cite{IoKe3} for $\lambda=0$. The general case
  follows using the Galilean transformation $T_{\lambda\e}$, see the
  definition \eqref{operatorT}. The maximal function estimate
  \eqref{latstc} is known, see for example \cite{IoKe2} or
  \cite{IoKe3}. The lateral Strichartz estimate \eqref{latsta} follows
  by interpolation between \eqref{locsmobound} and \eqref{linst}. The
  lateral Strichartz estimate \eqref{latstb} follows using a standard
  $TT^\ast$ argument and the Hardy-Littlewood-Sobolev inequality. We
  prove below the maximal function estimate \eqref{linnew}, which
  represents our main new contribution to the linear theory.  \medskip

  {\bf{Proof of \eqref{linnew}.}} We fix $\e\in\mathbb{S}^{1}$.  By
  rescaling we can assume that $\K = 0$. We may also assume that
  $k\geq 1$, since for $k\leq 0$ one has the stronger bound
  \begin{equation*}
    \|\mathbf{1}_{[-1,1]}(t)e^{it \Delta} P_k f\|_{L^2_xL^\infty_t}\lesssim \|f\|_{L^2}.
  \end{equation*}
  We need to show that
  \begin{equation}\label{keymax}
    \|1_{[-1,1]}(t) e^{it \Delta} P_k f\|_{\sum^2 L^{2,\infty}_{\e,W_{k+5}}} 
    \lesssim 2^{k/2} \|f\|_{L^2}. 
  \end{equation}
   We show first that if $\|g\|_{\bigcap^2L^{2,1}_{\e,W_{k+5}}}\leq 1$ then
  \begin{equation}\label{keymax2}
    \Big|\int_{\R^2\times\R}\overline{g(x,t)}\mathbf{1}_{[-1,1]}(t)(e^{it \Delta} P_k f)(x,t)\,dxdt\Big|\lesssim 2^{k/2} \|f\|_{L^2}.
  \end{equation}
This can be rewritten as 
\[
 \Big|\int_{\R^2\times\R}\overline{(e^{-it \Delta} P_k g(t))(x)}\mathbf{1}_{[-1,1]}(t) 
f(x)\,dtdx\Big|\lesssim 2^{k/2} \|f\|_{L^2}.
\]
or equivalently
\[
\left\| \int_{-1}^1 e^{-it \Delta} P_k g(t)\right\|_{L^2} \lesssim 2^{k/2}
\]
Hence it suffices to show that
  \begin{equation}\label{keymax3}
    \Big|\int_{\R^2\times\R}\int_{\R^2\times\R}g(x,t)\mathbf{1}_{[-1,1]}(t)\overline{g}(y,s)\mathbf{1}_{[-1,1]}(s)K_k(x-y,t-s)\,dxdtdyds\Big|\lesssim 2^{k}
  \end{equation}
  where
  \begin{equation}\label{K_kdef}
    K_k(x,t)=\int_{\R^2}e^{ix\cdot\xi}e^{-it|\xi|^2}\chi_k(|\xi|)^2\,d\xi.
  \end{equation}
  By stationary phase
  \[
  |K_k(t,x)| \lesssim \left\{ \begin{array}{cc} 2^{2k}(1+2^{2k}
      |t|)^{-1} & |x| \leq 2^{k+4} |t|; \cr \cr 2^{2k}(1+2^{k}
      |x|)^{-N} & |x| \geq 2^{k+4} |t|.
    \end{array}\right.
  \]
  The key idea is to foliate $K_k$ in the $\e$ direction with respect
  to (thickened) rays with speed less than $2^{k+5}$. We observe that
  for $t\in[-2,2]$
  \[
  |K_k(t,x)| \lesssim \sum_{\lambda \in W_{k+5}} K_{k,\lambda}(t,x),
  \qquad K_{k,\lambda}(t,x) = (1+2^k|x\cdot \e -\lambda t|)^{-N}.
  \]
  Hence the left hand side of \eqref{keymax3} can be bounded by
  \[
  \begin{split}
    \sum_{\lambda \in W_{k+5}} &\int_{-1}^1\int_{-1}^1
    K_{k,\lambda}(t-s,x-y) |g(y,s)| |g(x,t)| dx dy ds dt
    \\
    \lesssim &\ \sum_{\lambda \in W_{k+5}}
    \|K_{k,\lambda}\|_{L^{1,\infty}_{\e,\lambda}}\|g\|_{L^{2,1}_{\e,\lambda}}
    \|g\|_{L^{2,1}_{\e,\lambda}}\lesssim 2^{-k} \sum_{\lambda \in
      W_{k+5}} \|g\|_{L^{2,1}_{\e,\lambda}}^2 \lesssim
    2^{k}\|g\|_{\bigcap^2 L^{2,1}_{\e,W_{k+5}}}^2,
  \end{split}
  \]
  where we used the fact that $|W_{k+5}| \approx 2^{2k}$.  Thus
  \eqref{keymax2} follows.

  Formally the main bound \eqref{keymax} would follow from \eqref{keymax2} and the
  duality relation
  \[
  \left({\bigcap}^2 L^{2,1}_{\e,W_{k+5}}\right)' = {\sum}^2
  L^{2,\infty}_{\e,W_{k+5}}.
  \]
  However this duality relation is not entirely straightforward so we provide
a direct argument. Let $\chi(t)$ be a function which has the following properties:

a) $\mathcal F\chi$ is smooth and with compact support.

b) $\chi(t) \neq 0$ for $t \in [-1,1]$.
 
Then $\chi$ is rapidly decreasing at infinity, therefore from \eqref{keymax2}
it follows that
 \begin{equation}\label{keymax2chi}
    \Big|\int_{\R^2\times\R}\overline{g(x,t)}\chi(t)
(e^{it \Delta} P_k f)(x,t)\,dxdt\Big|\lesssim 2^{k/2} \|f\|_{L^2}
  \end{equation}
whenever  $\|g\|_{\bigcap^2L^{2,1}_{\e,W_{k+5}}}\leq 1$. We will use this to prove a stronger form of \eqref{keymax}, namely
\begin{equation}\label{keymaxchi}
    \|\chi(t) e^{it \Delta} P_k f\|_{\sum^2 L^{2,\infty}_{\e,W_{k+5}}} 
    \lesssim 2^{k/2} \|f\|_{L^2}. 
\end{equation}
The advantage is that the function $u = \chi(t) e^{it \Delta} P_k f$
has a compactly supported Fourier transform. To obtain \eqref{keymaxchi}
from \eqref{keymax2chi} it suffices to show that
\begin{equation}
\| u\|_{ \sum^2 L^{2,\infty}_{\e,W_{k+5}}} \lesssim 
\sup\ \left\{  \int_{\R^2 \times \R} u(x,t) 
\overline v(x,t) dx dt;\  \|g\|_{\bigcap^2L^{2,1}_{\e,W_{k+5}}}\leq 1 \right\}
\label{duala}\end{equation}

Indeed, suppose that the space-time Fourier transform $\mathcal F u$
is supported inside a ball $B$. We define the normed space
  \begin{equation*}
    X_B=
    \{h\in L^2(\R^2\times\R):\FF(h)\text{ supported in }B 
\text{ and }\|h\|_{A}=\|h\|_{\Sigma^2L^{2,\infty}_{\e,W_{k+5}}}<\infty\}.
  \end{equation*} 
We also use a larger ball $2B$ and the corresponding space $X_{2B}$.

Since $ u\in X_B$, by the Hahn-Banach theorem there is
$\Lambda\in X_B^\ast$ so that $\|u\|_{X_B}=\Lambda(u)$ and
  $\|\Lambda\|_{X_B^\ast}=1$. By the Hahn-Banach theorem we can extend 
$\Lambda$ to $X_{2B}$. On the other hand, for any $h\in X_{2B}$
  \begin{equation*}
    \|h\|_{X_{2B}} \lesssim_B \|h\|_{L^2}.
  \end{equation*}
  Thus, using the Hahn-Banach theorem again, there is a linear
  functional $\Lambda':L^2\to\mathbb{C}$ such that
  $\Lambda'(h)=\Lambda(h)$ for any $h\in A \cap L^2$ and $|\Lambda'(h)|\lesssim_B
 \|h\|_{L^2}$ for any $h\in L^2$. Therefore there is $g\in
  L^2(\R^d\times\R)$ with the property that
  \begin{equation*}
    \Lambda(h)=\int_{\R^2\times\R}\bar{g} 
\cdot h\,dxdt\text{ for any }h\in X_{2B} \cap L^2.
  \end{equation*}
  We consider a space-time multiplier $\chi_{B}(D)$ whose symbol is
  smooth, supported in $2B$ and equals $1$ in $B$. By the choice of $\Lambda$
we have
  \begin{equation*}
    \|u\|_{\sum^2 L^{2,\infty}_{\e,W_{k+5}}} =\Lambda(u)=
    \int_{\R^2\times\R} \bar{g} \cdot u\,dxdt =\int_{\R^2\times\R} 
\overline{\chi_{B}(D)g} \cdot u\,dxdt  
  \end{equation*}
Hence  for \eqref{duala} it suffices to prove
  that
  \begin{equation}\label{keymax8}
    \|\chi_B(D){g}\|_{\bigcap^2L^{2,1}_{\e,W_{k+5}}}\lesssim 1.
  \end{equation}
  Since $\chi_B(D)$ is a bounded operator on $\sum^2
  L^{2,\infty}_{\e,W_{k+5}}$ and $\Lambda$ is bounded in $X_{2B}$ we
  have
\[
\left|\int_{\R^2 \times \R} \overline{\chi_B(D) g} h dx dt \right|
=
\left|\int_{\R^2 \times \R} \overline{ g} \chi_B(D) h dx dt \right|
\lesssim
\|h\|_{\sum^2 L^{2,\infty}_{\e,W_{k+5}}}
\]
for all $h \in \sum^2 L^{2,\infty}_{\e,W_{k+5}} \cap L^2$. In view of the
duality relations $(L^{2,1}_{\e,\lambda})' =  L^{2,\infty}_{\e,\lambda}$
we can optimize the choice of $h$ above to obtain \eqref{keymax8}.
In order to guarantee that $h \in L^2$ one can carry out this 
analysis first in a compact set, and then expand it to infinity.

\end{proof}

We return to the proof of Proposition~\ref{linearmainrep}, which we
restate here for convenience.

\begin{proposition}\label{linearmain}
  (Main linear estimate) Assume $\K\in\Z_+$, $T\in(0,2^{2\K}]$, and
  $k\in\Z$. Then for any $u_0\in L^2$ localized at frequency $2^k$ and
  $h\in N_k(T)$, the solution $u$ to
  \begin{equation}\label{lins}
    (i\partial_t+\Delta_x)u=h,\qquad u(0)=u_0
  \end{equation}
  satisfies
  \begin{equation}\label{linearmainestrep}
    \|u\|_{G_k(T)}\lesssim \|u_0\|_{L^2}+\|h\|_{N_k(T)}.
  \end{equation}
\end{proposition}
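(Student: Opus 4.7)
My plan is to combine Duhamel's formula with a standard $TT^\ast$/Christ--Kiselev scheme, using Lemma \ref{linhom} as the only analytic input. Writing
$$u(t) = e^{it\Delta}u_0 - i\int_0^t e^{i(t-s)\Delta} h(s)\,ds =: u_{\mathrm{hom}}(t) + u_{\mathrm{inh}}(t),$$
I would estimate $\|u_{\mathrm{hom}}\|_{G_k(T)}$ and $\|u_{\mathrm{inh}}\|_{G_k(T)}$ separately, testing each piece against every constituent norm of $G_k(T)$.

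\textbf{Homogeneous piece.} This is an immediate consequence of Lemma \ref{linhom}. The $L^\infty_t L^2_x$ bound is conservation of the $L^2$ norm; the $L^{p_d}$ and $L^{p_d}_x L^\infty_t$ norms come from \eqref{linst} and \eqref{linmax}; in $d=2$ the lateral Strichartz norms $L^{3,6}_\e$ and $L^{6,3}_\e$ come from \eqref{latstb} and \eqref{latsta}; the local smoothing norms $L^{\infty,2}_{\e,\lambda}$ come from \eqref{locsmobound}; and for $d\geq 3$ the maximal function norm $L^{2,\infty}_\e$ comes from \eqref{latstc}. The one delicate case is the $d=2$ maximal function norm $L^{2,\infty}_{\e,W_{k+40}}$, which I would handle via the new bound \eqref{linnew}: by \eqref{compr} and the inclusion $W_{k+5}\subset W_{k+40}$ we have $\sum^2 L^{2,\infty}_{\e,W_{k+5}}\hookrightarrow L^{2,\infty}_{\e,W_{k+40}}$, with the required weight $2^{k/2}$.

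\textbf{Inhomogeneous piece.} I would decompose $h$ along the sum structure built into $N_k(T)$, into a Strichartz-dual part (in $L^{p_d'}$, and, for $d=2$, additional pieces in $L^{3/2,6/5}_{\e_i}$) and an inhomogeneous local smoothing part (in $L^{1,2}_\e$, or, for $d=2$, in $L^{1,2}_{\e,W_{k-40}}$). For each summand I would run the standard $TT^\ast$ argument: each homogeneous bound $e^{it\Delta}:L^2\to Y$ from Lemma \ref{linhom} dualizes to $g\mapsto\int_\R e^{-is\Delta}g(s)\,ds : Y'\to L^2$, and composing any two such bounds produces a non-retarded $X'\to Y$ estimate for the full-line Duhamel operator. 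Christ--Kiselev then converts each non-retarded bound into the retarded one we actually need, since every space involved is built from an $L^p_t$ norm with $p>1$ of a spatial quantity. This procedure yields, in particular, the pairings $L^{1,2}\to L^{\infty,2}$ (with a $2^{-k}$ gain) and $L^{1,2}\to$ (all Strichartz norms), as well as $L^{p_d'}\to$ (all norms of $G_k(T)$, including local smoothing).

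\textbf{Main obstacle.} The genuinely delicate step is the $d=2$ matching of the Galilean-sum spaces: a forcing $h\in L^{1,2}_{\e,W_{k-40}}$ from $N_k(T)$ must be propagated to an output in $L^{2,\infty}_{\e,W_{k+40}}$ in $G_k(T)$. I would split $h=\sum_{\lambda\in W_{k-40}} h_\lambda$ with $h_\lambda\in L^{1,2}_{\e,\lambda}$, run the previous scheme sector by sector after conjugating by the Galilean operators $T_{\lambda\e}$, and then reassemble the outputs; the inclusion $W_{k-40}\subset W_{k+40}$ is precisely what allows the recombination into $L^{2,\infty}_{\e,W_{k+40}}$. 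The $\sum^r/\bigcap^r$ duality from \eqref{sumspacesr}--\eqref{intspacesr} keeps the bookkeeping of the weights $2^{-k/2}$ and $2^{k/2}$ in the definitions of $N_k(T)$ and $G_k(T)$ under control, and it is the improvement provided by \eqref{linnew}, rather than the failing analogue of \eqref{latstc}, that ultimately makes the numerology close.
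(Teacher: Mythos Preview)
Your overall strategy is sound for the homogeneous piece and for the Strichartz-dual part of $N_k(T)$, and indeed the paper proceeds exactly that way for those pieces (Lemma~\ref{alexnew1}, using either $U^p_\Delta/V^p_\Delta$ spaces or a Christ--Kiselev-type argument). But there is a real gap in your treatment of the inhomogeneous local smoothing part.

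Your justification for applying Christ--Kiselev is that ``every space involved is built from an $L^p_t$ norm with $p>1$ of a spatial quantity.'' This is false for the lateral spaces $L^{1,2}_\e$ and $L^{\infty,2}_{\e,\lambda}$: by definition \eqref{Lpqe} these are $L^1_{x_1}(L^2_{x',t})$ and $L^\infty_{x_1}(L^2_{x',t})$, with the \emph{outer} integration in the spatial direction $x_1$ and time sitting in the \emph{inner} $L^2$. The retarded Duhamel operator truncates in time, not in $x_1$, so the standard Christ--Kiselev hypothesis is not met. Concretely, the relevant time-decomposition property \eqref{bo1} holds for $N_k^0(T)$ with $p_1=p_d'<2$ (and $p_1=3/2$ for the lateral Strichartz pieces in $d=2$), but for the space $L^{1,2}_\e$ one only gets $p_1=2$, which is exactly the endpoint at which the $V^{p_1}_\Delta\hookrightarrow U^2_\Delta$ embedding (equivalently, the Christ--Kiselev mechanism) fails. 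The paper says this explicitly at the start of the proof of Lemma~\ref{alexnew2}: ``The spaces $N_k^1(T)$ do not satisfy \eqref{bo1}, so the general argument given in Lemma~\ref{alexnew1} does not apply.''

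The paper's substitute for this missing step is a direct argument (Lemmas~\ref{alexnew2} and \ref{sidel}): one fixes the direction $\e$, foliates the source $h\in L^{1,2}_\e$ along the hyperplanes $\{x\cdot\e=y_1\}$, and shows via an explicit Fourier computation that the contribution of each slice is, up to an $H^1_{t,x}$-controlled error, a free solution multiplied by a smooth spatial cutoff $P_{<k-40,\e}\mathbf{1}_{\{x_1>y_1\}}$. This replaces the time truncation by a \emph{spatial} truncation, which is compatible with the $L^{p,q}_\e$ structure. Your ``main obstacle'' paragraph focuses on the Galilean bookkeeping in $d=2$, but that is essentially cosmetic once the $L^{1,2}_\e\to G_k(T)$ bound is available for a single $\lambda$; the genuine obstacle is the endpoint just described, and it is already present in dimensions $d\geq 3$.
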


The proposition follows from Lemma \ref{alexnew1} and Lemma
\ref{alexnew2} below. If $d=2$ we define $\widetilde{G}_k(T)$ as the
normed spaces of functions in $L^2_k(T)$ for which the norm
\begin{equation*}
  \begin{split}
    \|\phi\|_{\widetilde{G}_k(T)}&=\|\phi\|_{G_k(T)}+2^{-k/2}\sup_{\e\in S^1}\|\phi\|_{\Sigma^2L^{2,\infty}_{\e,W_{k+35}}}+2^{k/6}   \sup_{|j-k|  \leq 25} \sup_{\e \in \mathbb S^{1}}\|P_{j,\e} \phi\|_{L^{6,3}_{\e}}\\
    &+2^{k/2} \sup_{|j-k| \leq 25} \sup_{\e \in \mathbb
      S^{1}}\sup_{|\lambda|\leq
      2^{k-35}}\|P_{j,\e}\phi\|_{L^{\infty,2}_{\e,\lambda}}
  \end{split}
\end{equation*}
is finite. In other words, we replace the norm
$L^{2,\infty}_{\e,W_{k+40}}=\Sigma^1 L^{2,\infty}_{\e,W_{k+40}}$ in
\eqref{fkodef2} with the stronger norm $\Sigma^2
L^{2,\infty}_{\e,W_{k+35}}$, compare with Definition \ref{spacesd2}
and \eqref{compr}, and readjust slightly the ranges of $j$ and
$\lambda$. Similarly, if $d\geq 3$ let
\begin{equation*}
  \|\phi\|_{\widetilde{G}_k(T)}=\|\phi\|_{G_k(T)}+2^{k/2} \sup_{|j-k|  \leq 25} \sup_{\e \in \mathbb S^{d-1}}\|P_{j,\e} \phi \|_{L^{\infty,2}_{\e}}.
\end{equation*}
It is easy to check from the definition that
\begin{equation}\label{bo9}
  \|v\|^2_{\widetilde{G}_k(T)}\leq\sum_{i=0}^{m-1}\|\mathbf{1}_{[t_i,t_{i+1})}(t)\cdot v\|^2_{\widetilde{G}_k(T)}
\end{equation}
for any partition $\{-T=t_0<\ldots<t_m=T\}$ of the interval $[-T,T]$
and any $v\in \widetilde{G}_k(T)$. This property does not hold for the
spaces $G_k(T)$ if $d=2$.

The solution $u$ for \eqref{lins} can be represented as
\[
u(t) = e^{it \Delta } u_0 + \int_{0}^t e^{i(t-s) \Delta } h(s) ds.
\]
As a consequence of the Lemma \ref{linhom}, we immediately obtain
\eqref{linearmainestrep} for the first term. More precisely, for any
$f\in L^2(\R^d)$ localized at frequency $2^k$ we have
\begin{equation}\label{nghom}
  \|e^{it\Delta}f\|_{\widetilde{G}_k(T)}\lesssim \|f\|_{L^2}.
\end{equation}

It remains to make the transition to the full inhomogeneous
problem. We divide the $N_k(T)$ space into two components, $N_k(T) =
N_k^0(T) + N_k^1(T)$ with norms
\begin{equation}
  \begin{split}
    \|f\|_{N_k^0(T)} = &\ \inf_{f=f_1+f_2+f_3}
    \|f_1\|_{L^{\frac{4}{3}}} +
    2^{\frac{k}6}\|f_2\|_{L^{\frac32,\frac65}_{\e_1}} +
    2^{\frac{k}6}\|f_3\|_{L^{\frac32,\frac65}_{\e_2}}
    \\
    \|f\|_{N_k^1(T)}= &\ 2^{-k/2} \sup_{\e \in \mathbb S^{1}} \|
    f\|_{L^{1,2}_{\e,W_{k-40}}}
  \end{split}
  \label{nk1def2}\end{equation}
in dimension $d=2$, and
\begin{equation}\label{bo20}
  \|f\|_{N_k^0(T)}=\|f\|_{L^{p'_d}},\qquad\|f\|_{N_k^1(T)}=2^{-k/2}\sup_{\e \in \mathbb S^{d-1}}\|f\|_{L^{1,2}_\e}
\end{equation}
in dimensions $d\geq 3$. The spaces $N^0_k(T)$ have the property that
if $f\in N^0_k(T)$ and $-T=t_0<t_1<\ldots<t_m=T$ is a partition of the
interval $[-T,T]$ then
\begin{equation}\label{bo1}
  \sum_{i=0}^{m-1}\|f\cdot 1_{[t_{i},t_{i+1})}(t)\|_{N^0_k(T)}^{p_1}\leq\|f\|_{N^0_k(T)}^{p_1}\text{ for some }p_1<2.
\end{equation}
This property is easy to verify for $p_1=p'_d$ if $d\geq 3$ and
$p_1=3/2$ if $d=2$. The spaces $N^1_k(T)$ do not have this
property. From Lemma \ref{linhom}, by duality, we obtain the energy
bound
\begin{equation}
  \left\|\int_{0}^t e^{i(t-s) \Delta } h(s) ds\right\|_{L^\infty_t L^2_x}\lesssim \|h\|_{N^0_k(T)}\quad\text{ for any }h \in N^0_k(T). 
  \label{nginhom}\end{equation}

\begin{lemma}\label{alexnew1}
  Assume that $u\in L^2_k(T)$ satisfies
  \begin{equation*}
    (i\partial_t+\Delta_x)u=h\text{ on }\R^d\times[-T,T],\quad u(0)=0.
  \end{equation*}
  Then
  \begin{equation*}
    \|u\|_{\widetilde{G}_k(T)}\lesssim \|h\|_{N_k^0(T)}.
  \end{equation*}
\end{lemma}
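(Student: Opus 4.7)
The plan is to reduce the truncated inhomogeneous bound to an untruncated companion, and then to the homogeneous estimate~\eqref{nghom}, via the vector-valued Christ--Kiselev lemma.

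\textbf{First step.} I will prove the untruncated companion
\[
\Bigl\|\int_{-T}^{T} e^{i(t-s)\Delta} h(s)\,ds\Bigr\|_{\widetilde{G}_k(T)} \lesssim \|h\|_{N_k^0(T)}.
\]
Writing this as $e^{it\Delta}f$ with $f=\int_{-T}^T e^{-is\Delta}h(s)\,ds\in L^2(\R^d)$, the homogeneous estimate~\eqref{nghom} reduces the claim to the $L^2$ bound $\|f\|_{L^2}\lesssim \|h\|_{N_k^0(T)}$. Testing against $g\in L^2$, I rewrite
\[
\langle f,g\rangle_{L^2}=\int_{-T}^{T}\langle h(s),e^{is\Delta}g\rangle\,ds,
\]
decompose $h$ according to the definition of $N_k^0(T)$, and bound each piece by H\"{o}lder combined with the homogeneous Strichartz estimate~\eqref{linst} (for the $L^{p'_d}_{t,x}$ piece) and the lateral Strichartz estimates~\eqref{latsta},~\eqref{latstb} (for the $L^{3/2,6/5}_{\e_j}$ pieces in dimension $d=2$). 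This gives the desired $L^2$ bound on $f$.

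\textbf{Second step.} To pass from the untruncated bound to the actual solution $u(t)=\int_{0}^{t}e^{i(t-s)\Delta}h(s)\,ds$, which is a one-sided time-truncation of the operator estimated in Step~1, I will invoke the vector-valued Christ--Kiselev lemma componentwise. The crucial arithmetic condition is a strict inequality between input and output time-integrability exponents: on the input side the exponents are $p'_d$ (from $L^{p'_d}_{t,x}$) and $6/5$ (from $L^{3/2,6/5}_{\e_j}$ in $d=2$), both strictly below $2$; on the output side, the components of $\widetilde{G}_k(T)$ carry time exponents in $\{2,3,4,6,p_d,\infty\}$, each strictly above the input exponents. In each case Christ--Kiselev transfers the untruncated bound to the truncated one with at worst a constant depending on the gap between the exponents.

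\textbf{Main obstacle.} The delicate point is that the local smoothing component $L^{\infty,2}_{\e,\lambda}$ of $\widetilde{G}_k(T)$ has dual $L^{1,2}_{\e,\lambda}$, which belongs to $N_k^1(T)$ rather than $N_k^0(T)$, so a direct duality argument against $N_k^0(T)$ is unavailable for this norm. The key trick that circumvents this is that in Step~1 the \emph{entire} $\widetilde{G}_k(T)$ bound---including its local smoothing and maximal function components---is obtained through the homogeneous estimate~\eqref{nghom} applied to a constant-in-$t$ $L^2$ function, so local smoothing never appears on the dual side. The remaining technical care is in the rigorous application of Christ--Kiselev to mixed-norm output spaces such as $L^{p_d}_x L^{\infty}_t$, $L^{6,3}_\e$, or $L^{\infty,2}_{\e,\lambda}$, where time is an inner variable; this is handled by the standard vector-valued version of the lemma applied to the appropriate time slice of the kernel.
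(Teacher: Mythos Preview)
Your overall strategy is correct and is precisely the alternative route the paper itself mentions at the end of its proof: a Christ--Kiselev-type argument in place of the $U^p_\Delta/V^p_\Delta$ machinery. Your Step~1 is exactly the paper's bound \eqref{nginhom}, and the reduction to \eqref{nghom} is clean.

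The paper takes a different route for Step~2. Instead of Christ--Kiselev, it observes that \eqref{nghom} together with the $\ell^2$-superadditivity \eqref{bo9} of $\widetilde{G}_k(T)$ under time partitions yields the embedding $U^2_\Delta \hookrightarrow \widetilde{G}_k(T)$, while \eqref{nginhom} together with the $\ell^{p_1}$-subadditivity \eqref{bo1} of $N_k^0(T)$ (with $p_1<2$) yields $\|u\|_{V^{p_1}_\Delta}\lesssim\|h\|_{N_k^0}$. The nontrivial inclusion $V^{p_1}_\Delta\subset U^2_\Delta$ then closes the argument. What this buys is a clean abstract framework that handles all the exotic output norms (lateral, maximal, local smoothing, and the sum space $\Sigma^2 L^{2,\infty}_{\e,W}$) uniformly.

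Your Step~2 has a gap at the point you yourself flag. The phrase ``standard vector-valued version of the lemma applied to the appropriate time slice of the kernel'' does not cover output spaces such as $L^{p_d}_x L^\infty_t$, $L^{\infty,2}_{\e,\lambda}$, or $\Sigma^2 L^{2,\infty}_{\e,W_{k+35}}$, where time is an \emph{inner} variable: the standard Christ--Kiselev lemma requires the form $L^p_t(X)\to L^q_t(Y)$ with $t$ outermost on both sides. Comparing ``time exponents'' is only heuristic here. The correct fix is exactly what the paper isolates in \eqref{bo9} and \eqref{bo1}: an abstract Christ--Kiselev lemma holds whenever the output space satisfies $\|v\|^{q}\le\sum_j\|1_{[t_j,t_{j+1})}v\|^{q}$ and the input space satisfies $\sum_j\|1_{[t_j,t_{j+1})}h\|^{p}\le\|h\|^{p}$ with $p<q$. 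Once you verify \eqref{bo9} (which does require a short argument for the sum space $\Sigma^2 L^{2,\infty}_{\e,W}$), the Whitney-type dyadic decomposition of Christ--Kiselev goes through verbatim with $p=p_1$ and $q=2$. So your proof becomes complete if you replace the appeal to the ``standard vector-valued version'' by an appeal to \eqref{bo9}--\eqref{bo1} and the abstract form of the lemma.
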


\begin{proof}[Proof of Lemma \ref{alexnew1}] This lemma is an abstract
  consequence of the homogeneous bound \eqref{nghom}, the energy bound
  \eqref{nginhom}, and the bounds \eqref{bo9} and \eqref{bo1}.

  The simplest way to prove this lemma is by using the $U^p_\Delta$
  and $V^p_\Delta$ spaces associated to the Schr\"odinger
  evolution. These spaces have been first introduced in unpublished
  work of the last author, as  substitutes for Bourgain's $X^{s,b}$
  spaces which are better suited for the study of dispersive
  evolutions in critical Sobolev spaces. They have been successfully
  used for instance in \cite{KoTa}, \cite{KoTa1}, \cite{BeTa},
  \cite{HeKo}.

  For convenience we recall their definition. $V^p_\Delta$ is the
  space of right continuous $L^2$ valued functions with bounded
  $p$-variation along the Schr\"odinger flow,
  \[
  \|u\|_{V^p_\Delta}^p = \|u\|_{L^\infty_t L^2_x}^p + \sup_{t_k
    \nearrow} \sum_{k \in \Z} \|u(t_{k+1} -e^{i(t_{k+1} -t_k) \Delta}
  u(t_k)\|_{L^2}^p
  \]
  where the supremum is taken over all increasing sequences $t_k$. On
  the other hand $U^p_\Delta$ is the atomic space generated by a
  family $\mathcal A_p$ of atoms $a$ which have the form
  \[
  a(t) = e^{it\Delta} \sum_{k} 1_{[t_k,t_k+1)} u_k,\qquad\sum_k
  \|u_k\|_{L^2}^p \leq 1,
  \]
  where the sequence $\{t_k\}$ is increasing. Precisely, we have
  \[
  U^p_\Delta = \{ u = \sum c_k a_k: \ \sum_k |c_k| < \infty, \ \ a_k
  \in \mathcal A_p \}.
  \]
  The above sum converges uniformly in $L^2$; it also converges in the
  stronger $V^p_\Delta$ topology. The $U_\Delta^p$ norm is defined by
  \[
  \| u\|_{U^p_\Delta} = \inf \{ \sum_k |c_k|: \ u = \sum c_k a_k, \
  a_k \in \mathcal A_p \}.
  \]
  
  These spaces are related as follows:
  \begin{equation}
    U^p_\Delta \subset V^p_\Delta \subset U^q_\Delta, \qquad 1 \leq p < q < \infty.
    \label{upemb}\end{equation}
  The first inclusion is straightforward, but the second is not. As a
  consequence of the bounds \eqref{nghom} and \eqref{bo9} we have
  \begin{equation}
    \| u\|_{\widetilde{G}_k(T)} \lesssim \|u\|_{U^2_\Delta}
    \label{nghoma}\end{equation}
  for all $u \in U^2_\Delta$ localized at frequency $2^k$. On the
  other hand, as a consequence of the bounds \eqref{nginhom} and
  \eqref{bo1} we have
  \begin{equation}
    \left\| u\right\|_{V^{p_1}_\Delta} 
    \lesssim \|h\|_{N^0_k}, \qquad u(t) = \int_{0}^t e^{i(t-s) \Delta } h(s) ds.
    \label{downemb}\end{equation}
  The lemma follows using \eqref{upemb} and $p_1<2$.

  One could also provide a self-contained proof which does not use the
  spaces $U^p_\Delta$ and $V^p_\Delta$, in the spirit of the
  Christ-Kiselev lemma \cite{ChKi}, or of the proof of the second inclusion 
in \eqref{upemb} (see \cite[Lemma 6.4]{KoTa}).

\end{proof}

We estimate now the contribution of terms in $N_k^1(T)$.

\begin{lemma}\label{alexnew2}
  Assume that $v\in L^2_k(T)$ satisfies
  \begin{equation*}
    (i\partial_t+\Delta_x)u=h\text{ on }\R^d\times[-T,T],\quad u(0)=0.
  \end{equation*}
  Then
  \begin{equation*}
    \|u\|_{G_k(T)}\lesssim \|h\|_{N_k^1(T)}.
  \end{equation*}
\end{lemma}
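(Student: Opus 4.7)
The plan is to reduce to the homogeneous estimate \eqref{nghom} by duality, and then transfer from the non-retarded to the retarded operator, handling the borderline local smoothing component by a separate bilinear argument.

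First I would consider the non-retarded operator $Th := e^{it\Delta}\int_{-T}^{T}e^{-is\Delta}h(s)\,ds$. Duality applied to the local smoothing estimate \eqref{locsmobound} (handling each velocity $\lambda\in W_{k-40}$ via the Galilean operator $T_{\lambda\e}$ when $d=2$) gives
\[
\Big\|\int_{-T}^{T} e^{-is\Delta}h(s)\,ds\Big\|_{L^2_x}\lesssim 2^{-k/2}\sup_{\e\in\mathbb{S}^{d-1}}\|h\|_{L^{1,2}_{\e,W_{k-40}}}=\|h\|_{N_k^1(T)},
\]
and combining this with the homogeneous bound \eqref{nghom} yields $\|Th\|_{\widetilde{G}_k(T)}\lesssim \|h\|_{N_k^1(T)}$.

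For the transfer from $Th$ to $u(t)=\int_0^t e^{i(t-s)\Delta}h(s)\,ds$, observe that the input space $L^{1,2}_{\e,W_{k-40}}$ has time integrability exactly $2$, while each of the $L^\infty_tL^2_x$, Strichartz, lateral Strichartz, and maximal function components of the $G_k(T)$ norm has time integrability strictly greater than $2$. For all such components the transfer is performed by the Christ--Kiselev lemma in the time variable, yielding the corresponding bound for $u$ with constants depending only on the exponents.

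The main obstacle is the local smoothing component $L^{\infty,2}_\e$ (respectively $L^{\infty,2}_{\e,\lambda}$ for $|\lambda|\leq 2^{k-40}$ when $d=2$), which has $L^2_t$ integrability matching that of the input; here Christ--Kiselev is unavailable. For this component I would argue directly by duality, reducing to the bilinear retarded estimate
\[
\bigg|\iint_{s<t}\!\!\int_{\R^d}\!g(t,x)\,\overline{e^{i(t-s)\Delta}P_{j,\e'}h(s)(x)}\,dx\,ds\,dt\bigg|\lesssim 2^{-k}\|g\|_{L^{1,2}_{\e',\lambda'}}\|h\|_{L^{1,2}_{\e,W_{k-40}}}
\]
for $g$ frequency-localized to $P_{j,\e'}$ with $|j-k|\leq 20$ and $|\lambda'|\leq 2^{k-40}$. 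The non-retarded version of this inequality follows by composing two dual applications of \eqref{locsmobound}; in dimension $d=2$ the kernel of $e^{i(t-s)\Delta}P_j$ is further foliated along rays of velocities $\lambda\in W_{k+5}$ exactly as in the proof of \eqref{linnew}, which supplies the required gain $2^{-k}$. The passage to the retarded version is then carried out by a Whitney decomposition of $\{s<t\}$ into essentially diagonal dyadic time rectangles, the kernel decay providing the summability on off-diagonal blocks in the spirit of the Keel--Tao endpoint argument. Combining the Christ--Kiselev bounds with this bilinear endpoint bound concludes the proof of \eqref{linearmainestrep} for $h \in N_k^1(T)$.
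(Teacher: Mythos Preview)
Your reduction to the non-retarded operator via duality and \eqref{nghom} is fine, but the Christ--Kiselev transfer has a gap that is larger than you acknowledge. What Christ--Kiselev actually needs is an $l^p$-type bound on the input under time partitions and an $l^q$-type bound on the output, with $p<q$ strictly. For $L^{1,2}_{\e,\lambda}$ the sharp input exponent is $p=2$ (Minkowski on the inner $L^2_{x',t}$). On the output side, the $L^\infty_tL^2_x$, $L^{p_d}$, $L^{p_d}_xL^\infty_t$, and lateral Strichartz components do have $q>2$. But the maximal function component $L^{2,\infty}_\e$ (and its $d=2$ version $L^{2,\infty}_{\e,W_{k+40}}$) has outer norm $L^2_{x_1}$, which forces $q=2$: if the $g_i(x_1)=\sup_{x',t\in I_i}|u|$ have disjoint $x_1$-supports then $\|\max_ig_i\|_{L^2_{x_1}}=(\sum_i\|g_i\|_{L^2_{x_1}}^2)^{1/2}$ exactly. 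So $p=q=2$ and the transfer fails for the maximal function piece just as it does for local smoothing. This is precisely why the paper singles out $N_k^1$: it notes explicitly that \eqref{bo1} fails for $N_k^1$, so the $U^p/V^p$ argument of Lemma~\ref{alexnew1} (which is the abstract form of Christ--Kiselev) cannot be used.

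Your endpoint proposal via a Whitney/Keel--Tao argument is also not substantiated. Keel--Tao relies on interpolating the dispersive $L^1\to L^\infty$ decay against energy conservation within the $L^p_tL^q_x$ scale; here both input and output are lateral norms with time sitting inside, and you have not identified any kernel bound that would produce an off-diagonal gain summable over Whitney scales. With $p=q=2$ the scale sum diverges absent such a gain. The reference to the ray foliation from \eqref{linnew} is also misplaced: that device controls the maximal function, not the retarded $L^{1,2}\to L^{\infty,2}$ map.

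The paper's argument is structurally different and avoids time decompositions entirely. After an angular localization so that $\widehat h$ is supported where $\xi\cdot\e\approx 2^k$, and a Galilean boost to reduce to $h\in L^{1,2}_\e$, it exploits the outer $L^1_{y_1}$ structure: writing $u=\int_\R u_{y_1}\,dy_1$ with $u_{y_1}$ the retarded solution generated by the slice $h(y_1,\cdot,\cdot)\in L^2_{y',s}$, Minkowski reduces everything to the single-slice bound $\|P_kP_{j,\e}u_{y_1}\|_{\widetilde G_k}\lesssim 2^{-k/2}\|h(y_1)\|_{L^2}$. That is established by an explicit Fourier computation (Lemma~\ref{sidel}): on the symbol side the retarded kernel restricted to $\xi\cdot\e\approx 2^k$ differs from a half-space cutoff of a free solution $e^{it\Delta}v_0$ by a symbol that is smooth across the paraboloid, yielding a remainder $w$ with $2^{-k}(\|\Delta w\|_{L^2}+\|\partial_tw\|_{L^2})\lesssim 2^{-k/2}\|h(y_1)\|_{L^2}$. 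The spatial cutoff commutes well enough with the remaining $P_{k_1,\e'}$ projections to recover all of $\widetilde G_k$ from \eqref{nghom}. The point is that the $L^1$ direction in $N_k^1$ is spatial, not temporal, and that is what should be sliced.
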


\begin{proof}[Proof of Lemma \ref{alexnew2}] The spaces $N_k^1(T)$ do
  not satisfy \eqref{bo1}, so the general argument given in Lemma
  \ref{alexnew1} does not apply. We give a direct argument starting
  from the definition of the spaces $L^{1,2}_{\e,\lambda}$.  Using a
  smooth angular partition of unity in frequency we can assume without
  any restriction in generality that $h$ is frequency localized to a
  region $\xi \cdot \e\in[2^{k-2},2^{k+2}]$ for some $\e \in \mathbb
  S^{d-1}$. After a Galilean transformation, we may assume that $h \in
  L^{1,2}_{\e}$ and it suffices to prove the stronger bound
  \begin{equation}
    \| P_k P_{j,\e} u\|_{\widetilde{G}_k(T)} \lesssim 2^{-k/2} \|h\|_{L^{1,2}_{\e}},
    \qquad |j-k| \leq 4.
    \label{lud}\end{equation}
  Suppose $\e = \e_1$.  The solution $u$ can be expressed via the
  fundamental solution $K_0$ for the Schr\"odinger equation as
  \[
  \begin{split}
    u(t,x) = &\ \int_{s < t} \int_{\R^d} K_0(t-s,x-y) h(s,y) dy ds \\
    = &\ \int_{s < t} \int_{\R^d} K_0(t-s,x_1-y_1,x'-y') h(s,y_1,y')
    dy_1dy'ds \\ &\ = \int_\R u_{y_1}(t,x) d y_1
  \end{split}
  \]
  where
  \[
  u_{y_1}(t,x) = \int_{s < t} \int_{\R^{d-1}} K_0(t-s,x_1-y_1,x'-y')
  h(s,y_1,y')dy'ds.
  \]
  It suffices to show that
  \begin{equation}
    \| P_k P_{j,\e_1} u_{y_1}\|_{\widetilde{G}_k(T)} \lesssim 2^{-k/2} \|h(y_1)\|_{L^2}.
    \label{fin}\end{equation}
  This is a consequence of the following:

\begin{lemma} Suppose that $|j-k| \leq
  4$. Then the function $ P_k P_{j,\e_1} u_{y_1}$ can be represented
  as
  \begin{equation}
    \begin{split}
      &P_k P_{j,\e_1} u_{y_1}(t,x) = (P_{<k-40,\e_1} 1_{\{x_1 > y_1\}})\cdot P_k P_{j,\e_1}  e^{it\Delta}  v_0+ w(t,x);\\
      &\|v_0\|_{L^2} + 2^{-k} (\| \Delta w\|_{L^2} + \|\partial_t
      w\|_{L^2}) \lesssim 2^{-k/2} \|h(y_1)\|_{L^2}.
    \end{split}
    \label{fina}\end{equation}
\label{sidel}\end{lemma}
  To see that this implies \eqref{fin}, notice that, since $w$ is also
  localized at frequency $2^k$, the bound for
  $\|w\|_{\widetilde{G}_k(T)}$ is obtained from Sobolev embeddings. We
  observe also that the function $P_{<k-40,\e_1} 1_{\{x_1 < y_1\}}$ is
  bounded while $ v=P_k P_{j,\e_1} e^{it\Delta} v_0$ is an $L^2$
  solution for the homogeneous equation which is localized at
  frequency $2^k$.  By Lemma~\ref{linhom} this allows us to directly
  estimate all components of its $\widetilde{G}_k(T)$ norm except for
  the $L^{\infty,2}_{\e,\lambda}$ and the $L^{6,3}_{\e}$ bounds if
  $d=2$, respectively the $L^{\infty,2}_{\e}$ bound if $d\geq 3$. For
  these we need an additional step: if $|k_1-k| \leq 25$ and $\e \in
  \mathcal S^{d-1}$ we take advantage of the frequency localization of
  the above cutoff function in order to write
  \[
  P_{k_1,\e} [(P_{<k-40,\e_1} 1_{\{x_1 > y_1\}})\cdot v] =
  \sum_{|k_2-k| \leq 30} P_{k_1,\e} [(P_{<k-40,\e_1} 1_{\{x_1 >
    y_1\}}) \cdot P_{k_2,\e}v].
  \]
  This suffices to prove \eqref{fin}, in view of Lemma
  \ref{linhom}. It remains to prove the last lemma.
  \medskip

  \begin{proof}[Proof of Lemma~\ref{sidel}]  By translation invariance we can set
  $y_1=0$ and drop the parameter $y_1$ from the notations. The Fourier
  transform of $ P_k P_{j,\e_1} u$ is
  \[
  (\FF(P_k P_{j,\e_1} u))(\tau,\xi) = \frac{\chi_k(|\xi|)
    \chi_{j}(\xi_1)}{-\tau-|\xi|^2+i0} \hat h(\tau,\xi').
  \]
  On the other hand the Fourier transform of $v = P_k P_{j,\e_1}
  e^{it\Delta} v_0$ equals
  \[
  (\FF v)(\tau,\xi) =\chi_k(|\xi|) \chi_{j}(\xi_1) v_0(\xi)
  \delta_{\tau+|\xi|^2}.
  \]
  Assuming that $\hat v$ is supported in the region $\{|\xi| \approx
  2^k,\ \xi_1 \approx 2^j\}$, after truncation we obtain
  \[
  (\FF ( (P_{<k-40,\e_1} 1_{\{x_1 > y_1\}}) v))(\tau,\xi) = (
  \chi_k(|\xi|) \chi_{j}(\xi_1) v_0(\xi) \delta_{\tau+|\xi|^2}) \ast
  \frac{\chi_{<k-40}(\xi_1)}{\xi_1-i0}.
  \]
  On the $\{\xi_1 > 0\}$ part of the paraboloid we can write
  \[
  \delta_{\tau+|\xi|^2} = \frac{1}{2\sqrt{- \tau-|\xi'|^2}}
  \delta_{\xi_1 -\sqrt{-\tau-|\xi'|^2}}.
  \]
  Hence, with the notation $\txi = \sqrt{-\tau-|\xi'|^2}$, the above
  convolution gives
  \[
  \begin{split}
    (\FF ( (P_{<k-40,\e_1} &1_{\{x_1 > y_1\}}) v))(\tau,\xi) = \
    \chi_k(|(\txi,\xi')|) \chi_{j}(\xi_1) v(\txi,\xi') \frac{1}{2\txi}
    \frac{\chi_{<k-40}(\xi_1-\txi)}{\xi_1 -\txi -i0} \\ = &\ -
    \chi_k(|(\txi,\xi')|) \chi_{j}(\xi_1) v(\txi,\xi') \frac{\xi_1
      +\txi}{2\txi} \frac{\chi_{<k-40}(\xi_1-\txi)}{-\tau-|\xi|^2
      +i0}.
  \end{split}
  \]
  Matching the two expressions on the paraboloid $\tau + |\xi|^2 = 0$
  we see that it is natural to choose $v_0$ of the form
  \[
  v_0(\xi_1,\xi') = \left\{ \begin{array}{cc} \hat h(|\xi|^2,\xi') &
      \xi \in \text{supp} \ \chi_k(|\xi|) \chi_{j}(\xi_1) \cr \cr 0 &
      \text{otherwise}. \end{array} \right.
  \]
  Changing variables we obtain the required bound for $v$, namely
  \[
  \| v_0\|_{L^2} \lesssim 2^{-\frac{k}2} \|h\|_{L^2}.
  \]

  It remains to consider $w$, whose Fourier transform is obtained by
  taking the difference, $\hat w(\tau,\xi) = \hat h(\tau,\xi')
  q(\tau,\xi)$ where
  \[
  \begin{split}
    q(\tau,\xi) = &\ \left( \chi_k(|\xi|) \chi_{j}(\xi_1) -
      \chi_{k}(\txi,\xi') \chi_{j} (\txi) \chi_{<k-40}(\xi_1-\txi)
      \frac{\xi_1 +\txi}{2\txi} \right) \frac{1}{-\tau-|\xi|^2 +i0}.
  \end{split}
  \]
  The expression in the bracket is supported in $\{\xi_1 \approx 2^k,
  |\xi'| \lesssim 2^k\}$ and vanishes on the paraboloid $\{
  \tau+|\xi|^2 = 0\}$, canceling the singularity. Thus we obtain
  \[
  |q(\tau,\xi)| \lesssim (|\tau|+|\xi|^2)^{-1}.
  \]
  The bounds for $w$ follow.
\end{proof}
\end{proof}

\section{Proof of Proposition \ref{TaoHeat}}\label{HEATPROOF}

In this section we prove Proposition \ref{TaoHeat}. Recall that
the frequency envelopes $\gamma_k(\sigma)$ and $\gamma_k$ 
have the properties
\begin{equation}\label{la2}
  \begin{split}
    &\sum_{k\in\Z}\gamma_k^2\approx \gamma^2\ll 1;\\
    &\|P_k\phi\|_{L^\infty_tL^2_x}\leq 2^{-\sigma
      k}\gamma_k(\sigma)\qquad\text{ for any
    }\sigma\in[0,\infty)\text{ and }k\in\Z.
  \end{split}
\end{equation}

We start with two technical lemmas.

\begin{lemma}\label{basicest}
  Assume $f,g\in L^\infty(\R^d\times(-T,T):\C)$, $P_kf,P_kg\in
  L^\infty_tL^2_x$ and define, for any $k\in\Z$,
  \begin{equation}\label{bek}
    \al_k=\sum_{|k'-k|\leq 20}2^{dk'/2}\|P_{k'}f\|_{L^\infty_tL^2_x},\quad\be_k=\sum_{|k'-k|\leq 20}2^{dk'/2}\|P_{k'}g\|_{L^\infty_tL^2_x}.
  \end{equation}
  Then, for any $k\in\Z$,
  \begin{equation}\label{basicest18}
    2^{dk/2}\|P_k(fg)\|_{L^\infty_tL^2_x}\lesssim 
\al_k\sum_{k'\leq k}\be_{k'}+\be_k\sum_{k'\leq k}\al_{k'}+\sum_{k'\geq k}2^{-d|k'-k|}\al_{k'}\be_{k'}.
  \end{equation}
  In addition, if $\|g\|_{L^{\infty}_{x,t}}\leq 1$ then, for any
  $k\in\Z$
  \begin{equation}\label{basicest5.1}
    2^{dk/2}\|P_k(fg)\|_{L^\infty_tL^2_x}\lesssim \al_k+\be_k\sum_{k'\leq k}\al_{k'}+\sum_{k'\geq k}2^{-d|k'-k|}\al_{k'}\be_{k'}. 
  \end{equation}
  Finally, if $\|f\|_{L^\infty_{x,t}}+\|g\|_{L^{\infty}_{x,t}}\leq 1$
  then, for any $k\in\Z$
  \begin{equation}\label{basicest5.2}
    2^{dk/2}\|P_k(fg)\|_{L^\infty_tL^2_x}\lesssim \al_k+\be_k+\sum_{k'\geq k}2^{-d|k'-k|}\al_{k'}\be_{k'}. 
  \end{equation}
\end{lemma}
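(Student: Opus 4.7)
The natural starting point is a bilinear Littlewood--Paley decomposition
\[
P_k(fg) = \sum_{|k_1-k|\leq 4,\, k_2\leq k-4} P_k(P_{k_1}f\cdot P_{k_2}g) + \sum_{k_1\leq k-4,\,|k_2-k|\leq 4} P_k(P_{k_1}f\cdot P_{k_2}g) + \sum_{k_1,k_2\geq k-4,\,|k_1-k_2|\leq 8} P_k(P_{k_1}f\cdot P_{k_2}g),
\]
which is a partition based on frequency-support constraints: $P_k(P_{k_1}f\cdot P_{k_2}g)$ vanishes unless the support of the convolution of two annuli hits the annulus $I_k$. I will estimate each of the three pieces in $L^\infty_t L^2_x$ and then put things together.

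For the generic bound \eqref{basicest18}, I will apply H\"older plus Bernstein. In the first (low frequency in $f$, high frequency in $g$) sum, I use $\|P_{k_1}f\|_{L^\infty_x}\lesssim 2^{dk_1/2}\|P_{k_1}f\|_{L^2_x}$ and $\|P_{k_2}g\|_{L^2_x}\leq 2^{-dk_2/2}\beta_k$, which after multiplication by $2^{dk/2}$ and summation over $k_1\leq k$ gives a contribution of $\beta_k\sum_{k'\leq k}\alpha_{k'}$; the second sum is symmetric and gives $\alpha_k\sum_{k'\leq k}\beta_{k'}$. For the high--high term I use the Bernstein-type bound $\|P_k(uv)\|_{L^2_x}\lesssim 2^{dk/2}\|uv\|_{L^1_x}\lesssim 2^{dk/2}\|u\|_{L^2_x}\|v\|_{L^2_x}$, which after multiplication by $2^{dk/2}$ produces $2^{dk}\cdot 2^{-dk'/2}\alpha_{k'}\cdot 2^{-dk'/2}\beta_{k'}=2^{-d(k'-k)}\alpha_{k'}\beta_{k'}$, then I sum over $k'\geq k-4$.

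For the improvement \eqref{basicest5.1}, the idea is to replace the Bernstein bound on the \emph{high} factor $P_{k_1}f$ by the uniform bound $\|P_{\leq k-4}g\|_{L^\infty_{x,t}}\lesssim \|g\|_{L^\infty}\leq 1$ in the first low-high term, so the contribution becomes simply $2^{dk/2}\|P_{k_1}f\|_{L^\infty_t L^2_x}\lesssim \alpha_k$ (summing the at most four indices $|k_1-k|\leq 4$). The high-low and high-high terms are handled exactly as before. For \eqref{basicest5.2}, both low-high and high-low terms use the respective $L^\infty$ bounds on $g$ and $f$, giving $\alpha_k$ and $\beta_k$; the high-high term is unaffected.

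No real obstacle is expected: this is a routine application of H\"older and Bernstein inequalities, aided by the fact that $\alpha_k,\beta_k$ are averages over $|k'-k|\leq 20$ which absorbs any constants from overlapping frequency shells. The only bookkeeping subtlety is to ensure that in the low-high term of \eqref{basicest5.1} one uses $\|P_{\leq k-4}g\|_{L^\infty}\lesssim\|g\|_{L^\infty}$ rather than summing $\|P_{k_2}g\|_{L^\infty}$ dyadically (which would diverge or force a spurious logarithm); using the aggregated low-frequency projection avoids this cleanly.
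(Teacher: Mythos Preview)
Your approach is correct and essentially identical to the paper's: bilinear Littlewood--Paley decomposition, $L^\infty$ on the low-frequency factor and $L^2$ on the high-frequency factor for the low--high pieces (via Bernstein), $L^2\times L^2\to L^1$ plus Bernstein for the high--high piece, and then for \eqref{basicest5.1}/\eqref{basicest5.2} replacing the Bernstein estimate on the low-frequency factor by the direct bound $\|P_{\leq k-4}g\|_{L^\infty}\lesssim\|g\|_{L^\infty}\leq 1$ (resp.\ also for $f$). Your narrative has the ``first'' and ``second'' sums and the phrase ``Bernstein bound on the high factor'' interchanged relative to your displayed decomposition, but the computations and conclusions are right.
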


\begin{proof}[Proof of Lemma \ref{basicest}] We decompose the
  expressions to be estimated into
  $\mathrm{Low}\times\mathrm{High}\to\mathrm{High}$ frequency
  interactions and $\mathrm{High}\times\mathrm{High}\to\mathrm{Low}$
  frequency interactions. We estimate
  $\mathrm{Low}\times\mathrm{High}\to\mathrm{High}$ frequency
  interactions using the $L^\infty_{x,t}$ norm for the low frequency
  factor and the $L^\infty_tL^2_x$ norm for the high frequency factor.
  We estimate $\mathrm{High}\times\mathrm{High}\to\mathrm{Low}$
  frequency interactions using the $L^\infty_tL^2_x$ norms for both
  factors, and Sobolev embedding.

  For example, for \eqref{basicest18} we estimate  
  \begin{equation*}
    \begin{split}
 \|P_k (fg)\|_{L^\infty L^2} \lesssim&
\sum_{|k_2-k|\leq 4}\|P_k(P_{\leq k-5}f\cdot P_{k_2}g)\|_{L^\infty_tL^2_x}
+\negmedspace\sum_{|k_1-k|\leq 4}
\|P_k(P_{k_1}f\cdot P_{\leq  k-5}g)\|_{L^\infty_tL^2_x}
\\
&+\sum_{k_1,k_2\geq k-4,|k_1-k_2|\leq 8}\|P_k(P_{k_1}f\cdot P_{k_2}g)\|_{L^\infty_tL^2_x}\\
      \lesssim &  \sum_{|k_2-k|\leq 4}\|P_{\leq k-5}f\|_{L^\infty_{x,t}}\|P_{k_2}g\|_{L^\infty_tL^2_x}+\sum_{|k_1-k|\leq 4}\|P_{k_1}f\|_{L^\infty_tL^2_x}\|P_{\leq k-5}g\|_{L^\infty_{x,t}}\\
      &+ \sum_{k_1,k_2\geq k-4,|k_1-k_2|\leq 8}2^{dk/2}\|P_{k_1}f\cdot P_{k_2}g\|_{L^\infty_tL^1_x}\\
      \lesssim & \sum_{k_1\leq
        k}\alpha_{k_1}2^{-dk/2}\beta_k+\sum_{k_2\leq
        k}\beta_{k_2}2^{-dk/2}\alpha_k+2^{dk/2}\sum_{k'\geq
        k}2^{-dk'}\alpha_{k'}\beta_{k'},
    \end{split}
  \end{equation*}
  and the bound \eqref{basicest18} follows. To prove
  \eqref{basicest5.1} we use a similar argument, but estimate
  $\|P_{\leq k-5}g\|_{L^\infty_{x,t}}\lesssim 1$ instead of $\|P_{\leq
    k-5}g\|_{L^\infty_{x,t}}\lesssim \sum_{k_2\leq k}\beta_{k_2}$. For
  \eqref{basicest5.2}, we also estimate $\|P_{\leq
    k-5}f\|_{L^\infty_{x,t}}\lesssim 1$ instead of $\|P_{\leq
    k-5}f\|_{L^\infty_{x,t}}\lesssim \sum_{k_2\leq k}\alpha_{k_2}$.
\end{proof}

\begin{lemma}\label{lemmac3}
  Assume $f,g,h\in C^\infty(\R^d\times(-T,T):\mathbb{C})$ and
  $P_kf,P_kg,P_kh\in L^\infty_tL^2_x$ for any $k\in\mathbb{Z}$. Let
  \begin{equation*}
    \mu_k=\sum_{|k'-k|\leq 20}2^{dk'/2}(\|P_{k'}f\|_{L^\infty_tL^2_x}+\|P_{k'}g\|_{L^\infty_tL^2_x}+\|P_{k'}h\|_{L^\infty_tL^2_x}).
  \end{equation*}
  We assume that
  $\|f\|_{L^\infty_{x,t}}+\|g\|_{L^{\infty}_{x,t}}+\|h\|_{L^{\infty}_{x,t}}\leq
  1$ and $\sup_{k\in\Z}\mu_k\leq 1$. Then, for any $k\in\Z$ and
  $l,m=1,\ldots,d$
  \begin{equation}\label{basicest3}
    2^{dk/2}\|P_k(f\partial_lg\partial_mh)\|_{L^\infty_tL^2_x}\lesssim
    2^{2k}\big[\mu_k\sum_{k'\leq k}2^{-|k'-k|}\mu_{k'}+\sum_{k'\geq k}2^{-(d-2)|k'-k|}\mu^2_{k'}\big].
  \end{equation}
\end{lemma}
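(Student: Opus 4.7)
The plan is to apply Lemma \ref{basicest} in two stages: first combine $\partial_l g$ and $\partial_m h$, then multiply the result by $f$. Since differentiation costs one power of $2^{k'}$ in the $L^\infty_tL^2_x$ norm, the frequency envelopes from \eqref{bek} applied to $\partial_l g$ and $\partial_m h$ satisfy $\alpha_{k'}^{(\partial g)}\lesssim 2^{k'}\mu_{k'}$ and $\beta_{k'}^{(\partial h)}\lesssim 2^{k'}\mu_{k'}$. Plugging these into \eqref{basicest18} and using $\mu\leq 1$ to bound the lower tail $\sum_{k'\leq k}2^{k'}\mu_{k'}\lesssim 2^k$ at the intermediate step, I obtain
\begin{equation*}
v_k := 2^{dk/2}\|P_k(\partial_l g\cdot\partial_m h)\|_{L^\infty_tL^2_x}\lesssim 2^{2k}\Big[\mu_k A_k + B_k\Big],
\end{equation*}
where $A_k=\sum_{k'\leq k}2^{-(k-k')}\mu_{k'}$ and $B_k=\sum_{k'\geq k}2^{-(d-2)(k'-k)}\mu_{k'}^2$. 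This already matches the shape of the target bound.

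In the second stage I apply \eqref{basicest5.1} to $(\partial_l g\,\partial_m h)\cdot f$, with $f$ playing the role of the $L^\infty_{x,t}$-bounded factor (since $\|f\|_{L^\infty}\leq 1$). The relevant envelopes are $\tilde\alpha_{k''}\lesssim v_{k''}$ for the first factor and $\tilde\beta_{k''}\lesssim\mu_{k''}$ for $f$, yielding
\begin{equation*}
2^{dk/2}\|P_k(f\partial_l g\partial_m h)\|_{L^\infty_tL^2_x}\lesssim v_k + \mu_k\sum_{k'\leq k} v_{k'} + \sum_{k'\geq k}2^{-d(k'-k)}v_{k'}\mu_{k'}.
\end{equation*}
The first term is already in the desired form from Stage 1. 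For the middle term I substitute $v_{k'}\lesssim 2^{2k'}(\mu_{k'}A_{k'}+B_{k'})$, split into two pieces, switch the order of summation, and invoke $\mu\leq 1$ (in particular $\mu_{k'}^2\leq\mu_{k'}$ and $2^{-2(k-k'')}\leq 2^{-(k-k'')}$); after elementary arithmetic each piece is dominated by $2^{2k}(\mu_kA_k+B_k)$. For the third term the extra gain $2^{-d(k'-k)}$ beyond $2^{-(d-2)(k'-k)}$ makes the tail sum geometrically convergent and, after a similar reorganization, is absorbed into $2^{2k}B_k$.

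The main obstacle is the bookkeeping in the high-high-to-low frequency interactions—the $B_k$ sum—because the dimensional weight $2^{-(d-2)(k'-k)}$ becomes marginal (equal to $1$) when $d=2$. There the quadratic factor $\mu_{k'}^2$ is essential and must be preserved through both applications of Lemma \ref{basicest}: it arises from both $\partial_l g$ and $\partial_m h$ contributing simultaneously at a high frequency $k'\gg k$, and it is what makes the expression meaningful in the critical dimension. The most delicate algebraic step is showing that the mixed contribution $\mu_k\sum_{k'\leq k}2^{2k'}B_{k'}$ coming from Stage 2 actually splits cleanly, after exchanging summations, into a piece bounded by $2^{2k}\mu_kA_k$ (from $k''\leq k$) and a piece bounded by $2^{2k}B_k$ (from $k''>k$), with no residual logarithmic loss.
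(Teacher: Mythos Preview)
Your two-stage strategy has a genuine gap in the critical case $d=2$, located in the third term of Stage~2 when you substitute the $B_{k'}$ portion of $v_{k'}$. That contribution equals
\[
\sum_{k'\geq k}2^{-d(k'-k)}\cdot 2^{2k'}B_{k'}\,\mu_{k'}
=2^{2k}\sum_{k'\geq k}2^{-(d-2)(k'-k)}\mu_{k'}B_{k'},
\]
so the ``extra gain $2^{-d(k'-k)}$ beyond $2^{-(d-2)(k'-k)}$'' is exactly consumed by the factor $2^{2k'}$ carried by $v_{k'}\lesssim 2^{2k'}(\cdots)$; no residual decay in $k'-k$ survives when $d=2$. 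Expanding $B_{k'}$ and switching sums yields $2^{2k}\sum_{k''\geq k}\mu_{k''}^2\sum_{k\leq k'\leq k''}\mu_{k'}$, and the inner sum is not uniformly bounded under the sole hypothesis $\sup_j\mu_j\leq 1$: take $\mu_j\equiv c\in(0,1]$ on $[k,k+N]$ and zero elsewhere to obtain $\sim c^3N^2$ here against a target $2^{2k}B_k\sim c^2N$.

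The loss is structural to the two-stage reduction. The configuration behind this piece has $\partial_l g$ and $\partial_m h$ both at a common high frequency $k''$ (this is what produces the $B$ part in Stage~1) while $f$ sits at an intermediate frequency $k'\in[k,k'']$. By first collapsing $\partial_l g\,\partial_m h$ to the scalar $v_{k'}$ you discard the information that $k''$, not $k'$, is the true top frequency, and Stage~2 then over-counts by summing $P_{k'}f$ over all such $k'$. The paper proceeds instead via a direct trilinear Littlewood--Paley decomposition: in exactly this configuration (their term~I with $k_2\geq k+5$, $|k_1-k_2|\leq 4$) the low-frequency factor $P_{\leq k_2-5}f$ is placed in $L^\infty_{x,t}$ and bounded by $1$ in one stroke, rather than frequency by frequency. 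To repair your argument you would have to reopen this single interaction and handle it directly, which is effectively the paper's route for that piece.
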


\begin{proof}[Proof of Lemma \ref{lemmac3}] By symmetry, it suffices
  to estimate
  \begin{equation*}
    \sum_{k_2\in\Z}\sum_{k_1\leq k_2}2^{dk/2}\|P_k(f\cdot P_{k_1}(\partial_lg)\cdot P_{k_2}(\partial_mh))\|_{L^\infty_tL^2_x},
  \end{equation*}
  which is dominated by
  \begin{equation*}
    \begin{split}
      &\sum_{k_2\geq k-4}\sum_{k_1\leq k_2}2^{dk/2}\|P_k(P_{\leq k_2-5}(f)\cdot P_{k_1}(\partial_lg)\cdot P_{k_2}(\partial_mh))\|_{L^\infty_tL^2_x}\\
      &+\sum_{k_2\in\Z}\sum_{k_1\leq k_2}\sum_{k_3\geq k_2+5}2^{dk/2}\|P_k(P_{k_3}(f)\cdot P_{k_1}(\partial_lg)\cdot P_{k_2}(\partial_mh))\|_{L^\infty_tL^2_x}\\
      &+\sum_{k_2\in\Z}\sum_{k_1\leq k_2}\sum_{|k_3-k_2|\leq
        4}2^{dk/2}\|P_k(P_{k_3}(f)\cdot P_{k_1}(\partial_lg)\cdot
      P_{k_2}(\partial_mh))\|_{L^\infty_tL^2_x}=I+II+III.
    \end{split}
  \end{equation*}
  We estimate
  \begin{equation*}
    \begin{split}
      I&\leq \sum_{|k_2-k|\leq 4}\sum_{k_1\leq k_2}2^{dk/2}\|P_k(P_{\leq k_2-5}(f)\cdot P_{k_1}(\partial_lg)\cdot P_{k_2}(\partial_mh))\|_{L^\infty_tL^2_x}\\
      &+\sum_{k_2\geq k+5}\sum_{|k_1-k_2|\leq 4}2^{dk/2}\|P_k(P_{\leq k_2-5}(f)\cdot P_{k_1}(\partial_lg)\cdot P_{k_2}(\partial_mh))\|_{L^\infty_tL^2_x}\\
      &\lesssim \sum_{|k_2-k|\leq 4}\sum_{k_1\leq k_2}2^{dk/2}\|P_{\leq k_2-5}(f)\|_{L^\infty_{x,t}}\|P_{k_1}(\partial_lg)\|_{L^\infty_{x,t}}\|P_{k_2}(\partial_mh)\|_{L^\infty_tL^2_x}\\
      &+ \sum_{k_2\geq k+5}\sum_{|k_1-k_2|\leq 4}2^{dk/2}2^{dk/2}\|P_{\leq k_2-5}(f)\|_{L^\infty_{x,t}}\|P_{k_1}(\partial_lg)\|_{L^\infty_tL^2_x}\|P_{k_2}(\partial_mh)\|_{L^\infty_tL^2_x}\\
      &\lesssim 
\sum_{k_1\leq k}2^{k_1}\mu_{k_1}2^k\mu_k+C\sum_{k_2\geq
        k}2^{-d|k_2-k|}2^{2k_2}\mu_{k_2}^2,
    \end{split}
  \end{equation*}
  which is dominated by the right-hand side of \eqref{basicest3}. We
  estimate
  \begin{equation*}
    \begin{split}
      II&\leq \sum_{|k_3-k|\leq 5}\sum_{k_1,k_2\leq k+5}2^{dk/2}\|P_k(P_{k_3}(f)\cdot P_{k_1}(\partial_lg)\cdot P_{k_2}(\partial_mh))\|_{L^\infty_tL^2_x}\\
      &\lesssim \sum_{|k_3-k|\leq 5}\sum_{k_1,k_2\leq k+5}2^{dk/2}\|P_{k_3}(f)\|_{L^\infty_tL^2_x}\|P_{k_1}(\partial_lg)\|_{L^\infty_{x,t}}\|P_{k_2}(\partial_mh))\|_{L^\infty_{x,t}}\\
      &\lesssim\mu_k\big(\sum_{k_1\leq k}2^{k_1}\mu_{k_1}\big)^2,
    \end{split}
  \end{equation*}
  which is dominated by the right-hand side of \eqref{basicest3} since
  $\sup_{k\in\Z}\mu_{k}\leq 1$. Finally, we estimate
  \begin{equation*}
    \begin{split}
      III&\leq \sum_{k_2\geq k-8}\sum_{k_1\leq k_2}\sum_{|k_3-k_2|\leq 4}2^{dk/2}\|P_k(P_{k_3}(f)\cdot P_{k_1}(\partial_lg)\cdot P_{k_2}(\partial_mh))\|_{L^\infty_tL^2_x}\\
      & \lesssim \sum_{k_2\geq k-8}\sum_{k_1\leq k_2}\sum_{|k_3-k_2|\leq 4}2^{dk}\|P_{k_3}(f)\|_{L^\infty_tL^2_x}\|P_{k_1}(\partial_lg)\|_{L^\infty_{x,t}}\|P_{k_2}(\partial_mh))\|_{L^\infty_tL^2_x}\\
      & \lesssim \sum_{k_2\geq
        k}2^{dk}2^{-dk_2}2^{k_2}\mu_{k_2}^2\sum_{k_1\leq
        k_2}2^{k_1}\mu_{k_1}
    \end{split}
  \end{equation*}
  which is dominated by the right-hand side of \eqref{basicest3} since
  $\sup_{k\in\Z}\mu_{k}\leq 1$. This completes the proof of
  \eqref{basicest3}.
\end{proof}

We construct now the function $\tphi$.

\begin{lemma}\label{heatflow}
  Assume $\phi\in {H}_Q^{\infty,\infty}(T)$ and
  $\{\gamma_k(\sigma)\}_{k\in\Z}$ be as in \eqref{la2}. Then there is
  a unique global solution $\tphi\in
  C([0,\infty):{H}_Q^{\infty,\infty}(T))$ of the
  initial-value problem
  \begin{equation}\label{heat3.1}
    \begin{cases}
      &\partial_s\tphi=\Delta_x\tphi+\tphi\cdot\sum_{m=1}^d|\partial_m\tphi|^2\quad\text{ on }[0,\infty)\times\R^d\times(-T,T);\\
      &\tphi(0)=\phi.
    \end{cases}
  \end{equation}
  In addition, for any $k\in\Z$, $s\in[0,\infty)$, and
  $\sigma\in [0,\sigma_1]$
  \begin{equation}\label{big9}
    \|P_k(\tphi(s))\|_{L^\infty_tL^2_x}\lesssim 
(1+s2^{2k})^{-\sigma_1}2^{-\sigma k}\gamma_k(\sigma),
  \end{equation}
  and, for any $\sigma,\rho\in\Z_+$,
  \begin{equation}\label{big10}
    \sup_{s\in[0,\infty)}(s+1)^{\sigma/2}\|\nabla_x^\sigma\partial_t^\rho(\tphi(s)-Q)\|_{L^\infty_tL^2_x}<\infty.
  \end{equation}
\end{lemma}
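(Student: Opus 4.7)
The plan is to establish local existence by a standard contraction argument in a high-regularity space, verify that the sphere constraint is preserved, then extend globally and establish the decay bounds \eqref{big9} and \eqref{big10} via a bootstrap built on the frequency envelope machinery of Lemmas \ref{basicest} and \ref{lemmac3}.

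First, for local existence on some short interval $[0,s_0]$, I would apply a fixed-point argument to the Duhamel formulation
\[
\tphi(s) = e^{s\Delta_x}\phi + \int_0^s e^{(s-r)\Delta_x}\bigl[\tphi(r)\cdot\textstyle\sum_m|\partial_m\tphi(r)|^2\bigr]\,dr
\]
in the space $C([0,s_0]:H^{N,\rho}(T))$ for any fixed large $N,\rho$. Since the nonlinearity is cubic with two spatial derivatives, which is subcritical in sufficiently high Sobolev norms, the contraction closes on a time interval depending on $\|\phi-Q\|_{H^{N,\rho}}$. To preserve the constraint $|\tphi|\equiv 1$, I would differentiate $|\tphi|^2-1$ in $s$ using the equation: the resulting identity reads $\partial_s(|\tphi|^2-1) = \Delta_x(|\tphi|^2-1) + 2(|\tphi|^2-1)\sum_m|\partial_m\tphi|^2$, which is a linear parabolic equation with smooth coefficients. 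Since $|\tphi|^2-1=0$ at $s=0$, the maximum principle (or $L^2$ uniqueness) forces $|\tphi|\equiv 1$ for all $s\in[0,s_0]$.

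The heart of the proof is the bootstrap establishing \eqref{big9}. Let $B$ be the best constant such that
\[
\|P_k(\tphi(s)-Q)\|_{L^\infty_tL^2_x}\le B(1+s2^{2k})^{-\sigma_1}2^{-\sigma k}\gamma_k(\sigma),\qquad k\in\Z,\,s\in[0,s_0],\,\sigma\in[0,\sigma_1],
\]
which is finite by the local theory and continuous in $s_0$. Applying $P_k$ to the Duhamel formula, the linear term is dominated by $(1+s2^{2k})^{-N}2^{-\sigma k}\gamma_k(\sigma)$ for any $N$ by the heat kernel. For the nonlinear term, since $|\tphi|\equiv 1$ I can apply Lemma \ref{lemmac3} to the trilinear expression $\tphi\,\partial_l\tphi\,\partial_m\tphi$, with the frequency envelope
\[
\mu_k(r) \;\lesssim\; B(1+r2^{2k})^{-\sigma_1}\gamma_k+\gamma_k \quad\text{(critical case $\sigma=d/2$)}.
\]
Here the smallness $\gamma\ll 1$ is used to absorb the factor $\mu_{k'}$ appearing in the sums of \eqref{basicest3}. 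Combining this with the smoothing $\|P_ke^{(s-r)\Delta_x}f\|_{L^\infty_tL^2_x}\lesssim(1+(s-r)2^{2k})^{-\sigma_1-2}\|P_kf\|_{L^\infty_tL^2_x}$ and evaluating the resulting convolution in $r\in[0,s]$ produces the claimed bound with constant $C(1+B\gamma)$, which is less than $B$ once $\gamma$ is small enough; this closes the bootstrap, giving \eqref{big9} and hence global existence by iterating on $s$. The noncritical cases $\sigma\in(0,\sigma_1]$ follow from the same argument, with $\gamma_k$ replaced by $\gamma_k(\sigma)$ for the highest-frequency factor and $\gamma_k$ used for the two remaining factors, using the slowly-varying property \eqref{alsc}.

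Finally for \eqref{big10}, the estimates on $\nabla_x^\sigma\partial_t^\rho(\tphi-Q)$ are obtained by differentiating the equation in $x$ and $t$, applying the same Duhamel/multiplication scheme to the differentiated nonlinearity, and using the parabolic smoothing $\|\nabla_x^\sigma e^{s\Delta_x}g\|_{L^2}\lesssim s^{-\sigma/2}\|g\|_{L^2}$ for $s\leq 1$ and exponential decay for $s\geq 1$ after separating the low and high frequencies. The time derivatives are handled by noting that $\partial_t\phi\in H^{\infty,\infty}(T)$ by assumption, and by commuting $\partial_t$ through the heat flow; for $s\geq 1$, the $(s+1)^{-\sigma/2}$ factor is produced by applying the already-established bound \eqref{big9} with $\sigma$ large.

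The main obstacle is closing the bootstrap for \eqref{big9} at the critical scaling. The arbitrarily large exponent $\sigma_1$ in the parabolic weight $(1+s2^{2k})^{-\sigma_1}$ cannot be obtained in a single pass through Duhamel — it requires iterating the estimate, feeding back the improved decay on $\tphi(r)$ into the nonlinearity, and verifying that the trilinear bound \eqref{basicest3} propagates the high-order decay in the time variable $s$ without loss. Tracking the interplay between the frequency envelope $\gamma_k(\sigma)$, the smallness $\gamma\ll 1$, and the parabolic weights through the high-high-to-low summation in \eqref{basicest3} is the technically delicate part.
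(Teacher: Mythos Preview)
Your proposal follows the paper's approach closely: local existence by contraction, sphere-constraint preservation via the linear heat equation for $|\tphi|^2-1$, a bootstrap for \eqref{big9} built on Lemma~\ref{lemmac3} and Duhamel (first at the critical $\sigma=d/2$, then for general $\sigma$), and an inductive scheme for \eqref{big10}.

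One correction is worth making. You identify as the main obstacle that the parabolic weight $(1+s2^{2k})^{-\sigma_1}$ ``cannot be obtained in a single pass through Duhamel'' and requires iteration. In fact the paper closes the bootstrap in a single pass. The mechanism is the elementary convolution bound
\[
\int_0^s e^{-(s-s')\lambda}(1+s'\lambda')^{-\sigma_1}\,ds'\;\lesssim\;s(1+\lambda s)^{-\sigma_1}(1+\lambda' s)^{-1},\qquad 0\le\lambda\le\lambda',
\]
applied with $\lambda=2^{2k}$, $\lambda'=2^{2k'}$. The point is that Lemma~\ref{lemmac3} outputs a sum $\sum_{k'\ge k}\gamma_{k'}^2(1+s'2^{2k'})^{-\sigma_1}$, and after integration in $s'$ the extra factor $(1+2^{2k'}s)^{-1}$ exactly absorbs the prefactor $2^{2k}s$, since $2^{2k}s(1+2^{2k'}s)^{-1}\le 1$ for $k'\ge k$. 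This yields $B_1\lesssim 1+\gamma B_1^2$ directly, with the full $\sigma_1$ exponent intact. No iterative improvement of the decay is needed; the ``delicate'' step you flag dissolves once you write down this inequality.

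Your sketch for \eqref{big10} is on the right track but underspecified. The paper first reads off the case $\rho=0$, $\sigma\le\sigma_1$ from \eqref{big9}, then uses \eqref{big9} to obtain $L^\infty_{x,t}$ decay of $\nabla_x^{\sigma'}\tphi$ for large $s$, and only then runs an induction in $\sigma$ (for $\rho=0$) and subsequently in $\rho$, restarting Duhamel from a large time $S_0$ rather than from $s=0$.
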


\begin{proof}[Proof of Proposition \ref{heatflow}] Let
  $M=\sum_{k\in\Z}(2^{2\sigma_1k}+1)\|P_k(\phi)\|_{L^\infty_tL^2_x}^2$.
  A simple fixed-point argument shows that there is $S=S(M)>0$ and a
  unique smooth solution $\tphi$ for \eqref{heat3.1}, with
  $\tphi-Q\in C([0,S]:{H}^{\infty,\infty}(T))$. In addition, for any
  $\sigma,\rho\in\Z_+$
  \begin{equation}\label{smoothbound}
    \sup_{s\in[0,S]}\|\tphi(s)-Q\|_{{H}^{\sigma,\rho}(T)}
\leq C(M,s,\rho,\|\phi-Q\|_{{H}^{\sigma,\rho}(T)}),
  \end{equation}
  and
  \begin{equation*}
    \partial_s({}^t\tphi\cdot\phi-1)=2{}^t\tphi\cdot\Delta_x\tphi+2\sum_{m=1}^d|\partial_m\tphi|^2+2({}^t\tphi\cdot\phi-1)\sum_{m=1}^d|\partial_m\tphi|^2.
  \end{equation*}
  This shows that $|\tphi|\equiv 1$, thus
  $\tphi\in C([0,S]:{H}_Q^{\infty,\infty}(T))$.
  We prove now a priori estimates on the solution $\tphi$.

  For any $S'\in[0,S]$ we define
  \begin{equation}\label{mainnorm}
    B_1(S')=\sup_{s\in[0,S']}\sup_{k\in\Z}(1+s2^{2k})^{\sigma_1}2^{kd/2}\gamma_k^{-1}\|P_k(\tphi(s))\|_{L^\infty_tL^2_x}.
  \end{equation}
  It is easy to see that $B_1:[0,S]\to(0,\infty)$ is a well-defined
  continuous nondecreasing function and $\lim_{S'\to 0}B_1(S')\leq 1$.

  Using Duhamel's principle, for any $k\in\Z$ and $s\in[0,S]$
  \begin{equation}\label{Duhamel}
    P_k(\tphi(s))=e^{s\Delta}(P_k\phi)+\int_0^s e^{(s-s')\Delta}\Big[P_k\big[\tphi(s')\cdot\sum_{m=1}^d|\partial_m\tphi(s')|^2\big]\Big]\,ds'.
  \end{equation}
Hence for any $k\in\Z$ and
  $s\in[0,S']$
  \begin{equation}\label{big1}
    \begin{split}
      2^{dk/2}&\|P_k(\tphi(s))\|_{L^\infty_tL^2_x}\leq e^{-s2^{2k-2}}2^{dk/2}\|P_k\phi\|_{L^\infty_tL^2_x}\\
      &+\int_0^se^{-(s-s')2^{2k-2}}2^{dk/2}\left\|P_k\big[\tphi(s')\cdot\sum_{m=1}^d|\partial_m\tphi(s')|^2\big]\right\|_{L^\infty_tL^2_x}\,ds'.
    \end{split}
  \end{equation}
  In view of the definition \eqref{mainnorm}, for any $s'\in[0,S']$
  and $k'\in\Z$
  \begin{equation*}
    2^{dk'/2}\|P_{k'}(\tphi(s'))\|_{L^\infty_tL^2_x}\leq B_1(S')(1+s'2^{2k'})^{-\sigma_1}\gamma_{k'}.
  \end{equation*}
  It follows from \eqref{basicest3} (since $d\geq 2$) that, for any
  $k\in\Z$ and $s'\in[0,S']$,
  \begin{equation}\label{big2}
    2^{dk/2}\Big\|P_k\big[\tphi(s')\cdot\sum_{m=1}^d|\partial_m\tphi(s')|^2\big]\Big\|_{L^\infty_tL^2_x}\lesssim
2^{2k}B_1(S')^2\sum_{k'\geq k}\gamma_{k'}^2(1+s'2^{2k'})^{-\sigma_1}.
  \end{equation}
  We substitute this bound into \eqref{big1} and integrate in $s'$. We
  notice that\begin{equation}\label{simple}
    \int_0^se^{-(s-s')\lambda}(1+s'\lambda')^{-\sigma_1}\,ds'\lesssim
    s(1+\lambda s)^{-\sigma_1}(1+\lambda's)^{-1}
  \end{equation}
  for any $s\geq 0$ and $0\leq\lambda\leq\lambda'$. Using \eqref{la2},
   for any $s\in[0,S']$ and $k\in\Z$ we get
  \begin{equation}\label{big3}
    \begin{split}
      &2^{dk/2}\|P_k(\tphi(s))\|_{L^\infty_tL^2_x}(1+s2^{2k})^{\sigma_1}\\
      &\lesssim \gamma_k+B_1(S')^22^{2k}(1+s2^{2k})^{\sigma_1}\int_0^se^{-(s-s')2^{2k}/4}\sum_{k'\geq k}\gamma_{k'}^2(1+{s'}2^{2k'})^{-\sigma_1}\,ds'\\
      &\lesssim \gamma_k+B_1(S')^22^{2k}s\sum_{k'\geq
        k}\gamma_{k'}^2(1+2^{2k'}s)^{-1}\\&\lesssim
      \gamma_k+ B_1(S')^2\gamma\gamma_k,
    \end{split}
  \end{equation} 
which gives
\[
B_1(S') \lesssim 1+ \gamma  B_1(S')^2
\]
Since $B_1$ is continuous and
$B_1(0)\leq 1$, it follows that $B_1(S')\lesssim  1$ for any $S'\in[0,S]$
(provided that $\gamma$ is sufficiently small). Thus 
for any $ k\in\Z$ and $s\in[0,S]$
  \begin{equation}\label{big6}
    2^{dk/2}\|P_k(\tphi(s))\|_{L^\infty_tL^2_x}\leq C(1+s2^{2k})^{-\sigma_1}\gamma_k.
  \end{equation}

  We control now $(1+s2^{2k})^{\sigma_1}2^{\sigma
    k}\|P_k(\tphi(s))\|_{L^\infty_tL^2_x}$,
  $\sigma\in [0,\sigma_1]$. We define
  \begin{equation*}
    B_2(S)=\sup_{\sigma\in\{0,\sigma_1\}}\sup_{s\in[0,S]}\sup_{k\in\Z}(1+s2^{2k})^{\sigma_1}2^{\sigma k}\gamma_k(\sigma)^{-1}\|P_k(\tphi(s))\|_{L^\infty_tL^2_x}.
  \end{equation*}
It is easy to see that $B_2(S)<\infty$. It follows from
  \eqref{Duhamel} that
  \begin{equation}\label{big11}
    \begin{split}
      &\|P_k(\tphi(s))\|_{L^\infty_t L^2_x}2^{\sigma k}(1+s2^{2k})^{\sigma_1}\leq (1+s2^{2k})^{\sigma_1}e^{-s2^{2k-2}}\gamma_k(\sigma)\\
      &+(1+s2^{2k})^{\sigma_1}\int_0^se^{-(s-s')2^{2k-2}}2^{\sigma
        k}\Big\|P_k\big[\tphi(s')\cdot\sum_{m=1}^d|\partial_m\tphi(s')|^2\big]\Big\|_{L^\infty_t
        L^2_x}\,ds'
    \end{split}
  \end{equation}
  for any  $s\in[0,S]$. Using
  the definition of $B_2(S)$ we have
  \begin{equation}\label{big12}
    2^{dk'/2}\|P_{k'}(\tphi(s'))\|_{L^\infty_tL^2_x}\lesssim B_2(S)(1+s'2^{2k'})^{-\sigma_1}2^{dk'/2}2^{-\sigma k'}\gamma_{k'}(\sigma).
  \end{equation}
  We combine this with \eqref{big6}. It follows from \eqref{basicest3}
  that 
  \begin{equation*}
    \begin{split}
      &2^{\sigma
        k}\Big\|P_k\big[\tphi(s')\cdot\sum_{m=1}^d|\partial_m\tphi(s')|^2\big]\Big\|_{L^\infty_tL^2_x}\lesssim
      2^{2k}B_2(S)\sum_{k'\geq
        k}2^{|k'-k|}(1+s'2^{2k'})^{-\sigma_1}\gamma_{k'}\gamma_{k'}(\sigma),
    \end{split}
  \end{equation*}
  for any $k\in\Z$ and $s\in[0,S]$. Using \eqref{simple}
  \begin{equation*}
    \begin{split}
      &(1+s2^{2k})^{\sigma_1}\int_0^se^{-(s-s')2^{2k-2}}2^{\sigma k}\Big\|P_k\big[\tphi(s')\cdot\sum_{m=1}^d|\partial_m\tphi(s')|^2\big]\Big\|_{L^\infty_tL^2_x}\,ds'\\
      &\lesssim sB_2(S)\sum_{k'\geq
        k}2^{k-k'}\gamma_{k'}\gamma_{k'}(\sigma)(1+s2^{2k'})^{-1}\lesssim
      B_2(S)\gamma_k\gamma_k(\sigma).
    \end{split}
  \end{equation*}
  It follows from \eqref{big11} that
  \begin{equation*}
 B_2(s) \lesssim \sup_{k \in \Z} \sup_{s \in [0,S]}
   \gamma_k(\sigma)^{-1}\|P_k(\tphi(s))\|_{L^\infty_tL^2_x}2^{\sigma k}(1+s2^{2k})^{\sigma_1}\lesssim  1+\gamma B_2(S),
  \end{equation*} 
 Since $\gamma$ is small this gives  $B_2(S)\lesssim 1$. Thus 
for any $k\in\Z$, $s\in[0,S]$, $ \sigma\in [0,\sigma_1]$,
  \begin{equation}\label{kj1}
    2^{\sigma k}\|P_k(\tphi(s))\|_{L^\infty_tL^2}\lesssim 
\gamma_k(\sigma)(1+s2^{2k})^{-\sigma_1}.
  \end{equation}
  As a consequence, the solution $\tphi$ can be extended
  globally to a smooth solution $\tphi\in
  C([0,\infty):{H}_Q^{\infty,\infty}(T))$ of \eqref{heat3.1}. The
  bound \eqref{big9} follows from \eqref{big6} and \eqref{kj1}.

  It remains to prove the bound \eqref{big10}, which follows from
  \eqref{big9} (applied for $\sigma=0$ and $\sigma=\sigma_1$) for
  $\rho=0$ and $\sigma\leq\sigma_1$. It follows also from \eqref{big9}
  that for any $s\geq S_0=(M/\gamma)^4\gg 1$,
  $\sigma\in[0,\sigma_1+10]\cap\Z$, and
  $m_1,\ldots,m_\sigma\in\{1,\ldots,d\}$
  \begin{equation}\label{linfty}
    \begin{split}
      \|\partial_{m_1}\ldots\partial_{m_\sigma}(\tphi(s)-Q)\|_{L^\infty_{x,t}}&\lesssim \sum_{k\in\Z}2^{dk/2}2^{k\sigma}\|P_k(\tphi(s)-Q)\|_{L^\infty_tL^2_x}\\
      &\lesssim \sum_{k\in\Z}2^{dk/2}2^{k\sigma}(1+s2^{2k})^{-\sigma_1}M\\
      &\lesssim Ms^{-1/2}s^{-\sigma/2}
    \end{split}
  \end{equation}
  and
  \begin{equation}\label{l2}
      \|\partial_{m_1}\ldots\partial_{m_\sigma}(\tphi(s)-Q)\|_{L^\infty_tL^2_x}\lesssim \big[\sum_{k\in\Z}2^{2k\sigma}\|P_k(\tphi(s)-Q)\|_{L^\infty_tL^2_x}^2\big]^{1/2}\lesssim Ms^{-\sigma/2}.
  \end{equation}
  For $S\geq S_0$ let
  \begin{equation*}
    M_{\rho,\sigma}(S)=M+1+\sum_{\sigma'\leq\sigma}\sum_{\rho'\leq \rho}\sup_{s\in[S_0,S]}s^{\sigma'/2}\|\nabla_x^{\sigma'}\partial_t^{\rho'}(\tphi(s)-Q)\|_{L^\infty_tL^2_x}.
  \end{equation*}
  As in \eqref{linfty},
  \begin{equation}\label{linfty2}
    s^{\sigma'/2}\|\nabla_x^{\sigma'}\partial_t^{\rho'}(\tphi(s)-Q)\|_{L^\infty_{x,t}}\lesssim s^{-1/2}M_{\rho,\sigma}(S)
  \end{equation}
  for any $s\in[S_0,S]$, $\rho'\leq\rho$, and $\sigma'<\sigma-d/2$.

  We prove first \eqref{big10} for $\rho=0$ and $\sigma\geq
  \sigma_1+1$. We use induction over $\sigma$, \eqref{linfty},
  \eqref{linfty2}, and \eqref{heat3.1} to estimate, for $s\geq 2S_0$,
  \begin{equation}\label{estimate}
    \begin{split}
      s^{\sigma/2}\|\nabla_x^\sigma&(\tphi(s))\|_{L^\infty_tL^2_x}\leq s^{\sigma/2}\|e^{(s-S_0)\Delta}(\nabla_x^\sigma \tphi(S_0))\|_{L^\infty_tL^2_x}\\
      &+s^{\sigma/2}\int_{S_0}^s\big\|e^{(s-s')\Delta}[\nabla_x^\sigma(\tphi(s')\cdot \sum_{m=1}^d|\partial_m\tphi(s')|^2)]\big\|_{L^\infty_tL^2_x}\,ds'\\
      \lesssim & \ \|\tphi(S_0)\|_{L^\infty_tL^2_x}+\int_{S_0}^{s/2}\big\|\tphi(s')\cdot \sum_{m=1}^d|\partial_m\tphi(s')|^2)]\big\|_{L^\infty_tL^2_x}\,ds'\\
      &+s^{\sigma/2}\int_{s/2}^s\frac{1}{(s-s')^{1/2}}\big\|\nabla_x^{\sigma-1}(\tphi(s')\cdot \sum_{m=1}^d|\partial_m\tphi(s')|^2)\big\|_{L^\infty_tL^2_x}\,ds'\\
      \lesssim &\ (C_\sigma
      Ms^{-1/2})M_{0,\sigma}(s)+C_\sigma(M_{0,\sigma-1}(s))^3.
    \end{split}
  \end{equation}
  It follows by induction that $\sup_{s\geq
    S_0}M_{0,\sigma}(s)<\infty$ for any $\sigma\in\Z_+$, and
  \eqref{big10} follows for $\rho=0$.

  To prove \eqref{big10} for a pair $(\rho,\sigma)\in\Z_+\times\Z_+$,
  $\rho\geq 1$, we may assume by induction that $\sup_{s\geq
    S_0}M_{\rho-1,\sigma'}(s)<\infty$ for any $\sigma'\in\Z$ and
  $\sup_{s\geq S_0}M_{\rho,\sigma-1}(s)<\infty$. In view of
  \eqref{heat3.1}, the function
  $v_\rho=\partial_t^\rho\tphi$ solves the heat equation
  \begin{equation}\label{v1eq}
    \begin{split}
      &(\partial_s-\Delta_x)v_\rho=v_\rho\cdot\sum_{m=1}^d|\partial_m\tphi|^2+2\tphi\cdot\sum_{m=1}^d{}^t(\partial_m\tphi)\cdot\partial_mv_\rho+E_{\rho-1}\\
      &=\sum_{m=1}^d\big[|\partial_m\tphi|^2I_3-2\partial_m(\tphi\cdot {}^t(\partial_m\tphi))\big]\cdot v_\rho+\sum_{m=1}^d\partial_m[2\tphi\cdot{}^t(\partial_m\tphi)\cdot v_\rho]+E_{\rho-1}\\
      &=P\cdot v_\rho+\sum_{m=1}^d\partial_m(Q_m\cdot
      v_\rho)+E_{\rho-1}.
    \end{split}
  \end{equation}
  It follows from \eqref{linfty} that for any $s\geq S_0$
  \begin{equation}\label{PmQm}
    s^{3/2}\|P\|_{L^\infty_{x,t}}+s\sum_{m=1}^d\|Q_m\|_{L^\infty_{x,t}}
\lesssim M.
  \end{equation}
  In addition, it follows from \eqref{linfty2} and the induction
  hypothesis that
  \begin{equation*}
    \sup_{s\geq S_0}\big[s^{3/2}s^{\sigma'/2}\|\nabla_x^{\sigma'}P\|_{L^\infty_{x,t}}+s^{\sigma'/2+1}\|\nabla_x^{\sigma'}Q_m\|_{L^\infty_{x,t}}+s^{3/2}s^{\sigma'/2}\|\nabla_x^{\sigma'}E_{\rho-1}\|_{L^\infty_tL^2_x}\big]<\infty
  \end{equation*}
  for any $\sigma'\in\Z_+$ and $m=1,\ldots d$. An estimate similar to
  \eqref{estimate} shows that $\sup_{s\geq
    S_0}M_{\rho,\sigma}(s)<\infty$, which completes the proof of the
  lemma.
\end{proof}

We construct now the function $v$.

\begin{lemma}\label{vframe}
  There is a smooth function
  $v:[0,\infty)\times\R^d\times(-T,T)\to\mathbb{S}^2$ such that
  \begin{equation}\label{vprop1} {}^tv\cdot\tphi=
    0\quad\text{ and
    }\quad\partial_sv=[\partial_s\tphi\cdot{}^t\tphi-\tphi\cdot
    {}^t(\partial_s\tphi)]\cdot v
  \end{equation}
  on $[0,\infty)\times\R^d\times(-T,T)$. In addition, for any
  $k\in\Z$, $s\in[0,\infty)$, $\sigma\in[d/2,\sigma_1]$,
  \begin{equation}\label{vprop2}
    2^{\sigma k}\|P_k(v(s))\|_{L^\infty_tL^2_x}\lesssim 
(1+s2^{2k})^{-\sigma_1+1}\gamma_k(\sigma),
  \end{equation}
  and, for any $\sigma,\rho\in\Z_+$,
  \begin{equation}\label{vprop3}
    \sup_{s\in[0,\infty)}\sup_{k\in\Z}(s+1)^{\sigma/2}2^{k\sigma}\|P_k(\partial_t^\rho(v(s))\|_{L^\infty_tL^2_x}<\infty.
  \end{equation}
\end{lemma}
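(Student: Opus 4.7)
The plan is to construct $v$ as the solution of the linear ODE $\partial_s v = B(s) v$, with $B = \partial_s\tphi \cdot {}^t\tphi - \tphi \cdot {}^t(\partial_s\tphi)$, by integrating backwards from $s = \infty$ with prescribed boundary data $v_\infty$, an arbitrary unit vector in $T_Q\mathbb{S}^2$ (independent of $x,t$). A key algebraic simplification, obtained from $|\tphi| \equiv 1$, ${}^tv\tphi = 0$, and the heat equation \eqref{heat3.1}, reduces the equation to
\[
\partial_s v = -(\Delta_x\tphi \cdot v)\,\tphi,
\]
so only second spatial derivatives of $\tphi$ actually enter the right-hand side when the constraint is preserved. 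Since \eqref{big10} yields $\|\partial_s\tphi(s)\|_{L^\infty_{x,t}} \lesssim s^{-3/2}$ for $s\geq 1$, the coefficient $B$ is integrable at infinity in $L^\infty$, so Picard iteration on a half-line $[s_\ast, \infty)$ with $s_\ast$ sufficiently large produces a unique continuous solution of the integral equation $v(s) = v_\infty - \int_s^\infty B(r) v(r)\,dr$. I then extend $v$ smoothly to all of $[0,\infty) \times \R^d \times (-T,T)$ by standard linear ODE theory for the smooth, bounded coefficient $B$.

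The constraints $|v(s)| \equiv 1$ and ${}^tv(s)\,\tphi(s) \equiv 0$ follow from antisymmetry of $B$ (which gives $\partial_s|v|^2 = 2\,{}^tv B v = 0$) and from the direct calculation
\[
\partial_s({}^tv\,\tphi) = {}^tv\bigl[\tphi\,{}^t(\partial_s\tphi) - \partial_s\tphi\,{}^t\tphi\bigr]\tphi + {}^tv\,\partial_s\tphi = -{}^tv\,\partial_s\tphi + {}^tv\,\partial_s\tphi = 0,
\]
where I use $|\tphi|\equiv 1 \Rightarrow {}^t(\partial_s\tphi)\,\tphi = 0$; the boundary data $|v_\infty|=1$, ${}^tv_\infty Q = 0$ at $s=\infty$ then propagate the two identities to every $s$.

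The main work is the decay estimate \eqref{vprop2}, which I obtain by a bootstrap on
\[
C(S) = \sup_{s \geq S}\sup_{k \in \Z} \sup_{\sigma \in \{d/2,\sigma_1\}} (1 + s2^{2k})^{\sigma_1 - 1}\, 2^{\sigma k}\, \gamma_k(\sigma)^{-1}\, \|P_k v(s)\|_{L^\infty_t L^2_x},
\]
which is finite for each $S > 0$ by smoothness of $v$ and the pointwise decay of $v - v_\infty$ from the ODE. Since $v_\infty$ is constant, $P_k v_\infty = 0$ for every $k \in \Z$, so $P_k v = P_k(v - v_\infty)$ and the integral equation becomes
\[
P_k v(s) = \int_s^\infty P_k\bigl[(\Delta_x\tphi \cdot v)\,\tphi\bigr](r)\,dr.
\]
I split $\tphi = Q + (\tphi - Q)$ and $v = v_\infty + (v - v_\infty)$ and estimate the resulting bilinear and trilinear pieces via the product estimates \eqref{basicest5.1} and \eqref{basicest5.2}. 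The leading $v_\infty \cdot Q$ contribution reduces to $\|P_k \Delta_x\tphi\|_{L^\infty_t L^2_x} \lesssim 2^{2k}(1 + r2^{2k})^{-\sigma_1}2^{-\sigma k}\gamma_k(\sigma)$ from \eqref{big9}, while the cross terms carry extra smallness factors of $\gamma$ or $C(S)$ from the small size of $\tphi - Q$ in $\dot H^{d/2}$ and the bootstrap bound on $v - v_\infty$. Integrating in $r$ and using
\[
\int_s^\infty 2^{2k}(1 + r2^{2k})^{-\sigma_1}\,dr \lesssim (1 + s2^{2k})^{-\sigma_1 + 1}
\]
(the source of the one-power loss of parabolic decay relative to $\tphi$) produces $C(S) \lesssim 1 + \gamma\,C(S)$, hence $C(S) \lesssim 1$ uniformly in $S > 0$, which is \eqref{vprop2}.

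The higher-regularity bounds \eqref{vprop3} are obtained by induction on $(\rho,\sigma)$, paralleling the argument at the end of the proof of Lemma~\ref{heatflow}: differentiate $\partial_s v = Bv$ in $t$, control the coefficients $\partial_t^\rho B$ via \eqref{big10} and the previous induction steps, and close the backwards integration in $s$ in the appropriate frequency-localized norms. The principal technical obstacle throughout is the trilinear Littlewood-Paley bookkeeping in $\tphi(\Delta_x\tphi\cdot v)$, in particular separating the constant-at-infinity parts $Q$ and $v_\infty$ from the decaying remainders; the dimensional hypothesis $d \geq 2$ provides just enough room in Lemma~\ref{basicest} for the $\mathrm{High}\times\mathrm{High}\to\mathrm{Low}$ interactions to be summable and for the bootstrap to close.
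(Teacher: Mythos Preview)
Your approach matches the paper's: solve $\partial_s v = Rv$ backwards from $s=\infty$ with constant boundary data $v_\infty\in T_Q\mathbb{S}^2$, verify the geometric constraints via antisymmetry of $R$, and close \eqref{vprop2} by bootstrapping the integrated equation $P_k v(s)=-\int_s^\infty P_k(R(s')v(s'))\,ds'$.

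Two small points of comparison. First, the paper establishes \eqref{vprop3} \emph{before} \eqref{vprop2}, precisely because finiteness of the bootstrap constant (your $C(S)$, the paper's $B_3(S)$) over all $k\in\Z$ and $s\geq S$ genuinely requires a priori Sobolev-with-decay control on $v-v_\infty$; your justification ``by smoothness of $v$ and the pointwise decay of $v-v_\infty$'' is where that work is hidden, and as stated it does not obviously furnish the needed uniform bound $2^{\sigma_1 k}\|P_k v(s)\|\lesssim (1+s2^{2k})^{-\sigma_1+1}$. Second, the paper bypasses your explicit splitting $\tphi=Q+(\tphi-Q)$, $v=v_\infty+(v-v_\infty)$ by rewriting $R=\sum_m\partial_m[\partial_m\tphi\cdot{}^t\tphi-\tphi\cdot{}^t(\partial_m\tphi)]$ in divergence form and applying \eqref{basicest5.1} first to bound $P_k R$ and then to bound $P_k(R\cdot v)$; since \eqref{basicest5.1} only uses $\|g\|_{L^\infty}\leq 1$ for the low-frequency factor, the constant parts are absorbed automatically and the trilinear bookkeeping is avoided.
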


\begin{proof}[Proof of Lemma \ref{vframe}] Let $R$ denote the
  $3\times 3$ matrix
  \begin{equation}\label{Rma}
    R=\partial_s\tphi\cdot{}^t\tphi-\tphi\cdot {}^t(\partial_s\tphi)=\Delta_x\tphi\cdot{}^t\tphi-\tphi\cdot{}^t(\Delta_x\tphi)=\sum_{m=1}^d\partial_m[\partial_m\tphi\cdot{}^t\tphi-\tphi\cdot{}^t(\partial_m\tphi)],
  \end{equation}
  where one of the identities follows from \eqref{heat3.1}. It follows
  from \eqref{big9} and \eqref{basicest5.1} that
  \begin{equation}\label{mainR}
    2^{\sigma k}\|P_k(R(s))\|_{L^\infty_tL^2_x}\lesssim 
2^{2k}(1+s2^{2k})^{-\sigma_1}\gamma_k(\sigma)
  \end{equation}
  for any $k\in\Z$, $s\in[0,\infty)$, $\sigma\in[d/2,\sigma_1]$. It
  follows from \eqref{big10} and \eqref{linfty2} that
  \begin{equation}\label{mainR2}
    \sup_{s\in[0,\infty)}\big[(s+1)^{(\sigma+2)/2}\|\nabla_x^\sigma\partial_t^\rho(R(s))\|_{L^\infty_tL^2_x}+(s+1)^{(\sigma+3)/2}\|\nabla_x^\sigma\partial_t^\rho(R(s))\|_{L^\infty_{x,t}}\big]<\infty
  \end{equation}
  for any $\sigma,\rho\in\Z_+$.

  We prove first the existence of a smooth function
  $v:[0,\infty)\times\R^d\times(-T,T)\to\mathbb{S}^2$ satisfying
  \eqref{vprop1}.  We fix $Q'\in\mathbb{S}^{d-1}$ 
 with the property that ${}^tQ\cdot Q'=0$.
 By  \eqref{mainR2} we have
\[
\int_0^\infty\|R(s)\|_{L^\infty_{x,t}}\,ds<\infty.
\]
This allows us to construct $v:[0,\infty)\times\R^d\times(-T,T)\to
\R^3$ (by a simple fixed point argument) as the unique solution of the
ODE
  \begin{equation}\label{ODE}
    \partial_sv=R(s)\cdot v\text{ and }v(\infty)=Q'
  \end{equation}
  for any $(x,t)\in\R^d\times(-T,T)$.

Since
  $\int_0^\infty\|\nabla_x^\sigma\partial_t^\rho(R(s))\|_{L^\infty_{x,t}}\,ds<\infty$
  for any $\sigma,\rho\in\Z_+$, the function $v$ constructed as a
  solution of \eqref{ODE} is smooth on
  $[0,\infty)\times\R^d\times(-T,T)$ and
  \begin{equation}\label{vs1}
    \sup_{s\in[0,\infty)}(s+1)^{(\sigma+1)/2}\|\nabla_x^\sigma\partial_t^\rho(v(s)-Q')\|_{L^\infty_{x,t}}<\infty
  \end{equation}
  for any $\sigma,\rho\in\Z_+$. Using \eqref{ODE} and
  ${}^t(\partial_s\tphi)\cdot\tphi=0$, it is
  easy to see that
  \begin{equation*}
    \partial_s({}^tv\cdot\tphi)={}^tv\cdot[\tphi\cdot {}^t(\partial_s\tphi)-\partial_s\tphi\cdot{}^t\tphi]\cdot\tphi+{}^tv\cdot\partial_s\tphi=0.
  \end{equation*}
  Since $\lim_{s\to\infty}{}^tv(s)\cdot\tphi(s)=0$, it
  follows that ${}^tv\cdot\tphi\equiv 0$ on
  $[0,\infty)\times\R^d\times(-T,T)$. Thus, using \eqref{ODE} again,
  \begin{equation*}
    \partial_s({}^tv\cdot v)=2{}^tv\cdot [\partial_s\tphi\cdot{}^t\tphi-\tphi\cdot {}^t(\partial_s\tphi)]\cdot v=0.
  \end{equation*}
  Since $\lim_{s\to\infty}{}^tv(s)\cdot v(s)=1$ it follows that
  ${}^tv\cdot v\equiv 1$ on $[0,\infty)\times\R^d\times(-T,T)$. To
  summarize, we constructed a smooth function
  $v:[0,\infty)\times\R^d\times(-T,T)\to\mathbb{S}^2$ that satisfies
  \eqref{vprop1}.

  We prove now \eqref{vprop3}. In view of \eqref{ODE}, we have
  \begin{equation*}
    v(s)-Q'+\int_s^\infty R(s')\cdot Q'\,ds'=-\int_s^\infty R(s')\cdot (v(s')-Q')\,ds'.
  \end{equation*}
  Thus, using \eqref{mainR2} and \eqref{vs1},
  \begin{equation*}
    \sup_{s\in[0,\infty)}(s+1)^{(\sigma+1)/2}\Big\|\nabla_x^\sigma\partial_t^\rho(v(s)-Q')+\int_s^\infty \nabla_x^\sigma\partial_t^\rho(R(s'))\cdot Q'\,ds'\Big\|_{L^\infty_tL^2_x}<\infty
  \end{equation*}
  for any $\sigma,\rho\in\Z_+$. The bound \eqref{vprop3} follows from
  \eqref{mainR2}.

  Finally, we prove \eqref{vprop2}. It follows from \eqref{ODE} that
  \begin{equation}\label{eqv}
    P_k(v(s))=-\int_s^{\infty}P_k(R(s')\cdot v(s'))\,ds'.
  \end{equation}
  For any $S\in[0,\infty)$ let
  \begin{equation*}
    B_3(S)=1+\sup_{\sigma\in[d/2,\sigma_1]}\sup_{s'\in[S,\infty)}\sup_{k\in\Z}\gamma_k(\sigma)^{-1}(1+s'2^{2k})^{\sigma_1-1}2^{\sigma k}\|P_k(v(s'))\|_{L^\infty_tL^2_x}.
  \end{equation*}
  We have $B_3(S)<\infty$ for any $S\in[0,\infty)$, using
  \eqref{vprop3} and
  $\sup_{k\in\Z}\gamma_k(\sigma)^{-1}2^{-\delta_0|k|}<\infty$. Also,
  \begin{equation*}
    2^{\sigma k'}\|P_{k'}(v(s'))\|_{L^\infty_tL^2_x}\leq B_3(S)\gamma_{k'}(\sigma)(1+s'2^{2k'})^{-\sigma_1+1}
  \end{equation*}
  for any $\sigma\in[d/2,\sigma_1]$, $s'\geq S$, and $k'\in\Z$. It
  follows from \eqref{basicest5.1} and \eqref{mainR} that
  \begin{equation*}
    2^{\sigma k}\|P_k(R(s')\cdot v(s'))\|_{L^\infty_tL^2_x}\lesssim 
2^{2k}\big(\gamma_k(\sigma)+\gamma B_3(S)
\sum_{2^{2k}\leq 2^{2k'}\leq 1/s'}\gamma_{k'}(\sigma)\big)
  \end{equation*}
  if $s'2^{2k}\leq 1$, and
  \begin{equation*}
    2^{\sigma k}\|P_k(R(s')\cdot v(s'))\|_{L^\infty_tL^2_x}\lesssim
2^{2k}(s'2^{2k})^{-\sigma_1}\gamma_k(\sigma)(1+\gamma B_3(S))
  \end{equation*}
  if $s'2^{2k}\geq 1$.  Thus, for $s\geq S$ and $k\in\Z$
  \begin{equation*}
    \int_s^{\infty}2^{\sigma k}\|P_k(R(s')\cdot
    v(s'))\|_{L^\infty_tL^2_x}\,ds'\lesssim 
\gamma_k(\sigma)(1+s2^{2k})^{-\sigma_1+1}(1+\gamma B_3(S)).
  \end{equation*}
  It follows from \eqref{eqv} that $B_3(S)\lesssim (1+\gamma B_3(S))$, which gives \eqref{vprop2}.
\end{proof}

We complete now the proof of Proposition \ref{TaoHeat}. We define the
smooth function $w=\tphi\times
v:[0,\infty)\times\R^d\times(-T,T)\to\mathbb{S}^2$. It follows from
\eqref{basicest5.2}, \eqref{big9}, and \eqref{vprop2}
that\begin{equation*} 2^{\sigma k}\|P_k(w(s))\|_{L^\infty_tL^2_x}\lesssim
  (1+s2^{2k})^{-\sigma_1+1}2^{-\sigma k}\gamma_k(\sigma)
\end{equation*}
for any $k\in\Z$, $s\in[0,\infty)$, $\sigma\in[d/2,\sigma_1]$. It
follows easily from \eqref{big10} and \eqref{vprop3} that
\begin{equation*}
  \sup_{s\in[0,\infty)}\sup_{k\in\Z}(s+1)^{\sigma/2}2^{k\sigma}\|P_k(\partial_t^\rho(w(s))\|_{L^\infty_tL^2_x}<\infty
\end{equation*}
for any $\rho,\sigma\in\Z_+$. Finally, the identities \eqref{vwprop}
follow from \eqref{vprop1} and $w=\tphi \times v$.

\end{document}